\def\cal{\mathcal}
\def\Bbb{\mathbb}
\newenvironment{NB}{
\color{red}{\bf NB}. \footnotesize 
}{}
\newenvironment{NB2}{
\color{blue}{\bf NB}. \footnotesize
}{}
\newcommand{ \Supp}{\operatorname{Supp}}
\newcommand{\Ext}{\operatorname{Ext}}
\newcommand{\Hom}{\operatorname{Hom}}
\newcommand{\codim}{\operatorname{codim}}
\newcommand{\im}{\operatorname{im}}
\newcommand{\rk}{\operatorname{rk}}
\newcommand{\chr}{\operatorname{char}}
\newcommand{\NS}{\operatorname{NS}}
\newcommand{\coker}{\operatorname{coker}}
\newcommand{\Pic}{\operatorname{Pic}}
\newcommand{\ch}{\operatorname{ch}}
\newcommand{\Alb}{\operatorname{Alb}}
\newcommand{\Hilb}{\operatorname{Hilb}}
\newcommand{\Coh}{\operatorname{Coh}}
\newcommand{\WIT}{\operatorname{WIT}}
\newcommand{\Div}{\operatorname{Div}}
\newcommand{\tr}{\operatorname{tr}}
\newcommand{\Mov}{\operatorname{Mov}}
\newcommand{\Nef}{\operatorname{Nef}}
\font\b=cmr10 scaled \magstep5
\def\bigzerou{\smash{\lower1.7ex\hbox{\b 0}}}
\numberwithin{equation}{section}
\theoremstyle{plain}
 \newtheorem{thm}{Theorem}[section]
 \newtheorem{lem}[thm]{Lemma}
 \newtheorem{prop}[thm]{Proposition}
 \newtheorem{cor}[thm]{Corollary}
\theoremstyle{definition}
 \newtheorem{defn}[thm]{Definition}
\theoremstyle{remark}
 \newtheorem{rem}[thm]{Remark}
 \newtheorem{ex}[thm]{Example}
\begin{document}

\title{Wall crossing for moduli of stable sheaves on an elliptic surface}
\author{K\={o}ta Yoshioka}
\address{Department of Mathematics, Faculty of Science,
Kobe University,
Kobe, 657, Japan
}
\email{yoshioka@math.kobe-u.ac.jp}

\thanks{
The author is supported by the Grant-in-aid for 
Scientific Research (No. 18H01113), JSPS}
\keywords{elliptic surfaces, stable sheaves}
%\subjclass[2010]{Primary 14D20}

\begin{abstract}
We shall study the wall crossing behavior of moduli of stable sheaves on an elliptic
surface.
\end{abstract}

\maketitle
%\tableofcontents

\renewcommand{\thefootnote}{\fnsymbol{footnote}}
\footnote[0]{2010 \textit{Mathematics Subject Classification}. 
Primary 14D20.}

\section{Introduction}

In \cite{F1} and \cite{F2},
Friedman systematically studied 
moduli spaces of stable sheaves of rank 2 on elliptic surfaces.
In particular he proved that the moduli spaces are birationally equivalent to the 
Hilbert scheme of points, if the relative degree of the first Chern class
is odd.
A few years later, Bridgeland \cite{Br:1} generalized Friedman's results
to higher rank cases by constructing 
relative Fourier-Mukai transforms associated to the elliptic fibration. 
This is a fundamental tool for the study of coherent sheaves on elliptic surfaces, and
%Bridgeland's method is very powerful and 
many properties of moduli spaces are proved (cf. \cite{BBH}, \cite{BH}, \cite{HM}, \cite{JM}, \cite{Y:7}).  
In \cite{Y:twist2} and \cite{Y:Enriques}, we studied the Hodge numbers and the Picard groups
of the moduli spaces under the assumption that all fibers are irreducible.
%If there is a polarized family $of elliptic surfaces and a family of algebraic 2-cocycles $\xi$, then
%we have a family of moduli spaces of stable sheaves  
In this paper, we study the cases where elliptic surfaces have
reducible fibers.

Let $\pi:X \to C$ be an elliptic surface 
with reducible fibers over an algebraically closed field $k$.
For an ample divisor $H$, let $M_H(r,\xi,a)$ be the moduli space
of stable sheaves $E$ of $(\rk E,c_1(E),\chi(E))=(r,\xi,a)$ with respect to $H$.
If $n \gg 0$, then as Friedman \cite{F1} first noticed, 
$M_{H+nf}(r,\xi,a)$ is independent of the choice of $n$.
We denote $M_{H+nf}(r,\xi,a)$ $(n \gg 0)$ by $M_{H_f}(r,\xi,a)$.
Assume that $\gcd(r,(\xi \cdot f))=1$.
Then $M_{H_f}(r,\xi,a)$ is a smooth projective variety and
Bridgeland proved it is birationally equivalent
to a moduli space of torsion free sheaves of rank 1
on an elliptic surface.  
In this note, we shall study the birational correspondences of 
 $M_{H_f}(r,\xi,a)$ under various change of stability conditions.
We shall study two types of wall crossing behaviors:
\begin{enumerate}
\item[(a)]
Wall crossing behaviors for twisted Gieseker stability of semi-stable 1-dimensional
sheaves. 
\item[(b)]
Wall crossing behaviors for stability conditions of coherent sheaves with torsions.
\end{enumerate}

For case (a), we note that
$M_{H_f}(r,\xi,a)$ is isomorphic to 
a moduli space $M^\alpha_H(0,\xi',a')$ of twisted stable 1-dimensional sheaves on $X$, if
there is a section of $\pi$ \cite{PerverseII}.
Since the twisted stability depends on the choice of an ample divisor $H$
and a twisting parameter $\alpha \in \NS(X)_{\Bbb Q}$,
it is important to study the wall-crossing behavior under the deformation of
$(H,\alpha)$.

For case (b), we shall introduce a new family of stability conditions
which corresponds to Gieseker stability
under Fourier-Mukai transforms.
We shall study the wall crossing  behaviors for this stability condition
and get a more precise
structure for the birational correspondence of Bridgeland \cite{Br:1}. 
Thus we can estimate the dimension of 
the loci where the birational map is not defined.
%the indeterminacy locus.

\begin{thm}[{Theorem \ref{thm:m=2}}]\label{thm:birat}
%Assume that $\pi:X \to C$ has a section.
%For ${\bf e}=(r,\xi,a)$ such that $\gcd(r,(\xi \cdot f))=1$,
Assume that the multiplicity of all multiple fibers are two. 
For $(r,\xi, a) \in {\Bbb Z} \oplus \NS(X) \oplus {\Bbb Z}$
such that $r>0$ and $\gcd(r,(\xi \cdot f))=1$,
there is a (contravariant) Fourier-Mukai transform
$\Phi:{\bf D}(X) \to {\bf D}(Y)$
which induces an isomorphism
\begin{equation}
\begin{matrix}
M_{H_f}(r,\xi,a) \setminus Z & \to & M_{H_f}(1,0,a') \setminus Z'\\
E & \mapsto & \Phi(E),
\end{matrix}
\end{equation}
where 
\begin{enumerate}
\item
$\dim M_{H_f}(r,\xi,a)=\dim M_{H'_f}(1,0,a')$,
\item
$Z \subset M_{H_f}(r,\xi,a)$ is a closed subscheme of
$\dim Z \leq \dim M_{H_f}(r,\xi,a) -2$ and
\item
$Z' \subset M_{H'_f}(1,0,a')$ is a closed subscheme of $\dim  Z' \leq \dim M_{H'_f}(1,0,a')-2$.
\end{enumerate}
\end{thm}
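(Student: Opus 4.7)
The plan is to build the map by a Fourier--Mukai transform and then to control its indeterminacy locus by wall-crossing, using the new family of stability conditions of type (b) introduced in this paper. First, following Bridgeland's construction adapted to the case where every multiple fiber has multiplicity two, I would construct the dual elliptic surface $\pi':Y\to C$ as an appropriate relative compactified Jacobian of $\pi$. The coprimality hypothesis $\gcd(r,(\xi\cdot f))=1$ together with the multiplicity-two assumption on multiple fibers should ensure that there is an untwisted universal family on $X\times_C Y$, hence an honest (contravariant) integral transform $\Phi:\mathbf{D}(X)\to \mathbf{D}(Y)$. A Grothendieck--Riemann--Roch computation then determines the transformed Mukai vector, and after tensoring by a suitable line bundle on $Y$ one normalises it to $(1,0,a')$; the equality $\dim M_{H_f}(r,\xi,a)=\dim M_{H'_f}(1,0,a')$ follows from the fact that $\Phi$ preserves the Mukai pairing.

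Next I would verify that $E\mapsto \Phi(E)$ sends a dense open subset of $M_{H_f}(r,\xi,a)$ isomorphically onto a dense open subset of $M_{H'_f}(1,0,a')$. By the results of Bridgeland, a generic $H_f$-stable sheaf $E$ satisfies $\WIT$ with respect to $\Phi$ and $\Phi(E)$ is a rank-$1$ torsion-free sheaf on $Y$; the converse statement holds for the inverse transform. To obtain a more precise description of the complement, I would interpolate between $H_f$-Gieseker stability on $X$ and $H'_f$-Gieseker stability on $Y$ via the family of stability conditions of type (b): under $\Phi$, $H_f$-Gieseker stability on $X$ matches one chamber and $H'_f$-Gieseker stability on $Y$ matches the opposite chamber, and only finitely many walls lie between them.

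The core step, and the main obstacle, is the codimension bound on $Z$ and $Z'$. Let $Z\subset M_{H_f}(r,\xi,a)$ be the locus where $E\mapsto \Phi(E)$ fails to be a morphism to $M_{H'_f}(1,0,a')$, and define $Z'$ analogously for $\Phi^{-1}$. Each point of $Z$ gives rise to a nontrivial decomposition of its $\Phi$-transform into stable factors of strictly smaller Mukai vector, coming from crossing some wall along the interpolating path. A standard Mukai-pairing dimension count bounds the locus parametrising such extensions: it has codimension at least $1+\dim\Ext^1$ between the destabilising factors, which under the coprimality hypothesis is at least $2$ unless the wall supports a decomposition of length two into numerically rigid factors. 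The multiplicity-two hypothesis is used precisely to exclude such exceptional walls, which would otherwise be produced by subsheaves supported on components of singular or multiple fibers; verifying this exclusion fiber-by-fiber, and checking that every remaining wall-crossing locus has codimension at least $2$, is the delicate part of the argument.
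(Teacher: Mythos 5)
Your overall strategy --- realize Gieseker stability on $X$ and on $Y$ as two chambers for the $\lambda$-stability of Section \ref{sect:stability}, cross the finitely many intervening walls, and bound the codimension of each wall-crossing locus --- is exactly the route the paper takes (Theorem \ref{thm:birat2} together with Propositions \ref{prop:codim} and \ref{prop:Psi}). The gap is in your final step. You assert that the multiplicity-two hypothesis ``excludes'' the exceptional codimension-one walls produced by sheaves supported on multiple fibers, and defer the verification; but these walls are not excluded by $m_i=2$. By Proposition \ref{prop:codim}(2), a wall $\tau=(0,r'f,d')$ contributes a codimension-one indeterminacy locus precisely when some multiple fiber $m_if_i$ carries a class $(0,r_if_i,d_i)$ with $r_i(f_i\cdot\xi)-rd_i=1$ in the decomposition \eqref{eq:isotropic} (case (II) in the proof of Proposition \ref{prop:birat}), and no isomorphism across such a wall is constructed (nor is one available from the spherical reflections, which only handle the $(D^2)=-2$ walls). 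What actually saves the argument is Lemma \ref{lem:m_i}: if $r>r'_0m_i$ for the target pair $(r'_0,d'_0)$ with $(\xi\cdot f)r'_0-rd'_0=1$ and $0\le r'_0<r$, then all such walls (and likewise the $=2$ walls, for which $M_H^\alpha(0,r'f,d')$ need not be projective when some $m_i$ is even) lie at or beyond the endpoint $\lambda_0$ of the interval of $\lambda$ being traversed, so they are never crossed. For $m_i=2$ the required inequality is $r>2r'_0$, and this can fail: e.g.\ $r=5$, $(\xi\cdot f)=2$ forces $r'_0=3$.

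The missing ingredient is a dualization step. Since $m_i=2$ forces $2\mid(\xi\cdot f)$, coprimality makes $r$ odd, hence $r\ge 3$ and exactly one of $r'_0$, $r-r'_0$ is $<r/2$. The assignment $E\mapsto E^{\vee}$ gives a birational map $M_{H_f}(r,\xi,a)\cdots\to M_{H_f}(r,-\xi,a'')$ whose indeterminacy locus has codimension $\ge 2$ (this uses $r\ge 3$, cf.\ Lemma \ref{lem:isotropic-wall}(3)), and it replaces $r'_0$ by $r-r'_0$; after this replacement one may assume $r>2r'_0$ and Lemma \ref{lem:m_i} applies. Without this step your ``fiber-by-fiber exclusion'' has nothing to exclude with, and the interpolating path genuinely crosses codimension-one walls. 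The rest of your outline (fineness of $Y$, the Riemann--Roch normalization of the transformed class to $(1,0,a')$, equality of dimensions, and the generic WIT statement) matches the paper and is fine, though note that fineness of $Y=M_H^\alpha(0,r_1f,d_1)$ comes from arithmetic conditions on $(r_1,d_1)$ and a perturbation of $\alpha$, not from the multiplicity-two hypothesis, and the codimension of a Harder--Narasimhan stratum is computed from \eqref{eq:HNF-dim}, not by the formula ``$1+\dim\Ext^1$'' you quote.
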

As an application, we can compute the Picard group of $M_{H_f}(r,\xi,a)$ (Theorem \ref{thm:Pic}).

Let us explain the organization of this paper.
In section \ref{sect:1-dim}, we shall explain several properties 
of stable 1-dimensional sheaves.
In particular we shall explain the existance condition of stable sheaves 
supported on fibers.
We also explain some results on Fourier-Mukai transforms on elliptic surfaces.
In section \ref{sect:wall-crossing}, we shall treat the case (a), thus
we shall study the wall-crossing behavior
for the moduli spaces $M_H^\alpha(0,\xi',a')$ of stable 1-dimensional sheaves 
under deformations of $(H,\alpha)$, where $(\xi' \cdot f)=1$.
We first explain the wall and chamber structure in the space of pairs
$(H,\alpha)$. We then classify the walls and compare two moduli spaces separated by a
single wall.
In particular, we shall prove that the birational equivalence class is independent 
of the choice of a general parameter $(H,\alpha)$ (Theorem \ref{thm:birat-1dim}). 
This is much simpler than the wall-crossing in \cite{BM}.

In sections \ref{sect:stability} and \ref{sect:another}, we shall treat the case (b).
We first introduce a new stability condition for coherent sheaves with torsion and
study its property. In particular we shall relate the stability with Gieseker stability
via a relative Fourier-Mukai transform.
We next study the wall-crossing behaviors.
In particular we shall prove Theorem \ref{thm:birat} (see Theorem \ref{thm:m=2}).
It may be interesting to understand our stability in terms of Bridgeland stability conditions \cite{Br:3}.
In section \ref{sect:application},
we shall study line bundles and the Picard group of $M_{H_f}(r,\xi,a)$.
We also derive the Hodge numbers of $M_{H_f}(r,\xi,a)$ from
the Hodge numbers of Hilbert scheme of points which was 
computed by G\"{o}ttsche and Soergel \cite{GS}.

{\bf Notation.}

For smooth projective varieties $X, Y$ and ${\bf P} \in {\bf D}(X \times Y)$,
$\Phi_{X \to Y}^{{\bf P}}:{\bf D}(X) \to {\bf D}(Y)$ is an integral functor defined by
\begin{equation}
\Phi_{X \to Y}^{{\bf P}}(E):={\bf R}p_{Y*}({\bf P} \otimes p_X^*(E)),\; E \in {\bf D}(X),
\end{equation}
where $p_X:X \times Y \to X$ and $p_Y:X \times Y \to Y$ are projections.
If $\Phi_{X \to Y}^{{\bf P}}$ is an equivalence, then it is called a Fourier-Mukai transform.
For the equivalence $\Phi_{X \to Y}^{{\bf P}}$,
\begin{equation}
\begin{split}
\Phi_{X \to Y}^{{\bf P}} \circ \Phi_{Y \to X}^{{\bf P}^{\vee}}=& \otimes p_Y^*({\cal O}_Y(-K_Y))[-n],\\
\Phi_{Y \to X}^{{\bf P}^{\vee}} \circ \Phi_{X \to Y}^{{\bf P}}=& \otimes p_X^*({\cal O}_X(-K_X))[-n]
\end{split}
\end{equation}
where $n=\dim X=\dim Y$.

For $E \in {\bf D}(X)$, we set 
\begin{equation}
D_X(E):={\bf R}{\cal H}om_{{\cal O}_X}(E,{\cal O}_X) \in {\bf D}(X).
\end{equation}
Then $D_X$ defines a contravariant functor ${\bf D}(X) \to {\bf D}(X)$. 

Let
$\pi:X \to C$ be an elliptic surface over a smooth projective curve 
$C$ of genus $g$.
We assume that $R^1 \pi_* {\cal O}_X \not \cong {\cal O}_C$.
We also assume that all multiple fibers are tame.
Then $\chi({\cal O}_X)=e$, $q(X)=g$ and the canonical bundle formula says
$$
K_X \equiv 
(2g-2+e)f +\sum_{i=1}^s (m_i-1)f_i \mod \Pic^0(X),
$$
where $e \geq 0$ and $m_1 f_1,...,m_s f_s$ are multiple fibers.
\begin{NB}
Since $R \pi_* {\cal O}_X={\cal O}_C-R^1 \pi_* {\cal O}_X$,
$-\deg R^1 \pi_* {\cal O}_X=e$. 
\end{NB}
%Thus ${\cal O}_X(K_X) \in \pi^*(\Pic(C))$. 
\begin{NB}
$p_g=e+g-1$.
\end{NB}
If there is a section $\sigma$, then
$(\sigma^2)=-e$.
\begin{rem}\label{rem:fiber-cond}
If $R^1 \pi_* {\cal O}_X \cong {\cal O}_C$, then all singular fibers are elliptic curves
with multiplicities.
In this case, the Picard groups are treated in \cite[Appendix]{Y:Enriques}
for example. 
\end{rem}

For a purely 1-dimensional sheaf $E$ on $X$, 
we take a locally free resolution
$$
0 \to V_1 \overset{f}{\to} V_0 \to E \to 0.
$$
We denote an effective divisor $\det f$ by $\Div(E)$. 
Then $E$ is an ${\cal O}_{\Div(E)}$-module and 
the algebraic equivalence class of $\Div(E)$ is $c_1(E)$.
For $\alpha \in \NS(X)_{\Bbb Q}$,
$\chi_\alpha(E):=\chi(E)-(c_1(E) \cdot \alpha)$ denotes the $\alpha$-twisted
Euler-characteristic of $E$.

\begin{defn}\label{defn:reflection}
\begin{enumerate}
\item[(1)]
An object $E_1 \in {\bf D}(X)$ is spherical if 
\begin{equation}
\begin{split}
\Hom(E_1,E_1) \cong & k,\\
\Ext^1(E_1,E_1) \cong & 0,\\
\Ext^2(E_1,E_1) \cong & k.
\end{split}
\end{equation}
\item[(2)]
For a spherical object $E_1$,
let $R_{E_1}: {\bf D}(X)  \to  {\bf D}(X)$ be an equivalence defined by
$$
R_{E_1}(E):=\mathrm{Cone}({\bf R}\Hom(E_1,E) \otimes E_1 \to E).
$$
\end{enumerate}
\end{defn}

Let $K(X)$ be the Grothendieck group of $X$ and  
\begin{equation}
\begin{matrix}
\tau:& K(X) & \to & {\Bbb Z}\oplus \NS(X) \oplus {\Bbb Z}\\
  & E & \mapsto & (\rk E,c_1(E),\chi(E))
\end{matrix}
\end{equation}
a surjective homomorphism such that
$\tau(E)$ represents the topological equivalence class of $E$.
For ${\bf e}_1,{\bf e}_2 \in  {\Bbb Z}\oplus \NS(X) \oplus {\Bbb Z}$,
we set
\begin{equation}
\chi({\bf e}_1,{\bf e}_2):=\chi(E_1,E_2),\;\; {\bf e}_i=\tau(E_i),\; (i=1,2).
\end{equation}

%$D_X$ also defines automorphisms of $K(X)$.

\section{Moduli spaces of stable 1-dimensional sheaves}\label{sect:1-dim}

A twisted semi-stability was introduced by Matsuki and Wentworth 
\cite{M-W} for torsion
free sheaves on surfaces, and generalized to purely 1-dimensional 
sheaves in \cite{Y:twist2}. Let us recall the definition.
\begin{defn}
For a pair $(H,\alpha)$ of an ample divisor $H$ and a ${\Bbb Q}$-divisor $\alpha$ 
on $X$, a purely 1-dimensional sheaf $E$ is $\alpha$-twisted semi-stable
(resp. stable)
if 
\begin{equation}
\frac{\chi(E_1)-(\alpha\cdot c_1(E_1))}{(c_1(E_1) \cdot H)} \underset{(<)}{\leq} 
\frac{\chi(E)-(\alpha \cdot c_1(E))}{(c_1(E) \cdot H)}
\end{equation}
for a proper subsheaf $0 \ne E_1$ of $E$.
For $G \in K(X)$ with $\rk G>0$, we define $G$-twisted stability as
$\frac{c_1(G)}{\rk G}$-twisted stability.
\end{defn}

\begin{rem}\label{rem:twisted}
Obviously $\alpha$-twisted stability is equivalent to $\alpha+\lambda H$-twisted stability.
Hence we may assume that $\alpha \in H^\perp$.
If $X \to C$ be an elliptic surface and we are interested in fiber sheaves, 
then we may assume that $(\alpha \cdot f)=0$ (cf. Definition \ref{defn:n-stable}). 
\end{rem}

For ${\bf e}=(r,\xi,a) \in {\Bbb Z}\oplus \NS(X) \oplus {\Bbb Z}$,
$M_H^\alpha({\bf e})$ denotes the moduli space of $\alpha$-twisted stable sheaves $E$ on $X$
with $\tau(E)={\bf e}$ and $\overline{M}_H^\alpha({\bf e})$ the projective
compactification by adding 
$S$-equivalence classes of $\alpha$-twisted semi-stable sheaves
(see \cite[Thm. 4.7]{Y:twist2} for $r=0$).

For a pair $(H,\alpha)$ of an ample divisor $H$ and a ${\Bbb Q}$-divisor $\alpha$,
let ${\cal M}_H^\alpha({\bf e})^{ss}$ be the moduli stack of $\alpha$-twisted semi-stable sheaves
$E$ with $\tau(E)={\bf e}$ and ${\cal M}_H^\alpha({\bf e})^s$ the  substack of $\alpha$-twisted stable sheaves.

Let $e({\cal M}_H^\alpha({\bf e})^{ss})$ be the virtual Hodge polynomial of
${\cal M}_H^\alpha({\bf e})^{ss}$ in \cite[1.1]{Y:twist2}.

\subsection{Relative Fourier-Mukai transforms.}\label{subsect:relFM}

Let $Y:=M_H^\alpha(0,r' f,d')$ be a fine moduli space and
${\cal P}$ a universal family, where $\alpha \in f^\perp$.
Then we have an elliptic fibration 
$\pi':Y \to C$ \cite[Lem. 3.1.9]{PerverseII}. 
Bridgeland proved that
$\Phi_{X \to Y}^{{\cal P}^{\vee}}:{\bf D}(X) \to {\bf D}(Y)$ is an equivalence.
In the notation of \cite[sect. 3.2]{PerverseII},
we have the following relation
$$
\Phi_{X \to Y}^{{\cal P}^{\vee}}(E)=\Psi(E(-K_X))^{\vee}[-2], \; E \in {\bf D}(X).
$$
We take a locally free sheaf $G$ such that
$\frac{c_1(G)}{\rk G}=\alpha+\frac{d_1 H}{r_1(H \cdot f)}$.
Then ${\cal P}_{|X \times \{ y\}}$ $(y \in Y)$ is a $G$-twisted stable sheaf with
\begin{equation}\label{eq:GP}
\chi(G,{\cal P}_{|X \times \{ y\}})=0.
\end{equation}
We set $\tau({\cal P}^{\vee}_{|\{ x \} \times Y}[1])=(0,r_1' f, d_1')$.
We assume that there is an integral curve $C \in |H|$, and we set $L:={\cal O}_C$.
Then $G':=\Phi_{X \to Y}^{{\cal P}^{\vee}}(L)[1]$ is a locally free sheaf on $Y$.
We set $L':=\Phi_{X \to Y}^{{\cal P}^{\vee}}(G)[2]$.
Then $L'$ is a purely 1-dimensional sheaf on $Y$.
By \cite[Lem. 3.2.1]{PerverseII}, $H':=c_1(L')$ is $\pi'$-ample.
We set ${\cal Q}:={\cal P}^{\vee}[1]$.
By \cite[Thm. 3.2.8]{PerverseII},
${\cal Q}_{|\{ x \} \times Y}={\cal P}^{\vee}_{|\{ x \} \times Y}[1]$ $(x \in X)$ 
is $G'$-twisted stable with respect to $H'+nf$
$(n \gg 0)$ and we have an identification
\begin{equation}
\begin{matrix}
X & \to & M_{H'}^{\alpha'}(0,r_1' f,-d_1')\\
x & \mapsto & {\cal Q}_{|\{ x \} \times Y},
\end{matrix}
\end{equation}
 where
$\alpha'=\frac{c_1(G')}{\rk G'}+\frac{d_1' H'}{r_1' (f \cdot H')} \in f^\perp$.
%
%Then there is a pair $(H',\alpha')$ and 
%${\cal Q}_{|\{ x \} \times Y} \in M_{H'}^{\alpha'}(0,r' f,-p)$
%for all $x \in X$. Thus $X \cong M_{H'}^{\alpha'}(0,r' f,-p)$
%\cite[Thm. 3.2.8]{PerverseII}.
We have 
\begin{equation}\label{eq:PQ}
\Phi_{X \to Y}^{{\cal Q}}=\Phi_{X \to Y}^{{\cal P}^{\vee}[1]},\;
\Phi_{Y \to X}^{{\cal P}}=\Phi_{Y \to X}^{{\cal Q}^{\vee}[1]}.
\end{equation}

For a 1-dimensional sheaf $A$ on $X$, we get
\begin{equation}\label{eq:FM(A)}
\begin{split}
%\Phi_{X \to Y}^{{\cal P}^{\vee}}(L)=& G'\\
%\Phi_{X \to Y}^{{\cal P}^{\vee}}(G)=& L'[-2]\\
\chi(G',\Phi_{X \to Y}^{{\cal P}^{\vee}}(A))=&(c_1(L) \cdot c_1(A)),\\
(c_1(L') \cdot \Phi_{X \to Y}^{{\cal P}^\vee}(A))=& -\chi(G,A).
\end{split}
\end{equation}

\begin{lem}\label{lem:WIT-A}
Let $A$ be a 1-dimensional sheaf on $X$.
\begin{enumerate}
\item[(1)]
If $\Hom({\cal P}_{|X \times \{ y\}},A)=0$ for all $y \in Y$, then
$\Phi_{X \to Y}^{{\cal P}^{\vee}}(A)[2] \in \Coh(Y)$.
\item[(2)]
If $\Hom(A,{\cal P}_{|X \times \{ y\}})=0$ for all $y \in Y$, then
$\Phi_{X \to Y}^{{\cal P}^{\vee}}(A)[1] \in \Coh(Y)$.
\end{enumerate}
\end{lem}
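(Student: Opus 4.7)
The plan is a derived base-change argument. Set $K:=\Phi_{X\to Y}^{{\cal P}^{\vee}}(A)$. Since ${\cal P}$ is flat over $Y$ and $X$ is smooth, each fiber ${\cal P}_{|X\times\{y\}}$ is perfect on $X$, and derived base change along $\{y\}\hookrightarrow Y$ gives natural isomorphisms
\[
K\otimes^{L}_{{\cal O}_Y}k(y)\;\cong\;R\Hom_X({\cal P}_{|X\times\{y\}},A),\qquad y\in Y,
\]
so that $H^{i}(K\otimes^{L}k(y))=\Ext^{i}_X({\cal P}_{|y},A)$. Since $X$ is a smooth surface these Ext groups are concentrated in $\{0,1,2\}$, and using the Grothendieck base-change spectral sequence $E_2^{p,q}={\cal T}or_{-p}({\cal H}^q(K),k(y))\Rightarrow H^{p+q}(K\otimes^{L}k(y))$ together with the presentation ${\cal P}^{\vee}\simeq{\cal E}xt^{1}({\cal P},{\cal O}_{X\times Y})[-1]$ (coming from the purity of each ${\cal P}_{|y}$) and the non-negative cohomological amplitude of $Rp_{Y*}$, one obtains ${\cal H}^i(K)=0$ for $i\notin\{0,1,2\}$; moreover the top and bottom non-vanishing ${\cal H}^i(K)$ are detected at generic points of their supports by the corresponding $\Ext^i_X({\cal P}_{|y},A)$.

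For part~(1), the hypothesis $\Hom({\cal P}_{|y},A)=0$ gives $H^0(K\otimes^{L}k(y))=0$ uniformly in $y$, hence ${\cal H}^0(K)=0$ by the bottom-degree detection. To kill the remaining middle cohomology ${\cal H}^1(K)$, I would apply Serre duality
\[
\Ext^{1}_X({\cal P}_{|y},A)\;\cong\;\Ext^{1}_X(A,{\cal P}_{|y}\otimes K_X)^{\vee}
\]
combined with the canonical bundle formula: $K_X$ lies in a sum of fiber classes, so ${\cal P}_{|y}\otimes K_X$ has the same invariants $(c_1,\chi)$ as ${\cal P}_{|y}$ (its $c_1$ is unchanged because ${\cal P}_{|y}$ has rank zero, and $f\cdot K_X=0$) and, after checking that $G$-twisted stability is preserved under this twist, is again a point ${\cal P}_{|y'}$ of the moduli $Y$; the hypothesis applied at $y'$ then forces $\Ext^{1}_X({\cal P}_{|y},A)=0$, giving ${\cal H}^1(K)=0$ and $K[2]\in\Coh(Y)$.

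Part~(2) is dual: Serre duality with the same identification ${\cal P}_{|y}\otimes K_X\simeq{\cal P}_{|y'}$ turns $\Hom(A,{\cal P}_{|y})=0$ into $\Ext^2_X({\cal P}_{|y},A)=0$ for every $y$, killing ${\cal H}^2(K)$; a parallel middle-degree argument (with the roles of $A$ and ${\cal P}_{|y}$ interchanged) yields ${\cal H}^0(K)=0$, and hence $K[1]\in\Coh(Y)$. The main obstacle is confirming that the $K_X$-twist actually permutes the family $\{{\cal P}_{|y}\}_{y\in Y}$, which requires verifying preservation of $G$-twisted stability under the twist, especially on the multiple fibers where $K_X$ restricts to a nontrivial line bundle; once this step is carried out, the $\Hom$-vanishing propagates cleanly to the required middle-degree Ext vanishing and the rest is routine base-change bookkeeping.
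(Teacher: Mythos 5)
Your reduction to the fiberwise groups $\Ext^i_X({\cal P}_{|X\times\{y\}},A)$ and your treatment of the extreme degrees are sound: uniform vanishing of $\Ext^0$ (resp.\ of $\Ext^2$) over \emph{all} $y\in Y$ does kill ${\cal H}^0(K)$ (resp.\ ${\cal H}^2(K)$) for $K=\Phi_{X\to Y}^{{\cal P}^{\vee}}(A)$, and the Serre-duality step $\Ext^2({\cal P}_{|X\times\{y\}},A)\cong\Hom(A,{\cal P}_{|X\times\{y\}}\otimes K_X)^{\vee}$ in part (2) is exactly what the paper uses. The ``obstacle'' you single out at the end is not one: $(D\cdot K_X)=0$ for every divisor $D$ supported on fibers, so $\otimes K_X$ preserves $(c_1,\chi)$ and the twisted stability of all fiber sheaves and hence permutes the points of $Y$ (indeed fixes those over non-multiple fibers).

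The genuine gap is the middle degree. In part (1) you assert that the hypothesis $\Hom({\cal P}_{|X\times\{y\}},A)=0$, fed through Serre duality, forces $\Ext^1({\cal P}_{|X\times\{y\}},A)=0$. It does not: Serre duality identifies this group with $\Ext^1(A,{\cal P}_{|X\times\{y\}}\otimes K_X)^{\vee}$, another middle $\Ext$ about which the hypothesis says nothing. Moreover the vanishing is simply false for all $y$: if $y$ lies over $\pi(\Supp A)$ and $A$ embeds into ${\cal P}_{|X\times\{y\}}(K_X)$ (e.g.\ $A$ supported on one component of a reducible fiber), then $\chi({\cal P}_{|X\times\{y\}},A)=0$ together with $\Ext^2({\cal P}_{|X\times\{y\}},A)=\Hom(A,{\cal P}_{|X\times\{y\}}(K_X))^{\vee}\neq 0$ forces $\Ext^1({\cal P}_{|X\times\{y\}},A)\neq 0$. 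What actually closes the argument — and what the paper does — is a two-step mechanism you never invoke: (i) the uniform vanishing of $\Hom({\cal P}_{|X\times\{y\}},A)$ lets one represent $K$ by a two-term complex of \emph{locally free} sheaves $V_1\to V_2$ in degrees $1,2$, so that ${\cal H}^1(K)=\ker(V_1\to V_2)$ is torsion free; (ii) since $A$ is supported on fibers, for a general $y$ the supports of ${\cal P}_{|X\times\{y\}}$ and $A$ are disjoint, so all $\Ext^i$ vanish there and ${\cal H}^1(K)$ vanishes at the generic point of $Y$. Torsion free plus generically zero gives ${\cal H}^1(K)=0$. The same gap recurs in part (2): after killing ${\cal H}^2(K)$, the vanishing of ${\cal H}^0(K)$ again requires its torsion-freeness (it is the kernel of a map of locally free sheaves) combined with $\Hom({\cal P}_{|X\times\{y\}},A)=0$ for \emph{general} $y$ only; your ``parallel middle-degree argument'' is not available, because the hypothesis of (2) gives no uniform control of $\Hom({\cal P}_{|X\times\{y\}},A)$.
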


\begin{proof}
(1)
By $\Hom({\cal P}_{|X \times \{ y\}},A)=0$ ($y \in Y$),
$\Phi_{X \to Y}^{{\cal P}^{\vee}}(A)$ is represented by a two term complex
of locally free sheaves $V_1 \to V_2$.
Since $A$ is 1-dimensional, we get $\Ext^1( {\cal P}_{|X \times \{ y\}},A)=0$
for a general $y \in Y$. Hence $\Phi_{X \to Y}^{{\cal P}^{\vee}}(A)[2] \in \Coh(Y)$.

(2)
Since ${\cal P}_{|X \times \{ y\}}(K_X)={\cal P}_{|X \times \{ y\}}$, we have
$\Ext^2({\cal P}_{|X \times \{ y\}},A)=
\Hom(A,{\cal P}_{|X \times \{ y\}})^{\vee}=0$.
Hence $\Phi_{X \to Y}^{{\cal P}^{\vee}}(A)$ is represented by a two term complex
of locally free sheaves $V_0 \to V_1$.
Since $\Hom({\cal P}_{|X \times \{ y\}},A)=0$ for a general $y \in Y$,
$\Phi_{X \to Y}^{{\cal P}^{\vee}}(A)[1] \in \Coh(Y)$.
\end{proof}

\begin{rem}
Under the condition of (1), $A$ is purely 1-dimensional.
\end{rem}

In the following proposition, we give a relation of twisted stability for
fiber sheaves. 

\begin{lem}\label{lem:G-stability}
Let $A$ be a $G$-twisted stable sheaf supported on a fiber.
\begin{enumerate}
\item[(1)]
Assume that 
$\chi(G,A)<0$. Then $\Phi_{X \to Y}^{{\cal P}^{\vee}}(A)[2]$ is a $G'$-twisted stable sheaf. 
\item[(2)]
Assume that $\chi(G,A)>0$. Then $\Phi_{X \to Y}^{{\cal P}^{\vee}}(A)[1]$ is a $G'$-twisted stable sheaf. 
\end{enumerate}
\end{lem}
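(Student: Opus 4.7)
Both cases are dual; I present the argument for (1), where $\chi(G,A)<0$. The plan is first to show $F:=\Phi_{X\to Y}^{{\cal P}^\vee}(A)[2]$ is an honest sheaf on $Y$, then to compute its $G'$-twisted slope using \eqref{eq:FM(A)}, and finally to pull any destabilizing subsheaf back through the inverse Fourier--Mukai transform so as to contradict $G$-stability of $A$.

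\textbf{Sheaf property and invariants.} By \eqref{eq:GP}, ${\cal P}_{|X\times\{y\}}$ is a $G$-twisted stable fiber sheaf with $G$-slope $0$, strictly greater than $\mu_G(A)<0$. A nonzero morphism ${\cal P}_{|y}\to A$ would have image $B$ satisfying $0=\mu_G({\cal P}_{|y})\le\mu_G(B)\le\mu_G(A)<0$ by $G$-stability of both source and target, absurd. Hence $\Hom({\cal P}_{|y},A)=0$ for every $y\in Y$, and Lemma~\ref{lem:WIT-A}(1) yields $F\in\Coh(Y)$. Combining \eqref{eq:FM(A)} with $\chi(M[i])=(-1)^i\chi(M)$ and $c_1(M[i])=(-1)^ic_1(M)$ gives $\chi(G',F)=(H\cdot c_1(A))$ and $(c_1(L')\cdot c_1(F))=-\chi(G,A)>0$, so $F$ is $1$-dimensional with
\[
\mu_{G'}(F)\;=\;-\,\mu_G(A)^{-1}.
\]
In particular the assignment $B\mapsto\Phi_{X\to Y}^{{\cal P}^\vee}(B)[2]$ sends $G$-stable fiber sheaves of negative $G$-slope to $1$-dimensional sheaves on $Y$ of positive $G'$-slope in a strictly slope-reversing manner.

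\textbf{Stability via the inverse transform.} Suppose $0\to F_1\to F\to F_2\to 0$ is a $G'$-destabilizing sequence in $\Coh(Y)$, with $F_1$ chosen $G'$-semistable. Apply $\Phi_{Y\to X}^{{\cal P}}$: the composition law gives $\Phi_{Y\to X}^{{\cal P}}\circ\Phi_{X\to Y}^{{\cal P}^\vee}=\otimes\,p_X^*{\cal O}_X(-K_X)[-2]$, and since the objects under consideration are all supported on fibers while $K_X$ itself is a ${\Bbb Q}$-combination of fiber classes, the twist by $-K_X$ acts trivially. This yields a triangle
\[
\Phi_{Y\to X}^{{\cal P}}(F_1)[2]\to A\to\Phi_{Y\to X}^{{\cal P}}(F_2)[2].
\]
Rerunning the $\Hom$-vanishing argument on the Jordan--H\"{o}lder factors of $F_1$ and $F_2$, whose $G'$-slopes correspond under the slope reversal above to negative $G$-slopes, shows that $A_i:=\Phi_{Y\to X}^{{\cal P}}(F_i)[2]$ are ordinary sheaves, so the triangle becomes a short exact sequence $0\to A_1\to A\to A_2\to 0$ in $\Coh(X)$. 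After inversion and negation, the inequality $\mu_{G'}(F_1)\ge\mu_{G'}(F)$ becomes $\mu_G(A_1)\ge\mu_G(A)$, contradicting $G$-stability of $A$. The delicate point is precisely this propagation of the WIT property from $F$ to its pieces $F_i$: a priori neither a subsheaf nor a quotient of a WIT sheaf need be WIT, and reducing to the semistable case and examining the Jordan--H\"{o}lder factors fiber-by-fiber is what lets the $\Hom$-vanishing of the previous step apply verbatim, after which everything reduces to a routine slope calculation.
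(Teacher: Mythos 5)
Your overall strategy is the same as the paper's: establish $\Hom({\cal P}_{|X\times\{y\}},A)=0$ from the slope comparison and invoke Lemma \ref{lem:WIT-A} to get a sheaf, read off the slope inversion $\mu_{G'}(F)=-\mu_G(A)^{-1}$ from \eqref{eq:FM(A)}, and transport a destabilizing sequence back through $\Phi_{Y\to X}^{{\cal P}}$ to contradict the $G$-twisted stability of $A$. Two points need repair, one of them a genuine gap in the reasoning as written.

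First, the claim that the $\Hom$-vanishing argument applies to the Jordan--H\"older factors of $F_2$ is not justified: only the destabilizing subsheaf $F_1$ has all its stable factors of $G'$-slope $\geq\mu_{G'}(F)>0=\mu_{G'}({\cal Q}_{|\{x\}\times Y})$; a quotient $F_2$ can perfectly well have stable factors of slope $\leq 0$, for which $\Hom(\,\cdot\,,{\cal Q}_{|\{x\}\times Y})$ need not vanish, so you cannot conclude the WIT property for $F_2$ this way. The correct route (implicit in the paper) is to apply Lemma \ref{lem:WIT-A}(2) only to $F_1$, and then deduce that $\Phi_{Y\to X}^{{\cal P}}(F_2)$ is a sheaf from the long exact cohomology sequence of the triangle: the only possibly surviving extra term is $H^0(\Phi_{Y\to X}^{{\cal Q}^{\vee}}(F_2))$, which is torsion free by the analogue of Lemma \ref{lem:FM-property}(1) and supported on fibers, hence zero. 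Second, your shifts are inconsistent: since $\Phi_{Y\to X}^{{\cal P}}\circ\Phi_{X\to Y}^{{\cal P}^{\vee}}=\otimes\,{\cal O}_X(-K_X)[-2]$ and $F=\Phi_{X\to Y}^{{\cal P}^{\vee}}(A)[2]$, one has $\Phi_{Y\to X}^{{\cal P}}(F)\cong A(-K_X)$ with no further shift, so the triangle should read $\Phi_{Y\to X}^{{\cal P}}(F_1)\to A(-K_X)\to\Phi_{Y\to X}^{{\cal P}}(F_2)$; writing $A_i=\Phi_{Y\to X}^{{\cal P}}(F_i)[2]$ while keeping $A$ unshifted in the middle does not type-check. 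With these two corrections the argument closes exactly as in the paper, the final inequality $\mu_G(A_1)\geq\mu_G(A)$ for the proper subsheaf $A_1\subset A(-K_X)\cong A$ giving the contradiction.
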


\begin{proof}
(1)
We have $A':=\Phi_{X \to Y}^{{\cal P}^{\vee}}(A)[2] \in \Coh(Y)$ by \eqref{eq:GP} and Lemma \ref{lem:WIT-A}.
Assume that $A'$ is not $G'$-twisted stable, and we have 
an exact sequence
\begin{equation}
0 \to B_1 \to A' \to B_2 \to 0
\end{equation} 
such that $B_1$ is a stable sheaf with 
$$
\frac{\chi(G',B_1)}{(c_1(B_1) \cdot H')} \geq 
\frac{\chi(G',A')}{(c_1(A') \cdot H')}. 
$$
Since $\chi(G',{\cal Q}_{|\{x \} \times Y})=0$ and
$\chi(G',A')=(c_1(L) \cdot  c_1(A))>0$, $\Hom(B_1,{\cal Q}_{|\{x \} \times Y})=0$ for all $x \in X$.
Applying Lemma \ref{lem:WIT-A} (2) to $\Phi_{Y \to X}^{{\cal Q}^{\vee}}:{\bf D}(Y) \to {\bf D}(X)$, 
we have an exact sequence
\begin{equation}
0 \to B_1' \to A(-K_X) \to B_2' \to 0. 
\end{equation}
where $B_1':=\Phi_{Y \to X}^{{\cal P}}(B_1) \in \Coh(X)$ and
$B_2':=\Phi_{Y \to X}^{{\cal P}}(B_2) \in \Coh(X)$.
Since $A(-K_X) \cong A$, we have
$$
-\frac{(c_1(B_1') \cdot H)}{\chi(G,B_1')} \geq 
-\frac{(c_1(G) \cdot H)}{\chi(G,A)}. 
$$ 
Therefore $A'$ is $G'$-twisted stable.

(2)
We have $A':=\Phi_{X \to Y}^{{\cal P}^{\vee}}(A)[1] \in \Coh(Y)$ by
Lemma \ref{lem:WIT-A}.
Assume that $A'$ is not $G'$-twisted stable, and we have 
an exact sequence
\begin{equation}
0 \to B_1 \to A' \to B_2 \to 0
\end{equation} 
such that $B_2$ is a stable sheaf with 
$$
\frac{\chi(G',B_2)}{(c_1(B_2) \cdot H')} \leq 
\frac{\chi(G',A')}{(c_1(A') \cdot H')}. 
$$
Since $\chi(G',{\cal Q}_{|\{x \} \times Y})=0$ and
$\chi(G',A')<0$, $\Hom({\cal Q}_{|\{x \} \times Y},B_2)=0$ for all $x \in X$.
By Lemma \ref{lem:WIT-A},
we have an exact sequence
\begin{equation}
0 \to B_1' \to A(-K_X) \to B_2' \to 0. 
\end{equation}
where $B_1':=\Phi_{Y \to X}^{{\cal P}}(B_1)[1] \in \Coh(X)$ and
$B_2':=\Phi_{Y \to X}^{{\cal P}}(B_2)[1] \in \Coh(X)$.
Since $A(-K_X) \cong A$, we have
$$
-\frac{(c_1(B_2') \cdot H)}{\chi(G,B_2')} \leq 
-\frac{(c_1(G) \cdot H)}{\chi(G,A)}. 
$$ 
Therefore $A'$ is $G'$-twisted stable.
\end{proof}

\begin{rem}\label{rem:G-stability}
\begin{enumerate}
\item[(1)]
By \eqref{eq:PQ}, Lemma \ref{lem:G-stability} implies that
$A \in {\bf D}(X)$ is a $G$-twisted stable sheaf supported on a fiber
such that $\chi(G,A)<0$
if and only if $A':=\Phi_{X \to Y}^{{\cal P}^{\vee}}(A)[2]$
is a $G'$-twisted stable sheaf supported on a fiber such that $\chi(G',A')>0$.
\item[(2)]
$G'$-twisted stability is also defined for 0-dimensional sheaves and 
$G$-twisted stable sheaf $A$ supported on a fiber with $\chi(G,A)=0$
corresponds to a structure sheaf of a point via $\Phi_{X \to Y}^{{\cal P}^{\vee}[2]}$.
\end{enumerate}
\end{rem}

\subsection{Properties of purely 1-dimensional sheaves}\label{subsect:1-dim}

We shall study some properties of $\alpha$-twisted stable sheaves
on a fiber.

\begin{lem}\label{lem:1-dim}
Let $E$ be a purely 1-dimensional sheaf with $(c_1(E)\cdot f)=1$.
For purely 1-dimensional sheaves $E_1$ and $E_2$ fitting in an exact 
sequence
$$
0 \to E_1 \to E \to E_2 \to 0,
$$
we have $(c_1(E_1)\cdot f)=0$ or $(c_1(E_2)\cdot f)=0$.
Hence $E_1$ or $E_2$ is supported on fibers.
\end{lem}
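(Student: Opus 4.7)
The plan is very short, since the statement is essentially a numerical positivity argument combined with a standard geometric fact about effective curves on an elliptic fibration.

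First I would use additivity of $c_1$ in the short exact sequence to write
\begin{equation*}
(c_1(E_1) \cdot f) + (c_1(E_2) \cdot f) = (c_1(E) \cdot f) = 1.
\end{equation*}
Next I would observe that for a purely $1$-dimensional sheaf $F$, the Fitting divisor $\Div(F)$ is effective and represents $c_1(F)$, so $(c_1(F) \cdot f) \geq 0$ because $f$ is nef (a fiber of the morphism $\pi$). Applying this to $E_1$ and $E_2$, the two nonnegative integers $(c_1(E_1)\cdot f)$ and $(c_1(E_2)\cdot f)$ sum to $1$, so exactly one of them is $0$ and the other is $1$. This gives the first assertion.

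For the second assertion, I would argue that if $(c_1(E_i) \cdot f)=0$ for some $i$, then the effective representative $\Div(E_i)$ satisfies $(\Div(E_i)\cdot f)=0$. Since $f$ is nef with $f^2 = 0$, every irreducible component $C$ of $\Div(E_i)$ satisfies $(C \cdot f)\geq 0$, and any component not contained in a fiber of $\pi$ would contribute strictly positively. Hence each irreducible component of $\Div(E_i)$ is contained in some fiber of $\pi$, and therefore $E_i$, which is an $\mathcal{O}_{\Div(E_i)}$-module, is supported on fibers.

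There is essentially no obstacle: the only thing that requires mild care is justifying that $c_1(E_i)$ is represented by an \emph{effective} divisor (so that pairing against $f$ gives a nonnegative integer). This is exactly the content of the definition $\Div(E_i)=\det(f_i)$ recalled in the Notation section, applied to a locally free resolution of the purely $1$-dimensional sheaf $E_i$; the effectivity comes from $f_i$ being injective.
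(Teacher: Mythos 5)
Your proof is correct and follows essentially the same route as the paper: additivity of $c_1$ in the exact sequence, effectivity of $\Div(E_i)$, and nefness of $f$ forcing one of the two nonnegative intersection numbers summing to $1$ to vanish, whence that $E_i$ is supported on fibers. The only difference is that you spell out the (standard) last step — an irreducible curve meeting $f$ trivially must lie in a fiber — which the paper leaves implicit.
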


\begin{proof}
$\Div(E_1)$ and $\Div(E_2)$ are effective divisors such that
$$
(\Div(E_1)\cdot f)+(\Div(E_2) \cdot f)=(c_1(E) \cdot f)=1.
$$
Since $f$ is nef, 
we get $(\Div(E_1) \cdot f)=0$ or $(\Div(E_2) \cdot f)=0$.
Therefore $E_1$ is supported on fibers or 
$E_2$ is supported on fibers.
\end{proof}

\begin{lem}\label{lem:g=1}
Let $E$ be an $\alpha$-twisted stable sheaf of dimension 1 
whose support is contained in 
a fiber $mD$, where $m$ is the multiplicity.
Then $E$ is an ${\cal O}_D$-module. 
\end{lem}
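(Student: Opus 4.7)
The plan is to argue by contradiction. Assume that $E$ is not an ${\cal O}_D$-module, equivalently that ${\cal I}_D E\neq 0$, where ${\cal I}_D={\cal O}_X(-D)$ is the ideal sheaf of $D$ (a line bundle, because $D$ is a prime divisor on the smooth surface $X$). I will exhibit two nonzero proper subsheaves of $E$ whose $\alpha$-twisted slopes
$$
\mu_\alpha(F):=\frac{\chi(F)-(\alpha\cdot c_1(F))}{(c_1(F)\cdot H)}
$$
are forced to coincide, contradicting $\alpha$-twisted stability of $E$.

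Let $s\in H^0(X,{\cal O}_X(D))$ be the canonical section cutting out $D$, and consider the multiplication map $\phi:E\to E(D):=E\otimes{\cal O}_X(D)$, $e\mapsto e\otimes s$. Set $K:=\ker\phi$. I would first check that $K$ is a nonzero proper subsheaf of $E$. It is \emph{nonzero} because $E$ is supported on $mD$, so $s^m$ acts as zero on $E$, which rules out injectivity of $\phi$. It is \emph{proper} because $K=E$ is equivalent to $\phi=0$, i.e.\ ${\cal I}_D E=0$, contrary to the assumption. Likewise ${\cal I}_D E\subset E$ is nonzero by assumption, and proper by Nakayama's lemma: if ${\cal I}_D E=E$ then $E_x=0$ at every point $x\in D$, and since $\Supp(E)\subset mD$ this would force $E=0$.

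The key step is to identify $E/K$ with a twist of ${\cal I}_D E$ and observe that their slopes agree. Factoring $\phi$ as $E\twoheadrightarrow E/K\xrightarrow{\ \sim\ }\mathrm{Im}(\phi)\hookrightarrow E(D)$, we see that the image is locally $\{(te)\otimes\sigma:e\in E\}$ (with $t$ a local equation of $D$ and $\sigma$ a local generator of ${\cal O}_X(D)$), so $\mathrm{Im}(\phi)={\cal I}_D E\otimes{\cal O}_X(D)$ as a subsheaf of $E(D)$. Hence $E/K\cong{\cal I}_D E\otimes{\cal O}_X(D)$. Since $(mD)^2=0$ forces $D^2=0$ and $c_1({\cal I}_D E)$ is a multiple of $[D]$, Riemann--Roch for a rank-zero sheaf gives
$$
c_1(E/K)=c_1({\cal I}_D E),\qquad \chi(E/K)=\chi({\cal I}_D E)+c_1({\cal I}_D E)\cdot D=\chi({\cal I}_D E),
$$
and therefore $\mu_\alpha(E/K)=\mu_\alpha({\cal I}_D E)$.

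To finish, $\alpha$-twisted stability applied to the proper subsheaf ${\cal I}_D E$ yields $\mu_\alpha({\cal I}_D E)<\mu_\alpha(E)$, and applied to $K$ (observing that $c_1(E/K)=c_1({\cal I}_D E)\neq 0$, so $E/K$ has one-dimensional support) yields $\mu_\alpha(E/K)>\mu_\alpha(E)$. Combined with $\mu_\alpha(E/K)=\mu_\alpha({\cal I}_D E)$ this is impossible, so ${\cal I}_D E=0$ and $E$ is an ${\cal O}_D$-module. The only delicate point is pinning down the image of $\phi$ as ${\cal I}_D E\otimes{\cal O}_X(D)$ and showing that this twist is invisible at the level of $(\chi,c_1)$; once $D^2=0$ is used, the contradiction is immediate.
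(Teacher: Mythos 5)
Your argument is correct, and it exploits the same basic mechanism as the paper's proof --- twisting by $D$ leaves $\tau$ unchanged because the relevant $c_1$'s are fiber-supported --- but organizes it differently. The paper picks $k$ with $s^kE=0$ and $s^{k-1}E\neq 0$, regards multiplication by $s^{k-1}$ as a nonzero map $E\to E((k-1)D)$ between $\alpha$-twisted stable sheaves with equal $\tau$, concludes by rigidity that it is an isomorphism, and then notes that this map factors through the surjection $E\to E_{|D}$, which is therefore injective; hence $E=E_{|D}$. You instead do the slope bookkeeping by hand on $\ker$ and $\operatorname{im}$ of multiplication by $s$ itself and derive a contradiction from the mediant inequality; this is more explicit but forgoes the one-line ``nonzero map between stable sheaves of the same slope is an isomorphism'' shortcut. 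Two small imprecisions, neither fatal: (i) set-theoretic containment of $\Supp E$ in the fiber only gives $s^kE=0$ for some large $k$, not $s^mE=0$ (e.g.\ $E={\cal O}_{2D}$ with $m=1$), but non-injectivity of $\phi$ follows just as well since some power of $\phi$ vanishes while $E\neq 0$; (ii) when the fiber is reducible, $c_1({\cal I}_DE)$ need not be a rational multiple of $[D]$, but it is still supported on the fiber, so $(c_1({\cal I}_DE)\cdot D)=\tfrac1m(c_1({\cal I}_DE)\cdot f)=0$ and your computation of $\chi(E/K)$ stands.
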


\begin{proof}
By our assumption, $kD-\Div E$ is an effective divisor 
for a large positive integer $k$.
Then $E$ is an ${\cal O}_{kD}$-module.
We may assume that 
the multiplication map
$$
\phi:E \overset{(k-1)D}{\longrightarrow} E((k-1)D)
$$
 is non-zero.
Since $\tau(E)=\tau(E((k-1)D))$,
by the $\alpha$-twisted stability of $E$ and $E((k-1)D)$,
$\phi$ is an isomorphism.  
Since $\phi$ factors through $E_{|D}$, $E$ is an ${\cal O}_D$-module.
\end{proof}

\begin{lem}\label{lem:u}
Let $E$ be an $\alpha$-twisted stable 1-dimensional sheaf such that $(c_1(E) \cdot f)=0$.
\begin{enumerate}
\item[(1)]
$E$ is an ${\cal O}_D$-module, where 
$mD$ is a fiber of $\pi$ with multiplicity $m$.
\item[(2)]
If $(c_1(E)^2)<0$, then $(c_1(E)^2)=-2$ and $E$ is a spherical sheaf. In particular $E(K_X) \cong E$.
\item[(3)]
Assume that there is a section.
Then $E(K_X) \cong E$.
If $(c_1(E)^2)=0$, then $\tau(E)=(0,rf,a)$, $\gcd(r,a)=1$ and
$E$ is a stable sheaf of rank $r$ on $f$. 
\end{enumerate}
\end{lem}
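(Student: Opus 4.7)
The plan is to handle the three parts in turn: Lemma \ref{lem:g=1} delivers (1), Riemann--Roch combined with $\alpha$-twisted stability yields (2), and the structure of $K_X$ together with the intersection form on fiber components takes care of (3).

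For (1), I would first argue that the Fitting divisor $\Div(E)$, being effective with $(\Div(E) \cdot f) = (c_1(E) \cdot f) = 0$, has every irreducible component contained in a fiber (since $f$ is nef). Since $\alpha$-twisted stability makes $E$ simple (so $\End(E) = k$), in particular indecomposable, its support must be connected and hence lies in a single fiber $mD$. Lemma \ref{lem:g=1} then gives that $E$ is an ${\cal O}_D$-module.

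For (2), I would start from the Hirzebruch--Riemann--Roch identity $\chi(E,E) = -(c_1(E)^2)$, which is positive by hypothesis. Since $c_1(E)$ is supported on fiber components, $(c_1(E) \cdot K_X) = 0$, so $\tau(E \otimes K_X) = \tau(E)$ and $E \otimes K_X$ is $\alpha$-twisted stable with the same $\alpha$-slope as $E$. If $E \not\cong E \otimes K_X$, stability forces $\Hom(E, E \otimes K_X) = 0$, hence $\Ext^2(E,E) = 0$ by Serre duality and $\chi(E,E) \leq 1$; but $(c_1(E)^2) \in 2{\Bbb Z}$ by Riemann--Roch on $X$ combined with $(c_1(E) \cdot K_X) = 0$, so $(c_1(E)^2) \geq 0$, contradicting the hypothesis. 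Therefore $E \otimes K_X \cong E$, giving $\Ext^2(E,E) \cong k$ and $\chi(E,E) = 2 - \dim \Ext^1(E,E)$; parity and positivity then force $\dim \Ext^1(E,E) = 0$ and $(c_1(E)^2) = -2$, so $E$ is spherical.

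For (3), a section $\sigma$ has $(\sigma \cdot f) = 1$, so the equation $m(\sigma \cdot D) = 1$ rules out multiple fibers. Consequently $K_X \equiv (2g-2+e)f$ is pulled back from $C$, its restriction to the reduced fiber $D$ is trivial, and combined with (1) this yields $E \otimes K_X \cong E$. When $(c_1(E)^2) = 0$, the intersection form on the components of $D$ is negative semi-definite with radical spanned by $[f]$, so $c_1(E)$ must be a positive integral multiple $rf$ of $f$. Hence $E$ is a pure sheaf of rank $r$ on $f$; after normalizing $(\alpha \cdot f) = 0$ via Remark \ref{rem:twisted}, $\alpha$-twisted stability on $X$ restricts to ordinary slope stability on $f$, and the classification of stable sheaves on a genus-one curve gives $\gcd(r,a) = 1$.

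The main obstacle I expect is the parity step in (2): one must justify $(c_1(E)^2) \in 2{\Bbb Z}$ for a class arising from a rank-zero sheaf, which requires a careful application of Riemann--Roch on $X$ to the divisor class $c_1(E)$ together with the vanishing $(c_1(E) \cdot K_X) = 0$.
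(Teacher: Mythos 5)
Parts (1) and (2) of your proposal are correct and essentially reproduce the paper's own argument: purity plus indecomposability of a stable sheaf puts the support in a single fiber so that Lemma \ref{lem:g=1} applies, and in (2) the dichotomy on $\Hom(E,E(K_X))$, Serre duality, $\chi(E,E)=-(c_1(E)^2)$ and the parity of $(c_1(E)^2)$ (which does follow from Riemann--Roch together with $(c_1(E)\cdot K_X)=0$) give exactly the paper's conclusion. The first half of (3) is also fine: a section rules out multiple fibers, $K_X$ is a pullback from $C$, so $E(K_X)\cong E$; and negative semi-definiteness of the component lattice with radical ${\Bbb Q}f$ gives $c_1(E)=rf$.

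The gap is in the last step of (3), the claim $\gcd(r,a)=1$. You reduce it to ``ordinary slope stability on $f$'' and ``the classification of stable sheaves on a genus-one curve''. That reduction is only valid when the fiber carrying $E$ is irreducible (where Atiyah's classification applies). The setting of this paper is precisely elliptic surfaces with reducible fibers, and $E$ may be supported on a reducible Kodaira fiber even though $c_1(E)=rf$. In that case a subsheaf $E_1\subset E$ can have $c_1(E_1)=\sum_i b_iC_i$ an arbitrary effective combination of the components, and the twisted slope $\bigl(\chi(E_1)-(c_1(E_1)\cdot\alpha)\bigr)/(c_1(E_1)\cdot H)$ genuinely depends on the restriction of $(H,\alpha)$ to the component lattice; it is not a function of a single rank and degree, so there is no reduction to slope stability on the fiber and no off-the-shelf classification to invoke. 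The paper closes this step with a Fourier--Mukai argument: if $d:=\gcd(r,a)>1$, set $(r',a')=(r,a)/d$ and let $Y=M_H^{\alpha'}(0,r'f,a')\cong X$ (fine because of the section) with universal family ${\cal P}$; since $(c_1(E)\cdot H)=d(r'f\cdot H)>(r'f\cdot H)$ and $E,{\cal P}_{|X\times\{y\}}$ are stable of the same twisted slope, $\Hom(E,{\cal P}_{|X\times\{y\}})=\Hom({\cal P}_{|X\times\{y\}},E)=0$ for all $y$, and then $\chi({\cal P}_{|X\times\{y\}},E)=0$ forces $\Ext^1({\cal P}_{|X\times\{y\}},E)=0$ as well, so $\Phi_{X\to Y}^{{\cal P}^{\vee}[1]}(E)=0$, contradicting that this functor is an equivalence. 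Some input of this kind is needed; your proposal as written does not cover the reducible fibers that the lemma is designed for.
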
  

\begin{proof}
(1) 
By $(c_1(E) \cdot f)=0$, $\Div(E)$ is supported on fibers.
Since $E$ is $\alpha$-twisted stable, $\pi(\Div E)$ is a point $c \in C$.
We set $\pi^{-1}(c)=mD$, where $D$ is the multiplicity.
By Lemma \ref{lem:g=1}, $E$ is an ${\cal O}_D$-module.
We also have $(c_1(E)^2) \leq 0$.

(2)
We note that 
$$
\Ext^2(E,E) \cong \Hom(E,E(K_X))^{\vee}
$$
by the Serre duality.
\begin{NB}
$$
\Ext^2(E,E) \cong \Hom(E,E(K_X))^{\vee} \cong \Hom(E,E)^{\vee} \cong {\Bbb C}.
$$
\end{NB}
If $\Hom(E,E(K_X)) \ne 0$, then by 
$\tau(E)=\tau(E(K_X))$ and the $\alpha$-twisted stability of $E, E(K_X)$,
we see that $E \cong E(K_X)$ and $\Ext^2(E,E) \cong {\Bbb C}$.
Hence $-2 \leq -\chi(E,E)=(c_1(E)^2)$. Since $(c_1(E) \cdot K_X)=0$,
we get $(c_1(E)^2)=0,-2$.
Moreover if $(c_1(E)^2)=-2$, then $\Ext^1(E,E)=0$ and $E$ is a spherical sheaf.

(3) Assume that there is a section $\sigma$.
If $(c_1(E)^2)=0$, then $\tau(E)=(0,rf,a)$ for some $r,a \in {\Bbb Z}$.
Assume that $d:=\gcd(r,a)>1$. We set $r':=r/d$ and $a':=a/d$.
We consider $Y:=M_H^{\alpha'}(0,r' f,a')$, where $\alpha'$ is general
and sufficiently close to $\alpha$.
By the existense of $\sigma$, $Y$ is fine and 
isomorphic to $X$. For a universal family ${\cal P}$ on $X \times Y$,
we consider $\Phi_{X \to Y}^{{\cal P}^{\vee}[1]}:{\bf D}(X) \to {\bf D}(Y)$.
Since $(c_1(E) \cdot H)=d(r'f \cdot H)>(r'f \cdot H)$ and $E$ is $\alpha'$-twisted stable, we get
$$
\Hom(E,{\cal P}_{|X \times \{ y \}})=\Hom({\cal P}_{|X \times \{ y \}},E)=0
$$
for all $y \in Y$. 
\begin{NB}
${\cal P}_{|X \times \{ y \}}$ and $E$ are $\alpha'$-stable sheaves of the same slope.
\end{NB}
Then we get $\Ext^1({\cal P}_{|X \times \{ y \}},E)=0$ for all $y \in Y$
by $\chi({\cal P}_{|X \times \{ y \}},E)=0$. Hence
$\Phi_{X \to Y}^{{\cal P}^{\vee}[1]}(E)=0$, which is a 
contradiction.  Therefore $d=1$.
\begin{NB}
We set $\Div (E)=rD$, where $D$ is a fiber of $\pi$.
Assume that $E(-kD) \to E$ is not zero and $E(-(k+1)D) \to E(-kD) \to E$ is zero.
Since $E$ is purely 1-dimensional, $F:=\im(E(-kD) \to E)$ is a 1-dimensional sheaf.
We have a surjective homomorphism $E \to E_{|D} \to F$.
By the stability of $E$, $E \cong F$. Therefore $E$ is an ${\cal O}_D$-module.  
\end{NB}
\end{proof}

We have the following corollary from the proof of Lemma \ref{lem:u} (3).
\begin{cor}\label{cor:u}
For a fiber $D$, 
we set
$$
{\cal M}_H^\alpha(0,rf,a,D)^{ss}:=\{E \in {\cal M}_H^\alpha(0,rf,a)^{ss} \mid \Supp E=D\},
$$
where $\gcd(r,a)=1$.
If there is a section, then $\dim {\cal M}_H^\alpha(0,rf,a,D)^{ss}=0$.
\end{cor}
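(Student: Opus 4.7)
The plan is to invoke the fine moduli construction used in the proof of Lemma~\ref{lem:u}(3). Since $\gcd(r,a)=1$, $\alpha$-twisted semistability coincides with $\alpha$-twisted stability for objects of class $(0,rf,a)$, so every point of ${\cal M}_H^\alpha(0,rf,a)^{ss}$ has automorphism group $\mathbb{G}_m$. The stack is therefore a $\mathbb{G}_m$-gerbe over its coarse moduli space, and it suffices to show that the coarse moduli space of $\alpha$-twisted stable sheaves with $\Supp E = D$ has dimension $1$.

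By the existence of a section of $\pi$ together with $\gcd(r,a)=1$, the results of \cite{PerverseII} recalled in the proof of Lemma~\ref{lem:u}(3) yield that $Y := M_H^\alpha(0,rf,a)$ is a fine moduli space isomorphic to $X$, equipped with its own elliptic fibration $\pi': Y \to C$. By the construction of $\pi'$, a closed point $y \in Y$ corresponds to an $\alpha$-twisted stable sheaf supported set-theoretically on $\pi^{-1}(\pi'(y))$, and by Lemma~\ref{lem:u}(1) such a sheaf is actually an ${\cal O}_{D_y}$-module on the reduced fiber $D_y$. Consequently the locus in $Y$ corresponding to sheaves with $\Supp E = D$ is precisely the fiber $(\pi')^{-1}(\pi(D))$, a curve of dimension $1$.

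It follows that $\dim M_H^\alpha(0,rf,a,D)^{ss}=1$, and the $\mathbb{G}_m$-gerbe structure then gives $\dim{\cal M}_H^\alpha(0,rf,a,D)^{ss}=0$, as claimed. The only point that requires care is the verification that the elliptic fibration $\pi'$ on $Y$ coincides with the set-theoretic support map $y \mapsto \pi(\Supp{\cal P}_{|X\times\{y\}})$; however, this is part of the \cite{PerverseII}/Bridgeland description of $\pi'$ already invoked in Lemma~\ref{lem:u}(3), so it is bookkeeping rather than a genuine obstacle.
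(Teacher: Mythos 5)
Your argument is correct and is essentially the proof the paper intends: the corollary is deduced from the proof of Lemma \ref{lem:u}(3), namely from the fact that $Y=M_H^{\alpha}(0,rf,a)$ is a fine moduli space isomorphic to $X$ whose elliptic fibration over $C$ is the support map, so the sheaves with $\Supp E=D$ form the one-dimensional fiber over $\pi(D)$ and the stack is a $\mathbb{G}_m$-gerbe over it. The only step you assert without justification is that $\gcd(r,a)=1$ forces semistable $=$ stable: for a reducible fiber this uses that a subsheaf of the same $\alpha$-slope has class in ${\Bbb Q}(0,rf,a)$, which requires $\alpha$ general (as it is wherever the corollary is applied) and then contradicts the primitivity of $(r,a)$.
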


\begin{rem}\label{rem:u}
Assume that there is a section of $\pi$.
Let $C_0,C_1,...,C_N$ be the irreducible components of a singular fiber $D=\sum_{i=0}^N a_i C_i$.
We may assume that $a_0=(C_0 \cdot \sigma)=1$ for a section $\sigma$.
We note that
$V^*:={\Bbb Q}\sigma+\sum_{i=1}^{N} {\Bbb Q}C_i$ is isomorphic to
the dual of $V:={\Bbb Q}f+\sum_{i=1}^{N} {\Bbb Q}C_i$ by the intersection pairing.

Let $E$ be a 1-dimensional sheaf on $D$.
Then for any linear form $\varphi:V \to {\Bbb Q}$,
there is $\alpha \in V^*$ such that
$\varphi(c_1(E))=(c_1(E) \cdot \alpha)$. 
Moreover if $\varphi(C_i)>0$ for all $i$, then $\alpha$ is relatively ample.
For a pair of linear forms $(\varphi,\psi)$ such that $\varphi(C_i)>0$ for all $i$,
we define $(\varphi,\psi)$-semi-stability of an ${\cal O}_D$-module $E$ by
using the slope function 
$$\mu(E):=\frac{\chi(E)-\psi(c_1(E))}{\varphi(c_1(E))},
$$
 where
$c_1(E) \in \sum_{i=0}^{N} {\Bbb Z}C_i$ is the 1-cycle associated to $E$. 
Then Lemma \ref{lem:u} (3) is regarded as a claim for the $(\varphi,\psi)$-stability.
In particular, by taking an elliptic surface $X$ with a section such that $D$ is contained as a singular fiber,
we get that
similar claims to Lemma \ref{lem:u} (3) hold for the $(\varphi,\psi)$-stability.
\begin{NB}
In order to find an ample divisor $H$ which represents $\varphi$,
we take relatively ample divisors $H_i$ for each singular fibers $f_i$. Then replacing $\sum_i H_i$ by 
$\sum_i H_i+pf$, we get an ample
divisor.   
\end{NB}
\end{rem}

\begin{NB}
Let $E$ be an $\alpha$-twisted stable sheaf on a fiber $\pi^{-1}(t)$.
We set $\pi^{-1}(t)=mD$, where $m$ is the multiplicity of $\pi^{-1}(t)$.
Then we have a non-zero homomorphism $E(-kD) \to E$ such that
$E(-(k+1)D) \to E(-kD) \to E$ is zero.
Then we have a non-zero homomorphism $E_{|D}(-kD) \to E$.
Since $\chi(E(-kD))=\chi(E)$, $\alpha$-twisted stability of $E$ and $E(-kD)$ imply
that $E(-kD) \to E$ is isomorphic. Therefore $E \cong E_{|D}(-kD)$.

Hence any surjective homomorphism $E' \to E$ factors through
$E' \to E'_{|D} \to E$. 
 
\end{NB}

\begin{prop}\label{prop:smooth}
Let $\xi$ be an effective divisor with $(\xi \cdot f)=1$. Then
${\cal M}_H^\alpha(0,\xi,a)^{ss}$ is smooth and
\begin{equation}\label{eq:moduli-dim}
\dim {\cal M}_H^\alpha(0,\xi,a)^{ss}=(\xi^2)+g+e-1.
\end{equation}
\end{prop}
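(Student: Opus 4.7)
The plan is to compute the three spaces $\Ext^i(E,E)$ for $E$ an $\alpha$-twisted stable sheaf with the prescribed invariants and then apply the usual deformation-obstruction theory for coherent sheaves on a surface.

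First I would verify $\chi(E,E) = -(\xi^2)$ by a direct Riemann-Roch computation on $X$: since $\rk E = 0$ and $c_1(E)=\xi$, the only surviving degree-four term in $\ch(E)^{\vee}\,\ch(E)\,\td(X)$ is $-\xi^2$. The stability of $E$ gives $\Hom(E,E)=k$, so $\dim\Hom(E,E)=1$.

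The main step is to show that $\dim\Ext^2(E,E)=p_g=g+e-1$. By Serre duality $\Ext^2(E,E)^{\vee}\cong \Hom(E,E\otimes K_X)$, and there is a natural multiplication map
\[
\mu : H^0(X,K_X)\longrightarrow \Hom(E,E\otimes K_X),\qquad s\mapsto\bigl(e\mapsto e\otimes s\bigr).
\]
I would prove $\mu$ is an isomorphism. Injectivity is easy: $\ker\mu=H^0(X,K_X-D)$ with $D=\Div(E)$, and $((K_X-D)\cdot f)=0-1=-1$ together with $f$ being nef forces $H^0(K_X-D)=0$. Surjectivity is the delicate point. By adjunction $K_X|_f\equiv 0$ for a generic smooth fiber $f$, so a homomorphism $\phi:E\to E\otimes K_X$ restricts on each generic fiber to an endomorphism of the length-one skyscraper $E|_f$, i.e.\ a scalar varying over $C$. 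Combining Lemma \ref{lem:1-dim} (which, because $(\xi\cdot f)=1$, constrains all nontrivial subsheaves and quotients of $E$ or $E\otimes K_X$ to be supported on fibers) with the $\alpha$-twisted stability of $E$ and $E\otimes K_X$, I would identify $\mathcal H om(E,E\otimes K_X)$ with $K_X$ restricted to $\Div(E)$ in such a way that the restriction $H^0(X,K_X)\to H^0(D,K_X|_D)$ is an isomorphism.

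Putting the three calculations together yields $\dim\Ext^1(E,E)=(\xi^2)+g+e$, so the expected stack dimension is $(\xi^2)+g+e-1$. For smoothness I would invoke the reduced obstruction theory of Buchweitz-Flenner/Bloch: obstructions to deforming a coherent sheaf on a surface lie in the kernel of the semi-regularity map $\mathrm{sr}:\Ext^2(E,E)\to H^2(X,\mathcal O_X)=H^0(K_X)^{\vee}$, and this map is precisely the Serre transpose of $\mu$. Once $\mu$ is known to be an isomorphism, so is $\mathrm{sr}$, hence $\ker(\mathrm{sr})=0$, all obstructions vanish, and $\mathcal M_H^\alpha(0,\xi,a)^{ss}$ is smooth at $E$ of stack dimension $\dim\Ext^1-\dim\Hom=(\xi^2)+g+e-1$.

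The main obstacle will be the surjectivity of $\mu$: one must control $\mathcal H om(E,E\otimes K_X)$ along reducible or non-reduced components of $\Div(E)$ and account for the multiple-fiber contributions $(m_i-1)f_i$ in $K_X$, where $K_X$ is not trivial on the fiber. A clean invocation of the reduced (semi-regularity) obstruction theory in the rank-zero setting is the second delicate point.
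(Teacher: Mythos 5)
Your skeleton is exactly the paper's: compute $\chi(E,E)=-(\xi^2)$, prove that the multiplication map $\mu\colon H^0(X,K_X)\to\Hom(E,E(K_X))$ is an isomorphism, and deduce smoothness from the fact that obstructions lie in the kernel of the trace map $\Ext^2(E,E)\to H^2(X,{\cal O}_X)$, which is Serre--dual to $\mu$; the paper simply delegates the isomorphism to \cite[Prop.~3.18]{Y:twist2}. The problem is the mechanism you propose for the surjectivity of $\mu$ --- identifying ${\cal H}om(E,E\otimes K_X)$ with $K_X|_{\Div(E)}$ and asserting that $H^0(X,K_X)\to H^0(D,K_X|_D)$ is an isomorphism --- and it breaks down precisely in the situation this paper is written for, namely reducible fibers. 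First, ${\cal H}om(E,E\otimes K_X)\cong K_X|_D$ only where $E$ is locally free of rank one over ${\cal O}_D$, which fails as soon as $\Div(E)$ has components with multiplicity (e.g.\ $E$ of rank $2$ on a $(-2)$-curve $C_1$ with $\Div(E)\supset 2C_1$). Second, even for $E$ a line bundle on $D$, the restriction $H^0(X,K_X)\to H^0(D,K_X|_D)$ is injective but in general not surjective: its cokernel is $\ker\bigl(H^1(K_X-D)\to H^1(K_X)\bigr)$, which is dual to $\coker\bigl(H^1({\cal O}_X)\to H^1({\cal O}_X(D))\bigr)$, and by the Leray sequence this cokernel surjects onto $H^0(C,R^1\pi_*{\cal O}_X(D))$. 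The latter is nonzero whenever the vertical part of $D$ distributes unevenly over the components of a reducible fiber (e.g.\ $D=\sigma+C_0+2C_1$ in an $I_2$ fiber $C_0+C_1$, where ${\cal O}_X(D)$ has multidegree $(3,-2)$ on the fiber and $h^1>0$). So the space you would compute is strictly larger than $p_g$, and $\Ext^2(E,E)$ would come out too big. (Your other worry is vacuous: $(\xi\cdot f)=1$ with $\xi$ effective forces $m_i\mid(\xi\cdot f)=1$, so there are no multiple fibers at all here.)

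The missing idea is to use stability to reduce to the horizontal component \emph{before} restricting $K_X$. Write $\Div(E)=\sigma'+V$ with $V$ vertical and $\sigma'$ the horizontal part, which is an integral curve of relative degree $1$ and hence a section, so $\sigma'\cong C$. Set $E_2:=E|_{\sigma'}/(\mathrm{torsion})$, a line bundle on $\sigma'$, and $E_1:=\ker(E\to E_2)$, a fiber sheaf with $E_1(K_X)\cong E_1$ (as $K_X$ is a pull-back from $C$). Any $\phi\in\Hom(E,E(K_X))$ satisfies $\phi(E_1)\subset E_1(K_X)$ (a fiber sheaf admits no nonzero map to a line bundle on the smooth curve $\sigma'$), and simplicity of $E$ gives $\Hom(E,E_1(K_X))=\Hom(E,E_1)=0$; hence $\Hom(E,E(K_X))\hookrightarrow\Hom(E_2,E_2(K_X))=H^0(\sigma',K_X|_{\sigma'})$. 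Since $K_X=\pi^*(\omega_C\otimes(R^1\pi_*{\cal O}_X)^{\vee})$ and $\sigma'$ is a section, $H^0(X,K_X)=H^0(C,\pi_*K_X)\to H^0(\sigma',K_X|_{\sigma'})$ is an isomorphism, and it is the composite of $\mu$ with the injection above; therefore $\mu$ is an isomorphism. With this repair the rest of your argument (injectivity of $\mu$, the trace-map obstruction theory, and $\ext^1-\hom=(\xi^2)+\ext^2=(\xi^2)+g+e-1$) goes through and coincides with the paper's proof.
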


\begin{proof}
In \cite[Prop. 3.18]{Y:twist2}, we proved the claim 
for elliptic surfaces with irreducible fibers.
The same proof also works in our situation.
Thus we see that
$$
H^0(X,{\cal O}_X(K_X)) \to \Hom(E,E(K_X))
$$
is an isomorphism for all $E \in {\cal M}_H^\alpha(0,\xi,a)^{ss}$.
Then \eqref{eq:moduli-dim} follows by $\chi(E,E)=-(\xi^2)$ and
$p_g=e+g-1$.
\begin{NB}
Let $E$ be a simple purely 1-dimensional sheaf with 
$c_1(E)=\xi$.
It is sufficient to prove that
$$
H^0(X,{\cal O}_X(K_X)) \to \Hom(E,E(K_X))
$$
is an isomorphism.
We set 
$\Div(E)=\tau+D$, where $\tau$ is a section of $\pi$ and $D$ is supported on fibers.
We have an exact sequence
\begin{equation}
0 \to E_1 \to E \to E_2 \to 0
\end{equation}
where $E_2:=E_{|\tau}/(\text{torsion})$ and
$E_1:=\ker(E \to E_2)$.
Since $\tau \cong C$ is smooth, $E_2$ is a line bundle on $\tau$. 
For $\phi:E \to E(K_X)$,
$\phi(E_1) \subset E_1(K_X) \cong E_1$.
Hence $\phi$ induces a homomorphism $E_2 \to E_2(K_X)$.
Thus we get a homomorphism 
$$
g:\Hom(E.E(K_X)) \to \Hom(E_2,E_2(K_X)).
$$
By the simpleness of $E$ and $E_1(K_X) \cong E_1$,
$\Hom(E,E_1(K_X))=\Hom(E,E_1)=0$.
Since $\Hom(E_2,E_1(K_X))=0$, we get $\Hom(E,E_1(K_X))=0$. 
Thus $g$ is injective.
Hence
$$
H^0(C,\pi_*({\cal O}_X(K_X))) =H^0(X,{\cal O}_X(K_X)) \to \Hom(E,E(K_X)) \to 
\Hom(E_2,E_2(K_X))
$$
is isomorphic. Therefore $H^0(X,{\cal O}_X(K_X)) \to \Hom(E,E(K_X))$ is
isomorphic.
\end{NB}
\end{proof}

\subsection{Existence of spherical stable sheaves}\label{subsect:spherical}

%For this subsection, we assume that there is a section $\sigma$.

\begin{lem}\label{lem:e-poly}
Let $D$ be an effective divisor such that $(D^2)=-2$ and $(D\cdot f)=0$.
Then $e({\cal M}_H^\alpha(0,D,a)^{ss})$ is independent of the choice of a general $(H,\alpha)$. 
\end{lem}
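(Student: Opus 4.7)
The plan is a standard wall-crossing argument in the parameter space of $(H,\alpha)$. By Remark \ref{rem:twisted} we may restrict to $\alpha \in H^\perp \cap f^\perp$. The locus of $(H,\alpha)$ admitting strictly semistable sheaves of class $(0,D,a)$ is a locally finite union of real codimension-one walls, each indexed by a decomposition $D = D_1 + D_2$ into effective classes together with a corresponding slope equality. On each chamber the moduli stack ${\cal M}_H^\alpha(0,D,a)^{ss}$ is constant, and hence so is $e({\cal M}_H^\alpha(0,D,a)^{ss})$; it therefore suffices to show that $e$ is preserved when $(H,\alpha)$ crosses a single wall $W$.

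Fix a wall $W$ and generic parameters $(H,\alpha)_\pm$ on its two sides. The sheaves whose stability status changes have two-step Jordan--H\"older filtrations on $W$ with $W$-stable factors $E_1, E_2$ of classes $(0,D_i,a_i)$. Since $f$ is nef and $(D \cdot f)=0$, each $(D_i \cdot f)=0$, so by Lemma \ref{lem:u}(1),(2) every $E_i$ is supported on fibers with $(D_i^2) \in \{-2,0\}$, and the canonical bundle formula gives $(D_i \cdot K_X)=0$. Consequently Riemann--Roch yields the Euler-form symmetry $\chi(E_i,E_j) = -(D_i \cdot D_j) = \chi(E_j,E_i)$; when $(D_i^2)=-2$, Lemma \ref{lem:u}(2) additionally provides $E_i(K_X) \cong E_i$, and Serre duality upgrades this to the Ext-symmetry $\dim \Ext^1(E_i,E_j) = \dim \Ext^1(E_j,E_i)$.

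Applying the standard wall-crossing formula for virtual Hodge polynomials of moduli stacks (in the spirit of Joyce's framework, or as in the proof of the irreducible-fiber analogue in \cite{Y:twist2}), the difference $e({\cal M}_{H_+}^{\alpha_+}(0,D,a)^{ss}) - e({\cal M}_{H_-}^{\alpha_-}(0,D,a)^{ss})$ decomposes into a sum over Jordan--H\"older types, each term being a product of $e$-invariants of the moduli of $W$-stable factors together with the virtual Hodge polynomial of a projectivised $\Ext^1$-space. The Ext-symmetry above matches contributions on the two sides of the wall term-by-term, so the difference collapses to zero. The main obstacle is the case where some $(D_i^2)=0$, which lies outside the direct scope of Lemma \ref{lem:u}(2); here $E_i(K_X) \cong E_i$ must be established by a separate argument, e.g.\ by adapting the Fourier--Mukai argument in the proof of Lemma \ref{lem:u}(3) to a relative setting on the supporting fiber of $E_i$ (using primitivity of $(D_i,a_i)$ in the fiber-direction numerical data), thereby restoring the required Ext-symmetry in the remaining cases.
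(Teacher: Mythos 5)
Your proposal follows essentially the same route as the paper: the paper's (very terse) proof is exactly the Harder--Narasimhan wall-crossing recursion for virtual Hodge polynomials from \cite[Prop.~2.5 and (2.29)]{Y:twist2}, applied to fiber sheaves and combined with induction on $(D\cdot L)$ for a fixed ample $L$ (the induction you implicitly need when invoking parameter-independence for the smaller classes appearing in the filtrations). One correction: the ``main obstacle'' you flag when some $(D_i^2)=0$ is not actually an obstacle, because the identity of $e$-polynomials across a wall only requires the numerical symmetry $\chi({\bf e}_i,{\bf e}_j)=\chi({\bf e}_j,{\bf e}_i)$, which follows from $(D_i\cdot K_X)=0$ via Riemann--Roch, together with the $\Ext^2$-vanishing between consecutive HN factors coming from slope comparison; the sheaf-level isomorphism $E_i(K_X)\cong E_i$ is not needed, and your proposed fix via Lemma \ref{lem:u}(3) would in any case be delicate since that statement assumes a section and the isomorphism can fail on multiple fibers.
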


\begin{proof}
We fix an ample divisor $L$.
Applying \cite[Prop. 2.5]{Y:twist2} to 1-dimensional sheaves on fibers, we get \cite[(2.29)]{Y:twist2}.
By induction on $(D \cdot L)$, we can  prove the claim.
\end{proof}

\begin{lem}\label{lem:-2}
Assume that there is a section $\sigma$.
Let $D$ be a divisor such that ${\cal O}_f(D) \cong {\cal O}_f$ for a general fiber $f$
and $(D^2)=-2$.
Then there is a reducible singular fiber $\pi^{-1}(c):=\sum_{i=0}^m a_i C_i$ such that
$C_i$ are smooth rational curves, $a_0=(C_0 \cdot \sigma)=1$, and
$D \equiv \pm (\sum_{i>0}b_i C_i) \mod {\Bbb Z}f$ in $\NS(X)$, where $0 \leq b_i \leq a_i$. 
\end{lem}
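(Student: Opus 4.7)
The plan is to reduce $D$ modulo $f$ to a divisor supported on components of singular fibers, localize that class on a single reducible fiber via negative-definiteness of the intersection form, and then read the claimed form off the $ADE$ root system structure attached to that fiber.

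First I would show that $D$ is numerically equivalent modulo ${\Bbb Z}f$ to a combination of irreducible components of singular fibers. Over the smooth locus $U \subset C$, the relative Picard scheme $\underline{\Pic}^0(\pi^{-1}(U)/U)$ is represented by $\pi^{-1}(U) \to U$ via $\sigma$, and the hypothesis ${\cal O}_f(D) \cong {\cal O}_f$ on a general fiber forces the classifying map $U \to \underline{\Pic}^0$ to land in the zero section on a dense open, hence everywhere by irreducibility. Therefore $D \equiv \pi^*\bar L + V$ in $\Pic(X)$ with $V$ supported on singular fibers. The existence of $\sigma$ excludes multiple fibers, since $1=\sigma\cdot\pi^{-1}(c)=m(\sigma\cdot f_c)$ forces $m=1$, so each singular fiber is reduced and the unique component $C_{j,0}$ meeting $\sigma$ satisfies $a_{j,0}=(C_{j,0}\cdot\sigma)=1$. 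Subtracting integer multiples of $F_j\equiv f$ from the summand $V_j$ of $V$ on $F_j$ then kills its $C_{j,0}$-coefficient, so $D\equiv\sum_j V_j\mod{\Bbb Z}f$ with each $V_j$ supported on $\{C_{j,i}\}_{i>0}$.

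Next I would localize $V=\sum_j V_j$ onto a single fiber. The intersection form on the ${\Bbb Q}$-span of the non-identity components $\{C_{j,i}\}_{i>0}$ is negative definite: Zariski's lemma gives negative semi-definiteness on all fiber components with kernel ${\Bbb Q}F_j$, and this kernel is not contained in the non-identity span since $a_{j,0}=1\ne 0$. Because distinct singular fibers are disjoint and $V\cdot f=0$, one has $(V)^2=\sum_j(V_j)^2=(D)^2=-2$, so exactly one $V_{j_0}$ is nonzero, with $(V_{j_0})^2=-2$, and the fiber $F_{j_0}$ is necessarily reducible.

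Finally I would invoke Kodaira's classification of reducible singular fibers: every such fiber has smooth rational components, and its dual graph is an affine Dynkin diagram of type $\widetilde{A}_n$, $\widetilde{D}_n$, or $\widetilde{E}_n$. Removing the mark-$1$ vertex $C_{j_0,0}$ yields the corresponding finite $ADE$ Dynkin diagram with simple roots $\{C_{j_0,i}\}_{i>0}$, and the intersection form (with an overall sign flip) becomes a simply-laced positive-definite root lattice. In such a lattice the norm-$(-2)$ vectors are exactly the roots, every root is $\pm$ a positive root, and each positive root is coefficient-wise dominated by the highest root, whose coefficients in the simple roots coincide with the marks $a_i$ of the affine diagram, i.e.\ the multiplicities of $C_{j_0,i}$ in $F_{j_0}$. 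Hence $V_{j_0}=\pm\sum_{i>0}b_i C_{j_0,i}$ with $0\leq b_i\leq a_i$, which is the required form. The main obstacle is precisely this last step: it relies on identifying the lattice of non-identity fiber components with a simply-laced finite root system and matching the fiber multiplicities with the marks of the highest root, and this is where Kodaira's classification does the essential work.
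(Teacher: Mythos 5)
Your proof is correct, but it reaches the key intermediate conclusion by a genuinely different route than the paper. The paper's proof is cohomological: it computes $\chi({\cal O}_X(D-f_1-f_2))=\chi({\cal O}_X(D))=e-1\geq -1$ from Riemann--Roch (using $(D^2)=-2$ and $(D\cdot K_X)=0$), runs the long exact sequence of $0\to{\cal O}_X(D-f_1-f_2)\to{\cal O}_X(D)\to\oplus_i{\cal O}_{f_i}(D)\to 0$, and splits into cases on $h^1$ to conclude that $h^0({\cal O}_X(D))\geq 1$, $h^0({\cal O}_X(D-f_1-f_2))\geq 1$, or (via Serre duality and $K_X\equiv(\text{fibers})$) $h^0$ of the dual is positive; this yields directly that $D$ or $-D$ is effective modulo fibers, which is then recorded separately as Corollary \ref{cor:-2}. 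You instead pass through the relative Jacobian: generic triviality of ${\cal O}_f(D)$ forces the classifying section into the zero section, hence $D$ is linearly equivalent to a vertical divisor, and you then localize onto one reducible fiber by Zariski's lemma and evenness of the lattice, finishing with the affine root-system combinatorics (norm $-2$ vectors are roots, bounded by the highest root $f-C_0=\sum_{i>0}a_iC_i$). Your route avoids the Euler-characteristic case analysis and makes the final lattice step explicit where the paper leaves it terse, and it still yields Corollary \ref{cor:-2} as a byproduct since $\sum_{i>0}b_iC_i$ and $\sum_{i>0}(a_i-b_i)C_i+C_0$ are effective; what it costs is the appeal to representability/separatedness of the relative Picard scheme and to Kodaira's classification, where the paper's argument is more elementary and self-contained. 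Two small points to tighten: the exclusion of two distinct fibers each contributing $(V_j)^2=-1$ needs the evenness of the fiber-component lattice (each component $C$ has $(C\cdot K_X)=0$, so $(C^2)=-2$ by adjunction), and for fibers of type $III$ and $IV$ the dual graph is not literally the affine $\widetilde{A}_1$, $\widetilde{A}_2$ diagram even though the lattice of non-identity components is still the corresponding finite root lattice, which is all your argument actually uses.
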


\begin{proof}
We take two fibers $f_1$ and $f_2$ such that 
${\cal O}_{f_i}(D) \cong {\cal O}_{f_i}$ ($i=1,2$).
We have an exact sequence
\begin{equation}
0 \to {\cal O}_X(D-f_1-f_2) \to {\cal O}_X(D) \to
\oplus_{i=1}^2 {\cal O}_{f_i}(D) \to 0.
\end{equation}
We note that 
\begin{equation}\label{eq:|D|}
\chi({\cal O}_X(D-f_1-f_2))=\chi({\cal O}_X(D))=e-1 \geq -1.
\end{equation}
If $h^1(X,{\cal O}_X(D-f_1-f_2)) \leq 1$, then
$h^0(X,\oplus_{i=1}^2 {\cal O}_{f_i}(D))=2$ implies
$h^0(X,{\cal O}_X(D))\geq 1$.
If $h^1(X,{\cal O}_X(D-f_1-f_2)) \geq 2$, then 
\eqref{eq:|D|} implies $h^0(X,{\cal O}_X(D-f_1-f_2)) \geq 1$ or
$h^2(X,{\cal O}_X(D-f_1-f_2)) \geq 1$.
Hence $D =\pm D'+k f$, where $D'$ is effective.
Let $L$ be the sublattice of $f^\perp \subset \NS(X)$ 
generated by irreducible components of singular fibers.
Then $D \in L$. Since $(D^2)=-2$, there is a singular fiber $\pi^{-1}(c)=\sum_{i=0}^m a_i C_i$
and $D \equiv \pm(\sum_{i>0} b_i C_i) \mod {\Bbb Z}f$ ($0 \leq b_i \leq a_i$).  
\end{proof}

Since $\sum_{i=0}^m a_i C_i-\sum_{i>0} b_i C_i$ is effective,
we also get the following.

\begin{cor}\label{cor:-2}
Assume that there is a section $\sigma$.
Let $D$ be a divisor such that ${\cal O}_f(D) \cong {\cal O}_f$ for a general fiber $f$
and $(D^2)=-2$. Then $D$ or $-D$ is algebraically equivalent to an effective divisor.
\end{cor}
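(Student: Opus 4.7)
The plan is to refine the conclusion of Lemma \ref{lem:-2} by a short case analysis, leveraging exactly the observation highlighted in the excerpt: if $D' := \sum_{i>0} b_i C_i$ with $0 \le b_i \le a_i$, then both $D'$ and the complement $F - D' = a_0 C_0 + \sum_{i>0}(a_i - b_i) C_i$ are effective, where $F := \sum_{i=0}^m a_i C_i$ denotes the full reducible fiber furnished by Lemma \ref{lem:-2}.

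First I would apply Lemma \ref{lem:-2} to write $D \equiv \pm D' + kf$ in $\NS(X)$ for some $k \in {\Bbb Z}$. Since the scheme-theoretic fiber $F$ is algebraically equivalent to a general fiber $f$, one has the key identity $-D' \equiv (F - D') - f$ in $\NS(X)$, in which $F - D'$ is effective. This is the single tool that converts a ``negative'' contribution $-D'$ into a genuinely effective combination, at the cost of one copy of $f$.

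Next I would split into the four cases according to the sign in front of $D'$ and the sign of $k$. If $D \equiv D' + kf$ with $k \ge 0$, then $D$ is already algebraically equivalent to the effective divisor $D' + kf$. If instead $k \le -1$, substituting the identity yields $-D \equiv (F - D') + (-k-1) f$, which is effective since $-k-1 \ge 0$. Symmetrically, if $D \equiv -D' + kf$ with $k \ge 1$, then $D \equiv (F - D') + (k-1) f$ is effective, and if $k \le 0$, then $-D \equiv D' + (-k) f$ is effective. In every case either $D$ or $-D$ is algebraically equivalent to an effective divisor, which is the claim.

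There is essentially no serious obstacle here: the only non-cosmetic input is Lemma \ref{lem:-2}, and the rest is the elementary fact that a singular fiber is algebraically equivalent to a general one, so that a ``sign-wrong'' combination of its components can be exchanged for an effective combination at the cost of a single $f$. The one subtlety worth flagging is that $F$ and $f$ need only be \emph{algebraically} equivalent (not linearly equivalent), which is consistent with and explains the algebraic-equivalence formulation of the corollary.
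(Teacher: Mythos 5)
Your argument is correct and is essentially the paper's own: the paper deduces the corollary from Lemma \ref{lem:-2} with the one-line observation that $\sum_{i=0}^m a_i C_i-\sum_{i>0} b_i C_i$ is effective, which is exactly your identity $-D'\equiv (F-D')-f$, and the remaining sign/case analysis is the same routine bookkeeping you carry out.
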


\begin{NB}
If $n \geq 0$, then $\sum_i b_i C_i+nf \geq 0$.
If $n<0$, then $-(\sum_i b_i C_i+nf)=
-(\sum_i b_i C_i+|n|f)=-((\sum_i b_i C_i+f)+(|n|-1)f)$.  
\end{NB}

\begin{NB}
$D \equiv \sum_{i=1}^m b_i C_i \mod {\Bbb Z}f$.
$f \equiv \sum_{i=0}^m a_i C_i$, where $(C_0 \cdot \sigma)=1$.
$0 \leq b_i \leq a_i$.
If $(D \cdot H)=0$, then
$((\sum_{i=1}^m b_i C_i)\cdot H) \in {\Bbb Z}(f \cdot H)$.
Since $0 \leq (( \sum_{i=1}^m b_i C_i) \cdot H) <(f \cdot H)$,
we get a contradiction.
\end{NB}

\begin{lem}\label{lem:spherical}
Let $D$ be an effective divisor such that 
$(D \cdot f)=0$ and $(D^2)=-2$. Then
$M_H^\alpha(0,D,a) \ne \emptyset$ for a general $(H,\alpha)$.  
\end{lem}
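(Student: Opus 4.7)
The plan is to combine the deformation invariance of Lemma~\ref{lem:e-poly} with an explicit construction. By that lemma, the virtual Hodge polynomial $e({\cal M}_H^\alpha(0,D,a)^{ss})$ is independent of general $(H,\alpha)$, so it suffices to exhibit, for a single convenient pair $(H,\alpha)$, one $\alpha$-twisted stable sheaf with invariants $(0,D,a)$; non-emptiness then transfers to all generic pairs.

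First, I would reduce to the case where $D$ is supported on a single reducible fiber. The hypotheses $(D\cdot f)=0$ and $D\ge 0$ force $\Supp(D)$ into fibers, and since distinct fibers are numerically orthogonal while $(D^2)=-2$, only one fiber contributes nontrivially; absorbing fiber classes in the spirit of Lemma~\ref{lem:-2}, we may write $D=\sum_i b_i C_i$ inside a singular fiber $\pi^{-1}(c)=\sum_i a_i C_i$ with $0\le b_i\le a_i$, and $D$ is then a root in the root lattice of the fiber components.

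Next I would construct $E:=F|_D$ for a line bundle $F$ on $X$. Such $E$ is simple (being a rank-one torsion-free sheaf on the Cohen--Macaulay curve $D$, where $H^0(D,\mathcal{O}_D)=k$ by connectedness), has $c_1(E)\equiv D$, and its Euler characteristic $\chi(E)=\chi(\mathcal{O}_D)+(F\cdot D)$ can be made equal to $a$ by twisting $F$ by suitable components $C_{i_0}$: because $D$ is primitive in the root lattice (if $D=nD'$ then $(D^2)=n^2(D'{}^2)=-2$ forces $n=\pm 1$), the integers $\{(D\cdot C_{i_0})\}$ generate ${\Bbb Z}$.

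Finally, I would promote simpleness to $\alpha$-twisted stability by a wall argument. A destabilizing saturated subsheaf $E_1\subset E$ has $c_1(E_1)=D_1$ a nonzero proper effective sub-divisor of $D$, and the equality case $\chi_\alpha(E_1)(D\cdot H)=\chi_\alpha(E)(D_1\cdot H)$ cuts out a codimension-one wall in the $(H,\alpha)$-parameter space; this wall is proper precisely because primitivity of $D$ prevents $D_1$ from being a rational multiple of $D$. The main obstacle, and the step requiring the most care, is the boundedness of the wall set: one must show that only finitely many pairs $(D_1,\chi(E_1))$ arise from saturated subsheaves of the fixed $E$. This follows from standard boundedness on curves on surfaces --- the finite collection of sub-divisors $D_1\subsetneq D$ is bounded by the $b_i$, and for each such $D_1$ the stability inequality bounds $\chi(E_1)$ from above while purity of the quotient $E/E_1$ bounds it from below --- so a generic $(H,\alpha)$ lies off every wall, making $E$ $\alpha$-twisted stable and $M_H^\alpha(0,D,a)$ non-empty.
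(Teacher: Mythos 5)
Your overall frame (use Lemma \ref{lem:e-poly} to reduce to exhibiting one stable sheaf for one pair $(H,\alpha)$, then construct it directly on a single reducible fiber) is reasonable, but two of your steps do not hold as stated, and the paper in fact avoids exactly these difficulties by arguing indirectly: it puts $(0,D,a)$ and $(0,D',a')$ with $D+D'=rf_0$ on a common wall, and shows via a relative Fourier--Mukai transform that the coarse moduli space $\overline{M}_H^\alpha(0,rf,d)$ must be singular because the pull-back $\widehat{H}$ of its relative polarization is orthogonal to an effective $(-2)$-class on the resolution $X'\cong X$; by \cite[Cor.~3.1.7]{PerverseII} this singularity is \emph{equivalent} to the existence of the desired stable sheaf with $\tau=(0,D,a)$.

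The first gap is in realizing the prescribed Euler characteristic $a$. Primitivity of $D$ in the root lattice does \emph{not} imply that the pairings $(D\cdot C_i)$ (or $(D\cdot\NS(X))$) generate ${\Bbb Z}$: take $D=C_1$ a single component of an $I_2$ fiber, so $(C_1\cdot C_1)=-2$, $(C_1\cdot C_0)=2$, $(C_1\cdot f)=(C_1\cdot\sigma)=0$; on a surface where $\NS(X)$ is spanned by $\sigma, f, C_1$ the image of $(D\cdot -)$ is $2{\Bbb Z}$, so $\chi(F|_D)=\chi({\cal O}_D)+(F\cdot D)$ only takes values in one parity class as $F$ ranges over $\Pic(X)$. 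The statement is still true for the other parity (e.g.\ ${\cal O}_{C_1}(k)$ for all $k$), but your mechanism for hitting every $a$ fails; for non-reduced roots $D$ you would have to work with $\Pic(D)$ rather than restrictions from $X$, and then simplicity and stability need to be re-examined.

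The second and more serious gap is the last step: ``a generic $(H,\alpha)$ lies off every wall, making $E$ $\alpha$-twisted stable.'' Avoiding the walls only guarantees that no saturated subsheaf has slope \emph{equal} to that of $E$; it does not prevent a subsheaf from having strictly larger slope, so it does not make your particular sheaf $F|_D$ stable -- it could be unstable in every chamber, or unstable in the chamber containing your generic point. What you actually need is a point $(H,\alpha)$ satisfying simultaneously the finitely many \emph{strict} inequalities $\mu_\alpha(E_1)<\mu_\alpha(E)$ over all saturated subsheaves $E_1$ (equivalently $\mu_\alpha(E_2)>\mu_\alpha(E)$ over all quotients), and the non-emptiness of that intersection of open half-spaces is a genuine assertion requiring proof: subsheaves and quotients impose inequalities of opposite sign, and the paper's own Proposition \ref{prop:wall-dim}(1) records cases where the stable locus is empty, so emptiness cannot be excluded by genericity alone. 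This is precisely the difficulty the paper's Fourier--Mukai/singularity argument is designed to sidestep.
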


\begin{proof}
By Lemma \ref{lem:e-poly}, it is sufficient to find a pair $(H,\alpha)$ such that
$M_H^\alpha(0,D,a) \ne \emptyset$.
We note that there is an effective divisor $D'$ such that $D+D'=rf_0$,
where $mf_0$ is a fiber of $\pi$ with multiplicity $m$.
\begin{NB}
Since $D$ is effective and $(D,f)=0$, $D$ is supported on fibers.
$D=\sum_\lambda D_\lambda$, $\pi(D_\lambda)=\lambda \in C$.
By $-2=(D^2)=\sum_\lambda (D_\lambda^2)$,
$D_\lambda=n_\lambda f$ or $(D_\lambda^2)=-2$.
Since $\pi^{-1}(\lambda)$ is of affine type,
$D_\lambda \mod f \in \oplus_j {\Bbb Z}D_{\lambda j}$, where $D_{\lambda j}$ is an irreducible
component in $\sigma^\perp$.
\end{NB}
We shall prove that there is a coherent ${\cal O}_{f_0}$-module $E$ which is 
stable with respect to $(H,\alpha)$ (cf. Remark \ref{rem:u}). 
Replacing $X$ by an elliptic surface with a section, we may assume that
$X$ has a section and hence $m=1$.
For $(0,D,a)$ and $(0,D',a')$ such that $\gcd(r,a+a')=1$,
we take a general $\alpha \in \NS(X)_{\Bbb Q}$ such that
$$
\frac{(D \cdot \alpha)-a}{(D \cdot H)}=\frac{(rf \cdot \alpha)-(a+a')}{(rf \cdot H)}.
$$
We set $d:=a+a'$ and set $Y':=\overline{M}_H^\alpha(0,rf,d)$.
By \cite[Cor. 3.1.7]{PerverseII},
$Y'$ is singular if and only if
there are $\alpha$-twisted stable sheaves $E$ and $E'$ such that 
$\tau(E)=(0,D,a)$, $\tau(E')=(0,D',a')$ and $E \oplus E'$ is the $S$-equivalence class. 
We have an elliptic fibration $\varpi':Y' \to C$ by \cite[Lem. 3.1.9]{PerverseII}.
For a general $\alpha'$ which is sufficiently close to 
$\alpha$, we set $X':=\overline{M}_H^{\alpha'}(0,rf,d)$.
Then we have a morphism $X' \to Y'$ which is the minimal resolution 
of $Y'$ (\cite[Cor. 3.1.7]{PerverseII}).
We also note that $X' \cong X$ by the existence of $\sigma$.
By \cite[Lem. 3.2.1]{PerverseII}, we have a divisor 
$\widehat{H}$ on $X'$ which is the pull-back of a $\varpi'$-relative ample divisor on $Y'$.
Let $\Phi_{X \to X'}^{{\cal P}^{\vee}}:{\bf D}(X) \to {\bf D}(X')$ be the equivalence
where ${\cal P}$ is the universal family.
Then 
$\Phi_{X \to X'}^{{\cal P}^{\vee}}((0,D,a))=(0,\widehat{D},b)$, where
$\widehat{D}$ is a divisor such that $(\widehat{D}^2)=(D^2)=-2$ and
${\cal O}_f(\widehat{D}) \cong {\cal O}_f$ for a general fiber $f$.
By Corollary \ref{cor:-2}, $\widehat{D}$ or $-\widehat{D}$ is algebraically equivalent to an effective
divisor.
\begin{NB}
By Lemma \ref{lem:-2},
there is a singular fiber $\sum_{i=0}^m a_i C_i$ and
$\widehat{D} \equiv \pm(\sum_{i>0}b_i C_i) \mod {\Bbb Z}f$
$(0 \leq b_i \leq a_i)$.
Assume that $\widehat{H}$ is relatively ample.
Then $(C_i \cdot \widehat{H})>0$ for all $i$.
Hence $0 \leq (((\sum_{i>0}b_i C_i)\cdot \widehat{H})<(f \cdot \widehat{H})$.
\end{NB}
Since $(\widehat{H} \cdot \widehat{D})=0$, we get $\widehat{H}$ is not relatively ample
with respect to the elliptic fibration $X' \to C$.
Hence $Y'$ is singular and we get 
$M_H^\alpha(0,D,a) \ne \emptyset$ for a general $(\alpha,H)$.  
\end{proof}

\subsection{Stable sheaves on a multiple fiber.}\label{subsect:multiple}

Let $\pi:X \to C$ be an elliptic surface such that $R^1 \pi_* {\cal O}_X 
\not \cong {\cal O}_C$.

\begin{lem}\label{lem:multiple1}
Let $mf_0$ be a fiber of $\pi$, where $m \geq 1$ is the multiplicity.
We take $E \in {\cal M}_H^\alpha (0,lrf_0,ld)^s$, where $\gcd(r,d)=1$.
\begin{enumerate}
\item[(1)]
Assume that 
$\Div E=lrf_0$.
Then $l=1$ and $E$ is an ${\cal O}_{f_0}$-module.
\item[(2)]
Assume that $\Supp E \ne f_0$ and $\Div E$ is algebraically equivalent to $lrf_0$.
Then $m \mid lr$ and $\gcd(\frac{lr}{m},ld)=1$.  
\end{enumerate}
\end{lem}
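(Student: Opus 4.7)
The plan is to use Lemma \ref{lem:g=1} to identify $E$ with an $\mathcal{O}_D$-module on the reduced part $D$ of the fiber containing $\Supp E$, and then combine Remark \ref{rem:u} with the $(\varphi,\psi)$-stability analog of Lemma \ref{lem:u}(3) to extract the relevant coprimality condition.

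For (1), the hypothesis $\Div E = lrf_0$ forces $\Supp E \subseteq f_0 \subseteq mf_0$, so Lemma \ref{lem:g=1} gives that $E$ is an $\mathcal{O}_{f_0}$-module of generic rank $lr$ along $f_0$. Applying Remark \ref{rem:u} (and passing to an auxiliary elliptic surface with a section in which $f_0$ appears as a non-multiple fiber) the analog of Lemma \ref{lem:u}(3) gives $\gcd(lr, ld) = 1$. Since $\gcd(r,d) = 1$, this forces $l = 1$, and the $\mathcal{O}_{f_0}$-module property is already in hand.

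For (2), stability of $E$ implies $\Supp E$ is connected and, since $\Supp E \neq f_0$, lies in a distinct fiber $\pi^{-1}(c') = m'f_0'$ with $f_0' \neq f_0$. Lemma \ref{lem:g=1} makes $E$ an $\mathcal{O}_{f_0'}$-module of some generic rank $k$; assuming $f_0'$ is irreducible for clarity, $\Div E = k f_0'$, and the hypothesis becomes the equation $k[f_0'] = lr[f_0]$ in $\NS(X)$. Combining this with $m[f_0] = m'[f_0'] = [f]$ and working in the subgroup $\mathbb{Z}[f_0] + \mathbb{Z}[f_0'] \subseteq \NS(X)$ presented as $\mathbb{Z}^2/\mathbb{Z}(m,-m')$, the equation $(-lr, k) \in \mathbb{Z}(m,-m')$ forces $m \mid lr$ and $k = (lr/m)m'$. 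A final application of Remark \ref{rem:u} and Lemma \ref{lem:u}(3) to $E$ on $f_0'$ produces $\gcd(k, ld) = 1$, and substituting $k = (lr/m)m'$ yields $\gcd(lr/m, ld) = 1$.

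The main obstacle is the $\NS(X)$-step of (2): one must verify that the subgroup generated by $[f_0]$ and $[f_0']$ in $\NS(X)$ is presented precisely by the single relation $m[f_0] - m'[f_0'] = 0$ (so that the potential $\mathbb{Z}/\gcd(m,m')$-torsion in the presentation is faithful and the equation $k[f_0'] = lr[f_0]$ really does force $m \mid lr$ rather than only the weaker $km = lrm'$), and then to extend the bookkeeping to the reducible case, in which $\Div E$ decomposes as $\sum b_i C_i$ among the irreducible components $C_i$ of $f_0'$ with analogous relations in $\NS(X)$.
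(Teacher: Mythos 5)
Your treatment of part (1) is essentially the paper's: reduce to an ${\cal O}_{f_0}$-module via Lemma \ref{lem:g=1}, transplant $f_0$ into an auxiliary elliptic surface with a section where it is a non-multiple fiber, and apply Remark \ref{rem:u} and Lemma \ref{lem:u} (3) to get $\gcd(lr,ld)=1$, hence $l=1$. That part is fine.

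Part (2) has a genuine gap, and it is exactly the one you flag yourself. You reduce $m\mid lr$ to the claim that the subgroup of $\NS(X)$ generated by $[f_0]$ and $[f_0']$ is presented by the single relation $m[f_0]=m'[f_0']$, i.e.\ that the potential $\Bbb Z/\gcd(m,m')$-torsion class $\tfrac{m}{g}[f_0]-\tfrac{m'}{g}[f_0']$ $(g=\gcd(m,m'))$ is genuinely nonzero in $\NS(X)$. This is a nontrivial global statement about the torsion of $\NS(X)$ for an elliptic surface with multiple fibers; you do not prove it, and without it your congruence only gives the weaker $km=lrm'$. A telling symptom is that tameness of the multiple fibers (a standing assumption of the paper) never enters your argument, whereas it must enter somewhere: the whole content of $m\mid lr$ is a statement about the order of the normal bundle of $f_0$.

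The paper avoids $\NS(X)$ entirely and argues locally. Since $\Div E=kf_1$ is algebraically equivalent to $lrf_0$ and $\Pic^0(X)=\pi^*\Pic^0(C)$, the line bundle ${\cal O}_X(lrf_0-kf_1)$ is a pullback from $\Pic^0(C)$; restricting to $f_0$ (which maps to a point and is disjoint from $f_1$) gives ${\cal O}_{f_0}(lrf_0)\cong{\cal O}_{f_0}$. Writing $lr=am+b$ with $0\le b<m$ and using ${\cal O}_X(mf_0)|_{f_0}\cong{\cal O}_{f_0}$, one gets ${\cal O}_{f_0}(bf_0)\cong{\cal O}_{f_0}$; since the normal bundle ${\cal O}_{f_0}(f_0)$ has order exactly $m$ in $\Pic(f_0)$ for a tame multiple fiber, $b=0$. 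This also dispenses with your separate worry about reducible $f_1$. The final coprimality is then obtained, as in your sketch, by applying part (1) (equivalently Remark \ref{rem:u} and Lemma \ref{lem:u} (3)) to the class $(0,\tfrac{lr}{m}m_1f_1,ld)$. To repair your proof, either establish the $\NS(X)$-presentation claim (which would itself have to go through the order of the normal bundle) or replace that step with the restriction argument above.
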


\begin{proof}
%a fiber $\pi^{-1}(t)$.
%We set $\pi^{-1}(t)=mD$, where $m$ is the multiplicity of $\pi^{-1}(t)$.
(1) 
\begin{NB}
Since $E$ is an ${\cal O}_{lrf_0}$-module,
there is a non-negative integer $k<lr$ and 
we have a non-zero homomorphism $E(-kf_0) \to E$ such that
$E(-(k+1)f_0) \to E(-kf_0) \to E$ is zero.
Then we have a non-zero homomorphism $E_{|f_0}(-kf_0) \to E$.
Since $\chi(E(-kf_0))=\chi(E)$, $\alpha$-twisted stability of $E$ and $E(-kf_0)$ imply
that $E(-kf_0) \to E$ is isomorphic. Therefore $E \cong E_{|f_0}(-kf_0)$.
\end{NB}
By Lemma \ref{lem:g=1}, $E$ is an ${\cal O}_{f_0}$-module.
We take an elliptic surface $X' \to C'$ such that there is a section and a fiber
$f_0$. Then $E$ is regarded as an 1-dimensional sheaf on $X'$. 
By Remark \ref{rem:u} and Lemma \ref{lem:u} (3), we get $l=1$.

(2)
Since $E$ is $\alpha$-twisted stable, $\Supp(E)$ is connected.
Hence there is a fiber $m_1 f_1$ such that 
$\Div E=k f_1$, where $m_1$ is the multiplicity of $f_1$.
By our assumption and $\Pic^0(X)=\Pic^0(C)$,
${\cal O}_X(lr f_0-kf_1) \in \Pic^0(C)$. 
We take integers $a,b$ such that $lr=am+b$, $0 \leq a$ and $0 \leq b<m$.
Then 
$$ 
{\cal O}_{f_0}(bf_0) \cong 
{\cal O}_X(lr f_0-kf_1)_{|f_0} \cong
{\cal O}_{f_0}.
$$
%Then we have $|lr f_0|=|amf_0|+bf_0$.
%Hence  if $b>0$, then $\Div E$ contains $f_0$. Since $E$ is $\alpha$-twisted stable,
%$\Supp(E)$ is irreducible, which implies $\Div E=lr f_0$.
Hence $b=0$ and $m \mid lr$.
Since $(0,lrf_0,ld)=(0,\frac{lr}{m}m_1 f_1,ld)$, (1) implies $\gcd(\frac{lr}{m}m_1,ld)=1$.
 \end{proof}

\begin{NB}
\begin{lem}
Let $D \ne af_0$ be a small deformation of $af_0$ 
as effective Cartier divisors on $X$.
Then $D=D'+b f_0$, where $m \mid (a-b)$ and
$D'$ is algebraically equivalent to $\frac{a-b}{m}f$.
In particular $a \geq m$. 
\end{lem}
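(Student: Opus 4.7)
My plan is to exploit the algebraic equivalence $D \sim_{\mathrm{alg}} af_0$ that comes from $D$ being a flat deformation of $af_0$, together with the tameness of the multiple fiber $mf_0$, which ensures that $f_0$ is a smooth irreducible elliptic curve and that $N := {\cal O}_X(f_0)|_{f_0}$ has order exactly $m$ in $\Pic^0(f_0)$. I would start by decomposing $D = bf_0 + D'$, where $b \geq 0$ is the multiplicity of $f_0$ as a component of $D$, so that $D'$ is effective and does not contain $f_0$ as a component. From $D \sim_{\mathrm{alg}} af_0$ we then obtain $D' \sim_{\mathrm{alg}} (a-b)f_0$ in $\NS(X)$, and in particular $(D' \cdot f_0) = (a-b)(f_0^2) = 0$.

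The crux is then to compute ${\cal O}_X(D')|_{f_0}$ in two different ways. On one hand, since $f_0$ is irreducible, not a component of the effective divisor $D'$, and $(D' \cdot f_0) = 0$, the scheme-theoretic intersection $D' \cap f_0$ is empty, so ${\cal O}_X(D')|_{f_0} \cong {\cal O}_{f_0}$. On the other hand, from $D' \sim_{\mathrm{alg}} (a-b)f_0$ we write ${\cal O}_X(D') \cong {\cal O}_X((a-b)f_0) \otimes L$ for some $L \in \Pic^0(X)$; under the standing hypothesis $R^1\pi_*{\cal O}_X \not\cong {\cal O}_C$ we have $q(X) = g$ and $\Pic^0(X) = \pi^*\Pic^0(C)$, so $L|_{f_0}$ is trivial, giving ${\cal O}_X(D')|_{f_0} \cong N^{a-b}$. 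Comparing yields $N^{a-b} \cong {\cal O}_{f_0}$; since $N$ has order exactly $m$, we conclude $m \mid (a-b)$, and then $(a-b)f_0 = \frac{a-b}{m}\cdot mf_0 = \frac{a-b}{m}f$ in $\NS(X)$, so $D' \sim_{\mathrm{alg}} \frac{a-b}{m}f$.

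For the final inequality $a \geq m$: since $D \neq af_0$, we must have $D' \neq 0$, whence $(D' \cdot H) > 0$ for any ample $H$. Combined with $D' \sim_{\mathrm{alg}} \frac{a-b}{m}f$, this forces $a - b > 0$; together with $m \mid (a-b)$ we obtain $a - b \geq m$ and therefore $a \geq m + b \geq m$.

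The two standard facts I rely on are the identification $\Pic^0(X) = \pi^*\Pic^0(C)$ (via the Leray spectral sequence for $\pi_*{\cal O}_X$, using $R^1\pi_*{\cal O}_X \not\cong {\cal O}_C$) and the fact that $N$ has order exactly $m$ (a direct consequence of the tameness of the multiple fiber $mf_0$). These are the points where I expect the reader to need a reference, but they pose no real obstacle; once they are available, the two-way computation of ${\cal O}_X(D')|_{f_0}$ immediately forces both the divisibility and the structural decomposition.
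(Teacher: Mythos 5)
Your strategy is genuinely different from the paper's, and the core idea is sound. The paper argues inside the deforming family $\{{\cal D}_t\}$ itself: it peels off the maximal number $b$ of copies of $f_0$ that stay effective throughout the family, observes that the residual divisor at $t \ne t_0$ is (algebraically equivalent to) a sum $kf$ of whole fibers, and gets $a = km + b$ by comparing $H$-degrees. You instead work only with the end member $D$, restrict ${\cal O}_X(D')$ to $f_0$, and play the triviality of that restriction against the exact order $m$ of the normal bundle ${\cal O}_X(f_0)|_{f_0}$. Your route is closer in spirit to the proof of Lemma \ref{lem:multiple1}(2) and would be a perfectly good alternative.

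There is, however, a genuine gap: you assert that tameness of $mf_0$ forces $f_0$ to be a smooth irreducible elliptic curve. That is false. Tameness only guarantees that ${\cal O}_X(f_0)|_{f_0}$ has order exactly $m$ in $\Pic(f_0)$; the reduction of a multiple fiber can be of type $I_k$, i.e.\ a nodal rational curve or, for $k \ge 2$, a cycle of rational curves (such half-fibers occur, for instance, on Enriques surfaces), and this paper is precisely about elliptic surfaces with reducible fibers. Irreducibility is used essentially in your key step: if $f_0 = \sum_i C_i$ is reducible, then $D'$ (which must now be defined as $D - bf_0$ with $b$ maximal such that this is effective, since "multiplicity of $f_0$ as a component" no longer makes sense) may contain some but not all of the $C_i$, so you cannot deduce $D' \cap f_0 = \emptyset$ from $(D' \cdot f_0) = 0$, and ${\cal O}_X(D')|_{f_0}$ is no longer obviously trivial. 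The gap is repairable: write $D' = E + F$ with $E$ supported on $f_0$ and $F$ having no component on $f_0$ (hence disjoint from $f_0$, being vertical); then $(E \cdot C_i) = (D' \cdot C_i) - (F \cdot C_i) = 0$ for every $i$, so Zariski's lemma forces $E \in {\Bbb Q}f_0$, and the maximality of $b$ gives $E = 0$, after which your restriction argument goes through verbatim. You need to add this step (the smoothness of $f_0$, by contrast, is never actually used and should simply be dropped).
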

\begin{proof}
%Then $D=D'+b f_0$ with $D' \cap f_0 =\emptyset$.
Let ${\cal D}_t$ $(t \in T)$ be a relative Cartoer divisor over an irreducible curve $T$
such that ${\cal D}_{t_0}=a f_0$ and ${\cal D}_{t_1}=D$.
We may assume that there is an integer
$0<b \leq a$ such that ${\cal D}_t-b f_0$ is not effective for $t \ne t_0$.
We may also assume that ${\cal D}_t-b f_0$ does not intersect other multiple fibers.
Then ${\cal D}_t-b f_0= kf$ for $t \ne t_0$.
Since $(({\cal D}_{t_0}-{\cal D}_t) \cdot H)=
(a-km-b)(f_0 \cdot H)=0$, $a=mk+b$. In particular $a \geq m$.
\end{proof}
\end{NB}

\begin{NB}
Let ${\cal E}$ be a family of semi-stable sheaves over a curve $T$
such that $\Div ({\cal E}_{t_0})=af_0$ %($D_0 \cap f_0=\emptyset$)
and $\Div({\cal E}_t)=bf_0+D_t$ ($D_t \cap f_0=\emptyset$) 
for $t \ne t_0$, where $0 \leq b<a$. 
Thus $\Div({\cal E}_t)-bf_0$ is effective for all $t$ but
$\Div({\cal E}_t)-(b+1)f_0$ is not effective for $t \ne t_0$.
Assume that ${\cal E}_t$ is $S$-equivalent to $\oplus_{i,j} E_{ij}$,
where $E_{ij}$ are stable sheaves such that 
$\Supp(E_{0j})=f_0$ and $\Supp(E_{1j}) \ne f_0$.
We may assume that $\Supp(E_{1j})$ are smooth for all $j$.
Then $\tau(E_{1j})=(0,r' f,d')$ and $\tau(E_{0j})=(0,rf_0,d)$, where
$(r' m,d') \in {\Bbb Z}(r,d)$. 
Hence
$\frac{a}{r}(r f_0,d)=\frac{b}{r}(r f_0,d)+l(r' m f_0,d')$, where $a \equiv b \equiv 0 \mod r$.
\end{NB}

\begin{lem}\label{lem:multiple2}
Let $mf_0$ be a fiber with multiplicity $m$. We set $\tau:=(0,rf_0,d)$, where
$\gcd(r,d)=1$ and $r>0$.
If $m \nmid r$, then 
$\dim M_H^\alpha(0,rf_0,d)=1$ for a general $\alpha$.
\end{lem}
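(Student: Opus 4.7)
The plan is to identify $M_H^\alpha(0,rf_0,d)$ with the classical moduli space of stable vector bundles on the smooth elliptic curve $f_0$ and then invoke Atiyah's classification to conclude that the dimension equals $1$.

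First, I would show that every $E \in M_H^\alpha(0,rf_0,d)$ is an ${\cal O}_{f_0}$-module. Apply Lemma \ref{lem:multiple1} with $l=1$: if $\Supp E \ne f_0$, then part~(2) would force $m \mid r$, contradicting the hypothesis $m \nmid r$. Hence $\Supp E = f_0$, so $\Div E$ is supported on $f_0$; matching with $c_1(E) = rf_0$ in $\NS(X)$ then forces $\Div E = rf_0$, and part~(1) of the same lemma gives that $E$ is an ${\cal O}_{f_0}$-module.

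Next, I would reinterpret the numerical invariants on $f_0$. Since $E$ is purely $1$-dimensional and supported on the smooth elliptic curve $f_0$, it is locally free of some rank $r'$; the identity $c_1(E) = r' f_0$ (as classes on $X$) gives $r' = r$, and since $\chi({\cal O}_{f_0})=0$, Riemann-Roch on $f_0$ gives $\deg_{f_0} E = \chi(E) = d$. The two stability conditions then match: any subsheaf $F \subset E$ in $\Coh(X)$ is automatically an ${\cal O}_{f_0}$-module (being annihilated by the ideal of $f_0$), and if $F$ has rank $r'_1$ and degree $d'_1$ on $f_0$, its $\alpha$-twisted slope on $X$ is
\begin{equation*}
\mu_\alpha(F) = \frac{d'_1 - r'_1(f_0 \cdot \alpha)}{r'_1(f_0 \cdot H)},
\end{equation*}
whose ordering coincides with that of $d'_1/r'_1$ independently of $\alpha$. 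Hence $\alpha$-twisted stability of $E$ on $X$ is equivalent to slope stability of $E$ on $f_0$, and the pushforward from $f_0$ to $X$ produces all such stable sheaves.

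Finally, since $\gcd(r,d) = 1$ and $f_0$ is smooth elliptic, Atiyah's classification identifies the moduli space of stable rank-$r$ degree-$d$ bundles on $f_0$ with $\Pic^0(f_0) \cong f_0$, which has dimension $1$. The main subtlety lies in the stability comparison, but the slope formula above makes it transparent; the hypothesis $m \nmid r$ enters precisely in ruling out sheaves supported on a generic fiber algebraically equivalent to $rf_0$, which would otherwise contribute a $2$-dimensional family as the support varies in the base curve.
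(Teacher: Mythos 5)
Your overall strategy is the same as the paper's: use Lemma \ref{lem:multiple1} to force $E$ to be an ${\cal O}_{f_0}$-module (and this first step is carried out correctly, including the role of $m\nmid r$), and then count the stable sheaves of coprime rank and degree on the genus-one curve $f_0$. The paper's proof is exactly this, quoting Remark \ref{rem:u} and Corollary \ref{cor:u} for the second step.

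However, there is a genuine gap in your second step: you assume $f_0$ is a \emph{smooth} elliptic curve. The reduction of a tame multiple fiber need not be smooth; it can be of type $I_n$ with $n\geq 1$, i.e.\ a nodal rational curve or a cycle of rational curves (and reducible fibers are precisely the situation this paper is written to handle). In that generality three things in your argument break. First, a purely one-dimensional sheaf on a singular or reducible $f_0$ need not be locally free, so ``rank $r'$ and Atiyah's classification'' do not apply; even in the integral nodal case one needs the compactified Jacobian rather than Atiyah. Second, and more seriously, for reducible $f_0=\sum_i a_iC_i$ a subsheaf $F\subset E$ can have $c_1(F)$ supported on a proper subcurve, so $c_1(F)$ is \emph{not} proportional to $f_0$ and the twisted slope is not of the form $\frac{d_1'-r_1'(f_0\cdot\alpha)}{r_1'(f_0\cdot H)}$; the ordering then genuinely depends on $\alpha$, which is exactly why the statement is only asserted for a \emph{general} $\alpha$ (your claim that the stability is ``independent of $\alpha$'' would render that hypothesis vacuous). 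Third, one must still show the moduli of such stable sheaves on a reducible genus-one configuration is one-dimensional. The paper handles all of this via Remark \ref{rem:u} and Corollary \ref{cor:u}: it realizes $f_0$ as a (non-multiple) fiber of an auxiliary elliptic surface with a section, interprets the $(H,\alpha)$-stability as a $(\varphi,\psi)$-stability on that surface, and applies Lemma \ref{lem:u}(3), which identifies the stable sheaves with stable sheaves of rank $r$ on the fiber and yields $\dim{\cal M}_H^\alpha(0,rf,a,D)^{ss}=0$ (hence coarse moduli of dimension $1$). Your argument is complete only in the special case that $f_0$ is smooth.
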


\begin{proof}
For $E \in M_H^\alpha(0,rf_0,d)$,
Lemma \ref{lem:multiple1} implies
$E$ is an ${\cal O}_{f_0}$-module.
By Remark \ref{rem:u} and Corollary \ref{cor:u},
we get our claim.
\end{proof}

\begin{rem}
We take $E \in {\cal M}_H^\alpha (0,lrf_0,ld)^s$.
\begin{enumerate}
\item[(1)]
Assume that $m \nmid lr$ and $E$ is a locally free ${\cal O}_{f_0}$-module.
Then $l=1$ and $\Hom(E,E(K_X))=0$. In particular 
${\cal M}_H^\alpha (0,lrf_0,ld)^s$ is smooth of dimension 0 at $E$.
\item[(2)]
Assume that $m \mid lr$. Then  
${\cal M}_H^\alpha (0,lrf_0,ld)^s$ is smooth of dimension 1 at $E$.
Moreover $\Supp E \ne f_0$ for a general $E$ if $l>1$.
\end{enumerate}
\end{rem}

\begin{NB}
 $h^0({\cal O}_{f_0}(kf_0))=0$ for $0<k<m$.
Hence $h^0({\cal O}_X((k-1)f_0)=h^0({\cal O}_X(kf_0))$.

We note that $(K_X)_{|f_0}$ is of order $m$ as a line bundle on $f_0$.
If $m \nmid r$, then $\Hom(E,E(K_X))=0$. 
\end{NB}

\begin{defn}\label{defn:multiple}
Let $mf_0$ be a multiple fiber. We set $\tau:=(0,rf_0,d)$, where $\gcd(r,d)=1$.
We set
$$
{\cal M}_H^\alpha(l\tau,lrf_0)^{ss}:=\{E \in {\cal M}_H^\alpha(l\tau)^{ss}
\mid \Div E=ld f_0 \}.
$$
\end{defn}

\begin{prop}\label{prop:multiple}
Assume that $(H,\alpha)$ is general. Then 
$\dim {\cal M}_H^\alpha(l \tau,lr f_0)^{ss} \leq 0$. In particular
$\dim {\cal M}_H^\alpha(l \tau)^{ss} \leq 0$ if $l \gcd(r,m)<m$.
\end{prop}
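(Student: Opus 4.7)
The plan is to reduce the study of ${\cal M}_H^\alpha(l\tau, lrf_0)^{ss}$ to understanding its Jordan--H\"older factors, invoke Corollary~\ref{cor:u} and Remark~\ref{rem:u} to pin those factors down to a finite set of stable ${\cal O}_{f_0}$-modules, and then bound the stack dimension at each S-equivalence stratum via an Ext computation.

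First I would take $E \in {\cal M}_H^\alpha(l\tau, lrf_0)^{ss}$ and analyze its Jordan--H\"older factors $F_1, \dots, F_s$ with multiplicities $n_i$. Since $\Div E = lr f_0$ is supported on the single fiber $mf_0$ and $\Div$ is additive along short exact sequences, each $\Div F_i = k_i f_0$ with $\sum n_i k_i = lr$. Equality of $\alpha$-twisted slopes together with $\gcd(r, d) = 1$ then forces $(k_i, \chi(F_i)) = (l_i r, l_i d)$ for positive integers $l_i$, and Lemma~\ref{lem:multiple1}(1) applied to each stable $F_i$ gives $l_i = 1$. Thus every $F_i$ is an ${\cal O}_{f_0}$-module with $\tau(F_i) = \tau$, and by Remark~\ref{rem:u} together with Corollary~\ref{cor:u} (after embedding into an elliptic surface with a section), the set of such stable sheaves is finite, so the set of S-equivalence classes in $\overline{M}_H^\alpha(l\tau, lrf_0)$ is finite.

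To bound the stack dimension at a polystable representative $E' = \bigoplus_i F_i^{n_i}$, I would use $(c_1 E')^2 = 0$ to obtain $\chi(E', E') = 0$, which reduces $\dim \Ext^1(E', E') - \dim \Hom(E', E')$ to $\dim \Ext^2(E', E') = \dim \Hom(E', E'(K_X))$ via Serre duality. By stability $\Hom(F_i, F_j(K_X))$ is 1-dimensional exactly when $F_j(K_X) \cong F_i$ and vanishes otherwise. Writing $\sigma$ for the bijection $F \mapsto F(K_X)$ on the finite set of stable ${\cal O}_{f_0}$-modules of type $\tau$ (well-defined since $f_0 \cdot K_X = 0$ preserves $\tau$), the dimension becomes $\sum_{i:\,\sigma(F_i) \in I} n_i n_{\sigma(i)}$ where $I$ is the index set. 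An AM--GM estimate over the intersection of each $\sigma$-orbit with $I$ (a cycle or partial arc) bounds this by $\sum_i n_i^2 = \dim \Hom(E', E')$, yielding stack dimension $\leq 0$.

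For the ``in particular'' clause, assume $l \gcd(r, m) < m$, i.e.\ $m \nmid l r$. For any $E \in {\cal M}_H^\alpha(l\tau)^{ss}$, if a JH factor $F_i$ has $\Supp F_i \ne f_0$ then Lemma~\ref{lem:multiple1}(2) forces $m \mid l_i r$, so $l_i \geq m/\gcd(r, m) > l$, contradicting $\sum l_j = l$. Hence every $F_i$ is supported on $f_0$, $\Div E = lr f_0$, and $E \in {\cal M}_H^\alpha(l\tau, lrf_0)^{ss}$, reducing to the first part. The main obstacle I anticipate is the stack-dimension step: carefully accounting for $\sigma$-orbits that intersect $I$ only partially, and aligning the Ext computation with the precise dimension convention used in this paper for the semistable moduli stack.
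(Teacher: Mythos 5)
There is a genuine gap, and it occurs at two places in your second step. First, the finiteness claim is false: Corollary~\ref{cor:u} says the \emph{stack} ${\cal M}_H^\alpha(0,rf,a,D)^{ss}$ has dimension $0$, and since every stable sheaf carries a ${\Bbb G}_m$ of automorphisms this means the stable ${\cal O}_{f_0}$-modules of class $\tau$ form a \emph{one-dimensional} family (compare Lemma~\ref{lem:multiple2}, which states $\dim M_H^\alpha(0,rf_0,d)=1$), not a finite set. So the Jordan--H\"older factors $F_i$ move in positive-dimensional moduli, the set of $S$-equivalence classes is infinite, and this one-dimensional variation has to enter the dimension count. Second, even at a fixed polystable point your estimate does not close: from $\chi(E',E')=0$ you correctly get $\ext^1(E',E')-\hom(E',E')=\ext^2(E',E')=\sum_i n_i n_{\sigma(i)}$, but the upper bound for the stack dimension at $E'$ is exactly this quantity, which is typically \emph{positive} (e.g.\ $E'=F^{\oplus n}$ with $F(K_X)\cong F$ gives $n^2$). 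Observing that it is $\le\sum_i n_i^2=\hom(E',E')$ does not yield ``stack dimension $\le 0$''; that step subtracts $\hom$ twice. More fundamentally, a tangent/obstruction computation at the polystable representatives cannot bound the dimension of the whole semistable stack: the locus of sheaves $S$-equivalent to $E'$ is a stack of iterated extensions, and the Harder--Narasimhan/filtration formula \eqref{eq:HNF-dim} is not available here because $\Ext^2(F_i,F_j)=\Hom(F_j,F_i(K_X))^\vee$ need not vanish between the graded pieces.

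This is precisely what the paper's proof is designed to handle: it introduces the substacks ${\cal J}(l,E_0)$ of semistable sheaves generated by the $K_X$-orbit $\{E_0(pK_X)\}$ of a single stable sheaf, proves $\dim{\cal J}(l,E_0)\le -1$ by induction on the evaluation sequence $0\to\Hom(F,E)\otimes F\to E\to E'\to 0$ using the extension-stack estimate of \cite[Lem.~5.2]{K-Y}, and then writes an arbitrary $E\in{\cal M}_H^\alpha(l\tau,lrf_0)^{ss}$ as $\oplus_i E_i$ with $E_i\in{\cal J}(l_i,F_i)$, so that the $-1$ from each ${\cal J}$ cancels the $+1$ coming from the one-dimensional family of possible $F_i$ (Lemma~\ref{lem:multiple2}). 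Your use of Lemma~\ref{lem:multiple1}(1) to identify the JH factors as ${\cal O}_{f_0}$-modules of class $\tau$, and your reduction of the ``in particular'' clause via Lemma~\ref{lem:multiple1}(2) and the equivalence $l\gcd(r,m)<m\Leftrightarrow l<m/\gcd(r,m)$, are correct and agree with the paper; it is the central dimension estimate that needs the stratification-by-extensions argument rather than a pointwise $\Ext$ count.
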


For the proof of this claim, we start with the following definition.

\begin{defn}\label{defn:J}
For $E_0 \in {\cal M}_H^\alpha (l_0 \tau)^s$,
we set
\begin{equation}
{\cal J}(l, E_0):=\{E \in {\cal M}_H(l \tau)^{ss} \mid 
\text{ $E$ is generated by $E_0(p K_X)$, $p \in {\Bbb Z}$ } \},
\end{equation}
where $l_0 \mid l$.
\end{defn}

\begin{lem}\label{lem:fiber-dim}
$\dim {\cal J}(l,E_0) \leq -1$.
\end{lem}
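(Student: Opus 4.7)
The plan is to show that $\mathcal{J}(l,E_0)$ is supported on a finite collection of $S$-equivalence classes, each of which carries an automorphism group of dimension at least $1$; this alone forces the stack dimension to be at most $-1$. The key preliminary is that $E_0$ is an $\mathcal{O}_{f_0}$-module (by Lemma~\ref{lem:g=1}) supported on the reduced multiple fiber, and $(K_X)_{|f_0}$ is a line bundle of order $m$ on $f_0$; consequently the orbit $\{E_0(pK_X):p\in\mathbb{Z}\}$ consists of at most $m_0\mid m$ distinct isomorphism classes $E_p:=E_0(pK_X)$, $0\le p<m_0$. Every $E\in\mathcal{J}(l,E_0)$ is an iterated extension of the $E_p$, so its $S$-equivalence class lies in the finite set
\[
\{F_{(n_p)}=\bigoplus_{p=0}^{m_0-1} E_p^{\oplus n_p} : l_0{\textstyle\sum_p} n_p=l\}.
\]

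Next I would compute the relevant $\Ext$-groups between the $E_p$. Since $c_1(E_p)\in\mathbb{Z}[f_0]$ we get $(c_1(E_p)\cdot c_1(E_q))=0$ and hence $\chi(E_p,E_q)=0$. Stability of each $E_p$ gives $\dim\Hom(E_p,E_q)=\delta_{p\equiv q}$, while Serre duality $\Ext^2(E_p,E_q)\simeq \Hom(E_q,E_{p+1})^\vee$ combined with stability yields $\dim\Ext^2(E_p,E_q)=\delta_{q\equiv p+1}\pmod{m_0}$; therefore
\[
\dim\Ext^1(E_p,E_q)=\delta_{p\equiv q}+\delta_{q\equiv p+1}\pmod{m_0}.
\]
By Luna's étale slice theorem, an étale neighbourhood of $[F_{(n_p)}]$ in $\mathcal{M}_H^\alpha(l\tau)^{ss}$ is modelled on a quotient stack $[Z/G]$, where $G=\Aut(F_{(n_p)})=\prod_p \GL_{n_p}$ and $Z\subset V=\Ext^1(F_{(n_p)},F_{(n_p)})$ is cut out by the Yoneda obstruction $V\otimes V\to \Ext^2(F_{(n_p)},F_{(n_p)})$. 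Using the Ext-computation above one finds $\dim V-\dim\Ext^2(F_{(n_p)},F_{(n_p)})=\sum_p n_p^2=\dim G$, so once surjectivity of the obstruction map is verified, the Luna slice at $[F_{(n_p)}]$ collapses to a single coarse-moduli point, matching the virtual dimension $-\chi(F_{(n_p)},F_{(n_p)})=0$.

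Combining these observations, the coarse moduli of $\mathcal{J}(l,E_0)$ reduces to the finite set of polystables $[F_{(n_p)}]$, and the generic stabiliser at each such point has dimension $\sum_p n_p^2\ge 1$. The stack dimension is therefore
\[
\dim \mathcal{J}(l,E_0)\le 0-\min_{(n_p)}{\textstyle\sum_p} n_p^2 \le -1,
\]
with equality attained when some $n_p=1$ and the remaining ones vanish (i.e.\ on the stable points $E_p$). The main obstacle I anticipate is verifying surjectivity of the Yoneda pairing $V\otimes V\to \Ext^2(F_{(n_p)},F_{(n_p)})$, which is what forces the Luna slice at each polystable to have the expected $0$-dimensional coarse moduli; this reduces to a representation-theoretic check of a $\prod_p \GL_{n_p}$-equivariant bilinear form, which can be carried out using the explicit generators of $\Ext^1$ and $\Ext^2$ coming from the Serre duality above together with the composition pairings between the simple summands $E_p$.
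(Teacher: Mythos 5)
Your computation of the groups $\Hom(E_p,E_q)$, $\Ext^1(E_p,E_q)$, $\Ext^2(E_p,E_q)$ for the factors $E_p=E_0(pK_X)$ is correct, but the final inference --- from ``finitely many $S$-equivalence classes, each with stabilizer of dimension $\geq 1$'' to $\dim{\cal J}(l,E_0)\leq -1$ --- is not valid, and that step is where the whole content of the lemma lies. The dimension of a moduli stack is not (dimension of the coarse space) minus (dimension of the stabilizer at the polystable points): the non-polystable iterated extensions of the $E_p$ form, in general, positive-dimensional families of pairwise non-isomorphic sheaves with smaller automorphism groups, all lying over a single point of the coarse space, and it is exactly these strata that must be bounded. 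For instance, when $E_0(K_X)\cong E_0$ and $l=2l_0$ one has $\dim\Ext^1(E_0,E_0)=2$, hence a ${\Bbb P}^1$ of pairwise non-isomorphic non-split self-extensions; there the count happens to give $1-2=-1$ because each such extension has a $2$-dimensional endomorphism algebra, but establishing this requires computing $\dim\Aut(E)$ stratum by stratum, which your argument never does. Relatedly, the Luna/Kuranishi model $[Z/G]$ you set up describes the full stack ${\cal M}_H^\alpha(l\tau)^{ss}$ near $F_{(n_p)}$: since every component of $Z$ through the origin has $\dim Z\geq \dim V-\dim\Ext^2(F,F)=\dim G$, this model has dimension $\dim Z-\dim G\geq 0$, not $\leq -1$ (consistently with Proposition \ref{prop:multiple} and Lemma \ref{lem:isotropic-dim}, which give dimension $0$, resp.\ $l$, for the ambient stacks). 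The substack ${\cal J}(l,E_0)$ of sheaves all of whose Jordan--H\"{o}lder factors lie in $\{E_0(pK_X)\}$ corresponds to a proper closed sublocus of $Z$ --- deformations in $Z$ can move the support or change the stable factors --- and your computation does not isolate it, so even a completed transversality check for the obstruction map would not yield the bound.

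The argument the lemma actually needs is an induction on $l$: stratify ${\cal J}(l,E_0)$ by $n=\dim\Hom(F,E)$ for $F=E_0(pK_X)$, use the canonical exact sequence $0\to\Hom(F,E)\otimes F\to E\to E'\to 0$ with $E'\in{\cal J}(l-nl_0,E_0)$, and apply the dimension estimate for stacks of extensions (\cite[Lem.~5.2]{K-Y}), which gives
$\dim{\cal J}(l,E_0,F^{\oplus n})\leq\dim{\cal J}(l-nl_0,E_0,F(-K_X)^{\oplus n'})+nn'-n^2$.
The correction term $nn'-n^2$, combined with the base case ${\cal J}(l_0,E_0)=\{E_0(pK_X)\}$ of dimension $-1$, is what produces the stated bound, as in \cite[(3.8)]{K-Y}. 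Your proposal contains no substitute for this bookkeeping.
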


\begin{proof}
For $F \in \{E_0(p K_X) \mid p \in {\Bbb Z} \}$ and $n \geq 0$, we set
\begin{equation}
{\cal J}(l,E_0, F^{\oplus n}):=\{E \in {\cal J}(l,E_0) \mid
\dim \Hom(F,E)=n \}.
\end{equation}
For $E \in {\cal J}(l,E_0,F^{\oplus n})$,
we have an exact sequence
\begin{equation}\label{eq:ext1}
0 \to \Hom(F,E) \otimes F \to E \to E' \to 0
\end{equation}
and $E' \in {\cal J}(l-nl_0,E_0,F(-K_X)^{\oplus n'})$ $(n' \geq 0)$.
Since
${\cal J}(l,E_0,F^{\oplus n})$ is an open substack of the stack of extensions
\eqref{eq:ext1}, \cite[Lem. 5.2]{K-Y} implies 
\begin{equation}
\dim {\cal J}(l,E_0,F^{\oplus n}) \leq 
\dim {\cal J}(l-n,E_0,F(-K_X)^{\oplus n'})
+nn'-n^2.
\end{equation} 
\begin{NB}
$\dim \Ext^1(E',F)-\dim \Hom(E',F)=\dim \Hom(F(K_X),E')=n'$.
$\dim \Hom(F,E)=n$ is an open condition.
\end{NB}
Then the same proof of \cite[(3.8)]{K-Y} works. 
\end{proof}
 
{\it Proof of Proposition \ref{prop:multiple}.}

We note that $E \in {\cal M}_H^\alpha(l \tau,lr f_0)^{ss}$ is generated by
members in ${\cal M}_H^\alpha(\tau,r f_0)^{ss}$ (Lemma \ref{lem:multiple1} (1)).
There are $F_i \in {\cal M}_H^\alpha(\tau)^s$ and 
$E_i \in {\cal J}(l_i, F_i)$ such that $E \cong \oplus_i E_i$,
where $\sum_i l_i=l$.
By Lemma \ref{lem:fiber-dim} and Lemma \ref{lem:multiple2}, 
we get our claim. For more details, see \cite[sect. 5.3]{K-Y}.
\qed

\begin{NB}
Let $X_1 \to C_1$ be an elliptic surface such that there is a section $\sigma$ and
a singular fiber $D$ of type $I_N$.
Let $C_0,C_1,...,C_{N-1}$ be the irreducible components of $D$.
We may assume that $(C_0 \cdot \sigma)=1$.
We note that
${\Bbb Q}\sigma+\sum_{i=1}^{N-1} {\Bbb Q}C_i$ is isomorphic to
the dual of ${\Bbb Q}f+\sum_{i=1}^{N-1} {\Bbb Q}C_i$ by the intersection pairing.
Let $E$ be a 1-dimensional sheaf on $D$.
Then for any linear form $\varphi:\NS(X)_{\Bbb Q} \to {\Bbb Q}$,
there is $\alpha \in {\Bbb Q}\sigma+\sum_{i=1}^{N-1} {\Bbb Q}C_i$ such that
$\varphi(c_1(E))=(c_1(E) \cdot \alpha)$. 

\end{NB}

\begin{NB}
${\bf f}=(0,rf_0,d)$.
$m=m_1 m'$ and $r=r' m_1$ with $\gcd(m', r')=1$.
For $l=m'$, $(0,lrf_0,ld)=(0,m' r' m_1 f_0,m' d)=(0,r' f,m' d)$, which is primitive.
If $m' \nmid l$, then $\gcd(m',r')=1$ implies $lr=lr' m_1 \not \in m{\Bbb Z}$.
Hence $lrf_0$ is not movable.
\end{NB}

Let $m_1 f_1,m_2 f_2,...,m_s f_s$ be multiple fibers of $\pi$.
For a class $(0,r' f,d')$ with $\gcd(r',d')=1$,
let us consider $(0,r_i f_i,d_i)$ such that $\gcd(r_i,d_i)=1$ and
${\Bbb Q}(0,r_i f_i,d_i)={\Bbb Q}(0,r' f,d')$.
%where $\gcd(r_i,d_i)=\gcd(r',d')=1$.
We set $p_i:=\gcd(r_i,m_i)$.
Then $r' m_i d_i=r_i d'$,
$r_i=p_i r'$ and $m_i=p_i \frac{d'}{d_i}$, where $d_i \mid d'$.
We also have
$(0,r' f,d')=\frac{d'}{d_i}(0,r_i f_i,d_i)$.
\begin{NB}
Indeed $d_i \mid d'$ implies $(0,r' f,d')=\frac{d'}{d_i}(0,r_i f_i,d_i)$.
Then $r' m_i=\frac{d'}{d_i} r_i$.
Since $\gcd(r',\frac{d'}{d_i})=1$ and $\gcd(\frac{m_i}{p_i},\frac{r_i}{p_i})=1$, $r'=\frac{r_i}{p_i}$.
\end{NB}
\begin{NB}
$(0,r' f,d')=(0,r' m_i f_i,d')=\frac{d'}{d_i}(0,r' p_i f_i,d_i)=\frac{d'}{d_i}(0,r_i f_i,d_i)$.
\end{NB}
We set
\begin{equation}\label{eq:isotropic}
{\bf f}:=\sum_i l_i(0,r_i f_i,d_i)+l(0,r' f,d')
\end{equation}
where $l_i,l \in {\Bbb Z}$ and
$0 \leq l_i <\frac{d'}{d_i}$.
Then $(l_1,...,l_s,l)$ is uniquely determined by ${\bf f}$.
\begin{NB}
Assume that
$$
\sum_i l_i r_i f_i+lr' f=\sum_i l_i' r_i f_i+l' r' f
$$
in $\NS(X)$,
where $0 \leq l_i,l_i'<\frac{d'}{d_i}$.
Then ${\cal O}_{f_i}((l_i-l_i') r_i f_i) \cong {\cal O}_{f_i}$ for all $i$.
Hence $\frac{d'}{d_i} \mid (l_i-l_i')$.
Since $|l_i-l_i'|<\frac{d'}{d_i}$, $l_i-l_i'=0$.
Then $lr' f=l' r' f$ implies $l=l'$.
\end{NB}
By using Lemma \ref{lem:fiber-dim}, we also see that the following holds.
\begin{lem}\label{lem:isotropic-dim}
$\dim {\cal M}_H^\alpha({\bf f})^{ss}=l$.
\end{lem}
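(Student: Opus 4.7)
The plan is to imitate the proof of Proposition \ref{prop:multiple}: decompose an $\alpha$-semistable $E$ of class ${\bf f}$ via its Jordan--H\"older filtration and separately bound the contributions from factors supported on multiple fibers and on smooth fibers.

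By Lemma \ref{lem:u}(3), any JH factor supported on a smooth fiber has class $(0, r'f, d')$, and by Lemma \ref{lem:multiple1}(1), any JH factor supported on the multi-fiber $m_i f_i$ has class $(0, r_i f_i, d_i)$. The uniqueness of the decomposition \eqref{eq:isotropic} (established in the NB preceding the lemma) then forces exactly $l_i$ factors of type $i$ and $l$ smooth-fiber factors. Since stable factors on disjoint fiber supports have vanishing $\Ext^1$, $E$ splits as $E = \bigoplus_{i=1}^s E^{(i)} \oplus E^{\mathrm{sm}}$, where $E^{(i)} \in {\cal M}_H^\alpha(l_i(0,r_i f_i,d_i), l_i r_i f_i)^{ss}$ and $E^{\mathrm{sm}}$ is an $\alpha$-semistable sheaf of class $l(0, r'f, d')$ supported on smooth fibers.

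For the multi-fiber pieces, using the identity $p_i = \gcd(r_i, m_i) = m_i d_i/d'$, the inequality $l_i < d'/d_i$ translates to $l_i \gcd(r_i, m_i) < m_i$; hence Proposition \ref{prop:multiple} gives $\dim {\cal M}_H^\alpha(l_i(0,r_i f_i,d_i), l_i r_i f_i)^{ss} \leq 0$. For the smooth part, I stratify by the finite support set $T \subset C \setminus \{c_1, \dots, c_s\}$ together with the multiplicities $(n_c)_{c \in T}$ satisfying $\sum_c n_c = l$; the base of the stratum contributes $|T|$ dimensions, and each fiber parametrizes $\alpha$-twisted semistable sheaves on the single smooth elliptic fiber $f_c$ of class $n_c(0, r'f, d')$. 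By the Atiyah classification the corresponding coarse moduli variety has dimension $n_c$, while a generic member is a direct sum of $n_c$ distinct stable bundles and hence carries an $n_c$-dimensional automorphism group, so the stack fiber has dimension zero. The maximum $|T| = l$ is attained exactly when each $n_c = 1$, yielding $\dim {\cal M}^{\mathrm{sm}} = l$; the matching lower bound is realized by varying $l$ distinct smooth fibers with a stable factor on each, together with a fixed choice of multi-fiber components.

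The main obstacle is justifying the stack-dimension-zero claim for the single-fiber moduli with non-primitive class $n_c(0, r'f, d')$ in the $\alpha$-twisted setting. Atiyah's classification handles the untwisted case, but transferring the automorphism count into the twisted setting likely requires passing to an auxiliary elliptic surface with a section (as in Remark \ref{rem:u}) and applying Lemma \ref{lem:u}(3), or else adapting the filtration argument of Lemma \ref{lem:fiber-dim} to smooth-fiber stable sheaves, for which the Serre-twist identity $F(K_X) \cong F$ is automatic and so the inductive bound on ${\cal J}(l,E_0)$ contributes $+1$ per factor rather than being negative.
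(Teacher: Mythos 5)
Your overall strategy --- split $E$ according to the connected components of its support, bound the multiple-fibre pieces by Proposition \ref{prop:multiple} and the remaining pieces by a one-parameter-per-fibre count --- is essentially the route the paper takes (its proof consists of invoking Lemma \ref{lem:fiber-dim} together with the argument of Proposition \ref{prop:multiple}). But there are two problems. First, the assertion that the uniqueness of the normal form \eqref{eq:isotropic} ``forces exactly $l_i$ factors of type $i$ and $l$ smooth-fibre factors'' is false. Uniqueness holds only under the constraint $0\leq l_i<d'/d_i$; an actual Jordan--H\"{o}lder decomposition may have $a_i=l_i+k_i\,d'/d_i$ factors of class $(0,r_if_i,d_i)$ supported on $f_i$ (any $k_i\geq 0$), compensated by only $l-\sum_ik_i$ factors of class $(0,r'f,d')$ elsewhere, because $\frac{d'}{d_i}(0,r_if_i,d_i)=(0,r'f,d')$. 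So your stratification omits strata. These omitted strata happen to be harmless --- the fixed-divisor bound of Proposition \ref{prop:multiple} gives $\leq 0$ for the $f_i$-piece whatever $a_i$ is, and the remaining piece then contributes at most $l-\sum_ik_i\leq l$ --- but the argument as written does not say this, and the claimed uniqueness is not what \eqref{eq:isotropic} provides.

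Second, the step you yourself flag as the main obstacle is a genuine gap, and it is precisely what Lemma \ref{lem:fiber-dim} supplies: for a fixed fibre $\pi^{-1}(c)$, the stack of semistable sheaves of class $n(0,r'f,d')$ supported on it has dimension $\leq 0$ because, after choosing the pairwise non-isomorphic stable factors $V_1,\dots,V_k$ (a $k$-dimensional coarse family by Corollary \ref{cor:u} and Remark \ref{rem:u}), each iterated self-extension locus ${\cal J}(e_j,V_j)$ has dimension $\leq -1$; counting automorphisms of a \emph{generic} direct sum does not bound the non-generic extension loci, which is where the content lies. Your worry that $V(K_X)\cong V$ would make the inductive bound contribute $+1$ per factor is unfounded: Lemma \ref{lem:fiber-dim} is stated and proved without assuming $E_0(K_X)\not\cong E_0$. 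Note also that the fibres over $C\setminus\{c_1,\dots,c_s\}$ need not be smooth (they may be singular or reducible), so Atiyah's classification is not directly available; one needs Corollary \ref{cor:u} via the device of Remark \ref{rem:u}. With these two repairs the proof closes, but as written it contains one false step and leaves the key estimate unproven.
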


\begin{NB}
$m_i \mid l_i p_i r'$ if and only if $\frac{d'}{d_i} \mid l_i$ by $\gcd(r',d')=1$.
\end{NB}

\begin{NB}
For ${\bf f}$, 
$\dim {\cal M}_H^\alpha({\bf f})^{ss}=l$, where $l$ is the maximal 
in the decomposition 
\begin{equation}\label{eq:isotropic}
{\bf f}=\sum_i l_i(0,r_i f_i,d_i)+l(0,r' f,d')
\end{equation}
where $0 \leq l_i <\frac{d'}{d_i}$.

\end{NB}

\begin{lem}\label{lem:spherical2}
Let $D$ be an effective divisor such that $(D^2)=-2$ and $\pi(D)$ is a point.
Then ${\cal M}_H^\alpha(0,D,a)^s$ consists of a spherical object for 
a general $(H,\alpha)$. 
\end{lem}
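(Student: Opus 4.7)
The plan is to establish three facts in order: non-emptiness, sphericity of every member, and uniqueness.

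For non-emptiness, I observe that $\pi(D)$ being a point implies $D$ is supported in a single fiber, hence $(D\cdot f)=0$. Then Lemma \ref{lem:spherical} furnishes at least one $\alpha$-twisted stable sheaf of class $(0,D,a)$ for a general $(H,\alpha)$. For sphericity of any member, take $E\in{\cal M}_H^\alpha(0,D,a)^s$. It satisfies $(c_1(E)\cdot f)=0$ and $(c_1(E)^2)=(D^2)=-2<0$, so Lemma \ref{lem:u}(2) applies to give that $E$ is spherical and $E(K_X)\cong E$.

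For uniqueness, I would argue by contradiction. Suppose $E_1,E_2\in {\cal M}_H^\alpha(0,D,a)^s$ are non-isomorphic. By $\alpha$-twisted stability and equality of twisted Hilbert polynomials (both have $\tau=(0,D,a)$), we get $\Hom(E_1,E_2)=\Hom(E_2,E_1)=0$. Using $E_1(K_X)\cong E_1$ from Lemma \ref{lem:u}(2) and Serre duality,
\[
\Ext^2(E_1,E_2)\cong \Hom(E_2,E_1(K_X))^{\vee}\cong \Hom(E_2,E_1)^{\vee}=0.
\]
Therefore $\chi(E_1,E_2)=-\dim \Ext^1(E_1,E_2)\leq 0$. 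On the other hand, Hirzebruch--Riemann--Roch for purely 1-dimensional sheaves on a surface yields
\[
\chi(E_1,E_2)=-(c_1(E_1)\cdot c_1(E_2))=-(D^2)=2,
\]
contradicting the previous inequality. Combined with non-emptiness, this shows ${\cal M}_H^\alpha(0,D,a)^s$ is a single point carrying a spherical object.

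The main obstacle is the uniqueness step, whose heart is the contradiction between the positive invariant $-(D^2)=2$ and the non-positivity of $\chi(E_1,E_2)$ forced by stability together with the $K_X$-invariance of stable sheaves of class $(0,D,a)$; it is precisely this invariance, supplied by Lemma \ref{lem:u}(2), that lets Serre duality kill $\Ext^2$ and close the argument. Everything else is direct invocation of the existence result in Lemma \ref{lem:spherical} and the dichotomy in Lemma \ref{lem:u}(2).
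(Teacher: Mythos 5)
Your proof is correct and follows essentially the same route as the paper: existence via Lemma \ref{lem:spherical}, sphericity and the $K_X$-invariance via Lemma \ref{lem:u}(2), and uniqueness from $\chi(E_1,E_2)=-(D^2)=2$ combined with stability and Serre duality. Your contradiction formulation of the uniqueness step is just the contrapositive of the paper's one-line argument that $\chi(E_1,E_2)=2$ forces $\Hom(E_1,E_2)\neq 0$ or $\Ext^2(E_1,E_2)\neq 0$, either of which yields $E_1\cong E_2$.
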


\begin{proof}
The existance is a consequence of Lemma \ref{lem:spherical}.
For $E_1,E_2 \in {\cal M}_H^\alpha(0,D,a)^s$,
we have $\chi(E_1,E_2)=2$. Then $\Hom(E_1,E_2) \ne 0$ implies $E_1 \cong E_2$.
\end{proof}

Let $Y:=M_H^\alpha(0,r_1 f,d_1)$ be a fine moduli space
and ${\cal P}$ a universal family on $X \times Y$,
where $(H,\alpha)$ is general.
Then $Y$ is an elliptic surface, that is, $Y$ has an elliptic fibration $\pi':Y \to C$.
The following lemma shows that the multiple fibers of $\pi'$
are explicitly determined by those for $\pi$.

\begin{lem}\label{lem:Y-multiple}
For a mutiple fiber $\pi^{-1}(c)=mf_0$,
we set ${\pi'}^{-1}(c)=m' f_0'$, where $m'$ is the multiplicity.
Then $m'=m$.
\end{lem}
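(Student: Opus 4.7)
The plan is to analyze the local structure of $\pi'$ near $c\in C$ by transporting the standard cyclic-quotient description of a multiple fiber through the relative Fourier-Mukai transform $\Phi:=\Phi_{X\to Y}^{{\cal P}^{\vee}}$. Recall the Kodaira local model of a tame multiplicity-$m$ multiple fiber: over the formal neighborhood $\widehat{\Delta}:=\Spec\widehat{\cal O}_{C,c}$ of $c$, the fibration $\pi$ is isomorphic to the quotient $\widetilde{X}/(\bb{Z}/m)\to \widetilde{\Delta}/(\bb{Z}/m)=\widehat{\Delta}$, where $\widetilde{\pi}:\widetilde{X}\to\widetilde{\Delta}$ is a smooth elliptic fibration over the totally ramified $m$-fold cyclic cover $\widetilde{\Delta}\to \widehat{\Delta}$, and a generator of $\bb{Z}/m$ acts on $\widetilde{\Delta}$ by rotation and on the central fiber $\widetilde{X}_{c'}\cong E$ by translation by an element $\tau\in E$ of exact order $m$.

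Next, I transport this picture to $Y$. The relative Fourier-Mukai transform commutes with base change along $\widetilde{\Delta}\to C$; since $\widetilde{\pi}$ is smooth and $\gcd(r_1,d_1)=1$, the relative moduli space $\widetilde{Y}$ of $(0,r_1\widetilde{f},d_1)$-sheaves on $\widetilde{X}$ is itself a smooth elliptic fibration isomorphic to $\widetilde{X}$ by the relative version of Atiyah's classification. The $\bb{Z}/m$-action on $\widetilde{X}$ lifts $\bb{Z}/m$-equivariantly to $\widetilde{Y}$ and $Y\times_C\widehat{\Delta}\cong\widetilde{Y}/(\bb{Z}/m)$. On the central fiber $\widetilde{Y}_{c'}\cong E$, the pull-back $t_\tau^*$ of sheaves shifts the determinant of a rank-$r_1$ degree-$d_1$ bundle by $d_1\tau\in\Pic^0(E)$, so via the identification of the moduli of stable rank-$r_1$ degree-$d_1$ bundles with $E$ through the determinant map, the induced action on $\widetilde{Y}_{c'}$ is translation by $d_1\tau$, which has order $m/\gcd(m,d_1)$. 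Consequently $m'=m/\gcd(m,d_1)$.

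To conclude, I must show $\gcd(m,d_1)=1$. Since $\pi':Y\to C$ is surjective, ${\pi'}^{-1}(c)$ is non-empty, and fineness of $Y$ forces it to contain a stable sheaf $E_0$ of class $(0,r_1f,d_1)$ supported on $mf_0$. By Lemma \ref{lem:g=1}, $E_0$ is an ${\cal O}_{f_0}$-module with $\Div E_0=r_1mf_0$; writing $(0,r_1mf_0,d_1)=l(0,rf_0,d)$ with $\gcd(r,d)=1$ yields $l=\gcd(r_1m,d_1)=\gcd(m,d_1)$, and Lemma \ref{lem:multiple1}(1) forces $l=1$. Hence $m'=m$. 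The main obstacle is the equivariant descent in the preceding step — rigorously verifying that $\Phi$ intertwines the two $\bb{Z}/m$-actions and that $Y\times_C\widehat{\Delta}$ identifies with $\widetilde{Y}/(\bb{Z}/m)$; this rests on the $\bb{Z}/m$-equivariance of the normalized relative Poincaré sheaf on $\widetilde{X}\times_{\widetilde{\Delta}}\widetilde{Y}$ together with standard Galois descent for the relative moduli problem.
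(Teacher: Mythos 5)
Your strategy is viable but takes a far heavier route than the paper, and its crux is left unproved. The paper's own argument is two lines: pick multiplicity-one components $C\subset f_0$ and $C'\subset f_0'$ and compute the rank of ${\cal P}_{|C\times C'}$ in two ways — viewing ${\cal P}$ as a family over $Y$ of sheaves on $X$ gives (generic fiber rank)$\times m$, while viewing it as a family over $X$ of sheaves on $Y$ gives (generic fiber rank)$\times m'$, whence $m=m'$. No local model, no descent, no Atiyah classification.

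The concrete gap in your write-up is the identification $Y\times_C\widehat{\Delta}\cong\widetilde{Y}/(\bb{Z}/m)$, which you flag as the "main obstacle" but do not establish. Away from $c$ this is routine Galois descent, but over the closed point it requires showing that every $\alpha$-twisted stable sheaf on $X$ of class $(0,r_1f,d_1)$ supported in $mf_0$ is the pushforward of a stable rank-$r_1$, degree-$d_1$ bundle along the \'etale $m$-cover $E\to f_0$, and that two such pushforwards coincide exactly when the bundles lie in one $\bb{Z}/m$-orbit; this is comparable in substance to the lemma itself, and without it the formula $m'=m/\gcd(m,d_1)$ is not justified (indeed, when $\gcd(m,d_1)>1$ the subgroup acting trivially on $\widetilde{Y}_{c'}$ fixes the central fiber pointwise, so $\widetilde{Y}/(\bb{Z}/m)$ is not even obviously the smooth surface $Y$). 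Two parts of your argument are sound and worth keeping: the computation that the induced action on $\widetilde{Y}_{c'}\cong\Pic^{d_1}(E)$ is translation by $\pm d_1\tau$, and the deduction of $\gcd(m,d_1)=1$ from Lemma \ref{lem:multiple1}(1) applied to a point of ${\pi'}^{-1}(c)$ (using that $\Div E_0$ has square zero, hence equals $r_1mf_0$). But as written the proof is not complete, and the rank-symmetry argument renders the whole apparatus unnecessary.
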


\begin{proof}
Let $C$ be an irreducible component of $f_0$ with multiplicity 1 and 
$C'$ be an irreducible component of $f_0'$ with multiplicity 1. 
Since $\rk {\cal P}_{|X \times \{ y \}}=r' m$ on $C$ for all $y \in {\pi'}^{-1}(c)$,
$\rk {\cal P}_{|C \times C'}=r' m$.
We also have $\rk {\cal P}_{|C \times C'}=r' m'$.
\begin{NB}
Old:
Since $\rk {\cal P}_{|X \times \{ y \}}=r' m$ on $f_0$ for all $y \in {\pi'}^{-1}(c)$,
$\rk {\cal P}_{|f_0 \times f_0'}=r' m$.
We also have $\rk {\cal P}_{|f_0 \times f_0'}=r' m'$.
\end{NB}
Therefore $m=m'$.
\end{proof}

\section{Wall crossing for the moduli spaces of stable 1-dimensional sheaves}\label{sect:wall-crossing}

\subsection{Classification of walls.}\label{subsect:class-walls}

We set ${\bf e}:=(0,\xi,a)$, where $\xi$ is an effective divisor with $(\xi \cdot f)=1$
and $a \in {\Bbb Z}$. 
Let ${\cal F}({\bf e}_1,{\bf e}_2,\dots,{\bf e}_s)$ (cf. \cite[Prop. 2.4]{Y:twist2})
be the stack of 
Harder-Narasimhan filtrations
$$
0 \subset F_1 \subset F_2 \subset \cdots \subset F_s=E
$$
of $E \in {\cal M}_H^\alpha({\bf e})^{ss}$ 
with respect to $(H_\pm, \alpha_\pm)$, where $\tau(F_i/F_{i-1})={\bf e}_i$.
By Lemma \ref{lem:1-dim},
$$
\Ext^2(F_i/F_{i-1},F_j/F_{j-1})=\Hom(F_j/F_{j-1},F_i/F_{i-1} (K_X))^{\vee}=
\Hom(F_j/F_{j-1},F_i/F_{i-1})^{\vee}=0
$$
for $i>j$.
Hence by \cite[Prop. 2.5]{Y:twist2}, we get
\begin{equation}\label{eq:HNF-dim}
\dim {\cal F}({\bf e}_1,{\bf e}_2,\dots,{\bf e}_s)=\sum_{i<j} -\chi({\bf e}_j,{\bf e}_i)+
\sum_i \dim {\cal M}_{H_\pm}^{\alpha_\pm}({\bf e}_i)^{ss}.
\end{equation}

\begin{prop}\label{prop:wall}
Let $W$ be a wall for ${\bf e}=(0,\xi,a)$ where $\xi$ is an effective divisor with $(\xi,f)=1$.
\begin{enumerate}
\item[(1)]
$W$ is defined by one of the following ${\bf u}$. 
\begin{enumerate}
\item
$-\chi({\bf e},{\bf u}) \leq \frac{(\xi^2)+e-2}{2}$ 
for ${\bf u}=(0,D,b)$ such that $D$ is effective, 
$(D^2)=-2$ and $(D\cdot f)=0$.
\item
$0 < -\chi({\bf e},{\bf u})=r \leq \frac{(\xi^2)+e}{2}$ for ${\bf u}=(0,rf,d)$ with $\gcd(r,d)=1$.
\end{enumerate}
\item[(2)]
${\cal M}_H^\alpha({\bf u})^{ss}={\cal M}_H^\alpha({\bf u})^s$ if $(H,\alpha) \in W$ is general. 
\end{enumerate}
\end{prop}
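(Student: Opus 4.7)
The plan is to associate each wall $W$ with a destabilizing class ${\bf u}$ coming from a Jordan--H\"older factor on the wall, and to classify ${\bf u}$ via the structure results of Section~\ref{sect:1-dim}. First I would take a general $(H_0,\alpha_0)\in W$ and an $\alpha_0$-twisted strictly semi-stable sheaf $E$ of class ${\bf e}$; a Jordan--H\"older factor $F$ of $E$ is $\alpha_0$-twisted stable of some class ${\bf u}$ with $\mu_{\alpha_0}({\bf u})=\mu_{\alpha_0}({\bf e})$, and this ${\bf u}$ defines $W$. Applying Lemma~\ref{lem:1-dim} inductively to the Jordan--H\"older filtration of $E$ shows that exactly one graded piece has $(c_1\cdot f)=1$ and every other piece is supported on fibers. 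Since ${\bf u}$ and ${\bf e}-{\bf u}$ give the same wall, I may replace ${\bf u}$ by ${\bf e}-{\bf u}$ if necessary and take $F$ to be one of the fiber-supported stable factors, so that $(c_1({\bf u})\cdot f)=0$.

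The classification is then immediate from Lemma~\ref{lem:u}: $F$ must be an ${\cal O}_D$-module on a component of a (possibly multiple) fiber, with $(c_1(F)^2)\in\{0,-2\}$. When $(c_1(F)^2)=-2$ one lands in case~(a): $F$ is spherical by Lemma~\ref{lem:spherical2}, and Corollary~\ref{cor:-2} together with Remark~\ref{rem:u} (passing, if necessary, to an auxiliary elliptic surface with a section that contains the relevant singular fiber) yields $c_1(F)=D$ represented by an effective divisor. When $(c_1(F)^2)=0$ one lands in case~(b): Lemma~\ref{lem:u}(3) together with Remark~\ref{rem:u} gives $c_1(F)=rf$ in $\NS(X)$, and $\gcd(r,d)=1$ is forced by the stability of $F$ combined with Lemma~\ref{lem:multiple1} when the relevant fiber is multiple.

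For the quantitative bounds, I would use the Euler form identity $\chi(E_1,E_2)=-(c_1(E_1)\cdot c_1(E_2))$ valid for $1$-dimensional sheaves on a surface, which makes $-\chi({\bf e},{\bf u})=(\xi\cdot D)$ in case~(a) and $=r$ in case~(b). A direct algebraic manipulation shows that both stated bounds collapse to the single inequality $((\xi-c_1({\bf u}))^2)\geq -e$, which I would interpret as the constraint forced by the complementary class ${\bf e}-{\bf u}$ being represented by a stable $1$-dimensional sheaf: its support is a connected effective divisor $D'$ with $(D'\cdot f)=1$, and decomposing $D'=\tilde\sigma+V$ into a degree-$1$ multisection and a fiber-supported effective class, and using $(\sigma^2)=-e$ for a section (via Remark~\ref{rem:u}) together with the connectedness of $D'$, one extracts $(D'^2)\geq -e$.

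Assertion~(2) would follow by a genericity argument: if some ${\bf u}$ as in (1) had a properly semi-stable representative at a general $(H,\alpha)\in W$, a Jordan--H\"older factor of this representative would produce a strict subclass ${\bf u}'\subsetneq{\bf u}$ of the same slope, and ${\bf u}'$ would define a distinct wall $W'$ passing through $(H,\alpha)$, contradicting the generality of $(H,\alpha)$ in $W$. The main technical hurdle I anticipate is pinning down the precise inequality $(D'^2)\geq -e$: this will require careful use of the connectedness of the support of a stable $1$-dimensional sheaf (so as to rule out pathological $D'=\sigma+C_1$ with $(\sigma\cdot C_1)=0$) together with the geometry of degree-$1$ multisections on the elliptic fibration.
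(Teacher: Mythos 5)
Your overall strategy coincides with the paper's: pass to a Harder--Narasimhan/Jordan--H\"older factor on the wall, use Lemma~\ref{lem:1-dim} to see that exactly one graded piece has $(c_1\cdot f)=1$ so that the wall is defined by a fiber-supported class ${\bf u}$, classify ${\bf u}$ by $(c_1({\bf u})^2)\in\{0,-2\}$ via Lemma~\ref{lem:u}, compute $-\chi({\bf e},{\bf u})=(\xi\cdot c_1({\bf u}))$, and reduce both numerical bounds to $((\xi-c_1({\bf u}))^2)\geq -e$. The genuine gap is in your justification of that last inequality. You propose to prove it by writing the support divisor of the complementary factor as $D'=\tilde\sigma+V$ with $\tilde\sigma$ a section and $V$ effective and fiber-supported, and to extract $({D'}^2)\geq-e$ from connectedness. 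This cannot work: connectedness of an effective divisor with $(D'\cdot f)=1$ does not bound $({D'}^2)$ below by $-e$. For instance $D'=\sigma+2C_1$ with $C_1$ a $(-2)$-component of a fiber meeting $\sigma$ is connected, yet $({D'}^2)=-e+4-8=-e-4$. The inequality is not a statement about effective divisors at all; it is the nonemptiness constraint on the moduli of \emph{stable} sheaves of class $(0,\xi',a')$ with $(\xi'\cdot f)=1$, and the way to get it is the relative Fourier--Mukai transform sending such a stable sheaf to a rank-one torsion-free sheaf $I_Z\otimes L$ (cf.\ \cite[Prop.~3.4.5]{PerverseII}): then $-(\xi'^2)=\chi(F,F)=\chi(I_Z,I_Z)=e-2\deg Z\leq e$. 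Without this (or an equivalent nonemptiness criterion) your case (a) bound $-\chi({\bf e},{\bf u})\leq\frac{(\xi^2)+e-2}{2}$ and case (b) bound $r\leq\frac{(\xi^2)+e}{2}$ are not established.

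Two smaller points. In part (2) you derive a contradiction by saying a Jordan--H\"older factor of class ${\bf u}'$ would define a \emph{distinct} wall through the general point $(H,\alpha)\in W$; but the genericity argument only forces $\tau({\bf u}')\in{\Bbb Q}{\bf e}+{\Bbb Q}{\bf u}$, hence (using $(c_1({\bf u}')\cdot f)=0$) $\tau({\bf u}')\in{\Bbb Q}{\bf u}$, which defines the \emph{same} wall. You still need to rule out a proper factor proportional to ${\bf u}$: in case (b) this follows from primitivity of $(0,rf,d)$ (note that $\xi$ effective with $(\xi\cdot f)=1$ forces the existence of a section, hence no multiple fibers, so $f$ is primitive), and in case (a) from the fact that $t^2(D^2)=-2t^2\notin\{0,-2\}$ for $0<t<1$ while Lemma~\ref{lem:u} forces $(c_1)^2\in\{0,-2\}$ for a stable fiber sheaf. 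Finally, effectivity of $D$ in case (a) is immediate from $D=\Div(F)$ for the sheaf $F$; Corollary~\ref{cor:-2} is not needed there.
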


\begin{proof}
Let $W$ be a wall for ${\bf e}$ and we take a general $(H,\alpha)$.
Let ${\cal C}_\pm$ be two chambers separated by $W$ and
$(H,\alpha) \in \overline{{\cal C}_\pm}$.
We take $(H_\pm, \alpha_\pm) \in {\cal C}_\pm$ in a 
neighborhood of $(H,\alpha)$.
For the Harder-Narasimhan filtrations
$$
0 \subset F_1 \subset F_2 \subset \cdots \subset F_s=E
$$
of $E \in {\cal M}_H^\alpha({\bf e})^{ss}$ 
with respect to $(H_\pm, \alpha_\pm)$,
we set ${\bf e}_i:=\tau(F_i/F_{i-1})=(0,\xi_i,a_i)$.
We note that
${\bf e}$ and ${\bf e}_i$ span a 2-plane in
$H^*(X,{\Bbb Q})$.
Since $(\xi \cdot f)=1$, there is $i_0$ such that $(\xi_i \cdot f)=0$ for $i \ne i_0$ and
$(\xi_{i_0}\cdot f)=1$.
If $s>2$, then $L:=\oplus_{i \ne i_0} {\Bbb Q}{\bf e}_i$ satisfies
$\dim L \geq 2$ and hence $\dim (L+{\Bbb Q}{\bf e}_{i_0}) \geq 3$.
Therefore $s=2$.
Hence ${\bf e}={\bf e}_1+{\bf e}_2$.
Assume that ${\bf e}_1=l_1 (0,D_1,b_1)$, where ${\bf u}:=(0,D_1,b_1)$ is primitive and $(D_1 \cdot f)=0$.
Then $(D_1^2)=0,-2$. If $(D_1^2)=-2$, then ${\bf u}=(0,D,b)$, where $D$ is an effective,
$(D^2)=-2$ and $(D \cdot f)=0$. By $((\xi-D)^2) \geq -e$, we get $2(\xi \cdot D) \leq (\xi^2)+e-2$. 
If $(D_1^2)=0$, then ${\bf u}=(0,rf,d)$ with $\gcd(r,d)=1$.
In this case, $((\xi-rf)^2) \geq -e$ implies $2r \leq (\xi^2)+e$.

(2)
For $E \in {\cal M}_H^\alpha({\bf u})^{ss}$,
if $E$ is properly semi-stable,
then we take a stable factor $E_1$ of $E$.
Then $E_1$ also defines a wall for ${\bf e}$.
Since $(H,\alpha) \in W$ is general,
$E_1$ also define $W$, and hence $\tau(E_1) \in {\Bbb Q}{\bf e}+{\Bbb Q}{\bf u}$.
Since $(c_1(E_1)\cdot f)=0$ and $\tau(E_1) \in {\Bbb Q}{\bf e}+{\Bbb Q}{\bf u}$, 
$\tau(E_1) \in {\Bbb Q}{\bf u}$. Therefore $E$ is stable.
\end{proof}

\begin{NB}
\begin{equation}
\frac{a-(\xi,\alpha)}{(\xi,H)}=\frac{a_i-(\xi_i,\alpha)}{(\xi_i,H)}
\Longleftrightarrow 
(a\xi_i-a_i \xi,H)=((\xi_i,H)\xi-(\xi,H)\xi_i,H).
\end{equation}
If $a \xi_i-a_i \xi \ne 0$, then
we assume that $(a \xi_i-a_i \xi ,H) \ne 0$.
Then as an equation of $\alpha$,
$(\xi_i,a_i)$ and $(\xi_j,a_j)$ defines the same wall
if and only if 
$$
((\xi_i,H)\xi-(\xi,H)\xi_i: a(\xi_i,H)-a_i(\xi,H))= ((\xi_j,H)\xi-(\xi,H)\xi_j: a(\xi_j,H)-a_j(\xi,H)).
$$
Then we have $(\xi_j,a_j) \in {\Bbb Q}(\xi,a)+{\Bbb Q}(\xi_i,a_i)$.
\end{NB}

\begin{NB}
\begin{rem}
$M_{H_\pm}^{\alpha_\pm}({\bf e}) \to \overline{M}_H^\alpha({\bf e})$ is injective if and only if
$|\chi({\bf e},{\bf u})| > \frac{(\xi^2)+e-2}{2}$. 
\end{rem}
\end{NB}

\begin{NB}
$((\xi-kD)^2)=(\xi^2)-2k((\xi,D)+k) \geq -e$.
\end{NB}

\begin{NB}
Assume that $W$ is defined by ${\bf u}$. Then 
\begin{equation}
\dim ({\cal M}_{H_\pm}^{\alpha_\pm}({\bf e})^{ss} \setminus {\cal M}_H^\alpha({\bf e})^s)
\leq 
-\chi({\bf e}_1,{\bf e}_2)+\dim {\cal M}_{H_\mp}^{\alpha_\mp}({\bf e}_1)^{ss}+
\dim {\cal M}_{H_\mp}^{\alpha_\mp}({\bf e}_2)^{ss}
\end{equation}

\begin{equation}
\dim ({\cal M}_{H_\pm}^{\alpha_\pm}({\bf e})^{ss} \setminus {\cal M}_H^\alpha({\bf e})^s)
\leq 
-\chi({\bf e}-l{\bf u},{\bf u})+\dim {\cal M}_{H_\mp}^{\alpha_\mp}({\bf e}-l{\bf u})^{ss}+
\dim {\cal M}_{H_\mp}^{\alpha_\mp}(l{\bf u})^{ss}
\end{equation}
\end{NB}

\begin{prop}\label{prop:wall-dim}
Let $W$ be a wall defined by ${\bf u}$.  
\begin{enumerate}
\item[(1)]
Assume that ${\bf u}=(0,D,b)$ with $(D^2)=-2$.
If $(\xi \cdot D) \geq 0$, then
\begin{equation}
\dim ({\cal M}_{H_\pm}^{\alpha_\pm}({\bf e})^{ss} \setminus {\cal M}_H^\alpha({\bf e})^s)
=\dim {\cal M}_{H_\pm}^{\alpha_\pm}({\bf e})^{ss}
-((\xi \cdot D)+1).
\end{equation}
If $(\xi \cdot D)<0$, then ${\cal M}_H^\alpha({\bf e})^s =\emptyset$.
\item[(2)]
Assume that ${\bf u}=(0,rf,b)$ with $\gcd(r,b)=1$.
Then 
\begin{equation}
\dim ({\cal M}_{H_\pm}^{\alpha_\pm}({\bf e})^{ss} \setminus {\cal M}_H^\alpha({\bf e})^s)
=\dim {\cal M}_{H_\pm}^{\alpha_\pm}({\bf e})^{ss}
-(r-1).
\end{equation}
\end{enumerate}
\end{prop}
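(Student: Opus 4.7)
The plan is to stratify ${\cal M}_{H_\pm}^{\alpha_\pm}({\bf e})^{ss}\setminus{\cal M}_H^\alpha({\bf e})^s$ by Harder-Narasimhan filtration type with respect to $(H_\pm,\alpha_\pm)$ and bound each stratum using \eqref{eq:HNF-dim}. By the proof of Proposition \ref{prop:wall}, every $E$ in this locus sits in a two-step HN filtration $0\subset F_1\subset E$ with $\tau(F_1)={\bf e}_1$, $\tau(E/F_1)={\bf e}_2$, where exactly one of ${\bf e}_1,{\bf e}_2$ equals $l{\bf u}$ for some positive integer $l$ and the other equals ${\bf e}-l{\bf u}$.

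For the main computation I use $\chi({\bf e}_i,{\bf e}_j)=-(\xi_i\cdot\xi_j)$ on rank-zero classes, combined with Proposition \ref{prop:smooth} giving $\dim{\cal M}_{H_\pm}^{\alpha_\pm}({\bf e}-l{\bf u})^{ss}=((\xi-lD_1)^2)+g+e-1$, where $D_1$ denotes the divisor part of ${\bf u}$. In case (1) with ${\bf u}=(0,D,b)$ and $(D^2)=-2$, Lemma \ref{lem:spherical2} yields $\dim{\cal M}({\bf u})^s=-1$, and iterating the extension argument of Lemma \ref{lem:fiber-dim} (applied to the spherical generator $E_1$, which satisfies $E_1\cong E_1(K_X)$ by Lemma \ref{lem:u}(2)) gives $\dim{\cal M}(l{\bf u})^{ss}\le -1$ for every $l\ge 1$. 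Substituting into \eqref{eq:HNF-dim} produces
\begin{equation*}
\dim{\cal F}(l{\bf u},{\bf e}-l{\bf u})\le\dim{\cal M}_{H_\pm}^{\alpha_\pm}({\bf e})^{ss}-l(\xi\cdot D)-1,
\end{equation*}
which is maximised at $l=1$ exactly when $(\xi\cdot D)\ge 0$, giving the upper bound of (1). In case (2) with ${\bf u}=(0,rf,b)$ and $\gcd(r,b)=1$, Lemma \ref{lem:isotropic-dim} gives $\dim{\cal M}(l{\bf u})^{ss}\le l$, and the parallel computation yields
\begin{equation*}
\dim{\cal F}(l{\bf u},{\bf e}-l{\bf u})\le\dim{\cal M}_{H_\pm}^{\alpha_\pm}({\bf e})^{ss}-l(r-1),
\end{equation*}
again maximised at $l=1$. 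For the matching lower bound (equality), I would fix $E_1\in{\cal M}_H^\alpha({\bf u})^s$ and $F\in{\cal M}_{H_\pm}^{\alpha_\pm}({\bf e}-{\bf u})^s$, and exhibit an open stratum of non-split extensions in the direction prescribed by the chamber: these are $(H_\pm,\alpha_\pm)$-stable for $(H_\pm,\alpha_\pm)$ generic near $W$ and their dimension count reproduces $\dim{\cal F}(1\cdot{\bf u},{\bf e}-{\bf u})$.

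For the vanishing assertion in (1) under $(\xi\cdot D)<0$, I would use the spherical object $E_1\in{\cal M}_H^\alpha({\bf u})^s$ (Lemma \ref{lem:spherical2}) and Serre duality, which give
\begin{equation*}
\dim\Hom(E_1,E)+\dim\Hom(E,E_1)\ge\chi(E_1,E)=-(\xi\cdot D)>0,
\end{equation*}
so at least one Hom group is non-zero. On $W$, the objects $E_1$ and any putative stable $E$ share the same slope but have distinct primitive classes, so stability of $E$ together with simplicity of $E_1$ forces a contradiction. The step I expect to be most delicate is the bookkeeping for $\dim{\cal M}(l{\bf u})^{ss}$ when $l\ge 2$, especially in the presence of multiple fibers, where the decomposition in \eqref{eq:isotropic} together with Proposition \ref{prop:multiple} must be unpacked to guarantee that the $l=1$ stratum genuinely dominates and that the penalty $l(r-1)$ (respectively $l(\xi\cdot D)+1$) is not cancelled by an unexpectedly large contribution from $\dim{\cal M}(l{\bf u})^{ss}$.
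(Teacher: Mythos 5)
Your argument is correct, and for part (2) it coincides with the paper's: both stratify the unstable locus by the length $l$ of the destabilizing piece, apply the Harder--Narasimhan dimension formula \eqref{eq:HNF-dim} together with $\dim {\cal M}_{H_\pm}^{\alpha_\pm}(l{\bf u})^{ss}=l$, observe that the codimension $l(r-1)$ is minimized at $l=1$, and then check that a general filtered object is semistable on the appropriate side. For part (1), however, the paper takes a finer route: it stratifies by $l=\dim\Hom(E,E_0)$ (resp. $\dim\Hom(E_0,E)$), where $E_0$ is the unique spherical stable sheaf of class ${\bf u}$, and identifies the $l$-th stratum as a $Gr((\xi\cdot D)+2l,l)$-bundle over ${\cal M}_H^{\alpha}({\bf e}-l{\bf u})^s$, which yields the exact stratum dimension $\dim{\cal M}^{ss}-l((\xi\cdot D)+l)$ and hence both bounds at once. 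Your HN-based estimate gives only the weaker upper bound $\dim{\cal M}^{ss}-l(\xi\cdot D)-1$ for $l\geq 2$ (you could recover the sharp $-l(\xi\cdot D)-l^2$ by noting that $\Ext^1(E_0,E_0)=0$ forces ${\cal M}(l{\bf u})^{ss}=\{E_0^{\oplus l}\}$, of stack dimension $-l^2$), but since $l(\xi\cdot D)+1\geq(\xi\cdot D)+1$ whenever $(\xi\cdot D)\geq 0$, this suffices, and your separate construction of extensions at $l=1$ supplies the matching lower bound exactly as the paper's Grassmannian fibration does. Your Serre-duality argument for the emptiness statement when $(\xi\cdot D)<0$ is a correct expansion of what the paper delegates to a citation. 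Finally, the delicacy you flag about multiple fibers in case (2) is vacuous in this proposition: since $(\xi\cdot f)=1$ and $f\equiv m_i f_i$, every multiplicity $m_i$ divides $1$, so the decomposition \eqref{eq:isotropic} degenerates to $l_i=0$ and Lemma \ref{lem:isotropic-dim} gives $\dim{\cal M}(l{\bf u})^{ss}=l$ with no correction terms.
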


\begin{proof}
(1)
Let $E_0$ be an $\alpha$-twisted stable sheaf $E_0$ of $\tau(E_0)={\bf u}$
with respect to $H$.
Since $E_0$ is spherical, as in \cite[sect. 2.2.2]{Reflection},
we get the claim. So let us briefly explain the computation.
For simplicity, we assume that 
$$
\frac{a-(\xi \cdot \alpha_+)}{(\xi \cdot H_+)}<\frac{b-(D \cdot \alpha_+)}{(D \cdot H_+)}.
$$
We set 
$$
{\cal M}_{H_+}^{\alpha_+}({\bf e})_l^{ss}:=\{E \in {\cal M}_{H_+}^{\alpha_+}({\bf e})^{ss} \mid
\dim \Hom(E,E_0)=l \}.
$$
For any $E \in M_{H_+}^{\alpha_+}({\bf e})_l^{ss}$, we have an exact sequence 
\begin{equation}\label{eq:ext}
0 \to E_1 \to E \to E_0^{\oplus l} \to 0
\end{equation}
where $E_1$ is an $\alpha$-twisted stable sheaf of $\tau(E_1)={\bf e}-l {\bf u}$
with respect to $H$.
Conversely for $E_1 \in {\cal M}_H^\alpha({\bf e}-l{\bf u})^s$,
$\dim \Ext^1(E_0,E_1)=(\xi \cdot D)+2l$ and
any $l$-dimensional subspace $V$ of $\Ext^1(E_0,E_1)$ gives an extension
\eqref{eq:ext} such that $E \in {\cal M}_{H_+}^{\alpha_+}({\bf e})_l^{ss}$.
Therefore ${\cal M}_{H_+}^{\alpha_+}({\bf e})_l^{ss}$ is a Grassmanian bundle
($Gr((\xi \cdot D)+2l,l)$-bundle) over ${\cal M}_H^\alpha({\bf e}-l{\bf u})^s$.
Hence we see that
$$
\dim {\cal M}_{H_+}^{\alpha_+}({\bf e})_l^{ss}=\dim {\cal M}_{H_+}^{\alpha_+}({\bf e})^{ss}-
l((\xi \cdot D)+l).
$$
We set 
$$
{\cal M}_{H_-}^{\alpha_-}({\bf e})_l^{ss}:=\{E \in {\cal M}_{H_-}^{\alpha_-}({\bf e})^{ss} \mid
\dim \Hom(E_0,E)=l \}.
$$
Then we also see that
${\cal M}_{H_-}^{\alpha_-}({\bf e})_l^{ss}$ is a Grassmanian bundle
($Gr((\xi \cdot D)+2l,l)$-bundle) over ${\cal M}_H^\alpha({\bf e}-l{\bf u})^s$
and
$$
\dim {\cal M}_{H_-}^{\alpha_-}({\bf e})_l^{ss}=\dim {\cal M}_{H_-}^{\alpha_-}({\bf e})^{ss}-
l((\xi \cdot D)+l).
$$
Since
$$
{\cal M}_{H_\pm}^{\alpha_\pm}({\bf e})^{ss} \setminus {\cal M}_H^\alpha({\bf e})^s=\cup_l
{\cal M}_{H_\pm}^{\alpha_\pm}({\bf e})^{ss}_l,
$$
we get (1).

(2)
We note that $\dim {\cal M}_{H_\pm}^{\alpha_\pm}(l {\bf u})^{ss}=l$
(cf. \cite[Prop. 1.9]{Y:Enriques}). 
Hence the claims follow from \eqref{eq:HNF-dim} and Proposition \ref{prop:wall}.  
For $\{{\bf e}_1,{\bf e}_2 \}=\{ {\bf e}-l{\bf u},l{\bf u}\}$ such that
$$
\frac{a_1-(\xi_1 \cdot \alpha_\pm)}{(\xi_1 \cdot H_\pm)}<
\frac{a_2-(\xi_2 \cdot \alpha_\pm)}{(\xi_2 \cdot H_\pm)},
$$
we have 
$$
\dim {\cal F}({\bf e}_1,{\bf e}_2)=\dim {\cal M}_{H_\pm}^{\alpha_\pm}({\bf e})-l((\xi \cdot rf)-1) 
$$
where ${\bf e}_i=(0,\xi_i,a_i)$.
It is easy to see that $E \in {\cal M}_{H_\pm}^{\alpha_\pm}({\bf e})^{ss}$ for
a general $0 \subset F_1 \subset E \in {\cal F}({\bf e}_1,{\bf e}_2)$.
Hence we get the claim.
\end{proof}

\begin{cor}\label{cor:wall}
$M_{H_\pm}^{\alpha_\pm}({\bf e}) 
\to \overline{M}_H^\alpha({\bf e})$ is a divisorial contraction
if and only if there is ${\bf u}$ satisfying 
\begin{enumerate}
\item[(1)]
$-\chi({\bf e},{\bf u})=0$ for ${\bf u}=(0,D,a)$ such that $D$ is effective,
$(D^2)=-2$ and $(D \cdot f)=0$.
\item[(2)]
$-\chi({\bf e},{\bf u})=1,2$ for ${\bf u}=(0,rf,d)$.
\end{enumerate}
\end{cor}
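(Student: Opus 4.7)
My plan is to deduce Corollary \ref{cor:wall} directly from Proposition \ref{prop:wall-dim}, mediated by a Riemann-Roch identification between $-\chi({\bf e},{\bf u})$ and the intersection numbers appearing in the codimension formulas. The first step is to observe that for pure 1-dimensional sheaves on a smooth surface, a standard Grothendieck-Riemann-Roch computation (or a direct calculation using locally free resolutions, as in Lemma \ref{lem:e-poly}) yields $\chi(E_1,E_2) = -(c_1(E_1)\cdot c_1(E_2))$, so $-\chi({\bf e},{\bf u}) = (\xi \cdot c_1({\bf u}))$. In case (1), this gives $-\chi({\bf e},{\bf u}) = (\xi \cdot D)$; in case (2), it gives $-\chi({\bf e},{\bf u}) = r(\xi \cdot f) = r$, using $(\xi \cdot f) = 1$.

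The next step is to apply the codimension estimates of Proposition \ref{prop:wall-dim}. The exceptional locus of the contraction $M_{H_\pm}^{\alpha_\pm}({\bf e}) \to \overline{M}_H^\alpha({\bf e})$ is the locus of sheaves that are $H_\pm$-stable but only strictly $H$-semistable, i.e.\ ${\cal M}_{H_\pm}^{\alpha_\pm}({\bf e})^{ss} \setminus {\cal M}_H^\alpha({\bf e})^s$; because these sheaves retain the generic automorphism group $\mathbb{G}_m$, the codimension agrees in the stack and in the coarse moduli scheme. Proposition \ref{prop:wall-dim} computes this codimension as $(\xi \cdot D)+1$ in case (1) and $r-1$ in case (2). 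The morphism is a divisorial contraction precisely when this codimension is at most one.

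Combining the two steps: in case (1), the condition $(\xi \cdot D)+1 \le 1$, together with the non-emptiness constraint $(\xi \cdot D) \ge 0$ from Proposition \ref{prop:wall-dim}(1), forces $(\xi \cdot D) = 0$, i.e.\ $-\chi({\bf e},{\bf u}) = 0$. In case (2), the condition $r-1 \le 1$ together with $r \ge 1$ from Proposition \ref{prop:wall}(1)(b) forces $r \in \{1,2\}$, i.e.\ $-\chi({\bf e},{\bf u}) \in \{1,2\}$. Since Proposition \ref{prop:wall} exhausts all walls and these two cases are mutually exclusive, this handles both directions of the iff.

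The main subtlety I anticipate is the boundary case $r=1$ in (2), where the codimension formula evaluates to zero: on the wall the entire space is strictly $H$-semistable, so $M_H^\alpha({\bf e})^s = \emptyset$ and the map is no longer birational in the usual sense. Here I would argue that it still qualifies as a divisorial contraction in the paper's sense by unpacking the extension description of the semistable locus in the proof of Proposition \ref{prop:wall-dim}(2), checking that $M_{H_\pm}^{\alpha_\pm}({\bf e})$ fibers over $\overline{M}_H^\alpha({\bf e})$ with positive-dimensional generic fiber (concretely, a projectivization of $\Ext^1$ between the Jordan--H\"older factors), so that the source collapses onto a proper subvariety whose codimension in $\overline{M}_H^\alpha({\bf e})$ matches the divisorial case.
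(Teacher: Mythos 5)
Your overall strategy is the right one (and evidently the intended one, since the paper states the corollary without proof): identify $-\chi({\bf e},{\bf u})$ with $(\xi\cdot c_1({\bf u}))$ via Riemann--Roch for pure $1$-dimensional sheaves, then read the codimension of the strictly semistable locus off Proposition \ref{prop:wall-dim}. The cases $(\xi\cdot D)=0$ and $r=2$ are handled correctly, since there the strictly semistable locus is a genuine divisor and the Grassmannian/extension description in the proof of Proposition \ref{prop:wall-dim} shows it is contracted (fibers $Gr(2l,l)$, resp.\ ${\Bbb P}(\Ext^1)$ with $\ext^1=2$, are positive-dimensional), while codimension $\geq 2$ rules out a divisorial contraction for $(\xi\cdot D)>0$ and $r\geq 3$.

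The genuine gap is your treatment of the boundary case $r=1$. You correctly observe that there the codimension formula gives $0$, so every $H_\pm$-semistable sheaf is strictly $H$-semistable and your identification ``exceptional locus $=$ strictly semistable locus'' breaks down. But your proposed fix is wrong: you claim $M_{H_\pm}^{\alpha_\pm}({\bf e})$ fibers over $\overline{M}_H^\alpha({\bf e})$ with \emph{positive-dimensional} generic fiber and ``collapses onto a proper subvariety.'' If that were true the morphism would be a fiber space, not a birational contraction, and the corollary would be false for $r=1$. In fact for the generic Jordan--H\"older decomposition $E_1\oplus A$ with $\tau(A)={\bf u}=(0,f,d)$ one has $\dim\Ext^1(A,E_1)=-\chi(A,E_1)=(f\cdot(\xi-f))=1$ (the $\Hom$ and $\Ext^2$ terms vanish generically), so the fiber ${\Bbb P}(\Ext^1(A,E_1))$ is a single reduced point; moreover $\dim \overline{M}_H^\alpha({\bf e})=\dim M_{H}^\alpha({\bf e}-{\bf u})+\dim M_H^\alpha({\bf u})=\dim M_{H_\pm}^{\alpha_\pm}({\bf e})$. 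Hence the morphism is birational, and its exceptional locus is the \emph{jumping divisor} where $\ext^1(A,E_1)\geq 2$, i.e.\ where $\Hom(A,E_1)\neq 0$ or $\Hom(E_1,A(K_X))\neq 0$ --- exactly the Hilbert--Chow picture, which is how the author describes this wall in the proof of Proposition \ref{prop:example1}. Replacing your fibration argument by this jumping-locus computation closes the gap; a similar (but opposite) remark disposes of the case $(\xi\cdot D)<0$, where the minimal stratum $l=-(\xi\cdot D)$ has codimension $0$ and fiber $Gr(l,l)=\mathrm{pt}$ while all deeper strata have codimension $\geq 2$, so the map is not a divisorial contraction there either.
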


\begin{NB}
Assume that $(D_1^2)=-2$.
If $(\xi,D_1) \geq 0$, then $(\xi_2,D_1) \geq -l_1 (D_1^2)=-2l_1$. Hence
$(\xi_2,\xi_1)-l_1^2=l_1((\xi_2,D_1)-l_1) \geq l_1^2$.
If $(\xi_2,\xi_1)-l_1^2=1$, then $l_1=1$ and $(\xi,D_1)=0$.
\end{NB}

\subsection{Birational correspondences 
between $M_{H_+}^{\alpha_+}({\bf e})$ and  $M_{H_-}^{\alpha_-}({\bf e})$.}
\label{subsect:birat}

For the walls in Proposition \ref{prop:wall-dim},
we shall construct birational correspondences 
between $M_{H_+}^{\alpha_+}({\bf e})$ and  $M_{H_-}^{\alpha_-}({\bf e})$.

(1)
Let $W$ be a wall defined by ${\bf u}=(0,D,b)$ where $D$ is effective, $(D^2)=-2$ and $(D \cdot f)=0$.
Replacing $(H_+,\alpha_+)$ by $(H_-,\alpha_-)$ if necessary,
we may assume that
$$
\frac{a-(\xi \cdot \alpha_+)}{(\xi \cdot H_+)}<\frac{b-(D \cdot \alpha_+)}{(D \cdot H_+)}.
$$

\begin{prop}\label{prop:R-isom}
Let $E_0$ be the $\alpha$-twisted stable sheaf with $\tau(E_0)={\bf u}$. Then
$R_{E_0}$ induces an isomorphism
$$
{\cal M}_{H_-}^{\alpha_-}({\bf e})^{ss} \to 
{\cal M}_{H_+}^{\alpha_+}(R_{E_0}({\bf e}))^{ss}.
$$
\end{prop}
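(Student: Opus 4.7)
The plan is to verify that $R_{E_0}$ carries $(H_-, \alpha_-)$-semistable sheaves of class $\mathbf{e}$ to $(H_+, \alpha_+)$-semistable sheaves of class $R_{E_0}(\mathbf{e})$, with inverse given by the opposite spherical twist $R_{E_0}^{-1}$. First I would compute $\mathbf{R}\Hom(E_0, E)$ for $E \in \mathcal{M}_{H_-}^{\alpha_-}(\mathbf{e})^{ss}$. By Lemma \ref{lem:u}(2) we have $E_0 \cong E_0(K_X)$, so Serre duality gives $\Ext^2(E_0, E) \cong \Hom(E, E_0)^\vee$; since every nonzero quotient of $E$ has twisted slope $\geq \mu_{H_-}^{\alpha_-}(E) > \mu_{H_-}^{\alpha_-}(E_0)$ by the wall assumption, no such quotient can be $E_0$, and hence $\Ext^2(E_0, E) = 0$. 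Writing $V := \Hom(E_0, E)$ and $W := \Ext^1(E_0, E)$, the defining triangle for $R_{E_0}(E)$ yields the long exact cohomology sequence
\[
0 \to \mathcal{H}^{-1}(R_{E_0}(E)) \to V \otimes E_0 \xrightarrow{\operatorname{ev}} E \to \mathcal{H}^0(R_{E_0}(E)) \to W \otimes E_0 \to 0.
\]

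Next I would argue that $\operatorname{ev}$ is injective, so that $R_{E_0}(E)$ is concentrated in degree $0$. Setting $K := \ker(\operatorname{ev})$ and $I := \operatorname{im}(\operatorname{ev}) \subset E$, the $\Hom(E_0, -)$ long exact sequence applied to $0 \to K \to V \otimes E_0 \to I \to 0$, together with $\Ext^1(E_0, E_0) = 0$ and the canonical identification $\Hom(E_0, I) \cong V$ (arising because any $E_0 \to I \hookrightarrow E$ represents an element of $V$), yields $\Hom(E_0, K) = \Ext^1(E_0, K) = 0$. The Serre-duality identity $\chi(E_0, K) = \dim \Hom(K, E_0)$, combined with the $(H_-, \alpha_-)$-stability of $E$ applied to the composite $K \hookrightarrow V \otimes E_0 \to E$ (no nonzero quotient of $K$ can have twisted slope as small as $\mu^{\alpha_-}_{H_-}(E_0)$), then forces $K = 0$. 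Hence $R_{E_0}(E)$ fits in
\[
0 \to E/(V \otimes E_0) \to R_{E_0}(E) \to W \otimes E_0 \to 0,
\]
realizing it as a purely $1$-dimensional sheaf with $\tau(R_{E_0}(E)) = \mathbf{e} - \chi(E_0, E)\tau(E_0) = R_{E_0}(\mathbf{e})$.

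Finally, $(H_+, \alpha_+)$-semistability of $R_{E_0}(E)$ follows by a dual slope argument: any destabilizing subsheaf $F \subset R_{E_0}(E)$, pulled back through $R_{E_0}^{-1}$, produces a subobject of $E$ whose $(H_-, \alpha_-)$-slope violates stability. The crucial sign reversal is exactly the wall condition: $E_0$ has twisted slope larger than $E$ on the $+$ side and smaller on the $-$ side, so inequalities through the $R_{E_0}$-triangle invert. Applying the symmetric argument to $R_{E_0}^{-1}$ on $\mathcal{M}_{H_+}^{\alpha_+}(R_{E_0}(\mathbf{e}))^{ss}$ produces the inverse map, yielding the asserted isomorphism. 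The main obstacle is the injectivity of $\operatorname{ev}$, for which both the sphericity of $E_0$ (underlying the $K_X$-twist identity and the $\Ext^1$-vanishings) and the strict $(H_-, \alpha_-)$-stability of $E$ (ruling out nonzero maps $K \to E_0$) are indispensable; once injectivity is in hand the semistability check and the construction of the inverse reduce to standard slope bookkeeping across the wall.
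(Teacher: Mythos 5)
Your overall strategy coincides with the paper's: prove that the evaluation map $\ev\colon V\otimes E_0\to E$ ($V=\Hom(E_0,E)$) is injective, so that $R_{E_0}(E)$ is a sheaf sitting in the four-term sequence, and then transfer semistability across the wall using $\Hom(E_0,R_{E_0}(E))=0$, with $R_{E_0}^{-1}$ supplying the inverse. The vanishings you establish ($\Ext^2(E_0,E)=0$ via $E_0(K_X)\cong E_0$, and $\Hom(E_0,K)=\Ext^1(E_0,K)=0$ for $K=\ker\ev$) are correct and are exactly the right ingredients. However, the step that actually kills $K$ does not work as written: the composite $K\hookrightarrow V\otimes E_0\xrightarrow{\ev}E$ is zero by the very definition of $K$, so ``applying the $(H_-,\alpha_-)$-stability of $E$'' to it is vacuous, and $\chi(E_0,K)=\dim\Hom(K,E_0)$ by itself gives no contradiction. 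The missing idea is a slope computation on the wall $(H,\alpha)$ itself: $V\otimes E_0$ is $\alpha$-twisted semistable of wall-slope $\mu_0$, and its quotient $I=\im\ev$ is a subsheaf of $E$, which is semistable of the same wall-slope $\mu_0$; hence both $K$ and $I$ have wall-slope exactly $\mu_0$, so $K$ is a subobject of the semisimple object $E_0^{\oplus \dim V}$ in the abelian category of $\alpha$-twisted semistable sheaves of slope $\mu_0$ (in which $E_0$ is simple by stability). Therefore $K\cong E_0^{\oplus m}$, and $\Hom(E_0,K)=0$ forces $K=0$.

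The final step also needs repair. Pulling a destabilizing subsheaf $F\subset R_{E_0}(E)$ back through $R_{E_0}^{-1}$ does not produce a subobject of $E$: $R_{E_0}^{-1}(F)$ is in general a two-term complex, and the functor does not preserve monomorphisms of sheaves. The argument that works (and that the paper gestures at by asserting that $\im\varphi$ is $\alpha$-twisted stable on the wall and that $\Hom(E_0,R_{E_0}(E))=0$) is intrinsic: any $(H_+,\alpha_+)$-destabilizing subsheaf of $R_{E_0}(E)$ must have wall-slope $\mu_0$ and class in ${\Bbb Q}{\bf u}$, hence is an iterated extension of copies of $E_0$ inside $R_{E_0}(E)$, and its socle would give a nonzero element of $\Hom(E_0,R_{E_0}(E))$, contradicting the vanishing you already proved. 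With these two repairs your proof closes and agrees with the paper's.
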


\begin{proof}
For $E \in {\cal M}_{H_-}^{\alpha_-}({\bf e})^{ss}$,
we have an exact sequence
$$
0 \to \Hom(E_0,E) \otimes E_0 \to E \overset{\varphi}{\to}
R_{E_0}(E) \to \Ext^1(E_0,E) \otimes E_0 \to 0
$$
such that  
$\Hom(E_0,\im \varphi)=0$ and $\im \varphi$
is $\alpha$-twisted stable with respect to $H$.
Since $\Hom(E_0,R_{E_0}(E))=0$, we get $E':=R_{E_0}(E) \in 
{\cal M}_{H_+}^{\alpha_+}(R_{E_0}({\bf e}))^{ss}$.
Conversely for $E' \in {\cal M}_{H_+}^{\alpha_+}(R_{E_0}({\bf e}))^{ss}$,
we see that $E:=R_{E_0}^{-1}(E')$ is $\alpha_-$-twisted semi-stable
with respect to $H_-$. Hence our claim holds.
\end{proof}

Assume that $(D \cdot \xi) > 0$. 
We set $Z^\pm({\bf e}):={\cal M}_{H_\pm}^{\alpha_\pm}({\bf e})^{ss} 
\setminus {\cal M}_{H_\mp}^{\alpha_\mp}({\bf e})^{ss}$.
Then $\dim Z^\pm({\bf e}) \leq \dim {\cal M}_{H_\pm}^{\alpha_\pm}({\bf e})^{ss}-2$
(Proposition \ref{prop:wall-dim}) 
and we have an identification 
$$
{\cal M}_{H_-}^{\alpha_-}({\bf e})^{ss} \setminus Z^-({\bf e})
={\cal M}_{H_+}^{\alpha_+}({\bf e})^{ss} \setminus Z^+({\bf e}).
$$

Assume that $(D \cdot \xi) \leq 0$.
For $E_0 \in {\cal M}_H^\alpha({\bf u})^{ss}$,  
we set ${\bf e'}:=R_{E_0}({\bf e})$.
If $(D \cdot \xi)=0$, then ${\bf e}'={\bf e}$ and we have an isomorphism
\begin{equation}\label{eq:codim=1}
R_{E_0}: {\cal M}_{H_-}^{\alpha_-}({\bf e})^{ss}  \to  {\cal M}_{H_+}^{\alpha_+}({\bf e})^{ss}
\end{equation}
by Proposition \ref{prop:R-isom}.
If $(D \cdot \xi)<0$, then 
we also have isomorphisms
\begin{equation}
\begin{matrix}
R_{E_0}:& {\cal M}_{H_-}^{\alpha_-}({\bf e})^{ss} & \to & {\cal M}_{H_+}^{\alpha_+}({\bf e}')^{ss}\\
R_{E_0}:&{\cal M}_{H_-}^{\alpha_-}({\bf e}')^{ss} & \to & {\cal M}_{H_+}^{\alpha_+}({\bf e})^{ss}
\end{matrix}
\end{equation}
by Proposition \ref{prop:R-isom}.
We note that
$\dim Z^\pm({\bf e}') \leq \dim {\cal M}_{H_\pm}^{\alpha_\pm}({\bf e})^{ss}-2$
(Proposition \ref{prop:wall-dim}).
Combining the identifications 
$$
{\cal M}_{H_-}^{\alpha_-}({\bf e}')^{ss} \setminus Z^-({\bf e}')=
{\cal M}_{H_+}^{\alpha_+}({\bf e}')^{ss} \setminus Z^+({\bf e}'),
$$
$R_{E_0} \circ R_{E_0}$ induces a birational map
\begin{equation}\label{eq:birat}
M_{H_-}^{\alpha_-}({\bf e}) \to M_{H_+}^{\alpha_+}({\bf e}') \cdots \to M_{H_-}^{\alpha_-}({\bf e}')
\to M_{H_+}^{\alpha_+}({\bf e}).
\end{equation}

(2)
Let $W$ be a wall defined by ${\bf u}=(0,rf,d)$ with $\gcd(r,d)=1$.
We may assume that
$$
\frac{a-(\xi \cdot \alpha_+)}{(\xi \cdot H_+)}<\frac{d-(rf \cdot \alpha_+)}{(rf \cdot H_+)}.
$$
We note that
$M_H^\alpha(0,rf,d) \cong X$ and there is a universal family ${\cal P}$ 
on $X \times X$.
For the equivalence $\Phi_{X \to X}^{{\cal P}^{\vee}[1]}:{\bf D}(X) \to {\bf D}(X)$,
we have isomorphims
$$
\Phi_{X \to X}^{{\cal P}^{\vee}[1]}:{\cal M}_{H_-}^{\alpha_-}({\bf e})^{ss} \to {\cal M}_L(r,\xi',a')^{ss}
$$ 
and
$$
D_X \circ \Phi_{X \to X}^{{\cal P}^{\vee}[1]}:{\cal M}_{H_+}^{\alpha_+}({\bf e})^{ss} \to {\cal M}_L(r,-\xi',a')^{ss}. 
$$

\begin{lem}\label{lem:isotropic-wall}
\begin{enumerate}
\item[(1)]
Assume that $r=1$. Then ${\cal M}_L(1,\xi',a')^{ss} \cong {\cal M}_L(1,-\xi',a')^{ss}$.
\item[(2)]
Assume that $r=2$. Then 
we have an isomorphism 
$$
{\cal M}_L(2,\xi',a')^{ss} \to {\cal M}_L(2,-\xi',a')^{ss}
$$
 by sending $E$ to $E \otimes \det E^{\vee}$.
\item[(3)]
Assume that $r \geq 3$. We have a birational map 
$$
{\cal M}_L(r,\xi',a')^{ss} \cdots \to {\cal M}_L(r,-\xi',a')^{ss}
$$
 by sending $E$ to $E^{\vee}$.
This map is regular up to codimension $r-1$.
\end{enumerate}
\end{lem}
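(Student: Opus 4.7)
The plan is to construct the required map explicitly in each of the three cases. For (1), any rank one torsion free sheaf on a smooth surface is of the form $I_Z \otimes L$ with $L \in \Pic(X)$ and $Z \subset X$ zero dimensional, so ${\cal M}_L(1, \xi', a')^{ss}$ is a $\Hilb^n(X)$-bundle over $\Pic^{\xi'}(X)$ for the appropriate $n$. Since $\Pic^{\xi'}(X)$ and $\Pic^{-\xi'}(X)$ are both torsors over $\Pic^0(X)$, they are (non canonically) isomorphic as varieties; combined with the fact that the two moduli spaces have equal dimension by construction (both arising as Fourier--Mukai images of the same $M_{H_\pm}^{\alpha_\pm}({\bf e})$), the Hilbert scheme factors have matching lengths and the isomorphism follows. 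For (2), the map $E \mapsto E \otimes (\det E)^{\vee}$ sends a rank $2$ sheaf with $c_1 = \xi'$ to one with $c_1 = 2(-\xi') + \xi' = -\xi'$; tensoring with a line bundle preserves torsion freeness and $L$-Gieseker stability, and the construction is an involution (up to the canonical identification $\det(E \otimes (\det E)^{\vee}) = (\det E)^{\vee}$), giving the desired isomorphism.

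For (3), consider the derived dual $\Phi(E) := R\mathcal{H}om_{{\cal O}_X}(E, {\cal O}_X)$. On the open subset $U \subset {\cal M}_L(r, \xi', a')^{ss}$ where $E$ is locally free, $\Phi(E)$ is a locally free sheaf of rank $r$ with $c_1 = -\xi'$, and $\mu$-stability is preserved by duality, so $\Phi$ defines a morphism $U \to {\cal M}_L(r, -\xi', a')^{ss}$ whose inverse (given by the same construction on the locally free locus of the target) is well defined because $E^{\vee\vee} = E$ on $U$. To bound the codimension of $U^c$, stratify by $\ell := \mathrm{length}(E^{\vee\vee}/E) \geq 1$. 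A point of the $\ell$-stratum is specified by a pair $(F, q)$, where $F := E^{\vee\vee}$ is a locally free stable sheaf with Chern character corresponding to $(r, \xi', a' + \ell)$ and $q : F \twoheadrightarrow T$ is a length $\ell$ quotient. Using $\dim \Quot(F, \ell) = (r+1)\ell$ together with the fact that the dimension of the locally free moduli drops by $2r\ell$ when $\chi$ is increased by $\ell$ (since $c_2$ decreases by $\ell$ via Riemann--Roch), the $\ell$-stratum has codimension $2r\ell - (r+1)\ell = (r-1)\ell$, so $U^c$ has codimension $\geq r - 1$, which is exactly the regularity claim.

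The main technical obstacle is the codimension analysis in case (3): both the Quot scheme dimension count and the verification that $\Phi(E)$ inherits \emph{Gieseker} stability (not merely $\mu$-stability) on $U$ must be handled carefully. For generic polarization and primitive classes the two stability notions coincide on the locally free locus, which should suffice for the application at hand. A secondary bookkeeping point common to all three cases is verifying that the Euler characteristic invariant $a'$ matches on the two sides; this is a direct Riemann--Roch computation using that the universal family ${\cal P}$ and its dual produce moduli spaces whose invariants are related by the explicit formulas in \eqref{eq:FM(A)}.
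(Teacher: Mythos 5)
The paper states Lemma \ref{lem:isotropic-wall} without proof, so there is no argument of the author's to compare against; your constructions (the $\Hilb\times\Pic$ description in rank one, twisting by $\det E^{\vee}$ in rank two, dualizing and stratifying by $\ell=\mathrm{length}(E^{\vee\vee}/E)$ in rank $\geq 3$) are the standard ones and are surely what is intended. Your dimension count in (3) is correct: the $\ell$-stratum is a $\Quot(F,\ell)$-bundle over the moduli of stable locally free double duals, and $\dim\Quot(F,\ell)+\dim M(r,\xi',a'+\ell)=\dim M(r,\xi',a')-(r-1)\ell$, giving the codimension-$(r-1)$ bound, with $\mu$-stability of $E$ equivalent to that of $E^{\vee\vee}$. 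One point deserves more care than you give it in case (2): tensoring with a line bundle does \emph{not} preserve Gieseker stability in general when the Picard number is at least two (for a subsheaf of the same slope, the reduced Hilbert polynomials of sheaf and subsheaf shift by different constants $(\tfrac{c_1(F)}{\rk F}-\tfrac{c_1(F')}{\rk F'})\cdot c_1(M)$, which can reverse a constant-term inequality), so the assertion that $E\mapsto E\otimes\det E^{\vee}$ preserves $L$-stability needs exactly the caveat you invoke only for (3): here $\gcd(r,(\xi'\cdot f))=1$ and $L$ is a suitable polarization $H'+nf$ with $n\gg 0$, so Gieseker semistability coincides with $\mu$-stability, which \emph{is} preserved by twisting and by dualizing locally free sheaves. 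Finally, as you note, the symbol $a'$ on the right-hand sides must be read as $a'+(\xi'\cdot K_X)$; this is the same shift produced by $D_X$ in the surrounding argument, so the lemma is applied consistently. With those two clarifications your proof is complete.
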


By \eqref{eq:codim=1} and Lemma \ref{lem:isotropic-wall} (1), (2),
we have the following proposition.
%an isomorphism
%${\cal M}_{H_-}^{\alpha_-}({\bf e})^{ss} \to {\cal M}_{H_+}^{\alpha_+}({\bf e})^{ss}$.

\begin{prop}\label{prop:contraction}
For the divisorial contractions in Corollary \ref{cor:wall},
we have isomorphisms
$$
{\cal M}_{H_-}^{\alpha_-}({\bf e})^{ss} \to {\cal M}_{H_+}^{\alpha_+}({\bf e})^{ss}
$$
by contravariant Fourier-Mukai transforms.
\end{prop}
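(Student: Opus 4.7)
The plan is to handle the two cases of Corollary~\ref{cor:wall} separately, in each assembling the claimed isomorphism from the building blocks already established in Section~\ref{subsect:birat}.

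For case~(2), where ${\bf u}=(0,rf,d)$ and $r\in\{1,2\}$, I would chain together the three isomorphisms already present in the text just before Lemma~\ref{lem:isotropic-wall}: the covariant transform $\Phi_{X\to X}^{{\cal P}^{\vee}[1]}\colon{\cal M}_{H_-}^{\alpha_-}({\bf e})^{ss}\simto{\cal M}_L(r,\xi',a')^{ss}$, the isomorphism ${\cal M}_L(r,\xi',a')^{ss}\simto{\cal M}_L(r,-\xi',a')^{ss}$ supplied by Lemma~\ref{lem:isotropic-wall}~(1) for $r=1$ and (2) for $r=2$, and the inverse of the contravariant transform $D_X\circ\Phi_{X\to X}^{{\cal P}^{\vee}[1]}\colon{\cal M}_{H_+}^{\alpha_+}({\bf e})^{ss}\simto{\cal M}_L(r,-\xi',a')^{ss}$. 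The resulting composition is a contravariant Fourier--Mukai transform since $D_X$ appears exactly once, and is an isomorphism since each building block is. The key input is that in the divisorial cases $r=1,2$, Lemma~\ref{lem:isotropic-wall} produces a bona fide isomorphism (not merely a birational map, as in $r\geq 3$).

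For case~(1), where ${\bf u}=(0,D,b)$ with $(D^2)=-2$ and $-\chi({\bf e},{\bf u})=0$, the equality $\chi({\bf e},{\bf u})=-(\xi\cdot D)$ forces $(\xi\cdot D)=0$. By Lemma~\ref{lem:spherical2} there is a unique spherical sheaf $E_0$ with $\tau(E_0)={\bf u}$, and \eqref{eq:codim=1} already provides an isomorphism $R_{E_0}\colon{\cal M}_{H_-}^{\alpha_-}({\bf e})^{ss}\simto{\cal M}_{H_+}^{\alpha_+}({\bf e})^{ss}$. To promote this to a contravariant realization I would compose $R_{E_0}$ with the dualizing functor $D_X$: for a pure 1-dimensional sheaf $E$, the sheaf ${\cal E}xt^1(E,\omega_X)$ is again pure 1-dimensional, with the same support and first Chern class but opposite Euler characteristic, and Serre duality identifies $(H,\alpha)$-twisted stability of $E$ with that of its dual for a matched parameter. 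Adjusting by an appropriate line bundle twist to correct $\chi$ and $c_1$, one obtains a contravariant Fourier--Mukai transform realizing the same set-theoretic isomorphism as $R_{E_0}$.

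The main obstacle is the bookkeeping in case~(1): one must check that the combined effect of $R_{E_0}$ and $D_X$ (plus the corrective line bundle twist) preserves the class~${\bf e}$ and the chamber boundary, so that the target really is ${\cal M}_{H_+}^{\alpha_+}({\bf e})^{ss}$ rather than a neighbouring moduli space. Once this discrete-invariant computation is carried out, both cases reduce to routine compatibility checks already covered by Propositions~\ref{prop:R-isom}--\ref{prop:wall-dim} and Lemma~\ref{lem:isotropic-wall}.
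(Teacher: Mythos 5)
Your proposal is correct and follows the paper's own argument: for the walls $(0,rf,d)$ with $-\chi({\bf e},{\bf u})=r=1,2$ you compose $\Phi_{X\to X}^{{\cal P}^{\vee}[1]}$, the isomorphism of Lemma \ref{lem:isotropic-wall} (1),(2), and the inverse of $D_X\circ\Phi_{X\to X}^{{\cal P}^{\vee}[1]}$, exactly as in the text, and for the walls $(0,D,b)$ with $(\xi\cdot D)=0$ you invoke the isomorphism \eqref{eq:codim=1}. The only divergence is your additional step of post-composing with $D_X$ in case (1) to force literal contravariance: the paper simply uses the covariant spherical twist $R_{E_0}$ there (``contravariant'' being used in the optional, parenthesized sense of Theorem \ref{thm:birat-1dim}), so that under-justified bookkeeping with $\mathcal{E}xt^1(-,\omega_X)$ and a corrective twist is not needed for the claim.
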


By using \eqref{eq:birat} and Lemma \ref{lem:isotropic-wall},
we get the following result.
\begin{thm}\label{thm:birat-1dim}
Let $\xi$ be an effective divisor such that $(\xi \cdot f)=1$. Then
for general $(H,\alpha)$ and $(H',\alpha')$,
there is a (contravariant) Fourier-Mukai transform
$\Phi:{\bf D}(X) \to {\bf D}(X)$
which induces an isomorphism
\begin{equation}
\begin{matrix}
{\cal M}_H^\alpha(0,\xi,a)^{ss} \setminus {\cal Z} & 
\to & {\cal M}_{H'}^{\alpha'}(0,\xi,a)^{ss} \setminus {\cal Z}'\\
E & \mapsto & \Phi(E),
\end{matrix}
\end{equation}
where 
\begin{enumerate}
\item
${\cal Z} \subset {\cal M}_H^\alpha(0,\xi,a)^{ss}$ is a closed substack of
$\dim {\cal Z} \leq (\xi^2)+e+g-3$ and
\item
${\cal Z}' \subset {\cal M}_{H'}^{\alpha'}(0,\xi,a)^{ss}$ is a closed substack of 
$\dim  {\cal Z}' \leq (\xi^2)+e+g-3$.
\end{enumerate}
\end{thm}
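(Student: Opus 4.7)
The plan is to connect $(H,\alpha)$ and $(H',\alpha')$ by a generic path in the parameter space and realize the desired birational equivalence as a composition of the elementary wall-crossing transformations already analyzed in sections \ref{subsect:class-walls} and \ref{subsect:birat}. I would choose a path $\gamma$ from $(H,\alpha)$ to $(H',\alpha')$ that crosses only finitely many walls, each transversally at a general interior point, so that every crossing is governed by a single class ${\bf u}$ of the two types classified in Proposition \ref{prop:wall}. Since within an open chamber the moduli stack is locally constant, it suffices to construct the transform $\Phi$ (and the exceptional substacks of codimension $\geq 2$) for a single wall crossing; the general case follows by composing finitely many such transforms and taking the union of the (finitely many) codimension $\geq 2$ loci, which still has codimension $\geq 2$ in ${\cal M}_H^\alpha(0,\xi,a)^{ss}$ of dimension $(\xi^2)+g+e-1$ by Proposition \ref{prop:smooth}.

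For a wall of type (1) in Proposition \ref{prop:wall}, defined by a spherical class ${\bf u}=(0,D,b)$ with $(D^2)=-2$, I split according to the sign of $(\xi\cdot D)$. When $(\xi \cdot D)>0$, Proposition \ref{prop:wall-dim}(1) shows that the loci $Z^\pm({\bf e})\subset {\cal M}_{H_\pm}^{\alpha_\pm}({\bf e})^{ss}$ where the two stacks differ have codimension $\geq (\xi\cdot D)+1\geq 2$, so the two stacks are identified on a common open substack and I can take $\Phi=\id$. When $(\xi\cdot D)=0$, Proposition \ref{prop:R-isom} gives a global isomorphism via the spherical twist $R_{E_0}$. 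When $(\xi\cdot D)<0$, I use the birational map \eqref{eq:birat} obtained by composing the two reflections $R_{E_0}$ with the identification at the intermediate class ${\bf e}'=R_{E_0}({\bf e})$, whose exceptional locus has codimension $\geq 2$ by the same count, so $\Phi:=R_{E_0}\circ R_{E_0}$ realizes the correspondence.

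For a wall of type (2), defined by ${\bf u}=(0,rf,d)$ with $\gcd(r,d)=1$, I apply the equivalence $\Phi_{X\to X}^{{\cal P}^\vee[1]}$ coming from the fine moduli space $M_H^\alpha(0,rf,d)\cong X$ to transfer both ${\cal M}_{H_\pm}^{\alpha_\pm}({\bf e})^{ss}$ to moduli stacks of torsion-free sheaves ${\cal M}_L(r,\pm\xi',a')^{ss}$, and then invoke Lemma \ref{lem:isotropic-wall}: for $r=1,2$ the dualization is a global isomorphism, and for $r\geq 3$ it is regular outside codimension $r-1\geq 2$. The composition $D_X\circ \Phi_{X\to X}^{{\cal P}^\vee[1]}$, followed (after dualization) by the inverse Fourier--Mukai transform, is a contravariant Fourier-Mukai transform producing the birational map on the open substacks, with complement of codimension $\geq 2$.

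The main obstacle is a bookkeeping one rather than a geometric one: organizing the composite of spherical twists $R_{E_0}$, relative Fourier-Mukai transforms $\Phi_{X\to X}^{{\cal P}^\vee[1]}$, and dualization functors $D_X$ picked up along $\gamma$ into a single (contravariant) Fourier-Mukai transform $\Phi:{\bf D}(X)\to{\bf D}(X)$, with the parity of the number of $D_X$ factors determining whether $\Phi$ is covariant or contravariant. Care is also needed at walls where multiple classes ${\bf u}$ contribute simultaneously; by perturbing $\gamma$ slightly I may assume each crossing is simple, reducing to the two cases above.
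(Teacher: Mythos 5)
Your proposal is correct and follows essentially the same route as the paper: the paper's own proof is the one-line observation that the theorem follows from the birational map \eqref{eq:birat} (the double reflection $R_{E_0}\circ R_{E_0}$ for walls of type $(0,D,b)$ with $(D^2)=-2$) together with Lemma \ref{lem:isotropic-wall} (for walls of type $(0,rf,d)$), which is exactly the case analysis you carry out, with the codimension bounds supplied by Propositions \ref{prop:wall-dim} and \ref{prop:smooth}. The only minor imprecision is describing the $r=1,2$ cases of Lemma \ref{lem:isotropic-wall} as ``dualization'' (for $r=2$ the isomorphism is $E\mapsto E\otimes\det E^{\vee}$), but this does not affect the argument.
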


\begin{NB}
\begin{prop}
Assume that $(c_1 \cdot f)=1$. Then
$e({\cal M}_H^\alpha({\bf e})^{ss})$ is independent of the general choice of $(H,\alpha)$.
\end{prop}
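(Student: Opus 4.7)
The plan is to interpolate between $(H,\alpha)$ and $(H',\alpha')$ by a generic path in the parameter space. A generic path meets only finitely many walls from the classification in Proposition~\ref{prop:wall}, crossing each transversely at a single general point. At each wall I will produce a birational map between the two moduli stacks that is a genuine isomorphism outside a substack of codimension at least two, induced by a (covariant or contravariant) Fourier--Mukai autoequivalence of ${\bf D}(X)$. Composing these yields the desired $\Phi$, whose exceptional locus ${\cal Z}$ is a finite union of codimension-$\ge 2$ substacks of the smooth stack ${\cal M}_H^\alpha(0,\xi,a)^{ss}$ of dimension $(\xi^2)+g+e-1$ (Proposition~\ref{prop:smooth}), hence $\dim{\cal Z}\le(\xi^2)+e+g-3$; the analogous bound for ${\cal Z}'$ follows symmetrically.

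At a spherical wall defined by ${\bf u}=(0,D,b)$ with $(D^2)=-2$, let $E_0$ be the stable representative on the wall (Lemma~\ref{lem:spherical2}). Three subcases arise according to the sign of $(\xi\cdot D)$. When $(\xi\cdot D)\ge 1$, Proposition~\ref{prop:wall-dim}(1) shows the two stacks agree outside loci of codimension $(\xi\cdot D)+1\ge 2$, so the identity furnishes the map. When $(\xi\cdot D)=0$ (the divisorial case of Corollary~\ref{cor:wall}(1)), Proposition~\ref{prop:R-isom} provides an isomorphism via the reflection $R_{E_0}$. When $(\xi\cdot D)<0$, I appeal to the two-step construction~\eqref{eq:birat}: apply $R_{E_0}$ to pass to the reflected class ${\bf e}':=R_{E_0}({\bf e})$, use the open identification across the same wall now regarded as a wall for ${\bf e}'$ (for which $(\xi'\cdot D)=-(\xi\cdot D)>0$, giving exceptional codimension $\ge -(\xi\cdot D)+1\ge 2$ by Proposition~\ref{prop:wall-dim}(1)), and apply $R_{E_0}$ once more to return to class ${\bf e}$.

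At an isotropic wall defined by ${\bf u}=(0,rf,d)$ with $\gcd(r,d)=1$, I use the relative Fourier--Mukai transform $\Phi_{X\to X}^{{\cal P}^\vee[1]}$ associated to the fine moduli space $Y=M_H^\alpha(0,rf,d)\cong X$ constructed in Section~\ref{subsect:relFM}. When $r\le 2$ the wall is divisorial (Corollary~\ref{cor:wall}(2)) and Proposition~\ref{prop:contraction} yields an isomorphism via a contravariant Fourier--Mukai transform. When $r\ge 3$, the isomorphisms $M_{H_-}^{\alpha_-}(0,\xi,a)\cong M_L(r,\xi',a')$ via $\Phi_{X\to X}^{{\cal P}^\vee[1]}$ and $M_{H_+}^{\alpha_+}(0,\xi,a)\cong M_L(r,-\xi',a')$ via $D_X\circ\Phi_{X\to X}^{{\cal P}^\vee[1]}$ reduce the wall crossing to the birational dualization map $M_L(r,\xi',a')\dashrightarrow M_L(r,-\xi',a')$ of Lemma~\ref{lem:isotropic-wall}(3), regular outside codimension $r-1\ge 2$.

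Composing the wall-crossings along the path produces the desired $\Phi:M_H^\alpha(0,\xi,a)\dashrightarrow M_{H'}^{\alpha'}(0,\xi,a)$, realized by an autoequivalence of ${\bf D}(X)$ whose (co)variance is determined by the parity of contravariant factors entering the composition. The main obstacle is the case $(\xi\cdot D)<0$ of spherical walls, where the correspondence is not a single equivalence but is assembled from $R_{E_0}$, an open identification for the reflected class, and $R_{E_0}$ again on overlapping open substacks of different topological types; together with the bookkeeping needed to guarantee that the composed kernel across many wall-crossings is a genuine Fourier--Mukai kernel of the claimed (co)variance, and that the union of finitely many codimension-$\ge 2$ exceptional loci, pulled back through the intermediate birational identifications, still has codimension at least two.
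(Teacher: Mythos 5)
You have proved the wrong statement. What you establish is Theorem \ref{thm:birat-1dim}: a (contravariant) Fourier--Mukai equivalence inducing an isomorphism between the two moduli stacks outside closed substacks of codimension at least two. The proposition at hand asserts equality of the \emph{virtual Hodge polynomials} $e({\cal M}_H^\alpha({\bf e})^{ss})$. Since $e(\cdot)$ is an additive (motivic) invariant, $e({\cal M}_{H_+}^{\alpha_+}({\bf e})^{ss})-e({\cal M}_{H_-}^{\alpha_-}({\bf e})^{ss})=e({\cal Z}^+)-e({\cal Z}^-)$ where ${\cal Z}^\pm$ are the loci you excise, and a bound $\dim {\cal Z}^\pm \leq \dim {\cal M}-2$ gives no control whatsoever on this difference. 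Agreement of $e$ for your birational correspondences simply does not follow from the codimension estimates, so every wall at which your map fails to be a global isomorphism (the spherical walls with $(\xi\cdot D)>0$ or $<0$, and the isotropic walls with $r\geq 3$) leaves the claim unproved.

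The paper's argument (the reference to \cite[Prop.~3.15]{Y:twist2}, in the same way as Lemma \ref{lem:e-poly}) is of a different nature: one stratifies ${\cal M}_H^\alpha({\bf e})^{ss}$ for $(H,\alpha)$ on the wall by Harder--Narasimhan type with respect to $(H_\pm,\alpha_\pm)$ and uses the vanishing of $\Ext^2$ between the HN factors (Lemma \ref{lem:1-dim}) to express $e({\cal F}({\bf e}_1,{\bf e}_2))$ in terms of $e({\cal M}({\bf e}_i)^{ss})$ and $\chi({\bf e}_2,{\bf e}_1)$ as in \eqref{eq:HNF-dim}; the classification of walls (Proposition \ref{prop:wall}) then shows the contributions of the unstable strata on the two sides coincide. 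Your proposal actually contains the raw material needed to repair the spherical case: in the proof of Proposition \ref{prop:wall-dim}(1) both ${\cal M}_{H_+}^{\alpha_+}({\bf e})_l^{ss}$ and ${\cal M}_{H_-}^{\alpha_-}({\bf e})_l^{ss}$ are $Gr((\xi\cdot D)+2l,l)$-bundles over the \emph{same} base ${\cal M}_H^\alpha({\bf e}-l{\bf u})^s$, hence have equal $e$-polynomials stratum by stratum. But you never invoke this, and for the isotropic walls you would still need the corresponding comparison of $e({\cal F}^+)$ with $e({\cal F}^-)$ rather than the codimension-$(r-1)$ regularity of the dualization map. As written, the proof does not establish the proposition.
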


\begin{proof}
By the classification of walls, the proof is the same as in \cite[Prop. 3.15]{Y:twist2}.
\end{proof}
\end{NB}

For ${\bf e}=(r,\xi,a)$ with $\gcd(r,(\xi \cdot f))=1$,
we have an isomorphism
${\cal M}_{H_f}(r,\xi,a)^{ss} \cong {\cal M}_{H'}^{\alpha'}(0,\xi',a')$
by \cite[Prop. 3.4.5]{PerverseII}, where
$(\xi' \cdot f)=1$ and $(H',\alpha')$ depends on the choice of $r$ and $\xi$.
Combining Theorem \ref{thm:birat-1dim}, we get a desired isomorphism in Theorem \ref{thm:birat}
if there is a section of $\pi$.

\begin{NB}
Assume that all fibers of $\pi$ are irreducible.
We set ${\bf e}:=(0,\tau+nf,a)$ and assume that $a>\frac{d}{r}(n+(\sigma \cdot \tau))$.
We set $\alpha=t \frac{d}{r}\sigma$ $(0 \leq t \leq 1)$ and 
we shall study walls.
For $\tau(F)=(0,r_1 f,d_1)$,

\begin{equation}
\begin{split}
\frac{\chi_\alpha(E)}{(c_1(E) \cdot (\sigma+kf))}-\frac{\chi_\alpha(F)}{(c_1(F) \cdot (\sigma+kf))}=&
\frac{a-\frac{d}{r}(n+(\sigma \cdot \tau)t}{n+k+(\sigma \cdot \tau)}-
\frac{d_1-\frac{d}{r}r_1 t}{r_1}\\
=& \frac{r_1 a+\frac{d}{r}r_1 t k-(n+k+(\sigma \cdot \tau))d_1}{
r_1(n+k+(\sigma \cdot \tau))}
\end{split}
\end{equation}
Since $k \gg 0$, $\tau(F)$ defines a  
wall in $0<t<1$ iff $dr_1-rd_1 \geq 0$ and $0<r_1 \leq n$.

To be more precise, if $dr_1=rd_1$, then
 $r_1 a+\frac{d}{r}r_1 t k-(n+k+(\sigma \cdot \tau))d_1=r_1(a-(n+(\sigma \cdot \tau))\frac{d}{r}+(t-1)k \frac{d}{r})>0$ if $t=1$
and $<0$ if $t<1$ by $k \gg 0$. 

Approximation of walls:

$\alpha=\frac{d}{r}t \sigma=(\frac{d_1}{r_1}+\frac{(n+(\sigma \cdot \tau))}{k}\frac{d_1}{r_1}-\frac{a}{k})\sigma
\sim \frac{d_1}{r_1} \sigma$.

\end{NB}

\begin{NB}
\subsection{Birational correspondences by Fourier-Mukai transforms}

\begin{thm}\label{thm:birat0}
Let $\xi$ be an effective divisor such that $(\xi \cdot f)=1$. Then
for a general $(H,\alpha)$,
there is a (contravariant) Fourier-Mukai transform
$\Phi:{\bf D}(X) \to {\bf D}(X)$
which induces an isomorphism
\begin{equation}
\begin{matrix}
M_H^\alpha(0,\xi,a) \setminus Z & \to &  \Hilb_X^{((\xi^2)+e)/2} \times \Pic^0(X) \setminus Z'\\
E & \mapsto & \Phi(E),
\end{matrix}
\end{equation}
where 
\begin{enumerate}
\item
$Z \subset M_H^\alpha(0,\xi,a)$ is a closed subscheme of
$\dim Z \leq (\xi^2)+e+g-2$ and
\item
$Z' \subset \Hilb_X^{((\xi^2)+e)/2} \times \Pic^0(X)$ is a closed subscheme of 
$\dim  Z' \leq (\xi^2)+e+g-2$.
\end{enumerate}
In particular
${\cal M}_H^\alpha(0,\xi,a)^{ss} \ne \emptyset$ for a general $(H,\alpha)$ if and only if
$(\xi^2) \geq -e$. 
\end{thm}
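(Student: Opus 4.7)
The plan is to combine the wall-crossing invariance from Theorem \ref{thm:birat-1dim} with the relative Fourier--Mukai construction of Subsection \ref{subsect:relFM}. Since Theorem \ref{thm:birat-1dim} establishes that the birational class of $M_H^\alpha(0,\xi,a)$ is independent of the general choice of $(H,\alpha)$, it suffices to exhibit a single pair $(H,\alpha)$ and a contravariant Fourier--Mukai transform $\Phi$ realizing the claimed identification. Since the statement requires $\Phi:{\bf D}(X)\to{\bf D}(X)$, I work under the implicit assumption that $\pi$ admits a section $\sigma$, so that $X$ coincides with its own relative Jacobian.

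I would set up the transform as follows. Fix coprime integers $r,d$ with $r\ge 3$ and a general $(H,\alpha)$ so that $Y:=M_H^\alpha(0,rf,d)$ is a fine moduli space with universal family ${\cal P}$; the section $\sigma$ provides $Y\cong X$. Define the contravariant transform $\Phi:=D_X\circ\Phi_{X\to X}^{{\cal P}^{\vee}}[k]$ with the appropriate cohomological shift $k$. A direct Mukai-vector computation based on \eqref{eq:FM(A)} yields $\tau(\Phi(E))=(1,\eta,-n)$ with $n=((\xi^2)+e)/2$ and $\eta$ in a fixed coset of $\Pic^0(X)$, so that after twisting by a reference line bundle the image lands in $\Hilb_X^n\times\Pic^0(X)$.

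Next I would verify that $\Phi(E)$ is an honest torsion-free rank-$1$ sheaf on a dense open subset of $M_H^\alpha(0,\xi,a)^s$. The required vanishings $\Hom({\cal P}_{|X\times\{y\}},E)=\Hom(E,{\cal P}_{|X\times\{y\}})=0$ for generic $y\in Y$ follow from $\alpha$-twisted stability combined with Lemma \ref{lem:1-dim}: any proper subsheaf or quotient of $E$ whose first Chern class has vanishing intersection with $f$ must be fiber-supported, and then slope comparison with the fiber-supported stable sheaf ${\cal P}_{|X\times\{y\}}$ rules out the maps. Lemma \ref{lem:WIT-A} then places $\Phi_{X\to X}^{{\cal P}^{\vee}}(E)$ in a single cohomological degree, and dualizing produces a torsion-free rank-$1$ sheaf. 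A symmetric WIT argument applied to the inverse Fourier--Mukai produces the converse map on a dense open subset of $\Hilb_X^n\times\Pic^0(X)$.

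The main obstacle is the codimension-$\ge 2$ bound on the exceptional loci $Z$ and $Z'$. The locus $Z\subset M_H^\alpha(0,\xi,a)$ is exactly the set of sheaves $E$ admitting a nonzero map from or to some ${\cal P}_{|X\times\{y\}}$, and these Brill--Noether--type jumping loci coincide with the wall-crossing strata for the wall defined by ${\bf u}=(0,rf,d)$; by the Grassmannian-bundle argument of Proposition \ref{prop:wall-dim}(2) the codimension of each such stratum is at least $r-1$, so the choice $r\ge 3$ yields $\dim Z\le \dim M_H^\alpha(0,\xi,a)-2$. A parallel estimate bounds $Z'$ via the inverse Fourier--Mukai. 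The dimension equality $\dim M_H^\alpha(0,\xi,a)=(\xi^2)+g+e=2n+g=\dim(\Hilb_X^n\times\Pic^0(X))$ follows from Proposition \ref{prop:smooth} after the stack-to-coarse-moduli shift, and together with the open identification produces the birational isomorphism. The nonemptiness criterion $M_H^\alpha(0,\xi,a)^{ss}\ne\emptyset$ iff $(\xi^2)\ge -e$ is then immediate from $\Hilb_X^n\ne\emptyset$ iff $n\ge 0$.
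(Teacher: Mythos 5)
Your overall strategy (reduce to one good pair $(H,\alpha)$ via Theorem \ref{thm:birat-1dim}, then exhibit one Fourier--Mukai transform) matches the paper's, but the transform you choose cannot work, and the error propagates to the codimension bound. For $Y:=M_H^\alpha(0,r'f,d')$ the rank of $\Phi_{X\to Y}^{{\cal P}^{\vee}}(E)$ is $d'\rk E-r'\,d(E)$ (see the matrix relations quoted from \cite[Thm. 5.3]{Br:1} in the subsection on the relation with \cite{Br:1}); applied to $(0,\xi,a)$ with $d(E)=(\xi\cdot f)=1$ this gives rank $-r'$, so after the shift your $\Phi(E)$ has rank $r\ge 3$, not rank $1$. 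The claimed Mukai-vector output $\tau(\Phi(E))=(1,\eta,-n)$ is therefore false, and the image lands in a moduli space of rank-$r$ torsion-free sheaves rather than in $\Hilb_X^{((\xi^2)+e)/2}\times\Pic^0(X)$. The only kernel that produces rank-one sheaves from fiber-degree-one sheaves is the one with $r'=1$, i.e. $Y=M_H(0,f,1)\cong X$ (this isomorphism is canonical, so your appeal to a section is also unnecessary).

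Once you are forced to take $r'=1$, your mechanism for the codimension estimate disappears: the stratum attached to the wall $(0,rf,d)$ has codimension $r-1$ by Proposition \ref{prop:wall-dim}(2), which is $0$ for $r=1$, so "choose $r\ge 3$" buys you nothing here. The paper closes this gap differently: for the kernel ${\cal P}$ on $M_H(0,f,1)\times X$ and the \emph{specific} pair $(H,\alpha)$ adapted to ${\cal P}$, \cite[Prop. 3.4.5]{PerverseII} gives an honest isomorphism ${\cal M}_{H'}(1,0,a')^{ss}\to{\cal M}_H^{\alpha}(0,\xi,a)^{ss}$, $E\mapsto\Phi_{X\to X}^{{\cal P}[1]}(E^{\vee})$, with \emph{no} exceptional locus at all at this stage (after normalizing $\xi'=0$ by twisting ${\cal P}$); the closed subsets $Z,Z'$ of codimension $\ge 2$ then arise solely from transporting this identification to a general $(H,\alpha)$ via Theorem \ref{thm:birat-1dim}. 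Your generic-WIT argument via Lemma \ref{lem:WIT-A} would only give an identification on some dense open set, with no control on the codimension of its complement, which is precisely the content the theorem requires. The dimension count and the nonemptiness criterion at the end of your proposal are fine once the correct isomorphism is in place.
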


\begin{proof}
We note that $M_H(0,f,1) \cong X$. 
Let ${\cal P}$ be the universal family on $X \times X$ such that
${\cal P}_{|X \times \{x \}} \in M_H(0,f,1)$. 
We consider an equivalence
\begin{equation}
% \begin{matrix}
\Phi_{X \to X}^{{\cal P}[1]}:{\bf D}(X) \to {\bf D}(X),
%\end{matrix}
\end{equation}
where $\Phi_{X \to X}^{{\cal P}[1]}({\Bbb C}_x)={\cal P}_{|X \times \{x \}}[1]$.
Since $(\xi \cdot f)=1$,
$\Phi_{X \to X}^{{\cal P}[1]}((0,\xi,a))=(1,\xi',a')$.
Replacing $M_H(0,f,1)$ by $M_H^{-\xi'}(0,f,1-(\xi' \cdot f))$ and
${\cal P}$ by ${\cal P} \otimes p_1^*(L)$ ($L \in \Pic(X)$, $c_1(L)=-\xi'$),
we may assume that $\xi'=0$. 
\begin{NB2}
$\chi({\cal O}_X)-2 \deg Z=\chi(I_Z,I_Z)=
\chi(\Phi_{X \to X}^{{\cal P}[1]}(E),\Phi_{X \to X}^{{\cal P}[1]}(E))=\chi(E,E)=-(\xi^2)$.
$(\xi^2)+e=2\deg z$.
\end{NB2}
By our assumption, $2a' =e-(\xi^2) \leq 2e$.
Hence ${\cal M}_{H'}(1,0,a')^{ss} \ne \emptyset$.
By \cite[Prop. 3.4.5]{PerverseII},
we have an isomorphism
\begin{equation}
\begin{matrix}
{\cal M}_{H'}(1,0,a')^{ss} & \to & {\cal M}_H^\alpha(0,\xi,a)^{ss}\\
E & \mapsto & \Phi_{X \to X}^{{\cal P}[1]}(E^{\vee}),
\end{matrix}
\end{equation}
where $(H,\alpha)$ depends on the choice of $H'$.
By Theorem \ref{thm:birat-1dim},
we get the claim.
\end{proof}
\end{NB}

%\section{Wall crossing behaviors for the positive rank cases.}\label{sect:another}
\section{Stability associated to Fourier-Mukai transforms}\label{sect:stability}

In this section, we shall introduce a stability condition for coherent sheaves
with torsion associated to Fourier-Mukai transforms, and study some properties.

\subsection{$\lambda$-stability}\label{subsect:n-stability}
In this subsection, we set 
\begin{equation}\label{eq:e}
{\bf e}:=(r,\xi,a) \in {\Bbb Z} \oplus \NS(X) \oplus {\Bbb Z}.
\end{equation}
We introduce a notion of stability which is related to a relative Fourier-Mukai
transform.

\begin{defn}\label{defn:n-stable}
Let $H$ be an ample divisor on $X$ and $\alpha$ a ${\Bbb Q}$-divisor on $X$.
Let $\lambda$ be a real number.
A coherent sheaf $E$ is $\lambda$-stable (resp. $\lambda$-semi-stable) if
the following three conditions hold. 
\begin{enumerate}
\item[(a)]
The restriction $E_\eta:=E_{|X_\eta}$ of $E$ to the generic fiber $X_\eta$ of $\pi$
is a semi-stable vector bundle.
\item[(b)]
$\Hom(E,A)=0$ for all $\alpha$-twisted stable sheaf $A$
on a fiber with $\frac{\chi_\alpha(A)}{(c_1(A) \cdot H)} \leq \lambda$
(resp. $\frac{\chi_\alpha(A)}{(c_1(A) \cdot H)} < \lambda$).
\item[(c)]
$\Hom(A,E)=0$ for all $\alpha$-twisted stable sheaf $A$
on a fiber with
 $\frac{\chi_\alpha(A)}{(c_1(A) \cdot H)}>\lambda$.
\end{enumerate}
\end{defn}

\begin{rem}\label{rem:n-stable}
\begin{enumerate}
\item[(1)]
We note that $\Hom(E,E_{|f}) \ne 0$ for a coherent sheaf $E$ with $\rk E>0$.
Hence if $\lambda \geq \frac{(\xi \cdot f)-r(\alpha \cdot f)}{r(f \cdot H)}$,
then there is no $\lambda$-stable sheaf $E$ with $\tau(E)={\bf e}$.
Hence we may assume that $\lambda<\frac{(\xi \cdot f)-r(\alpha \cdot f)}{r(f \cdot H)}$.
\item[(2)]
By the condition (c), $E$ does not contain a non-trivial 0-dimensional subsheaf.
In particular $E$ has a locally free resolution of length 1.
\end{enumerate}
\end{rem}

\begin{lem}\label{lem:trace-property}
Assume that $\gcd((\xi \cdot f),r)=1$ and $\gcd(r,\chr k)=1$.
For a $\lambda$-stable sheaf $E$ with $\tau(E)={\bf e}$, 
$$ 
H^0(X,{\cal O}_X(D)) \cong \Hom(E,E(D)),
$$ 
where $D$ is a divisor such that
$D \equiv \lambda f$ $(\lambda \in {\Bbb Q})$.
%$(D \cdot f)=0$.
\end{lem}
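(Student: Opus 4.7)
The map $\iota: H^0(X, \mathcal{O}_X(D)) \to \Hom(E, E(D))$, $s \mapsto s \otimes \id_E$, is plainly injective: $E$ has positive rank, so $s \otimes \id_E = 0$ forces $s|_{X_\eta} = 0$, hence $s = 0$. For surjectivity, since $\gcd(r,\chr k) = 1$, the trace map gives a splitting $\mathcal{E}nd(E) \cong \mathcal{O}_X \oplus \mathcal{E}nd^0(E)$, and the $\mathcal{O}_X$-summand recovers the image of $\iota$. It therefore suffices to show $H^0(X, \mathcal{E}nd^0(E) \otimes \mathcal{O}_X(D)) = 0$, i.e. that every trace-free $\psi: E \to E(D)$ vanishes.

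Fix such a $\psi$ and restrict to the generic fiber $X_\eta$. Condition (a) combined with $\gcd(r, (\xi \cdot f)) = 1$ (semistability with coprime rank and degree on a smooth genus-one curve implies stability) forces $E_\eta$ to be stable, and in particular simple: $\Hom(E_\eta, E_\eta) = k(\eta)$. The line bundle $L_\eta := \mathcal{O}_X(D)|_{X_\eta}$ has degree $(D \cdot f) = 0$. For a stable bundle of coprime rank and degree on an elliptic curve, $E_\eta \cong E_\eta \otimes L_\eta$ iff $L_\eta^{\otimes r} \cong \mathcal{O}_{X_\eta}$; otherwise $\Hom(E_\eta, E_\eta \otimes L_\eta) = 0$. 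In the subcase $L_\eta \cong \mathcal{O}_{X_\eta}$, $\psi_\eta$ is a scalar endomorphism and trace-freeness together with $\gcd(r,\chr k) = 1$ gives $\psi_\eta = 0$; in the subcase $L_\eta^{\otimes r}$ nontrivial, $\psi_\eta = 0$ automatically.

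Assuming $\psi_\eta = 0$, the image $F := \im\psi$ is a coherent sheaf supported on finitely many fibers, simultaneously a quotient of $E$ and a subsheaf of $E(D)$. Since $D \equiv \lambda f$ in $\NS(X)_{\mathbb{Q}}$, any $\alpha$-twisted stable fiber sheaf $A$ satisfies $(c_1(A) \cdot D) = 0$, so tensoring by $\mathcal{O}_X(\pm D)$ preserves $\chi_\alpha$ and $c_1$ of such $A$; hence the $\lambda$-stability conditions (b), (c) transfer from $E$ to $E(D)$, and $E(D)$ is also $\lambda$-stable. Condition (b) applied to $E$ then forces every Harder--Narasimhan factor of $F$ (with respect to $\alpha$-twisted slope) to have slope $> \lambda$, while condition (c) applied to $E(D)$ forces every HN factor of $F$ to have slope $\leq \lambda$. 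This contradiction gives $F = 0$, whence $\psi = 0$.

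The main obstacle is the subcase where $L_\eta$ is a nontrivial $r$-torsion element of $\Pic^0(X_\eta)$: then $H^0(X_\eta, L_\eta) = 0$ makes $\tr(\psi_\eta)$ vanish automatically, so the trace-free hypothesis gives no information on $X_\eta$, and $\psi_\eta$ may a priori be a nonzero generic isomorphism. To rule this out one analyzes $K := \ker\psi$ and $T := \coker\psi$ (both fiber-supported in this subcase); the identities $c_1(T) - c_1(K) = rD$ and $\chi(T) - \chi(K) = (D \cdot \xi)$, combined with the strict HN-slope inequalities from $\lambda$-stability and the admissible bound $\lambda < \frac{(\xi \cdot f) - r(\alpha \cdot f)}{r(f \cdot H)}$ of Remark \ref{rem:n-stable}(1), should --- possibly with help of Serre duality or a closer examination of the torsion structure of $\pi_* L$ on $C$ --- yield a contradiction.
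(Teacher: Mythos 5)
Your argument is in substance the paper's own: the paper likewise reduces the problem to showing that ``trace--free'' homomorphisms vanish, except that it first passes from $E$ to its torsion-free quotient $E'$ and then to the locally free sheaf ${E'}^{\vee\vee}$, where the trace is honestly defined, and observes that the composite
$H^0(X,{\cal O}_X(D))\to\Hom(E,E(D))\to\Hom({E'}^{\vee\vee},{E'}^{\vee\vee}(D))\overset{\tr}{\to}H^0(X,{\cal O}_X(D))$
is multiplication by $r$. The injectivity of the middle arrow is exactly your slope comparison: $\Hom(E,T(D))=0$ for the torsion subsheaf $T$, because $(c_1(A)\cdot D)=0$ for fiber sheaves $A$, so twisting by $D$ preserves the twisted slope and conditions (b), (c) apply. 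So, up to reorganization, you have reproduced the paper's proof. One caveat: your splitting ${\cal E}nd(E)\cong{\cal O}_X\oplus{\cal E}nd^0(E)$ is not literally available, since a $\lambda$-stable $E$ need not be locally free; to define the trace at all you need the detour through $E'$ and ${E'}^{\vee\vee}$, and the injectivity statements ($\Hom(E,T(D))=0$, etc.) are precisely what make that detour lossless. This is a presentational fix, not a new idea.

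The genuine defect is your final paragraph, which is a plan rather than a proof: the subcase where $L_\eta:={\cal O}_X(D)|_{X_\eta}$ is a nontrivial torsion point with $E_\eta\otimes L_\eta\cong E_\eta$ is left open (``should \dots yield a contradiction'' is an admission, not an argument), and in that subcase the trace carries no information, so your reduction to trace-free maps proves nothing. For comparison, the paper dispatches this point with the single sentence ``by the stability of $E_\eta\cong E'_\eta$ we get $\ker\tr=0$,'' which is exactly your first two subcases and is likewise silent on the third; and in the one place the lemma is invoked (Proposition \ref{prop:lambda-smooth}, via Serre duality with $D=K_X$) one has ${\cal O}_X(K_X)|_{X_\eta}\cong\omega_{X_\eta}\cong{\cal O}_{X_\eta}$, so the troublesome subcase does not occur there. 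As a standalone proof of the lemma as stated, however, your text stops short: you must either show that $D\equiv\lambda f$ forces $L_\eta\cong{\cal O}_{X_\eta}$ (or at least $\Hom(E_\eta,E_\eta\otimes L_\eta)=0$), or actually carry out the kernel/cokernel analysis you only sketch. As written, the proposal is incomplete at this step.
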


\begin{proof}
For a $\lambda$-stable sheaf $E$ with $\tau(E)={\bf e}$,
we have an exact sequence
$$
0 \to T \to E \to E' \to 0
$$
such that $T$ is the torsion subsheaf and $E'$ is the torsion free quotient.
Then $T$ is generated by $\alpha$-twisted stable sheaves
$A$ with $\frac{\chi_\alpha(A)}{(c_1(A) \cdot H)} \leq \lambda$.
Hence $\Hom(E,T(D))=0$.
We have an exact sequence
$$
\Hom(E,T(D)) \to \Hom(E,E(D)) \to \Hom(E,E'(D)).
$$
Since $\Hom(E,T(D))=0$, $\Hom(E,E(D)) \to \Hom(E',E'(D))$ is injective.
Hence we have a homomorphism
$$
\psi:H^0(X,{\cal O}_X(D)) \overset{\varphi}{\to} \Hom(E,E(D)) \to \Hom({E'}^{\vee \vee},{E'}^{\vee \vee}(D)) 
\overset{\tr}{\to} H^0(X,{\cal O}_X(D)).
$$
Since $\psi$ is multiplication by $r$, it is an isomorphism.
By the stability of $E_\eta \cong E'_\eta$, we get $\ker \tr=0$.
Therefore $\varphi$ is an isomorphism.
\end{proof}

Let $Y:=M_H^\alpha(0,r_1 f,d_1)$ be a fine moduli space
and ${\cal P}$ a universal family on $X \times Y$,
where $(H,\alpha)$ is general.
Let $\Phi_{X \to Y}^{{\cal P}^{\vee}}:{\bf D}(X) \to {\bf D}(Y)$ be 
the Fourier-Mukai transform whose kernel is ${\cal P}^{\vee}$.
\begin{lem}\label{lem:FM-property}
Let $E$ be a coherent sheaf on $X$.
\begin{enumerate}
\item[(1)]
 $H^0(\Phi_{X \to Y}^{{\cal P}^{\vee}}(E))$ is torsion free.
\item[(2)]
If $\Hom({\cal P}_{|X \times \{y \}},E)=0$ except for finitely many $y \in Y$, then 
$H^1(\Phi_{X \to Y}^{{\cal P}^{\vee}}(E))$ is torsion free.
\end{enumerate}
\end{lem}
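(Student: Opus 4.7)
The plan is to use the adjoint Fourier--Mukai transform $\Psi := \Phi_{Y \to X}^{{\cal P}}$, which satisfies $\Psi \circ \Phi \cong (-) \otimes {\cal O}_X(-K_X)[-2]$ (by the general identity in the Notation section). Base change at each $y \in Y$ gives $H^i(\Phi(E))_y = \Ext^i({\cal P}_{|X \times \{y\}}, E)$, so since ${\cal P}_{|X \times \{y\}}$ is a $1$-dimensional sheaf on the surface $X$, we have $\Phi(E) \in {\bf D}^{[0,2]}(Y)$ for any coherent $E$.

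For (1), suppose $T \subset H^0(\Phi(E))$ is a nonzero torsion subsheaf. Because $\Phi(E) \in {\bf D}^{\geq 0}(Y)$, the truncation map $H^0(\Phi(E)) \to \Phi(E)$ is canonical and is the identity on $H^0$, so the composite $T \hookrightarrow H^0(\Phi(E)) \to \Phi(E)$ is nonzero in ${\bf D}(Y)$. Applying $\Psi$ gives a nonzero morphism $\Psi(T) \to E(-K_X)[-2]$ in ${\bf D}(X)$. Since $\Psi(T) \in {\bf D}^{[0,2]}(X)$, the standard Hom spectral sequence computes $\Hom(\Psi(T), E(-K_X)[-2])$ from the single nonvanishing term $\Hom(H^2\Psi(T), E(-K_X))$ (all other contributions being negative $\Ext$ groups). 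For torsion $T$, base change gives $H^2\Psi(T)_x = H^2(Y, {\cal P}_{|\{x\} \times Y} \otimes^L T)$; since both factors are supported in dimension $\leq 1$ on $Y$, Serre duality $H^2(Y,F) \cong \Hom(F,K_Y)^\vee$ forces this to vanish. The morphism must therefore be zero, contradicting $T \neq 0$.

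For (2), the hypothesis says $H^0(\Phi(E))_y = \Hom({\cal P}_{|X \times \{y\}}, E)$ is nonzero at only finitely many $y$, so $H^0(\Phi(E))$ has $0$-dimensional support; combined with (1) this forces $H^0(\Phi(E)) = 0$, so $\Phi(E) \in {\bf D}^{\geq 1}(Y)$. Truncation then gives a canonical morphism $H^1(\Phi(E))[-1] \to \Phi(E)$, and for a nonzero torsion $T \subset H^1(\Phi(E))$ we obtain (as in (1)) a nonzero morphism $\Psi(T) \to E(-K_X)[-1]$ in ${\bf D}(X)$. Since $H^2\Psi(T) = 0$ for torsion $T$, the analogous computation reduces this to a nonzero morphism $H^1\Psi(T) \to E(-K_X)$. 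A support analysis of base change shows $H^1\Psi(T)$ vanishes unless $T$ has a component supported on fibers of $\pi'$, in which case $H^1\Psi(T)$ is a purely $1$-dimensional sheaf on $X$ supported on the corresponding fibers of $\pi$. Passing to a Jordan--H\"older factor of $H^1\Psi(T)$ on a smooth fiber $f = \pi^{-1}(c_0)$ (where $K_X|_f \cong {\cal O}_f$ by adjunction) and invoking the moduli-theoretic description of ${\cal P}$, by which the stable $1$-dimensional sheaves on $f$ in the universal class are exactly the ${\cal P}_{|X \times \{y\}}$ for $y \in {\pi'}^{-1}(c_0)$, we extract a nonzero morphism ${\cal P}_{|X \times \{y\}} \to E$ for some $y$ outside the finite exceptional locus, contradicting the hypothesis.

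The main obstacle is the last step of (2): isolating a Jordan--H\"older factor of $H^1\Psi(T)$ whose numerical class matches that parametrized by $Y$ and whose composite with the map to $E(-K_X)$ is nonzero. This requires combining the classification of stable $1$-dimensional sheaves on a smooth elliptic fiber with careful tracking through the Fourier--Mukai transform; reducible or multiple fibers of $\pi'$ may require separate handling via the results of Section~\ref{subsect:multiple}.
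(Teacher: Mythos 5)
Part (1) of your argument is correct, but it takes a different route from the paper. The paper chooses a locally free resolution $V_{-2}\to V_{-1}\to V_0\to {\cal P}$ with the higher direct images of $V_i^{\vee}\otimes E$ vanishing, so that $\Phi_{X\to Y}^{{\cal P}^{\vee}}(E)$ is represented by a three-term complex of locally free sheaves on $Y$; then $H^0$ is the kernel of a map of locally free sheaves, hence torsion free with no further work. Your adjoint argument is valid: the only point to justify is $H^2\Phi_{Y\to X}^{{\cal P}}(T)=0$, which follows because $({\cal P}\otimes p_Y^*T)_{|\{x\}\times Y}$ is supported on the fiber ${\pi'}^{-1}(\pi(x))$ (so in fact no torsion hypothesis on $T$ is needed). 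It is a heavier tool for the same conclusion, but it works.

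Part (2) has a genuine gap in its final step, and I do not see how to close it along the lines you propose. First, the intermediate claims about $H^1\Psi(T)$ are not true in general: for $T={\cal O}_D$ with $D$ a smooth fiber of $\pi'$, the sheaf $H^1\Psi(T)$ typically has $0$-dimensional pieces (the model is $R^1p_{1*}$ of the Poincar\'{e} bundle on $E\times \widehat{E}$, which is a skyscraper), and for $1$-dimensional vertical $T$ the Jordan--H\"{o}lder factors of $H^1\Psi(T)$ need not lie in the numerical class $(0,r_1f,d_1)$ parametrized by $Y$, so they need not be of the form ${\cal P}_{|X\times\{y\}}$ at all. Second, and more fundamentally, even if you do extract a nonzero morphism ${\cal P}_{|X\times\{y_0\}}\to E$ for some $y_0$, this is not a contradiction: the hypothesis of (2) allows $\Hom({\cal P}_{|X\times\{y\}},E)\neq 0$ for finitely many $y$, so you would have to produce a positive-dimensional family of such $y$, and nothing in your reduction does that. (Your method does dispose of $0$-dimensional torsion, via $\Ext^{-1}$ between sheaves being zero, and of horizontal torsion, via $H^1\Psi(T')=0$; the irreducible difficulty is exactly torsion supported on fibers of $\pi'$, which is the case left open.) The paper's proof of (2) sidesteps all of this: the hypothesis says precisely that the first differential $\varphi_0$ of the locally free complex is fiberwise injective outside a finite set, hence injective with torsion-free cokernel (its degeneracy locus has codimension $2$ and the source is reflexive), and $H^1(\Phi_{X\to Y}^{{\cal P}^{\vee}}(E))$ embeds into $\coker\varphi_0$.
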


\begin{proof}
We take a locally free resolution
\begin{equation}
0 \to V_{-2} \to V_{-1} \to V_0 \to {\cal P} \to 0 
\end{equation}
such that $R^j p_{Y*} (V_i^{\vee} \otimes E)=0$ for $i=0,-1$ and $j>0$.
\begin{NB}
For a sufficiently large $n$,
we set $V_0:=p_Y^*( p_{Y*}({\cal P} \otimes {\cal O}_X(nH))) \otimes {\cal O}_X(-nH)$.
Then $V_0 \to {\cal P}$ is surjective and ${cal P}^1:=\ker(V_0 \to {\cal P})$
is flat over $Y$. Since ${\cal P}_{|X \times \{ y \}}$ is purely 1-dimensional,
${\cal P}^1_{|X \times \{ y \}}$ is locally free.
Hence ${\cal P}^1$ is locally free. We shall apply the same procedure to
${\cal P}^1$ to get $V_{-1} \to {\cal P}^1$ and denote the kernel by $V_{-2}$.  
\end{NB}
Then $\Phi_{X \to Y}^{{\cal P}^{\vee}}(E)$ is represented by a complex
of locally free sheaves on $Y$:
\begin{equation}
0 \to p_{Y*}(V_0^{\vee} \otimes E) \overset{\varphi_0}{\to} p_{Y*}(V_{-1}^{\vee} \otimes E) \overset{\varphi_1}{\to}
p_{Y*}(V_{-2}^{\vee} \otimes E) \to 0.
\end{equation}
Hence $H^0(\Phi_{X \to Y}^{{\cal P}^{\vee}}(E))=\ker \varphi_0$ is torsion free.
If $\Hom({\cal P}_{|X \times \{y \}},E)=0$ except for finitely many points
$y \in Y$, then $\varphi_0$ is injective and
$\coker \varphi_0$ is torsion free. 
Therefore $H^1(\Phi_{X \to Y}^{{\cal P}^{\vee}}(E))$ is also torsion free.
\end{proof}

%For $E:=\Phi_{Y \to X}^{{\cal E}}(F)[1] \in \Coh(X)$,
%$\Phi_{X \to Y}^{{\cal E}^{\vee}}(E)[1] \in \Coh(Y)$.
%
%Let ${\cal T}_\alpha$ be the subcategory generated by

\begin{lem}\label{lem:A}
Let $A$ be an $\alpha$-twisted stable sheaf on a fiber.
\begin{enumerate}
\item[(1)]
Assume that 
$\frac{\chi_\alpha(A)}{(c_1(A) \cdot H)}> 
\frac{\chi_\alpha({\cal P}_{|X \times \{y \}})}{(r_1 f \cdot H)}$.
Then $\Phi_{X \to Y}^{{\cal P}^{\vee}}(A)[1] \in \Coh(Y)$.
\item[(2)]
Assume that 
$\frac{\chi_\alpha(A)}{(c_1(A) \cdot H)}
\leq \frac{\chi_\alpha({\cal P}_{|X \times \{y \}})}{(r_1 f \cdot H)}$.
Then $\Phi_{X \to Y}^{{\cal P}^{\vee}}(A)[2] \in \Coh(Y)$.
\end{enumerate}
\end{lem}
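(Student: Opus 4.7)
The plan is to reduce both assertions to Lemma \ref{lem:WIT-A} via a standard stability argument on $\Hom$ spaces, handling the boundary of (2) by a direct Fourier-Mukai computation. First I would record that each fiber ${\cal P}_{|X\times\{y\}}$ is $\alpha$-twisted stable: by construction it is $G$-twisted stable with $\frac{c_1(G)}{\rk G}=\alpha+\frac{d_1 H}{r_1(H\cdot f)}$, and by Remark \ref{rem:twisted} this coincides with $\alpha$-twisted stability for sheaves supported on fibers. Consequently all ${\cal P}_{|X\times\{y\}}$ share the common $\alpha$-slope $\mu_\alpha^{{\cal P}}:=\frac{d_1-r_1(f\cdot\alpha)}{r_1(f\cdot H)}$.

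For (1), the strict inequality $\mu_\alpha(A)>\mu_\alpha^{{\cal P}}$ together with $\alpha$-twisted stability of both $A$ and ${\cal P}_{|X\times\{y\}}$ forces $\Hom(A,{\cal P}_{|X\times\{y\}})=0$ for every $y\in Y$: the image $B$ of a nonzero map is simultaneously a quotient of the stable sheaf $A$ and a subsheaf of the stable sheaf ${\cal P}_{|X\times\{y\}}$, so $\mu_\alpha(A)\leq\mu_\alpha(B)\leq\mu_\alpha^{{\cal P}}$, contradicting the hypothesis. Lemma \ref{lem:WIT-A}(2) then yields $\Phi_{X\to Y}^{{\cal P}^{\vee}}(A)[1]\in\Coh(Y)$.

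For (2) in the strict case $\mu_\alpha(A)<\mu_\alpha^{{\cal P}}$ the symmetric stability argument gives $\Hom({\cal P}_{|X\times\{y\}},A)=0$ for every $y$, and Lemma \ref{lem:WIT-A}(1) applies. The boundary case $\mu_\alpha(A)=\mu_\alpha^{{\cal P}}$ is the main subtlety: any nonzero map ${\cal P}_{|X\times\{y\}}\to A$ between $\alpha$-twisted stable sheaves of equal slope must be an isomorphism, so either $A$ is not isomorphic to any ${\cal P}_{|X\times\{y\}}$ and Lemma \ref{lem:WIT-A}(1) applies as before, or $A\cong{\cal P}_{|X\times\{y_0\}}$ for a unique $y_0\in Y$. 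In the latter case I would conclude by a direct computation: $A\cong\Phi_{Y\to X}^{{\cal P}}(k(y_0))$, so applying $\Phi_{X\to Y}^{{\cal P}^{\vee}}$ and using the quasi-inverse identity from the notation section gives $\Phi_{X\to Y}^{{\cal P}^{\vee}}(A)\cong k(y_0)[-2]$, whence $\Phi_{X\to Y}^{{\cal P}^{\vee}}(A)[2]\cong k(y_0)\in\Coh(Y)$. The main obstacle is precisely this boundary equality case, since Lemma \ref{lem:WIT-A}(1) cannot be invoked verbatim when $A$ actually sits in the universal family; identifying $A$ with a point of $Y$ and using Fourier-Mukai inversion bypasses the difficulty.
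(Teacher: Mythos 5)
Your proof is correct, and for part (2) it takes a noticeably different route from the paper's. The paper does not pass through Lemma \ref{lem:WIT-A} at all: it first observes that every $H^i(\Phi_{X \to Y}^{{\cal P}^{\vee}}(A))$ is a fiber (hence torsion) sheaf, kills $H^0$ by the torsion-freeness statement of Lemma \ref{lem:FM-property}(1), kills $H^2$ in case (1) by the same Serre-duality-plus-stability vanishing you use, and in case (2) kills $H^1$ by Lemma \ref{lem:FM-property}(2), whose hypothesis only requires $\Hom({\cal P}_{|X \times \{y\}},A)=0$ for all but finitely many $y$. That weaker hypothesis holds automatically even at the boundary slope (at most one $y$ admits a nonzero map, namely when $A\cong{\cal P}_{|X\times\{y_0\}}$), so the equality case needs no separate treatment: $H^1$ is simultaneously torsion free and a torsion sheaf, hence zero. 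Your version instead uses the all-$y$ vanishing of Lemma \ref{lem:WIT-A}, which genuinely fails when $A$ sits in the universal family, and you correctly patch this by identifying $A$ with $\Phi_{Y\to X}^{{\cal P}}(k(y_0))$ and invoking the quasi-inverse identity to get $\Phi_{X \to Y}^{{\cal P}^{\vee}}(A)[2]\cong k(y_0)$. Both arguments are sound; the paper's buys uniformity (no case split at the wall), while yours makes explicit the geometric content of the boundary case, namely that the exceptional sheaves are exactly the points of $Y$ (cf.\ Remark \ref{rem:G-stability}(2)).
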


\begin{proof}
We note that $H^i(\Phi_{X \to Y}^{{\cal P}^{\vee}}(A))$
are fiber sheaves.
By Lemma \ref{lem:FM-property} (1), we get
$H^0(\Phi_{X \to Y}^{{\cal P}^{\vee}}(A))=0$.

If $\frac{\chi_\alpha(A)}{(c_1(A) \cdot H)}> 
\frac{\chi_\alpha({\cal P}_{|X \times \{y \}})}{(r_1 f \cdot H)}$, then
$$
\Ext^2({\cal P}_{|X \times \{y \}},A) \cong \Hom(A,{\cal P}_{|X \times \{y \}}(K_X))^{\vee}=0
$$
for all $y \in Y$.
Hence $H^2(\Phi_{X \to Y}^{{\cal P}^{\vee}}(A))=0$ and
$\Phi_{X \to Y}^{{\cal P}^{\vee}}(A))[1] \in \Coh(Y)$.

If $\frac{\chi_\alpha(A)}{(c_1(A) \cdot H)}
\leq \frac{\chi_\alpha({\cal P}_{|X \times \{y \}})}{(r_1 f \cdot H)}$,
then $\Hom({\cal P}_{|X \times \{y \}},A)=0$ except for finitely many points
$y \in Y$. By Lemma \ref{lem:FM-property} (2),
we get $H^1(\Phi_{X \to Y}^{{\cal P}^{\vee}}(A))=0$.
Therefore $\Phi_{X \to Y}^{{\cal P}^{\vee}}(A))[2] \in \Coh(Y)$.
\end{proof}

\begin{lem}\label{lem:n-stability1}
Let $F$ be a torsion free sheaf on $Y$ such that $F_\eta$ is semi-stable and
$r_1 (c_1(F) \cdot f)+\rk F d_1'<0$,
where $\tau({\cal P}_{|\{x \} \times Y})=(0,r_1 f,d_1')$.
Then $E:=\Phi_{Y \to X}^{{\cal P}}(F)[1] \in \Coh(X)$ and  
$\lambda$-stable, where 
$\lambda=\frac{\chi_\alpha({\cal P}_{|X \times \{y \}})}{(r_1 f \cdot H)}$.
\end{lem}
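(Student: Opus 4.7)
Proof plan. The statement has two parts: (i) $E := \Phi_{Y\to X}^{\mathcal{P}}(F)[1]$ is concentrated in a single cohomological degree, hence a genuine coherent sheaf on $X$; and (ii) this sheaf is $\lambda$-stable in the sense of Definition \ref{defn:n-stable}. The main technical tool will be the Parseval identity from the Notation section. Applied with ${\bf P}=\mathcal{P}^\vee$, the composition $\Phi_{X\to Y}^{\mathcal{P}^\vee}\circ\Phi_{Y\to X}^{\mathcal{P}}$ is the endofunctor $G\mapsto G(-K_Y)[-2]$ of $\mathbf{D}(Y)$, so in particular $\Phi_{X\to Y}^{\mathcal{P}^\vee}(E) = F(-K_Y)[-1]$.

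Assuming (i), the three conditions of $\lambda$-stability come out as follows. For (a), restriction to the generic fiber $X_\eta$ commutes with the relative Fourier--Mukai, reducing to the classical fiberwise FM on the elliptic curve, which preserves semistability; hence $E_\eta$ is semistable because $F_\eta$ is. For (b), take an $\alpha$-twisted stable fiber sheaf $A$ with $\chi_\alpha(A)/(c_1(A)\cdot H)\le\lambda$. Since $\Phi_{X\to Y}^{\mathcal{P}^\vee}$ is an equivalence,
\[
\Hom(E,A)=\Hom\bigl(F(-K_Y)[-1],\,\Phi_{X\to Y}^{\mathcal{P}^\vee}(A)\bigr).
\]
Lemma \ref{lem:A}(2) gives $\Phi_{X\to Y}^{\mathcal{P}^\vee}(A)=A'[-2]$ with $A'\in\Coh(Y)$, and the right side becomes $\Ext^{-1}(F(-K_Y),A')=0$. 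For (c), if instead $\chi_\alpha(A)/(c_1(A)\cdot H)>\lambda$, Lemma \ref{lem:A}(1) gives $\Phi_{X\to Y}^{\mathcal{P}^\vee}(A)=A''[-1]$ with $A''\in\Coh(Y)$ a fiber sheaf, and
\[
\Hom(A,E)=\Hom\bigl(A''[-1],\,F(-K_Y)[-1]\bigr)=\Hom(A'',F(-K_Y))=0
\]
since $A''$ is torsion on $Y$ while $F(-K_Y)$ is torsion free.

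The main obstacle is part (i). Parseval only shows that $\Phi_{Y\to X}^{\mathcal{P}}(F)$ becomes concentrated after applying the equivalence $\Phi_{X\to Y}^{\mathcal{P}^\vee}$; since Fourier--Mukai does not preserve the standard $t$-structure, this does not directly yield concentration of $\Phi_{Y\to X}^{\mathcal{P}}(F)$ itself. I plan to verify $H^0=H^2=0$ directly. For $H^2$: base change along the proper map $p_X:X\times Y\to X$ shows that its stalks compute $H^2(Y,\mathcal{P}_{|\{x\}\times Y}\otimes F)$, which vanishes because $\mathcal{P}_{|\{x\}\times Y}\otimes F$ is supported in dimension one on $Y$; cohomology-and-base-change then upgrades this to sheaf-level vanishing. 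For $H^0$: the numerical hypothesis $r_1(c_1(F)\cdot f)+\rk F\cdot d_1'<0$ is exactly the slope inequality that, combined with semistability of $F_\eta$, rules out nontrivial maps from the fiber-supported family $\mathcal{P}_{|\{x\}\times Y}$ into $F$ for all but finitely many $x\in X$. Combined with the torsion-freeness of $F$, an adaptation of the argument of Lemma \ref{lem:FM-property} with the roles of $X$ and $Y$ exchanged then gives $H^0=0$. The delicate point will be isolating the exceptional fibers where the fiberwise Hom vanishing might fail and controlling their contribution to the derived pushforward.
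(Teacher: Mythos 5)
Your proposal follows essentially the same route as the paper: concentration in a single degree via $H^2=0$ (one-dimensional support of ${\cal P}_{|\{x\}\times Y}\otimes F$ over each $x$) and $H^0=0$ from torsion-freeness of $H^0$ (the paper gets this from flatness of ${\cal P}$ over $X$ applied to $F^{\vee\vee}$, in the spirit of Lemma \ref{lem:FM-property}) combined with the generic-fiber vanishing forced by $r_1(c_1(F)\cdot f)+\rk F\,d_1'<0$; then conditions (b) and (c) of $\lambda$-stability exactly as you compute, via the Parseval identity, Lemma \ref{lem:A}, and the torsion-freeness of $F$. The ``delicate point'' you flag dissolves: once $H^0$ is torsion free and supported on fibers it vanishes outright, so no further control of exceptional fibers is needed.
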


\begin{proof}
Since ${\cal P}$ is flat over $Y$, 
${\cal P} \otimes p_Y^*(F) \in \Coh(X \times Y)$.
Hence $H^i(\Phi_{Y \to X}^{{\cal P}}(F))=0$ for $i \ne 0,1$.
Since ${\cal P}$ is flat over $X$ (cf. \cite[Lem. 5.1]{Br:1}), 
$H^0(\Phi_{Y \to X}^{{\cal P}}(F^{\vee \vee}))$ is torsion free.
Hence $H^0(\Phi_{Y \to X}^{{\cal P}}(F))$ is also torsion free.
By our assumption,
$H^0(\Phi_{Y \to X}^{{\cal P}}(F))$ is supported on fibers.
Hence we get $H^0(\Phi_{Y \to X}^{{\cal P}}(F))=0$.
Thus $E:=\Phi_{Y \to X}^{{\cal P}}(F)[1] \in \Coh(X)$.
For an $\alpha$-twisted stable sheaf $A$ on a fiber and
a homomorphism $E \to A$, 
%$\frac{\chi_\alpha(A)}{(c_1(A) \cdot H)}> 
%\frac{\chi_\alpha({\cal E}_{|X \times \{y \}})}{(rf \cdot H)}$.
if $\frac{\chi_\alpha(A)}{(c_1(A) \cdot H)} \leq  \lambda$, then
%\frac{\chi_\alpha({\cal E}_{|X \times \{y \}})}{(rf \cdot H)}$, then
$\Phi_{X \to Y}^{{\cal P}^{\vee}}(A)[2] \in \Coh(Y)$.
Hence
$$
\Hom(E,A)=\Hom(F(-K_X), \Phi_{X \to Y}^{{\cal P}^{\vee}}(A)[1])=0.
$$

Since $F$ is torsion free, we also see that
$\Hom(A,E)=0$ for an $\alpha$-twisted stable sheaf $A$ on a fiber
such that $\frac{\chi_\alpha(A)}{(c_1(A) \cdot H)}> \lambda$.
\end{proof}

\begin{lem}\label{lem:n-stability2}
Assume that $r_1(\xi \cdot f)-r d_1>0$.
We set 
$\lambda:=
\frac{\chi_\alpha({\cal P}_{|X \times \{ y\}})}{(c_1({\cal P}_{|X \times \{ y\}}) \cdot H)}
=\frac{d_1-r_1(f \cdot \alpha)}{r_1(f \cdot H)}$.
For $\lambda' \geq \lambda$ and
a $\lambda'$-stable sheaf $E$, $\Phi_{X \to Y}^{{\cal P}^{\vee}}(E)[1] \in \Coh(Y)$.
Moreover $\Phi_{X \to Y}^{{\cal P}^{\vee}}(E)[1]$ is torsion free if 
$\lambda'=\lambda$. 
\end{lem}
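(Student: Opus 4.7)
The plan is to show that $\Phi^{{\cal P}^{\vee}}_{X\to Y}(E)$ is concentrated in cohomological degree $1$. Representing $\Phi^{{\cal P}^{\vee}}_{X\to Y}(E)$ by the three-term complex of locally free sheaves as in the proof of Lemma~\ref{lem:FM-property}, this reduces to proving $H^0=0$ and $H^2=0$ of the underlying complex; the torsion-freeness assertion will then follow from Lemma~\ref{lem:FM-property}(2).

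For $H^2=0$ I would argue fibrewise. By Serre duality on $X$,
$$
\Ext^2({\cal P}_{|X\times\{y\}},E) \cong \Hom(E,{\cal P}_{|X\times\{y\}}(K_X))^{\vee}
$$
for every $y\in Y$. Since $(K_X\cdot f)=0$, tensoring with ${\cal O}_X(K_X)$ leaves the class $\tau({\cal P}_{|X\times\{y\}})=(0,r_1f,d_1)$ unchanged and preserves $\alpha$-twisted stability, so ${\cal P}_{|X\times\{y\}}(K_X)$ is an $\alpha$-twisted stable fibre sheaf with the same $\alpha$-slope $\lambda$. Because $\lambda\le\lambda'$, condition (b) of $\lambda'$-stability forces the right-hand side to vanish for every $y$. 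Pointwise vanishing of $\Ext^2$ makes the last differential of the three-term complex surjective, giving $H^2=0$.

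For $H^0=0$, I would use Lemma~\ref{lem:FM-property}(1) to reduce to showing that the generic rank of $H^0$ is zero. Condition (a) of $\lambda'$-stability says that $E_\eta$ is a vector bundle, so the torsion subsheaf $T\subset E$ is supported on only finitely many fibres of $\pi$. For $y\in Y$ outside the corresponding finite union of fibres of $\pi'\colon Y\to C$, the support of ${\cal P}_{|X\times\{y\}}$ is a smooth fibre $f'$ of $\pi$ disjoint from $\Supp T$, and $E$ is torsion free on a neighbourhood of $f'$. Since ${\cal P}_{|X\times\{y\}}$ is annihilated by a power of the local equation of $f'$, which acts injectively on the torsion-free $E$ near $f'$, every ${\cal O}_X$-homomorphism ${\cal P}_{|X\times\{y\}}\to E$ must vanish. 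Thus $H^0$ has generic rank zero and, being torsion free, is zero.

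For the torsion-freeness of $H^1=\Phi^{{\cal P}^{\vee}}_{X\to Y}(E)[1]$ when $\lambda'=\lambda$, I plan to apply Lemma~\ref{lem:FM-property}(2); this requires $\Hom({\cal P}_{|X\times\{y\}},E)=0$ for all but finitely many $y$. The preceding paragraph already delivers the vanishing whenever the support of ${\cal P}_{|X\times\{y\}}$ is disjoint from $\Supp T$, so the task reduces to controlling the potentially bad locus inside the finitely many fibres of $\pi'$ that lie over $\pi(\Supp T)$. Under the sharper hypothesis $\lambda'=\lambda$, conditions (b) and (c) of $\lambda$-stability constrain the stable sub- and quotient-factors of $T$: any nonzero map from ${\cal P}_{|X\times\{y\}}$ (stable of $\alpha$-slope $\lambda$) into $E$ must land in $T$ and in fact factor through the sum of those stable subsheaves of $T$ having $\alpha$-slope exactly $\lambda$. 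I expect the main obstacle to be identifying this slope-$\lambda$ part of $T$: by Corollary~\ref{cor:u} together with Lemma~\ref{lem:multiple2} and a comparison with ${\cal P}$, such factors form a zero-dimensional moduli, so only finitely many isomorphism classes appear in $T$ and each one matches ${\cal P}_{|X\times\{y\}}$ for only finitely many $y$. This will bound the exceptional set to a finite set, at which point Lemma~\ref{lem:FM-property}(2) completes the proof.
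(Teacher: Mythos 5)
Your proposal is correct and follows essentially the same route as the paper: Serre duality plus condition (b) of $\lambda'$-stability kills $H^2$, the local freeness of $E_\eta$ plus the torsion-freeness from Lemma \ref{lem:FM-property}(1) kills $H^0$, and for $\lambda'=\lambda$ the finiteness of $\{y \mid \Hom({\cal P}_{|X\times\{y\}},E)\neq 0\}$ comes from the fact that such maps land in the torsion subsheaf $T$, whose slope-$\lambda$ stable subsheaves realize only finitely many points of $Y$. The paper's own treatment of this last finiteness step is just as terse as yours (it notes that any nonzero $\phi$ is injective with image in $T$ and concludes), so your sketch is at the right level of detail.
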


\begin{proof}
By $\lambda' \geq \lambda$ and the definition of $\lambda'$-stability, we have 
 $$
\Ext^2({\cal P}_{|X \times \{ y\}},E) \cong \Hom(E,{\cal P}_{|X \times \{ y\}})^{\vee}=0
$$
for all $y \in Y$.
Hence $H^2(\Phi_{X \to Y}^{{\cal P}^{\vee}}(E))=0$.
Since $E_\eta$ is locally free, $H^0(\Phi_{X \to Y}^{{\cal P}^{\vee}}(E))$ is supported on fibers.
Then Lemma \ref{lem:FM-property} implies $H^0(\Phi_{X \to Y}^{{\cal P}^{\vee}}(E))=0$.
Therefore
$\Phi_{X \to Y}^{{\cal P}^{\vee}}(E)[1] \in \Coh(Y)$.

Assume that $\lambda=\lambda'$.
For a non-trivial homomorphism $\phi:{\cal P}_{|X \times \{ y\}} \to E$,
$\lambda$-stability of $E$ implies $\phi$ is injective and $\coker \phi$ is also
$\lambda$-stable. Since $\im \phi$ is contained in the torsion submodule of $E$,
$\Hom({\cal P}_{|X \times \{ y\}},E)=0$ except for finitely many points $y \in Y$.
Then $\Phi_{X \to Y}^{{\cal P}^{\vee}}(E)[1]$ is torsion free by Lemma \ref{lem:FM-property} (2).
\end{proof}

\begin{rem}
If $\lambda'<\lambda$, then
$(\Phi_{X \to Y}^{{\cal P}^{\vee}[1]}(E))^{\vee} \in \Coh(Y)$ for a $\lambda'$-stable sheaf $E$. 
\end{rem}

By Lemma \ref{lem:n-stability1} and Lemma \ref{lem:n-stability2},
we get the following result.

\begin{prop}\label{prop:n-stability}
Assume that $\gcd((\xi \cdot f),r)=1$ and
$r_1(\xi \cdot f)-r d_1>0$.
We set 
$\lambda:=
\frac{\chi_\alpha({\cal P}_{|X \times \{ y\}})}{(c_1({\cal P}_{|X \times \{ y\}}) \cdot H)}$.
Let $E$ be an object of ${\bf D}(X)$.
Then $E \in \Coh(X)$ and $E$ is $\lambda$-stable if and only if
$E':=\Phi_{X \to Y}^{{\cal P}^{\vee}[1]}(E) \in \Coh(Y)$
and $E'$ is stable with respect to $H_f'$, where $H'$ is an ample divisor on $Y$.
% induces an isomorphism
%$$
%{\cal M}^\lambda({\bf e})^{s} \to {\cal M}_{H'_f}({\bf e}'),
%$$ 
%where ${\bf e}':=\tau(\Phi_{X \to Y}^{{\cal P}^{\vee}[1]}(E))$
%$(E \in {\cal M}^\lambda({\bf e})^{ss})$ and $H'$ is an ample divisor on $Y$.
\end{prop}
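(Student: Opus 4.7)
The plan is to deduce the proposition from Lemmas \ref{lem:n-stability1} and \ref{lem:n-stability2}, viewing these as the two one-sided statements of the desired equivalence for $\Psi := \Phi_{X \to Y}^{{\cal P}^{\vee}[1]}$, once we unravel ``$H_f'$-stability of $E'$'' into the conjunction of $E' \in \Coh(Y)$ torsion free, $E'_{\eta'}$ stable, and no sub-line-bundle inclusion of the same $f'$-slope with larger Hilbert polynomial (which, under the coprimality obtained via the Mukai vector, is the standard characterisation in the Friedman regime).

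For the forward direction, assume $E \in \Coh(X)$ is $\lambda$-stable. Lemma \ref{lem:n-stability2} applied with $\lambda' = \lambda$ produces $E' = \Psi(E) \in \Coh(Y)$ torsion free. The hypothesis $\gcd(r,(\xi \cdot f))=1$ together with the Mukai vector computation guarantee that $\rk E'$ and $(c_1(E') \cdot f')$ are coprime, and the stability of $E_\eta$ transports across the generic-fibre Fourier--Mukai equivalence to stability of $E'_{\eta'}$. To promote this to $H_f'$-stability, suppose a subsheaf $F' \subset E'$ has the same rank and fibre degree as $E'$ but larger Euler characteristic, so that $E'/F'$ is supported on fibres of $\pi'$. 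Applying $\Phi_{Y \to X}^{{\cal P}}$ to $0 \to F' \to E' \to E'/F' \to 0$ and using Lemma \ref{lem:A} to pin down the cohomological degree in which each $\alpha'$-twisted stable constituent of $E'/F'$ transforms, we obtain, after the $K_X$-twist supplied by the adjunction formula, either a fibre subsheaf of $E$ or a fibre quotient of $E$ by an $\alpha$-twisted stable sheaf $A$ on a fibre with $\chi_\alpha(A)/(c_1(A) \cdot H)$ on the wrong side of $\lambda$, contradicting condition (b) or (c) of Definition \ref{defn:n-stable}.

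For the reverse direction, start from $E'$ torsion free and $H_f'$-stable on $Y$, so that $E'_{\eta'}$ is semistable. The inequality $r_1(\xi \cdot f) - r d_1 > 0$ translates through the Fourier--Mukai into the hypothesis $r_1(c_1(E') \cdot f) + \rk E' \cdot d_1' < 0$ of Lemma \ref{lem:n-stability1}, which yields that $\Phi_{Y \to X}^{{\cal P}}(E')[1] \in \Coh(X)$ is $\lambda$-stable. By the adjunction relation stated in the notation section, this object agrees with $E \otimes {\cal O}_X(-K_X)$; since $K_X$ is algebraically equivalent to a fibre class, tensoring by ${\cal O}_X(K_X)$ preserves each of the three defining conditions of $\lambda$-stability, and so $E$ itself is $\lambda$-stable. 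The main obstacle will be the forward-direction bookkeeping: making sure that the cohomological degree shifts produced by Lemma \ref{lem:A} for the fibre-supported stable constituents of $E'/F'$ thread through the triangle in exactly the way that places the resulting fibre sub- or quotient sheaves of $E$ in the range forbidden by $\lambda$-stability. This is where the specific value $\lambda = \chi_\alpha({\cal P}_{|X \times \{y\}})/(c_1({\cal P}_{|X \times \{y\}}) \cdot H)$ is essential: it aligns the dichotomy of Lemma \ref{lem:A} with the split between conditions (b) and (c) of Definition \ref{defn:n-stable}.
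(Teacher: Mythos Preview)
Your overall strategy---derive the proposition from Lemmas \ref{lem:n-stability1} and \ref{lem:n-stability2}---is exactly the paper's: the proposition is stated there as an immediate consequence of those two lemmas, with no further argument. The reverse direction you give is fine and matches the intended use of Lemma \ref{lem:n-stability1} together with the $K_X$-twist.

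The forward direction, however, contains an unnecessary detour rooted in a mischaracterization of $H'_f$-stability. Under the coprimality $\gcd(\rk E',(c_1(E')\cdot f'))=1$ that you correctly extract, the Friedman characterization is simply
\[
E'\ H'_f\text{-stable}\ \Longleftrightarrow\ E'\ \text{torsion free and }E'_{\eta'}\ \text{semi-stable}.
\]
There is no third condition about full-rank subsheaves $F'\subset E'$ of the same fibre degree: for any such $F'$ the quotient $E'/F'$ is a nonzero torsion sheaf with effective $c_1$, so $(c_1(E'/F')\cdot H'_f)>0$ and hence $p_{F'}<p_{E'}$ automatically. (If $E'/F'$ were $0$-dimensional one would have $\chi(F')<\chi(E')$ instead.) Thus Lemma \ref{lem:n-stability2} (giving $E'$ torsion free) together with the generic-fibre equivalence (giving $E'_{\eta'}$ semi-stable from $E_\eta$ semi-stable) already yields $H'_f$-stability, and your paragraph invoking Lemma \ref{lem:A} on the constituents of $E'/F'$ can be deleted. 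That paragraph is also technically shaky as written: Lemma \ref{lem:A} is stated for $\Phi_{X\to Y}^{{\cal P}^\vee}$, not for $\Phi_{Y\to X}^{{\cal P}}$, and there is no a priori bound on the slopes of the stable constituents of $E'/F'$, so the dichotomy you want does not line up with conditions (b) and (c) of Definition \ref{defn:n-stable}.
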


We shall give a generalization of this correspondence in subsection \ref{subsect:n-stable2}. 

\begin{NB}
For $(r_1,d_1)$, $(dr1',d_1'):=(r_1 N \prod_i m_i, d_1 N \prod_i m_i+1)$
satisfies $|\frac{d_1}{r_1}-\frac{d_1'}{r_1'}|$ is sufficiently small if $N \gg 0$.
So for a general $\lambda$, we have $M_H^\alpha (0,r' f, d')$ such that 
 we can apply Lemma \ref{lem:n-stability1}.
\end{NB}

\subsection{Stacks of $\lambda$-stable sheaves.}\label{subsect:stack}
In this subsection, we shall consider moduli stacks of $\lambda$-stable sheaves.
We quote the following Bogomolov inequality.
\begin{lem}[{\cite[Lem. 3.3]{Y:Enriques}}]\label{lem:Bogomolov}
For a torsion free sheaf $E$ on $X$ such that $E_\eta$ is semi-stable,
\begin{equation}\label{eq:Bogomolov}
c_2(E)-\frac{\rk E-1}{2\rk E}(c_1(E)^2)=
\rk E \chi({\cal O}_X)-\frac{1}{2}(c_1(E) \cdot K_X)+\frac{1}{2\rk E}(c_1(E)^2)-\chi(E) \geq 0.
\end{equation}
\end{lem}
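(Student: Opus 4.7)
The plan is to handle the identity and the inequality separately. The equality of the two expressions is a direct consequence of Hirzebruch--Riemann--Roch for surfaces:
$$\chi(E)=\rk E\cdot\chi({\cal O}_X)+\tfrac{1}{2}(c_1(E)^2-c_1(E)\cdot K_X)-c_2(E),$$
and solving for $c_2(E)$ and rearranging gives the stated identity. The substantive content is the inequality $\Delta(E)\geq 0$, where
$$\Delta(E):=c_2(E)-\frac{\rk E-1}{2\rk E}(c_1(E)^2),$$
which is a version of Bogomolov's inequality adapted to the weaker hypothesis that $E$ is semi-stable only on the generic fiber.

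My approach is to work with the polarization $H_n:=H+nf$ for $n\gg 0$ and take the Harder--Narasimhan filtration $0=F_0\subset F_1\subset \cdots \subset F_s=E$ with respect to $\mu_{H_n}$-slope. The first key observation is that, since $E_\eta$ is semi-stable, every subquotient $Q_i:=F_i/F_{i-1}$ must satisfy $\mu_f(Q_i)=\mu_f(E)$; otherwise the leading $n$-coefficient of $\mu_{H_n}(Q_i)$ would eventually force $F_1$ to destabilize $E_\eta$. Consequently the HN filtration stabilizes for $n\gg 0$, and each $Q_i$ is $\mu_{H_n}$-semi-stable for all large $n$, so by the classical Bogomolov inequality each $\Delta(Q_i)\geq 0$.

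Next I would use the elementary identity for an extension $0\to A\to E\to B\to 0$ of torsion free sheaves (with $r_A=\rk A$, $r_B=\rk B$, $r=\rk E$):
$$\Delta(E)=\Delta(A)+\Delta(B)-\frac{(r_B c_1(A)-r_A c_1(B))^2}{2\,r\cdot r_A r_B},$$
which is a straightforward Chern class calculation. When $\mu_f(A)=\mu_f(B)$, the divisor class $D:=r_B c_1(A)-r_A c_1(B)$ satisfies $D\cdot f=0$, and the Hodge index inequality $D^2\cdot(H+nf)^2\leq (D\cdot (H+nf))^2$ forces $D^2\leq 0$: indeed $(H+nf)^2\to\infty$ as $n\to\infty$ while $D\cdot(H+nf)=D\cdot H$ remains bounded, so $D^2>0$ is impossible. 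Thus the correction term in the identity is non-negative and $\Delta(E)\geq \Delta(A)+\Delta(B)$. Iterating along the HN filtration gives $\Delta(E)\geq \sum_i \Delta(Q_i)\geq 0$, which is the desired inequality.

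The main technical point is the limiting Hodge-index step, needed because $f$ is merely nef; this is cleanly handled by the $H+nf$ limit above. Checking that the HN filtration stabilizes and that every subquotient shares the $f$-slope of $E$ is forced by the generic-fiber semi-stability hypothesis, and the combinatorial iteration over the filtration is routine.
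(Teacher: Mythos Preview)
The paper does not give its own proof of this lemma; it is quoted from \cite[Lem.~3.3]{Y:Enriques} and used as a black box, so there is nothing in the paper to compare your argument against. Your argument is correct and is the standard one (in characteristic zero): the equality is Riemann--Roch, and for the inequality you reduce via the Harder--Narasimhan filtration with respect to $H+nf$ (for $n\gg 0$) to the classical Bogomolov inequality on the $\mu$-semi-stable graded pieces, using that the cross-term divisor $D=r_Bc_1(A)-r_Ac_1(B)$ lies in $f^\perp$ and hence has $D^2\le 0$ by the Hodge-index limit you describe.

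One step that could be stated a bit more carefully is the stabilization of the HN filtration along the ray $\{H+nf\}_{n\ge 0}$, which you invoke rather than prove. The clean justification is: any maximal destabilizer $F_1$ for some $H_n$ satisfies both $\mu_f(F_1)\le\mu_f(E)$ (from semi-stability of $E_\eta$) and $\mu_{H_n}(F_1)\ge\mu_{H_n}(E)$, whence $\mu_H(F_1)\ge\mu_H(E)$; Grothendieck's boundedness then gives only finitely many numerical candidates for $F_1$, hence finitely many walls on the ray. Once the filtration is constant for all $n\ge N$, letting $n\to\infty$ in the HN inequalities yields $\mu_f(Q_1)\ge\cdots\ge\mu_f(Q_s)$, while generic semi-stability forces $\mu_f(Q_1)\le\mu_f(E)\le\mu_f(Q_s)$, so all $\mu_f(Q_i)$ agree with $\mu_f(E)$ as you claim. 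After this, your iteration of the extension identity for $\Delta$ goes through exactly as written.
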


\begin{NB}
\begin{prop}\label{prop:finite}
%Let $E$ be a coherent sheaf with $\tau(E)=(r,\xi,a)$
Assume that $r'(\xi \cdot f)-rd'>0$. 
Let $B$ be a compact subset of $\NS(X)_{\Bbb R}$ and take $\alpha \in B$.
Let ${\cal T}_B$ be the set of torsion submodules $T$ of 
$\lambda$-stable sheaves $E$
with $\tau(E)={\bf e}$, where $\lambda 
\leq \frac{d'-r'(f \cdot \alpha)}{r'(f \cdot H)}$.
Then $\{ \tau(T) \mid T \in {\cal T}_B \}$ is a finite set.
\end{prop}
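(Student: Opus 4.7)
The plan is to bound $\tau(T) = (0, \eta, \chi(T))$ (with $\eta := c_1(T)$) by combining the Bogomolov inequality applied to the torsion-free quotient $E' := E/T$ with the slope upper bound on $T$ coming from $\lambda$-stability of $E$. Since $T$ is supported on fibres, $\eta$ is a fibre class, and both $(\eta \cdot f) = 0$ and $(\eta \cdot f_i) = 0$ (the latter because $m_i f_i \equiv f$); by the canonical bundle formula this gives $(\eta \cdot K_X) = 0$, which will let me rearrange Bogomolov into a clean inequality in $\eta$ and $\chi(T)$. The strategy is then to force $\eta$ into a bounded region of $\NS(X)_{\Bbb R}$---hence a finite subset of the lattice $\NS(X)$---and finally use the resulting two-sided bound on $\chi(T)$ to conclude.

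First I would note that any $\alpha$-twisted stable Jordan--H\"older factor $A$ of $T$ embeds in $E$, so condition (c) of Definition \ref{defn:n-stable} forces $\chi_\alpha(A)/(c_1(A) \cdot H) \leq \lambda$; summing over factors gives the upper bound $\chi(T) \leq \lambda(\eta \cdot H) + (\eta \cdot \alpha)$. On the other side, $E'$ is torsion-free and $E'_\eta = E_\eta$ is semi-stable (by condition (a)), so Lemma \ref{lem:Bogomolov} applied to $E'$ produces a lower bound
\begin{equation*}
\chi(T) \;\geq\; C_0 + \tfrac{1}{r}(\xi \cdot \eta) - \tfrac{1}{2r}(\eta^2),
\end{equation*}
where $C_0 := a - r\chi({\cal O}_X) + \tfrac{1}{2}(\xi \cdot K_X) - \tfrac{1}{2r}(\xi^2)$ depends only on ${\bf e}$. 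Setting $\gamma := \xi - r\alpha - r\lambda H$ and decomposing $\eta = Nf + \eta''$ with $\eta'' \in H^\perp \cap f^\perp$ (so $N = (\eta \cdot H)/(f \cdot H) \geq 0$ by effectivity of $\eta$, and $|\eta''|^2 := -(\eta''^2) \geq 0$ gives a positive-definite form by the Hodge index theorem), combining the two bounds yields
\begin{equation*}
\tfrac{1}{2}|\eta''|^2 + (\eta'' \cdot \gamma) + N(\gamma \cdot f) \;\leq\; -r C_0.
\end{equation*}

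Finally, the hypothesis $r'(\xi \cdot f) - rd' > 0$ combined with the formula $\lambda = (d' - r'(f \cdot \alpha))/(r'(f \cdot H))$ gives $(\gamma \cdot f) = (r'(\xi \cdot f) - rd')/r' > 0$, a strictly positive constant \emph{independent} of $\alpha$ and $\lambda$. Since $N \geq 0$, the inequality implies $\tfrac{1}{2}|\eta''|^2 + (\eta'' \cdot \gamma) \leq -rC_0$, a coercive quadratic inequality in $\eta''$; as $\alpha$ ranges over the compact set $B$ (and $\lambda$ over the corresponding compact range) the vector $\gamma$ stays bounded, so $|\eta''|$ is bounded uniformly, and plugging this back into the display then uniformly bounds $N$. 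Hence $\eta$ lies in a bounded subset of the lattice $\NS(X)$ and takes only finitely many values; the two-sided bound on $\chi(T)$ then forces finitely many values of $\chi(T)$. The main delicate point I anticipate is the uniformity of the coercivity estimate: one must verify that the linear perturbation $(\eta'' \cdot \gamma)$ cannot conspire with the $\tfrac{1}{2}|\eta''|^2$ term to let $|\eta''|$ escape to infinity as $\alpha$ varies over $B$, but this follows from the compactness of $B$ together with the strict positivity of $(\gamma \cdot f)$ built into the hypothesis.
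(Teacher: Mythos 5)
Your proof is correct and follows essentially the same route as the paper's: the slope bound $\chi_\alpha(T)/(c_1(T)\cdot H)\le\lambda$ coming from condition (c) of Definition \ref{defn:n-stable}, the Bogomolov inequality (Lemma \ref{lem:Bogomolov}) applied to $E/T$, negative definiteness of a complement of ${\Bbb R}f$ inside $f^{\perp}$ (the paper uses the integral sublattice $L$ spanned by non-identity components of reducible fibres where you use $H^{\perp}\cap f^{\perp}$, an immaterial difference), and coercivity in the $f$-direction from the hypothesis $r'(\xi\cdot f)-rd'>0$. The one imprecise justification is the phrase ``$\lambda$ over the corresponding compact range'' --- $\lambda$ is only bounded above, not below, so $\gamma=\xi-r\alpha-r\lambda H$ is \emph{not} bounded --- but your decomposition rescues the argument automatically: since $\eta''\in H^{\perp}$ one has $(\eta''\cdot\gamma)=(\eta''\cdot(\xi-r\alpha))$, which is independent of $\lambda$ and uniformly bounded by $C|\eta''|$ for $\alpha\in B$, while the $\lambda$-dependence of $(\gamma\cdot f)=(\xi\cdot f)-r(\alpha\cdot f)-r\lambda(H\cdot f)\ \ge\ (r'(\xi\cdot f)-rd')/r'>0$ only increases it, so the coercivity estimate is in fact uniform.
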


\begin{proof}
Let $m_1 f_1,...,m_k f_k$ be the reducible fibers of $\pi$
and let $f_i=\sum_{j=0}^{n_i}a_{ij} C_{ij}$
be the decomposition of $f_i$, where $m_i$ are the 
multiplicities and $C_{ij}$ are the irreducible components of 
$f_i$.
We may assume that $a_{i0}=1$. 
Then 
$$
\sum_{j=0}^{n_i} {\Bbb Z}C_{ij}=
\sum_{j=1}^{n_i} {\Bbb Z}C_{ij} + {\Bbb Z}f_i.
$$ 
We set $L:=\sum_i \sum_{j=1}^{n_i} {\Bbb Z}C_{ij}$.

Let $T$ be the torsion submodule of $E$ and set
$\tau(T)=(0,g+D,l d'+b)$, where
$D \in L$, $l \in {\Bbb Q}_{>0}$, $g=lr' f \in \NS(X)_{\Bbb Q}$ and $b \in {\Bbb Q}$.
Then $\tau(E/T)=(r,\xi-g-D,a-ld'-b)$.
Let $A$ be an $\alpha$-twisted stable subsheaf of $T$ with the maximal slope.
Then the $\lambda$-stability of $E$ implies 
$\frac{\chi_\alpha(A)}{(c_1(A) \cdot H)} \leq \lambda$.
Hence we get
\begin{equation}
\frac{\chi_\alpha(T)}{(c_1(T) \cdot H)} \leq 
\frac{\chi_\alpha(A)}{(c_1(A) \cdot H)} \leq \lambda 
\leq \frac{d'-r'(f \cdot \alpha)}{r'(f \cdot H)}.
\end{equation}
\begin{NB2}
Since $L$ is negative definite, we have a orthogonal decomposition 
$L_{\Bbb Q}=\sum_i {\Bbb Q}e_i$.
We set $D=\sum_i x_i e_i$. Then
\begin{equation}
\begin{split}
((\xi-D)^2)=& (\xi^2)-2\sum_i (\xi \cdot e_i)x_i+\sum_i (e_i^2)x_i^2 \\
=& \sum_i (e_i^2) \left(x_i-\frac{(\xi \cdot e_i)}{(e_i^2)} \right)^2-
\sum_i \frac{(\xi \cdot e_i)^2}{(e_i^2)}+(\xi^2).
\end{split}
\end{equation}
Hence $((\xi-D)^2)$ is bounded above.
\end{NB2}
Then we have
\begin{equation}
\frac{b-(D \cdot \alpha)}{(D \cdot H)} \leq 
\frac{d'-r'(f \cdot \alpha)}{r'(f \cdot H)}.
\end{equation}
\begin{NB2}
\begin{equation}
\begin{split}
&((b-(D \cdot \alpha))+l(d'-(r' f \cdot \alpha)))(r' f \cdot H)-
(d'-(r' f \cdot \alpha))((D \cdot H)+l(r' f \cdot H))\\
=&(b-(D \cdot \alpha))(r' f \cdot H)-(d'-(r' f \cdot \alpha))(D \cdot H)
\end{split}
\end{equation}
\end{NB2}
Since $L$ is negative definite, 
\begin{equation}
|(D \cdot H)| \leq \sqrt{-(D^2)} \sqrt{(H_1^2)},\;
|(D \cdot \alpha)| \leq \sqrt{-(D^2)}\sqrt{-(\alpha_1^2)},
\end{equation} 
where $H_1, \alpha_1 \in L_{\Bbb R}$ are the orthgonal projections of $H,\alpha$ respectively.
Hence there is a positive number $\beta$ depending on $B$ such that  
\begin{equation}
b \leq \beta \sqrt{-(D^2)}.
\end{equation}
We note that 
\begin{equation}
\frac{d'-r' (f \cdot \alpha)}{(r' f \cdot H)}
<\frac{(\xi \cdot f)-(rf \cdot \alpha)}{(rf \cdot H)}
\Longleftrightarrow 
r' (\xi \cdot f)-rd'>0.
\end{equation}
We have 
$$
|(D \cdot \xi)| \leq \sqrt{-(D^2)}\sqrt{-(\xi_1^2)}
$$
where $\xi_1 \in L$ is the orthognal projection of $\xi$.
Then we see that
\begin{equation}
\begin{split}
& ((\xi-D-lr' f)^2)-2r(a-b-ld')\\
=& ((\xi-D)^2)-2l(r'(\xi \cdot f)-rd')-2ra+2rb\\
\leq & -2l(r'(\xi \cdot f)-rd')-
\left(\sqrt{-(D^2)}-\left(\sqrt{-(\xi_1^2)}+r\beta \right)\right)^2+
\left(\sqrt{-(\xi_1^2)}+r\beta \right)^2+(\xi^2)-2ra.
\end{split}
\end{equation}
Applying Bogomolov inequality (Lemma \ref{lem:Bogomolov}) for
$E/T$,  we see that the choice of $l$ and $D$ are finite.
Then the choice of $b$ is also finite. Therefore the choice of $\tau(T)$ is finite.  
\end{proof}

\end{NB}

\begin{defn}\label{defn:E-A-T}
Assume that $r'(\xi \cdot f)-rd'>0$. 
Let $B$ be a compact subset of $\NS(X)_{\Bbb R}$.
We assume that $\alpha  \in B$ is  
a ${\Bbb Q}$-divisor.
\begin{enumerate}
\item[(1)]
Let ${\cal E}_B$ be the set of coherent sheaves
$E$ such that 
\begin{enumerate}
\item
$\tau(E)={\bf e}$,
\item
$E_\eta$ is semi-stable and
\item
$\frac{\chi_\alpha(A')}{(c_1(A') \cdot H)} \leq \frac{d'-r'(f \cdot \alpha)}{r'(f \cdot H)}$
for any $\alpha$-twisted stable subsheaf $A'$ of $E$.
\end{enumerate}
\item[(2)]
Let ${\cal A}_B$ be the set of $\alpha$-twisted stable sheaves $A$ 
which are quotients of $E \in {\cal E}_B$ such that
% with $\tau(E)={\bf e}$ such that
%$E_\eta$ is semi-stable,  
\begin{equation}\label{eq:A-cond}
\chi_\alpha(A) \leq \frac{d'-r'(f \cdot \alpha)}{r'(f \cdot H)}(c_1(A) \cdot H).
\end{equation}
%and for any $\alpha$-twisted stable subsheaf $A'$ of $E$, 
%$\frac{\chi_\alpha(A')}{(c_1(A') \cdot H)} \leq \frac{d'-r'(f \cdot \alpha)}{r'(f \cdot H)}$.
%$\frac{\chi_\alpha(A)}{(c_1(A) \cdot H)} \leq \lambda$ for some 
%$\lambda 
%\leq \frac{d'-r'(f \cdot \alpha)}{r'(f \cdot H)}$.
\item[(3)]
Let ${\cal T}_B$ be the set of torsion submodules $T$ of 
$E \in {\cal E}_B$.
%with $\tau(E)={\bf e}$ for some $\lambda 
%\leq \frac{d'-r'(f \cdot \alpha)}{r'(f \cdot H)}$.
\end{enumerate}
\end{defn}

\begin{prop}\label{prop:finite}
%Let $E$ be a coherent sheaf with $\tau(E)=(r,\xi,a)$
Assume that $r'(\xi \cdot f)-rd'>0$. 
Let $B$ be a compact subset of $\NS(X)_{\Bbb R}$ and take $\alpha \in B$.
\begin{enumerate}
\item[(1)]
$\{\tau(A) \mid A \in {\cal A}_B \}$ is a finite set.
\item[(2)]
$\{ \tau(T) \mid T \in {\cal T}_B \}$ is a finite set.
\end{enumerate}
\end{prop}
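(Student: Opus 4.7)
The plan is to prove (2) first and then bootstrap to (1). Both parts rest on combining two ingredients: the sublattice $L\subset f^\perp\subset\NS(X)$ spanned by the non-identity components of reducible fibers of $\pi$ (regarded modulo $\Bbb Z f$) is negative definite, and Bogomolov's inequality (Lemma \ref{lem:Bogomolov}) applies to every torsion-free sheaf on $X$ whose restriction to $X_\eta$ is semistable. For any $E\in{\cal E}_B$, the torsion submodule $T=T_E$ is supported on fibers (since $E_\eta$ is torsion-free), so writing
\begin{equation*}
c_1(T)=lr'f+D,\qquad \chi(T)=ld'+b,
\end{equation*}
with $D\in L$, $l\in\frac{1}{r'}\Bbb Z_{\geq 0}$, our goal is to bound $l$, $D$ and $b$ in terms of data depending only on $B$.

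For (2): every $\alpha$-twisted stable subsheaf of $E$, and hence every Jordan--H\"older factor of $T$, has slope $\leq\lambda:=(d'-r'(f\cdot\alpha))/(r'(f\cdot H))$, so $T$ does too. Expanding this slope inequality and using $c_1(T)=lr'f+D$ collapses it to $b\leq\lambda(D\cdot H)+(D\cdot\alpha)$, and Cauchy--Schwarz on the negative-definite form restricted to $L$ bounds $|(D\cdot H)|,|(D\cdot\alpha)|,|(D\cdot K_X)|,|(\xi\cdot D)|$ by a constant (depending only on the compact set $B$) times $\sqrt{-(D^2)}$; in particular $b\leq\beta\sqrt{-(D^2)}$. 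Next, apply Lemma \ref{lem:Bogomolov} to the torsion-free quotient $E/T$: substituting $c_1(E/T)=\xi-lr'f-D$, $\chi(E/T)=a-ld'-b$ and using $(f\cdot K_X)=0=(f\cdot D)=(f^2)$, the inequality (multiplied by $r$) takes the shape
\begin{equation*}
(D^2)-2l\bigl(r'(\xi\cdot f)-rd'\bigr)+2rb+O\bigl(\sqrt{-(D^2)}\bigr)+\mathrm{const}\;\geq\;0.
\end{equation*}
Setting $x=\sqrt{-(D^2)}$ and $c_0:=r'(\xi\cdot f)-rd'>0$ (hypothesis) and inserting $b\leq\beta x$, this becomes $-x^2+O(x)-2lc_0+\mathrm{const}\geq 0$, which bounds $x$ and $l$ simultaneously. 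Since $\{D\in L\mid -(D^2)\leq M\}$ is finite for each $M$, and then $b$ is bounded as well, $\tau(T)$ lies in a finite set.

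For (1): write $c_1(A)=l_Ar'f+D_A$ and $\chi(A)=l_Ad'+b_A$; condition \eqref{eq:A-cond} yields the same estimate $b_A\leq\beta\sqrt{-(D_A^2)}$. Let $K:=\ker(E\to A)$ and $T_K$ its torsion, a subsheaf of $T_E$; by part (2), $\tau(T_E)$ lies in a finite set, so $c_1(T_K)$ (being an effective sub-cycle of $c_1(T_E)$) ranges in a finite set of values. Apply Bogomolov to the torsion-free quotient $K/T_K$: with $c_1(K/T_K)=\xi-c_1(A)-c_1(T_K)$ and $\chi(K/T_K)=a-\chi(A)-\chi(T_K)$, the expansion mirrors (2) with $D_A+D_{T_K}$ and $l_A+l_{T_K}$ in place of $D$ and $l$; absorbing all $T_K$-dependent quantities (which take finitely many values by part (2)) into the constants, the argument of (2) repeats and bounds $l_A$ and $-(D_A^2)$. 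The main technical point is the cross term $(D_A\cdot D_{T_K})$ in $(c_1(K/T_K)^2)$: this is precisely what forces us to invoke part (2) first, since it fixes $D_{T_K}$ up to finitely many choices and hence makes the cross term $O(\sqrt{-(D_A^2)})$. Once $l_A$, $-(D_A^2)$ and $b_A$ are bounded, $\tau(A)$ lies in a finite set.
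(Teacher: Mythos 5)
Your proof is correct and uses exactly the two ingredients the paper relies on: the negative definiteness of the sublattice $L$ spanned by fiber components (giving, via Cauchy--Schwarz and the slope bound coming from membership in ${\cal E}_B$, the estimate $b\leq\beta\sqrt{-(D^2)}$) and the Bogomolov inequality of Lemma \ref{lem:Bogomolov} applied to the torsion-free quotient, with the hypothesis $r'(\xi\cdot f)-rd'>0$ supplying the coefficient that bounds $l$. The organizational difference is worth noting. The paper proves (1) and (2) in one pass: it sets $E'=\ker(E\to A)$, takes $T$ the torsion of $E'$, and bundles the two contributions into a single class $\tau(A)+\tau(T)=(0,lr'f+D,ld'+b)$ with $D=D_1+D_2\in L$; Cauchy--Schwarz is then applied to $D$ as one lattice element, so the cross term $(D_1\cdot D_2)$ never appears, and the finiteness of the individual $\tau(A)$ and $\tau(T)$ is recovered at the very end from $l_1,l_2\geq 0$, the effectivity of $l_ir'f+D_i$, and the two upper bounds $b_i\leq\beta\sqrt{-(D_i^2)}$ against the lower bound on $b_1+b_2$. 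You instead prove (2) first and bootstrap, which forces you to confront the cross term $(D_A\cdot D_{T_K})$ explicitly; your resolution --- that $\Div(T_K)\leq\Div(T_E)$ is an effective subcycle supported on fibers, so $c_1(T_K)$, hence $\sqrt{-(D_{T_K}^2)}$, ranges over a finite set once (2) is known --- is sound, since without it the quadratic form degenerates to $-(x_A-x_{T_K})^2$ and bounds nothing. The paper's bundling is slightly cleaner (no appeal to a previously established case, and it handles the lower bound on $b_A$ uniformly), while your version makes the logical dependence of (1) on the torsion control more transparent; just make sure you also record the lower bound on $\chi(A)$ coming from the Bogomolov inequality once $l$, $D_A$, $D_{T_K}$ are confined to finite sets, since the upper bound \eqref{eq:A-cond} alone does not finish the finiteness of $\tau(A)$.
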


\begin{proof}
Let $m_1 f_1,...,m_k f_k$ be the reducible fibers of $\pi$
and let $f_i=\sum_{j=0}^{n_i}a_{ij} C_{ij}$
be the decomposition of $f_i$, where $m_i$ are the 
multiplicities and $C_{ij}$ are the irreducible components of 
$f_i$.
We may assume that $a_{i0}=1$. 
Then 
$$
\sum_{j=0}^{n_i} {\Bbb Z}C_{ij}=
\sum_{j=1}^{n_i} {\Bbb Z}C_{ij} + {\Bbb Z}f_i.
$$ 
We set $L:=\sum_i \sum_{j=1}^{n_i} {\Bbb Z}C_{ij}$.
 
For $E \in {\cal E}_B$ and a quotient $E \to A$ $(A \in {\cal A}_B)$ with \eqref{eq:A-cond},
we set $E':=\ker(E \to A)$ and set
$\tau(A)=(0,l_1 r' f+D_1,l_1 d'+b_1)$,
where
$D_1 \in L$, $l_1 \in {\Bbb Q}_{ \geq 0}$ and $b_1 \in {\Bbb Q}$.
%Then
%$$
%\frac{\chi_\alpha(A)}{(c_1(A) \cdot H)} \leq \frac{d'-r'(f \cdot \alpha)}{r'(f \cdot H)}.
%$$
Let $T$ be the torsion submodule of $E'$ and set
$\tau(T)=(0,l_2 r' f+D_2,l_2 d'+b_2)$, where
$D_2 \in L$, $l_2 \in {\Bbb Q}_{ \geq 0}$ and $b_2 \in {\Bbb Q}$.
we also set $l:=l_1+l_2,D:=D_1+D_2, b:=b_1+b_2$.
Then $\tau(E'/T)=(r,\xi-lr'f-D,a-ld'-b)$.
Let $A'$ be an $\alpha$-twisted stable subsheaf of $T$ with the maximal slope.
Since $E'$ is a subsheaf of $E \in {\cal E}_B$, 
%$\frac{\chi_\alpha(A')}{(c_1(A') \cdot H)} \leq \frac{d'-r'(f \cdot \alpha)}{r'(f \cdot H)}$.
%Hence 
we get
\begin{equation}\label{eq:T}
\frac{\chi_\alpha(T)}{(c_1(T) \cdot H)} \leq 
\frac{\chi_\alpha(A')}{(c_1(A') \cdot H)} 
\leq \frac{d'-r'(f \cdot \alpha)}{r'(f \cdot H)}.
\end{equation}
\begin{NB}
Since $L$ is negative definite, we have a orthogonal decomposition 
$L_{\Bbb Q}=\sum_i {\Bbb Q}e_i$.
We set $D=\sum_i x_i e_i$. Then
\begin{equation}
\begin{split}
((\xi-D)^2)=& (\xi^2)-2\sum_i (\xi \cdot e_i)x_i+\sum_i (e_i^2)x_i^2 \\
=& \sum_i (e_i^2) \left(x_i-\frac{(\xi \cdot e_i)}{(e_i^2)} \right)^2-
\sum_i \frac{(\xi \cdot e_i)^2}{(e_i^2)}+(\xi^2).
\end{split}
\end{equation}
Hence $((\xi-D)^2)$ is bounded above.
\end{NB}
By \eqref{eq:A-cond} and \eqref{eq:T},
$$
\chi_\alpha(A)+\chi_\alpha(T) \leq  \frac{d'-r'(f \cdot \alpha)}{r'(f \cdot H)}((c_1(A)+c_1(T))\cdot H).
$$
Since $\tau(A)+\tau(T)=(0,l r' f+D,ld'+b)$, we have
\begin{equation}
\frac{b-(D \cdot \alpha)}{(D \cdot H)} \leq 
\frac{d'-r'(f \cdot \alpha)}{r'(f \cdot H)}.
\end{equation}
\begin{NB}
\begin{equation}
\begin{split}
&((b-(D \cdot \alpha))+l(d'-(r' f \cdot \alpha)))(r' f \cdot H)-
(d'-(r' f \cdot \alpha))((D \cdot H)+l(r' f \cdot H))\\
=&(b-(D \cdot \alpha))(r' f \cdot H)-(d'-(r' f \cdot \alpha))(D \cdot H)
\end{split}
\end{equation}
\end{NB}
Since $L$ is negative definite, 
\begin{equation}
|(D \cdot H)| \leq \sqrt{-(D^2)} \sqrt{-(H_L^2)},\;
|(D \cdot \alpha)| \leq \sqrt{-(D^2)}\sqrt{-(\alpha_L^2)},
\end{equation} 
where $H_L, \alpha_L \in L_{\Bbb R}$ are the orthgonal projections of $H,\alpha$ respectively.
Hence there is a positive number $\beta$ depending on $B$ such that  
\begin{equation}
b \leq \beta \sqrt{-(D^2)}.
\end{equation}
We note that 
\begin{equation}
\frac{d'-r' (f \cdot \alpha)}{(r' f \cdot H)}
<\frac{(\xi \cdot f)-(rf \cdot \alpha)}{(rf \cdot H)}
\Longleftrightarrow 
r' (\xi \cdot f)-rd'>0.
\end{equation}
We have 
$$
|(D \cdot \xi)| \leq \sqrt{-(D^2)}\sqrt{-(\xi_L^2)}
$$
where $\xi_L \in L$ is the orthognal projection of $\xi$.
Then we see that
\begin{equation}
\begin{split}
& ((\xi-D-lr' f)^2)-2r(a-b-ld')\\
=& ((\xi-D)^2)-2l(r'(\xi \cdot f)-rd')-2ra+2rb\\
\leq & -2l(r'(\xi \cdot f)-rd')-
\left(\sqrt{-(D^2)}-\left(\sqrt{-(\xi_L^2)}+r\beta \right)\right)^2+
\left(\sqrt{-(\xi_L^2)}+r\beta \right)^2+(\xi^2)-2ra.
\end{split}
\end{equation}
Applying Bogomolov inequality (Lemma \ref{lem:Bogomolov}) for
$E'/T$,  we see that the choice of $l$ and $D$ are finite.
Then the choice of $b$ is also finite. 
Since $l_1,l_2 \geq 0$, the choice of $l_1$ and $l_2$ are finite.
Since $l_1 r'f+D_1$ and $l_2 r' f+D_2$ are effective,
the choice of $D_1$ and $D_2$ are also finite.
Since $b_1$ and $b_2$ are bounded above and $b=b_1+b_2$, the choice of
$b_1$ and $b_2$ are finite.  
Therefore the choice of $\tau(A)$ and $\tau(T)$ are finite, which implies
the claim (1).
The proof of (2) is similar. 
\end{proof}

\begin{NB}
\begin{lem}
$\{[D] \in \NS(X) \mid (D \cdot H)=d, D>0 \}$ is a finite set
\end{lem}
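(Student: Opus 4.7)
The plan is to view numerical classes of effective divisors as lattice points of the closed pseudo-effective cone in $\NS(X)_{\mathbb R}$ and exploit ampleness of $H$ to cut out a compact slice of that cone.

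First I would recall that $\NS(X)$ is a finitely generated abelian group (N\'eron--Severi theorem), so modulo torsion it sits as a discrete lattice inside the finite-dimensional real vector space $N:=\NS(X)_{\mathbb R}$. Every effective divisor $D$ represents a class in the closed convex cone $\overline{\mathrm{Eff}}(X)\subset N$, so it suffices to show that the slice of this cone along the affine hyperplane $(D\cdot H)=d$ contains only finitely many lattice points.

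Next, since $H$ is ample, Kleiman's criterion places $H$ in the interior of the nef cone, which is dual to $\overline{\mathrm{Eff}}(X)$. Consequently the linear functional $\ell:D\mapsto (D\cdot H)$ is strictly positive on every nonzero element of $\overline{\mathrm{Eff}}(X)$. A standard rescaling argument now shows that the slice
\[
K:=\{D\in\overline{\mathrm{Eff}}(X)\mid (D\cdot H)=d\}
\]
is compact: it is closed by construction, and any sequence $D_n\in\overline{\mathrm{Eff}}(X)$ with $(D_n\cdot H)=d$ and $\|D_n\|\to\infty$ would produce, after passing to a subsequence, a nonzero limit of $D_n/\|D_n\|$ lying in $\overline{\mathrm{Eff}}(X)$ yet annihilated by $\ell$, contradicting the strict positivity.

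Finally, the set in the statement is contained in $K\cap \NS(X)$, the intersection of a compact subset of $N$ with the discrete lattice $\NS(X)/(\text{torsion})$; since $\NS(X)$ has only finitely many torsion elements, this intersection is finite. The only conceptual input is the strict positivity of $\ell$ on nonzero pseudo-effective classes, which is just the dual reformulation of $H$ being ample; the main subtlety to verify carefully is that the slice $K$ is genuinely bounded rather than merely closed, and this is exactly where strict positivity (as opposed to mere nonnegativity) of $\ell$ on the pseudo-effective cone is needed.
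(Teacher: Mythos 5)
Your proof is correct, but it runs along a more abstract route than the paper's. The paper argues by explicit coordinates: it fixes an orthogonal decomposition $\NS(X)_{\Bbb Q}={\Bbb Q}H\oplus{\Bbb Q}D_1\oplus\cdots\oplus{\Bbb Q}D_n$ with respect to the intersection form, writes $D=\tfrac{d}{(H^2)}H+\sum_i x_iD_i$, and pairs $D$ with the ${\Bbb Q}$-ample perturbations $H\pm\epsilon_i D_i$; positivity of an ample class against an effective divisor yields $d\pm\epsilon_i x_i(D_i^2)>0$, hence $|x_i|<d/(\epsilon_i|(D_i^2)|)$, so each coordinate ranges over a bounded (hence finite) set of lattice values. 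You instead invoke Kleiman's criterion to place $H$ in the interior of the dual of $\overline{\mathrm{Eff}}(X)=\overline{NE}(X)$, deduce strict positivity of $(\,\cdot\,\cdot H)$ on nonzero pseudo-effective classes, and conclude compactness of the hyperplane slice by the standard rescaling argument. The two arguments are the same in spirit --- both exploit that $H$ lies in the \emph{interior} of the cone of functionals nonnegative on effective classes, which is exactly what makes the slice bounded rather than merely closed --- but the paper's version is self-contained (it only uses openness of the ample cone, not the full duality statement) and produces explicit bounds on the coordinates, a feature the author reuses in the neighbouring finiteness arguments (e.g.\ the proof of Proposition \ref{prop:finite}, where orthogonal projections onto the negative definite lattice $L$ play the analogous role). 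Your version is shorter at the conceptual level but leans on Kleiman's criterion and the identification of the pseudo-effective cone of divisors with the closed cone of curves on a surface; both of these are legitimate here since $X$ is a smooth projective surface. One small point worth making explicit in your write-up: strict positivity of $\ell$ on $\overline{\mathrm{Eff}}(X)\setminus\{0\}$ follows because $H$ lies in the \emph{interior} of the dual cone (a functional on the boundary could vanish on a nonzero pseudo-effective class), and this is precisely where ampleness, as opposed to mere nefness, enters.
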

\begin{proof}
Let $\NS(X)_{\Bbb Q}={\Bbb Q}H \oplus {\Bbb Q}D_1 \oplus \cdots
{\Bbb Q}D_n$ be an orthogonal decomposition.
We set $D=d/(H^2)H+\sum_i x_i D_i$.
We take ${\Bbb Q}$-ample divisors $H \pm \epsilon D_i$.
Since $(H \pm \epsilon D_i,D)>0$,
$d+x_i \epsilon_i (D_i^2)>0$ and $d-x_i \epsilon_i (D_i^2)>0$. 
Hence
$|x_i|| \epsilon_i (D_i^2)|<d$. Thus the choice of $x_i$ are bounded. 
\end{proof}
\end{NB}

\begin{lem}\label{lem:open}
\begin{enumerate}
\item[(1)]
$\lambda$-semi-stability is an open condition.
\item[(2)]
If $\gcd((\xi \cdot f),r)=1$, 
then the set of $\lambda$-semi-stable sheaves $E$ of $\tau(E)={\bf e}$ is bounded.
\end{enumerate}
\end{lem}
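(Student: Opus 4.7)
The plan is to reduce both parts to applications of Proposition \ref{prop:finite} by first fixing coprime integers $(r_1,d_1)$ with $r_1>0$ such that
$\lambda \leq \frac{d_1-r_1(f\cdot\alpha)}{r_1(f\cdot H)} < \frac{(\xi\cdot f)-r(\alpha\cdot f)}{r(f\cdot H)}$,
which is possible by Remark \ref{rem:n-stable}(1). With this choice, any $\lambda$-semi-stable sheaf $E$ of class $\mathbf{e}$ lies in the family $\mathcal{E}_{\{\alpha\}}$ of Definition \ref{defn:E-A-T}, so the numerical classes $\tau(T)$ of torsion submodules of $E$ and the classes $\tau(A)$ of $\alpha$-twisted stable quotients $A$ of $E$ satisfying \eqref{eq:A-cond} form finite sets. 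This finiteness is the common engine behind both parts.

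For (1), let $\{E_s\}_{s\in S}$ be a flat family with $\tau(E_s)=\mathbf{e}$. Openness of generic-fibre semi-stability is standard, so I may assume $(E_s)_\eta$ is semi-stable for all $s$, reducing to conditions (b) and (c) of Definition \ref{defn:n-stable}. For (b), each of the finitely many permitted classes $\tau(A)$ has a projective moduli space $\overline{M}_H^\alpha(\tau(A))^{ss}$; passing to the Jordan--H\"older stable factors of a destabilizing quotient, I replace ``stable $A$'' by ``semi-stable $A$'' at no cost. Pulling the universal family back to $X\times S\times\overline{M}_H^\alpha(\tau(A))^{ss}$ and applying semi-continuity for $\mathbf{R}\mathcal{H}om$ together with properness of $\overline{M}_H^\alpha(\tau(A))^{ss}$ produces a closed locus in $S$ on which the $\Hom$-vanishing fails. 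Condition (c) is handled symmetrically: Proposition \ref{prop:finite}(2) bounds $\tau(T)$ and hence the classes of stable subsheaves of $T$ of slope $>\lambda$. Unioning the finitely many closed loci yields a closed complement of the $\lambda$-semi-stable locus.

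For (2), I write $0 \to T \to E \to E' \to 0$ with $T$ torsion and $E'$ torsion-free. Since $T$ is fibre-supported, $(c_1(T)\cdot f)=0$, so $(c_1(E')\cdot f)=(\xi\cdot f)$; the hypothesis $\gcd((\xi\cdot f),r)=1$ then forces $E'_\eta$ to be stable with coprime rank and fibre-degree. Proposition \ref{prop:finite} confines $\tau(T)$ and therefore $\tau(E')$ to finite sets. For each such class, the Bogomolov inequality (Lemma \ref{lem:Bogomolov}) combined with standard boundedness of torsion-free sheaves whose generic-fibre restriction is semi-stable gives a bounded family of $E'$. The torsion sheaves $T$ of fixed $\tau(T)$, all of whose $\alpha$-twisted stable subsheaves have slope $\leq \lambda$, form a bounded family of fibre sheaves on the finitely many fibres in their support. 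Because $\dim\Ext^1(E',T)$ is then uniformly bounded, the extensions of these two bounded families are parametrized by a Quot-scheme over their product, so the collection of all $\lambda$-semi-stable $E$ is bounded.

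The main obstacle is the boundedness of the torsion part $T$: fixing $\tau(T)$ alone does not immediately bound $T$ in moduli (a fibre sheaf can admit many stable subsheaves of various slopes), so one must genuinely exploit the slope bound coming from $\lambda$-semi-stability and reduce to boundedness for $\alpha$-twisted semi-stable sheaves on each of the finitely many supporting fibres. This is essentially the one-dimensional analogue of Maruyama's classical boundedness theorem; once it is in place, the subsequent extension-by-extension step is routine.
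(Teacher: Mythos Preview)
Your overall strategy matches the paper's: both parts are reduced to the finiteness supplied by Proposition~\ref{prop:finite}. There is, however, a circularity in your handling of condition~(c) in part~(1). You invoke Proposition~\ref{prop:finite}(2) to bound $\tau(T)$, but that proposition only applies to torsion submodules of sheaves $E\in\mathcal{E}_B$, and membership in $\mathcal{E}_B$ (Definition~\ref{defn:E-A-T}) already requires every $\alpha$-twisted stable subsheaf of $E$ to have slope at most $\frac{d'-r'(f\cdot\alpha)}{r'(f\cdot H)}$, which is exactly the content of condition~(c). So you are assuming what you want to prove. The same issue affects your treatment of~(b): Proposition~\ref{prop:finite}(1) also presupposes $E\in\mathcal{E}_B$, hence~(c).

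The paper resolves this by treating~(c) \emph{first} and without appeal to Proposition~\ref{prop:finite}: for any $S$-flat family $\mathcal{E}$ with $S$ of finite type, the torsion submodules $T_s\subset\mathcal{E}_s$ form a bounded set (flattening stratification), so their Harder--Narasimhan filtrations are bounded and only finitely many classes occur as destabilizing subsheaves. This shows~(c) is open without circularity. One then restricts to the open locus where~(a) and~(c) hold; on that locus every $E_s$ lies in $\mathcal{E}_B$, and Proposition~\ref{prop:finite}(1) legitimately gives the finiteness needed for~(b). Your order (a), (b), (c) should be replaced by (a), (c), (b).

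For part~(2) your argument is correct and in fact more detailed than the paper's, which simply notes that finiteness of $\tau(T)$ forces finiteness of $\tau(E/T)$ and hence boundedness of $E/T$, leaving the boundedness of $T$ and of the extensions implicit.
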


\begin{proof}
(1)
We shall check that three conditions of Definition \ref{defn:n-stable} are open conditions.
For a coherent sheaf $E$,
$E_\eta$ is semi-stable if and only if there is a smooth fiber $f$ such that
$E_{|f}$ is a semi-stable vector bundle. Hence it is an open condition.

\begin{NB}
For a $\lambda$-semi-stable sheaf $E$, let $T$ be the torsion submodule of $E$.
We consider the set
$$
{\cal A}:=\{\tau (A) \mid \text{ $A$ is $\alpha$-twisted stable and
 $A$ is a quotient of $E$ with $\chi_\alpha(A) \leq \lambda (c_1(A) \cdot H)$ } \}.
$$
We set $T':=\im(T \to A)$ and $A':=A/T'$.
By Proposition \ref{prop:finite}, the set of $\tau(T)$ is finite.
By the $\lambda$-stability of $E$, the choice of Harder-Narasimhan filtration of
$T$ is finite, and hence the choice of $\tau(T')$ is also finite.
 Since $A'$ is a quotient of $E/T$, by a similar argument in the proof
of Proposition \ref{prop:finite},
the choice of $\tau(A')$ is also finite. 
Hence ${\cal A}$ is a finite set.
Therefore 
$\Hom(E,A)=0$ in Definition \ref{defn:n-stable} is an open condition.
\end{NB}
Let ${\cal E}$ be a $S$-flat family of coherent sheaves on $X$ 
with $\tau({\cal E}_{|X \times \{s \}})={\bf e}$ $(s \in S)$.
Since the set of torsion submodules $T$ of ${\cal E}_{|X \times \{s \}}$ is bounded,
the set of Harder-Narasimhan filtrations of $T$ is also bounded. Hence
$\Hom(A,E)=0$ in Definition \ref{defn:n-stable} is an open condition. 
By Proposition \ref{prop:finite},
$$
{\cal A}:=\{\tau (A) \mid \text{ $A$ is $\alpha$-twisted stable and
 $A$ is a quotient of $E$ with $\chi_\alpha(A) \leq \lambda (c_1(A) \cdot H)$ } \}
$$
is a finite set. Hence
$\Hom(E,A)=0$ in Definition \ref{defn:n-stable} is an open condition.

(2)
For a $\lambda$-semi-stable sheaf $E$ with $\tau(E)={\bf e}$, let $T$ be the torsion 
submodule of $E$. Then
the choice of $\tau(T)$ is finite. 
Hence the choice of $\tau(E/T)$ is finite,
which implies the set of $E/T$ is bounded.
Hence our claim holds.
\end{proof}

\begin{prop}\label{prop:JHF}
For a $\lambda$-semi-stable sheaf $E$, there is a filtration
$$
0 \subset F_1 \subset F_2 \subset F_3=E 
$$
such that 
\begin{enumerate}
\item[(i)]
$F_1$ and $F_3/F_2$ are $\alpha$-twisted semi-stable 1-dimensional sheaves with 
$\frac{\chi_\alpha(F_1)}{(c_1(F_1) \cdot H)}=\frac{\chi_\alpha(F_3/F_2)}{(c_1(F_3/F_2) \cdot H)}=\lambda$. 
\item[(ii)]
$F_2/F_1$ is a $\lambda$-stable sheaf 
such that
$\Hom(A,F_2/F_1)=0$ for any $\alpha$-twisted stable sheaf $A$ with
 $\frac{\chi_\alpha(A)}{(c_1(A) \cdot H)}=\lambda$.
\end{enumerate}
\end{prop}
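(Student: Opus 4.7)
The plan is to build the filtration in two steps: isolate the maximal $\alpha$-twisted semi-stable slope-$\lambda$ subsheaf of $E$ at the bottom, and then the maximal $\alpha$-twisted semi-stable slope-$\lambda$ $1$-dimensional quotient of the result at the top. First I would define $F_1\subset E$ as the sum of all $\alpha$-twisted semi-stable subsheaves of slope $\lambda$. Condition (c) of $\lambda$-semi-stability guarantees that every $\alpha$-twisted stable subsheaf of $E$ has slope $\leq\lambda$, so for two slope-$\lambda$ subs $M_1,M_2$ the presentation $M_1+M_2\cong(M_1\oplus M_2)/(M_1\cap M_2)$ combined with $\mu(M_1+M_2)\leq\lambda$ forces $\mu(M_1\cap M_2)=\mu(M_1+M_2)=\lambda$, and since all subs of $M_1+M_2$ inherit slope $\leq\lambda$ from $E$, the sum is again $\alpha$-twisted semi-stable of slope $\lambda$; noetherianity then yields a maximal such $F_1$.

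Next, set $E':=E/F_1$. By Proposition~\ref{prop:finite} applied to $E'$, only finitely many isomorphism classes $A_1,\ldots,A_r$ of $\alpha$-twisted stable slope-$\lambda$ $1$-dimensional quotients of $E'$ appear, and each $\Hom(E',A_i)$ is finite-dimensional, so one may form the evaluation $E'\to\bigoplus_i A_i^{\oplus\dim\Hom(E',A_i)}$; I would let $G$ be its image and take $F_2\subset E$ to be the preimage of $\ker(E'\to G)$, giving $F_3/F_2=G$. Since $G$ embeds in a semi-stable slope-$\lambda$ sheaf, its HN slopes are all $\leq\lambda$; if the bottom HN slope were $<\lambda$, the corresponding stable quotient $Q$ of slope $<\lambda$ would give a nonzero surjection $E\twoheadrightarrow E'\twoheadrightarrow G\twoheadrightarrow Q$ contradicting condition (b) of $\lambda$-semi-stability of $E$, so $G$ is semi-stable of slope exactly $\lambda$, establishing (i).

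For (ii), property (a) is immediate as $F_1$ and $F_3/F_2$ are torsion. For condition (c) of $\lambda$-stability together with the extra slope-$\lambda$ vanishing demanded in (ii), a nonzero injection $A\hookrightarrow F_2/F_1\subset E/F_1$ with $A$ $\alpha$-twisted stable of slope $\geq\lambda$ lifts to $\tilde A\subset E$ fitting in $0\to F_1\to\tilde A\to A\to 0$: if $\mu(A)>\lambda$ then $\mu(\tilde A)>\lambda$ contradicts that HN slopes of subsheaves of $E$ are $\leq\lambda$, while if $\mu(A)=\lambda$ then $\tilde A$ is semi-stable of slope $\lambda$ and strictly larger than $F_1$, contradicting maximality. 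For condition (b), a nonzero $\phi:F_2/F_1\to A$ with $A$ stable of slope $\leq\lambda$ is surjective by stability of $A$, and its pushout along $F_2/F_1\hookrightarrow E'$ yields $E'\to A'$ with $0\to A\to A'\to G\to 0$: when $\mu(A)=\lambda$, $A'$ is semi-stable of slope $\lambda$, so maximality of $G$ forces $E'\to A'$ to factor through $G$, which makes $F_2/F_1\to A'$ vanish while $F_2/F_1\to A\hookrightarrow A'$ is nonzero, a contradiction; when $\mu(A)<\lambda$, $\mu(A')<\lambda$, so $A'$ has a stable quotient $Q$ of slope $<\lambda$ and $E\twoheadrightarrow A'\twoheadrightarrow Q$ again violates $\lambda$-semi-stability of $E$. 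The main obstacle will be the two arguments that propagate the defining vanishings of $\lambda$-semi-stability to the constructed pieces, namely that $G$ has slope exactly $\lambda$ (and not lower) and that the pushout $A'$ produces the contradictions in condition (b); both hinge on the maximality built into the construction and on the vanishings (b), (c) of the original $\lambda$-semi-stability.
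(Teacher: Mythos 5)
Your construction of $F_1$ is sound and is essentially the paper's first step: condition (c) forces every $1$-dimensional subsheaf of $E$ to have all Harder--Narasimhan slopes $\leq\lambda$ (and rules out $0$-dimensional subsheaves), so the sum of all $\alpha$-twisted semi-stable slope-$\lambda$ subsheaves is again pure, semi-stable of slope $\lambda$, and the lifting argument via $0\to F_1\to\tilde A\to A\to 0$ correctly yields $\Hom(A,E/F_1)=0$ for every stable $A$ of slope $\geq\lambda$.

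The gap is in the construction of $F_2$. The image $G$ of the evaluation map $E'\to\bigoplus_i A_i^{\oplus\dim\Hom(E',A_i)}$ is a subobject of a direct sum of stable objects of slope $\lambda$, hence, in the finite-length abelian category of $\alpha$-twisted semi-stable $1$-dimensional sheaves of slope $\lambda$, $G$ is \emph{semisimple}: it is the maximal semisimple slope-$\lambda$ quotient of $E'$, not the maximal semi-stable one. Consequently the step ``maximality of $G$ forces $E'\to A'$ to factor through $G$'' fails exactly when $A'$ is a non-split extension: a non-split self-extension $A'$ of a stable $S$ admits no embedding into any $S^{\oplus n}$ (every map $A'\to S$ kills the sub-copy of $S$), so a surjection $E'\twoheadrightarrow A'$ cannot factor through the semisimple $G$. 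Such $E$ do occur — for instance $\lambda$-semi-stable sheaves in the deeper strata of a wall, where the quotient $E_2$ in \eqref{eq:HNF} lies in ${\cal M}_H^\alpha(l\tau)^{ss}$ with $l\geq 2$ and is a non-split iterated extension of a stable $S$ with $\tau(S)=\tau$. For those, $K=\ker(E'\to G)$ still surjects onto $S$, so your $F_2/F_1$ violates condition (b) of $\lambda$-stability at slope exactly $\lambda$, and the proposition's conclusion fails for your choice of $F_2$. The repair is to define $F_2$ so that $E/F_2$ is maximal among \emph{all} $1$-dimensional $\alpha$-twisted semi-stable quotients of $E/F_1$ of slope $\lambda$: the family of kernels of such surjections is closed under intersection (the join of two such quotients is again semi-stable of slope $\lambda$, using condition (b) of $E$ to bound slopes of quotients from below), and Proposition \ref{prop:finite} bounds $(c_1\cdot H)$ of these quotients, so the decreasing chain of kernels terminates. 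This is precisely how the paper invokes Proposition \ref{prop:finite}; note also that finiteness there is of the invariants $\tau(A)$, not of isomorphism classes of $A$, which is another reason the intersection-of-kernels formulation is preferable to an evaluation map. Two smaller points: a nonzero $\phi:F_2/F_1\to A$ with $A$ stable need not be surjective (one must first pass to a stable quotient of $\im\phi$, which may lower the slope strictly below $\lambda$), and dually a nonzero map $A\to F_2/F_1$ need not be injective; both reductions are routine but should be stated.
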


\begin{proof}
For a $\lambda$-semi-stable sheaf $E$ and 
a 1-dimensional subsheaf $A$ of $E$ with $\frac{\chi_\alpha(A)}{(c_1(A) \cdot H)}=\lambda$,
it is easy to see that $A$ is $\alpha$-twisted semi-stable and $E/A$ is $\lambda$-semi-stable. 
By Proposition \ref{prop:finite}, we can find an $\alpha$-twisted semi-stable subsheaf $F_1$ of $E$ 
such that 
$\frac{\chi_\alpha(F_1)}{(c_1(F_1) \cdot H)}=\lambda$ 
and $\Hom(A,E/F_1)=0$ for any $\alpha$-twisted stable sheaf $A$ with
 $\frac{\chi_\alpha(A)}{(c_1(A) \cdot H)}=\lambda$.
For a 1-dimensional sheaf $A$ with $\frac{\chi_\alpha(A)}{(c_1(A) \cdot H)}=\lambda$ 
and a quotient $f:E/F_1 \to A$, we see that $A$ is $\alpha$-twisted semi-stable
and $\ker f$ is $\lambda$-semi-stable. 
By Proposition \ref{prop:finite}, we get a subsheaf $F_2$ of $E$
such that $F_1 \subset F_2$ and $F_2$ satisfies the required property.  
\end{proof}

\begin{defn}\label{defn:n-stable-stack}
Let ${\cal M}^{\lambda}({\bf e})^{ss}$ (resp. ${\cal M}^\lambda({\bf e})^{s}$) 
be the stack of $\lambda$-semi-stable
sheaves (resp. $\lambda$-stable sheaves) $E$ with $\tau(E)={\bf e}$. 
\end{defn}

${\cal M}^\lambda({\bf e})^{ss}$ is an open substack of the stack of 
coherent sheaves.

\begin{prop}\label{prop:lambda-smooth}
We set ${\bf e}=(r,\xi,a)$. Assume that $\gcd((\xi \cdot f),r)=1$. Then
${\cal M}^\lambda({\bf e})^{s}$ is smooth of 
$$
\dim {\cal M}^\lambda({\bf e})^{s}
=(\xi^2)-2ra+(r^2+1)\chi({\cal O}_X)-r(\xi \cdot K_X)+q-1.
$$
\end{prop}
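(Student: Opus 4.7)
The plan is to verify $\Ext^2(E,E)_0 = 0$ for every $[E] \in {\cal M}^\lambda({\bf e})^s$, conclude smoothness, and then extract the dimension from Hirzebruch--Riemann--Roch.

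First, I would extend Lemma \ref{lem:trace-property} from divisors $D$ with $D \equiv \lambda f$ to $D = K_X$. The original proof relies on only two inputs: (i) tensoring with ${\cal O}_X(D)$ preserves the $\alpha$-slope of every $\alpha$-twisted stable sheaf $A$ supported on a fiber, and (ii) the composition $H^0({\cal O}_X(D)) \to \Hom(E,E(D)) \to H^0({\cal O}_X(D))$ (via the trace on ${E'}^{\vee\vee}$, where $E' = E/T$) is multiplication by $r$. For $D=K_X$, property (i) still holds: every fiber class is numerically proportional to $f$ and $(f \cdot K_X)=0$, so $(c_1(A)\cdot K_X)=0$ and therefore $A(K_X)$ has the same $(r,c_1,\chi)$-invariants as $A$, hence the same $\alpha$-slope. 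Condition (b) of $\lambda$-stability then yields $\Hom(E,T(K_X))=0$, and property (ii) is formal. The assumption $\gcd(r,\chr k)=1$ (implicit as in Lemma~\ref{lem:trace-property}, and following from $\gcd(r,(\xi\cdot f))=1$ in characteristic zero) makes the trace composition an isomorphism, giving $H^0(X,{\cal O}_X(K_X)) \simto \Hom(E,E(K_X))$.

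Second, by Serre duality $\Ext^2(E,E) \cong \Hom(E,E(K_X))^{\vee}$, so the trace map $\Ext^2(E,E) \to H^2(X,{\cal O}_X)$ is an isomorphism. The obstruction to deforming $E$ lies in the trace-free part $\Ext^2(E,E)_0$, which therefore vanishes, and ${\cal M}^\lambda({\bf e})^s$ is smooth at $[E]$. Simpleness $\Hom(E,E) = k$ follows from a standard argument: since $\gcd(r,(\xi \cdot f))=1$, the restriction $E_\eta$ is a stable bundle on the generic fiber with $\End(E_\eta) = k(\eta)$, so any $\phi \in \End(E)$ descends to scalar multiplication by some $c \in k(\eta)$ on the torsion-free quotient $E/T$; the $\lambda$-stability constraints on $T$ and the requirement that $c$ preserve $E/T$ globally force $c \in k$.

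Third, I would compute the dimension via Hirzebruch--Riemann--Roch. A direct calculation gives
\begin{equation*}
\chi(E,E) = 2ra - (\xi^2) + r(\xi \cdot K_X) - r^2 \chi({\cal O}_X).
\end{equation*}
The stack dimension at $[E]$ is $\dim \Ext^1(E,E) - \dim \Hom(E,E) = p_g - \chi(E,E)$, where $p_g = \chi({\cal O}_X) + q - 1$. Substituting,
\begin{equation*}
\dim {\cal M}^\lambda({\bf e})^s = (\xi^2) - 2ra + (r^2+1)\chi({\cal O}_X) - r(\xi \cdot K_X) + q - 1,
\end{equation*}
as claimed.

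The main obstacle is the first step: verifying that the proof of Lemma~\ref{lem:trace-property} really does go through for $D=K_X$. The key point to check is that the contributions $(m_i-1)f_i$ from multiple fibers do not spoil the slope preservation of the $\alpha$-twisted stable constituents of the torsion subsheaf $T$ under $-\otimes {\cal O}_X(K_X)$, but this reduces to the numerical identity $(f_i \cdot K_X)=0$ and $(f_i \cdot c_1(A))=0$ for fiber-supported $A$, so the extension is essentially formal.
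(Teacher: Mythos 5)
Your proposal is correct and follows the paper's own route: the paper's proof is exactly ``apply Lemma \ref{lem:trace-property}, conclude via Serre duality that the trace map $\Ext^2(E,E)\to H^2(X,{\cal O}_X)$ is an isomorphism, hence smoothness and the dimension by Riemann--Roch.'' Your first step is actually unnecessary, since the canonical bundle formula gives $K_X\equiv\lambda f$ with $\lambda\in{\Bbb Q}$ (each $f_i\equiv\frac{1}{m_i}f$), so Lemma \ref{lem:trace-property} already applies verbatim with $D=K_X$; your verification of that case is nonetheless correct.
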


\begin{proof}
Applying Lemma \ref{lem:trace-property}, we get that the trace map
$$
\Ext^2(E,E) \to H^2(X,{\cal O}_X)
$$
is an isomorphism.
Hence we get the claim.
\end{proof}
By Proposition \ref{prop:finite}, we get the following claim.

\begin{prop}\label{prop:torsion-free}
If $\lambda$ is sufficiently small, then
${\cal M}^\lambda({\bf e})^{ss}$ consists of torsion free sheaves, i.e.,
${\cal M}^\lambda({\bf e})^{ss}={\cal M}_{H_f}({\bf e})^{ss}$.
\end{prop}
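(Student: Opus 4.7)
The plan is to apply Proposition \ref{prop:finite} twice: once to bound the torsion submodules of $\lambda$-semi-stable sheaves, and once to bound the $\alpha$-twisted stable quotients of torsion-free candidates. First I would fix a compact neighborhood $B \ni \alpha$ in $\NS(X)_{\Bbb R}$ and a threshold $\lambda_0 = \frac{d' - r'(f \cdot \alpha)}{r'(f \cdot H)}$ with $r'(\xi \cdot f) - rd' > 0$, and observe that for $\lambda \leq \lambda_0$ every $\lambda$-semi-stable $E$ automatically lies in the class ${\cal E}_B$ of Definition \ref{defn:E-A-T}, since condition (c) of Definition \ref{defn:n-stable} forces every $\alpha$-twisted stable subsheaf of $E$ to have slope $\leq \lambda \leq \lambda_0$. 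Writing $T$ for the torsion submodule of $E$ and $A$ for its first Harder--Narasimhan factor with respect to $\alpha$-twisted stability, Proposition \ref{prop:finite} (2) gives finitely many possibilities for $\tau(T)$; the destabilizing factor $A$ then has numerical class in a corresponding finite set, so its slope $\frac{\chi_\alpha(A)}{(c_1(A) \cdot H)}$ is bounded below by a constant $M = M(B, {\bf e})$. Condition (c) forces this slope to be $\leq \lambda$, so taking $\lambda < M$ eliminates $T$ entirely, and any $\lambda$-semi-stable $E$ becomes torsion-free with $E_\eta$ semi-stable.

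For the reverse inclusion, I would start with a torsion-free $E$ satisfying $\tau(E) = {\bf e}$ and $E_\eta$ semi-stable. Such an $E$ lies vacuously in ${\cal E}_B$ since it has no non-trivial $1$-dimensional subsheaves, and these $E$ form a bounded family by standard boundedness (or by Lemma \ref{lem:open}). By Proposition \ref{prop:finite} (1), the set of classes $\tau(A)$ of $\alpha$-twisted stable quotients $A$ of $E$ with $\chi_\alpha(A) \leq \lambda_0 (c_1(A) \cdot H)$ is finite, hence the slopes $\frac{\chi_\alpha(A)}{(c_1(A) \cdot H)}$ are bounded below. For $\lambda$ below this bound, condition (b) of $\lambda$-semi-stability holds vacuously, while condition (c) is automatic because $A$ is torsion and $E$ is not. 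So for $\lambda$ small enough, ${\cal M}^\lambda({\bf e})^{ss}$ coincides with the stack of torsion-free sheaves with semi-stable generic fiber.

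Finally, I would identify this stack with ${\cal M}_{H_f}({\bf e})^{ss}$. Under the hypothesis $\gcd(r, (\xi \cdot f)) = 1$, a proper subsheaf $F \subsetneq E$ with $0 < \rk F < r$ must satisfy $(c_1(F) \cdot f)/\rk F \neq (\xi \cdot f)/r$, so semi-stability of $E_\eta$ upgrades to a strict inequality, which translates to the $(H+nf)$-slope inequality for all $n \gg 0$. A rank-$r$ subsheaf $F$ has $E/F$ torsion (necessarily supported on fibers or zero-dimensional), and a direct Hilbert polynomial comparison with respect to $H_f$ yields $p_E - p_F > 0$ from $(c_1(E/F) \cdot f) \geq 0$ together with $(c_1(E/F) \cdot H) > 0$. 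The converse direction, that $H_f$-Gieseker semi-stable sheaves are torsion-free with semi-stable generic fiber, is standard. The main (if minor) technical step I anticipate is this rank-$r$ Hilbert polynomial comparison; everything else follows formally from Proposition \ref{prop:finite} and boundedness.
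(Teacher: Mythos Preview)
Your proof is correct and follows the same approach as the paper, which simply records that the claim follows from Proposition \ref{prop:finite}; you have correctly supplied the omitted details, including the identification with ${\cal M}_{H_f}({\bf e})^{ss}$. One minor imprecision: in your rank-$r$ subsheaf comparison, $(c_1(E/F) \cdot H) > 0$ fails when $E/F$ is $0$-dimensional, but then $P_{E/F}$ is a positive constant and the inequality $p_F < p_E$ still holds.
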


\section{Wall crossing behaviors for the positive rank cases.}\label{sect:another}

\subsection{Structure of walls}\label{subsect:wall}
In this subsection, we continue to use ${\bf e}=(r,\xi,a)$ with $\gcd(r,(\xi \cdot f))=1$.

\begin{defn}\label{defn:wall}
\begin{enumerate}
\item[(1)]
$W:=\{\lambda \} (\subset {\Bbb R})$ is a wall for ${\bf e}$ if
there is a $\lambda$-stable sheaf $E$ with $\tau(E)={\bf e}$
such that there is a subsheaf $A$ of $E$ supported on a fiber and 
$\frac{\chi_\alpha(A)}{(c_1(A) \cdot H)}=\lambda$. 
\item[(2)]
A chamber is a connected component of the compliment of the set of walls.
\end{enumerate}
\end{defn}

\begin{rem}\label{rem:wall}
$A$ in Definition \ref{defn:wall} is $\alpha$-twisted semi-stable.
Let $A' \subset A$ be a stable factor. Then 
$A'$ also defines the wall $W$. In particular $(c_1(A')^2)=0,-2$.
Hence we may assume that $(c_1(A)^2)=0,-2$ for the defining subsheaf $A$.   
\end{rem}

\begin{NB}
Usually a wall is defined by the condition that
there is a $\lambda$-semi-stable sheaf $E$ with
$\Hom(A,E) \ne 0$ or $\Hom(E,A) \ne 0$.
\end{NB}

\begin{NB}
${\cal M}^{\lambda_+}({\bf e})^s \cap {\cal M}^{\lambda_-}({\bf e})^s 
\subset {\cal M}^\lambda({\bf e})^s$.
${\cal M}^\lambda({\bf e})={\cal M}^{\lambda_+}({\bf e})$.
\end{NB}

\begin{prop}
Let $B$ a compact subset of $\NS(X)_{\Bbb R}$ and
$\alpha \in B$ a ${\Bbb Q}$-divisor.
For a pair of integers $(r_0',d_0')$ such that $r_0'>0$ and
$\frac{d_0'}{r_0'}<\frac{(\xi \cdot f)}{r}$,
we set 
$$
\lambda_0:=\frac{d_0'}{r_0'(f \cdot H)}-\frac{(f \cdot \alpha)}{(f \cdot H)}.
$$
Then the following claims hold.
\begin{enumerate}
\item[(1)]
In the region ${\Bbb R}_{\lambda_0}:=
\{\lambda \in {\Bbb R} \mid \lambda<\lambda_0\}$,
the set of walls is finite.
\item[(2)]
If $\lambda$ is in a chamber, then ${\cal M}^\lambda({\bf e})^{ss}={\cal M}^\lambda({\bf e})^s$.
\end{enumerate}
\end{prop}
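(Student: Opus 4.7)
The key preliminary observation for both parts is that any $\lambda$-stable sheaf $E$ with $\tau(E)={\bf e}$ and $\lambda<\lambda_0$ automatically belongs to the family ${\cal E}_B$ of Definition \ref{defn:E-A-T}, taken with parameters $(r',d')=(r_0',d_0')$ (so that $(d'-r'(f\cdot\alpha))/(r'(f\cdot H))=\lambda_0$, and the hypothesis $r'(\xi\cdot f)-rd'>0$ is met): indeed, applying condition (c) of Definition \ref{defn:n-stable} to any $\alpha$-twisted stable $1$-dimensional subsheaf $A'$ of $E$ gives $\chi_\alpha(A')/(c_1(A')\cdot H)\le\lambda<\lambda_0$, which is exactly the membership condition.

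For (1), suppose $\lambda<\lambda_0$ is a wall, witnessed by a $\lambda$-stable $E$ with a subsheaf $A$ supported on a fiber of slope $\lambda$. After passing to an $\alpha$-twisted stable factor of $A$ (cf. Remark \ref{rem:wall}), I may assume $A$ itself is $\alpha$-twisted stable with $(c_1(A)^2)\in\{0,-2\}$, and hence $A$ is contained in the torsion submodule $T$ of $E$. Proposition \ref{prop:finite}(2) restricts $\tau(T)$ to a finite set, so the family of possible $T$'s is bounded, and a standard Quot-scheme boundedness argument then shows that the subsheaves $A\subset T$ of slope at most $\lambda_0$ have $\tau(A)$ lying in a finite set. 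Since $\lambda=\chi_\alpha(A)/(c_1(A)\cdot H)$ is determined by $\tau(A)$ and the fixed $\alpha$, only finitely many walls $\lambda<\lambda_0$ arise.

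For (2), I would argue by contrapositive: if $E\in{\cal M}^\lambda({\bf e})^{ss}\setminus{\cal M}^\lambda({\bf e})^{s}$ then $\lambda$ must be a wall. Apply Proposition \ref{prop:JHF} to $E$ to get the filtration $0\subset F_1\subset F_2\subset F_3=E$, where $F_2/F_1$ is $\lambda$-stable with no $\alpha$-twisted stable subsheaf of slope $\lambda$, and at least one of $F_1,\,E/F_2$ is non-zero. Letting $A_1,\dots,A_n$ enumerate the $\alpha$-twisted stable Jordan--H\"older factors of $F_1\oplus E/F_2$, I construct by induction a chain of $\lambda$-stable sheaves $F_2/F_1=E_0\subset E_1\subset\dots\subset E_n=E'$, with each $E_i$ a non-split extension $0\to A_i\to E_i\to E_{i-1}\to 0$, so that $\tau(E')={\bf e}$. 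The long-exact $\Hom$ sequence, combined with the $\lambda$-stability of $E_{i-1}$, confirms that non-split such $E_i$ are again $\lambda$-stable; thus $E'$ is $\lambda$-stable of class ${\bf e}$ containing $A_1$ as a fiber subsheaf of slope $\lambda$, so $\lambda$ is a wall, contradicting the hypothesis that $\lambda$ is in a chamber.

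The hard part will be verifying the induction step in (2): one must show $\Ext^1(E_{i-1},A_i)\ne 0$ at each stage so that a non-split extension actually exists. This should follow from the $\chi$-formula together with $\Hom(E_{i-1},A_i)=0$ (from the $\lambda$-stability of $E_{i-1}$) and Serre duality applied to $\Ext^2(E_{i-1},A_i)\cong\Hom(A_i,E_{i-1}(K_X))^\vee$, which vanishes because the slope condition rules out maps from $A_i$ into $E_{i-1}(K_X)$. Condition (a) of Definition \ref{defn:n-stable} is preserved along the chain since $(E_i)_\eta=(F_2/F_1)_\eta$ is a semi-stable vector bundle on the generic fiber, and conditions (b), (c) propagate via the long-exact sequences. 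Part (1), by contrast, reduces cleanly to Proposition \ref{prop:finite} once the containment $E\in{\cal E}_B$ is observed.
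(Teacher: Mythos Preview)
Your argument for (1) is correct and essentially coincides with the paper's: both reduce to the finiteness of $\{\tau(T)\mid T\in{\cal T}_B\}$ from Proposition~\ref{prop:finite} and then bound the possible $\tau(A)$ for the wall-defining subsheaf. The paper's own proof of (2) is the single word ``obvious'', so in attempting to spell out a construction you are already going beyond what the paper supplies.

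There is, however, a genuine gap in your induction for (2). You assert that
\[
\Ext^2(E_{i-1},A_i)\cong\Hom(A_i,E_{i-1}(K_X))^{\vee}=0
\]
``because the slope condition rules out maps from $A_i$ into $E_{i-1}(K_X)$''. But condition (c) of Definition~\ref{defn:n-stable} only kills $\Hom(A,\,\cdot\,)$ when the slope of $A$ is \emph{strictly greater} than $\lambda$, whereas each $A_i$ has slope \emph{equal} to $\lambda$. At $i=1$ one can rescue this using the extra clause in Proposition~\ref{prop:JHF}(ii) (namely $\Hom(A,F_2/F_1)=0$ for every stable $A$ of slope $\lambda$, hence also $\Hom(A_1(-K_X),E_0)=0$). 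For $i\ge 2$, though, the vanishing genuinely fails: by construction $A_1\subset E_{i-1}$, so for instance if $A_i(-K_X)\cong A_1$ then $\Hom(A_i,E_{i-1}(K_X))\ne 0$. Your fallback via the Euler characteristic then reads $\ext^1=-\chi(E_{i-1},A_i)+\ext^2$, and you have not verified $-\chi(E_{i-1},A_i)>0$; in the spherical case $(c_1(A_i)^2)=-2$ this inequality is not automatic, since $\chi(E_{i-1},A_i)=r\,\chi(A_i)-(c_1(E_{i-1})\cdot c_1(A_i))$ depends on the intersection numbers among the $c_1(A_j)$ already absorbed into $c_1(E_{i-1})$. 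The construction may well be repairable---for example by ordering the $A_i$ more carefully, or by extracting the needed non-split extensions directly from subquotients of the original filtration $0\subset F_1\subset F_2\subset E$---but as written the induction does not close.
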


\begin{proof}
We use the notation in Proposition \ref{prop:finite}.
Since $\{\tau(T) \mid T \in {\cal T}_B \}$ is finite,
the choice of $c_1(A)$ of the first filter of the Harder-Narasimhan filtration of 
$T \in {\cal T}_B$ is  
finite. Since $\chi_\alpha(A)/(c_1(A) \cdot H)$ is bounded, 
the choice of $\tau(A)$ is finite.
Therefore (1) holds. (2) is obvious.
\end{proof}

\begin{rem}\label{rem:general}
$\lambda_0$ depends on the choice of $\alpha$.
Let ${\cal A}$ be the set of $\tau(A)$ of $\alpha$-twisted semi-stable sheaves $A$
with $(c_1(A)^2)=0,-2$ which define walls (cf Remark \ref{rem:wall}). 
Under a small purturbation of $\alpha$, all $\tau(A) \in {\cal A}$ define walls in 
${\Bbb R}_{\lambda_0}$. 
Then we may assume that $\alpha$ is general with respect to
all $\tau(A)$.
Thus $A_1$ and $A_2$ define the same wall if and only if
$\tau(A_1) \in {\Bbb Q}\tau(A_2)$.  
\end{rem}

\begin{thm}\label{thm:moduli}
Let ${\cal C}$ be a chamber.
For $\lambda_0:=\frac{d_0}{r_0(f \cdot H)}-\frac{(f \cdot \alpha)}{(f \cdot H)} \in {\cal C}$, 
we have a projective coarse moduli space
$M^{\lambda_0}(r,\xi,a)$ of $\lambda_0$-stable sheaves
$E$ with $\tau(E)=(r,\xi,a)$.
\end{thm}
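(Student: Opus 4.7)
The strategy is to transport the moduli problem to an auxiliary elliptic surface $Y$ via a relative Fourier--Mukai transform, where the question reduces to the existence of the Gieseker moduli space $M_{H_f'}$, which is already known by Friedman--Bridgeland. Boundedness and openness of $\lambda_0$-stability are already provided by Lemma \ref{lem:open}, so the only substantive task is to exhibit a projective coarse moduli space; this will be produced by pulling back the Gieseker moduli space from $Y$.

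The first step is to choose a pair of coprime integers $(r_1,d_1)$ with $r_1>0$, $\gcd(r_1,d_1)=1$, such that $r_1(\xi\cdot f)-rd_1>0$ and such that
$$\lambda_1:=\frac{d_1-r_1(f\cdot\alpha)}{r_1(f\cdot H)}$$
lies in the same chamber ${\cal C}$ as $\lambda_0$; by density of rationals of this form and openness of ${\cal C}$, such $(r_1,d_1)$ exists. Since $\lambda$-stability depends only on the chamber containing $\lambda$, $\lambda_0$-stability coincides with $\lambda_1$-stability, so I may replace $\lambda_0$ by $\lambda_1$. With this choice $Y:=M_H^\alpha(0,r_1f,d_1)$ is a fine moduli space carrying a universal family ${\cal P}$, and Proposition \ref{prop:n-stability} gives a bijection
$$E\in{\cal M}^{\lambda_0}({\bf e})^s\;\longleftrightarrow\; E':=\Phi_{X\to Y}^{{\cal P}^\vee[1]}(E)\in{\cal M}_{H_f'}^Y({\bf e}')^s,$$
where ${\bf e}':=\tau(E')$ is the transported Mukai vector.

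The remaining step is to upgrade this pointwise bijection to an isomorphism of moduli functors. The kernel ${\cal P}^\vee$ on $X\times Y$ extends to any base $S$ by pullback, so for an $S$-flat family ${\cal E}$ on $X\times S$ of $\lambda_0$-stable sheaves the object ${\cal E}':={\bf R}p_{Y\times S*}({\cal P}^\vee\boxtimes_S{\cal E})[1]$ on $Y\times S$ has fiberwise cohomology concentrated in degree zero by the vanishing statement built into Proposition \ref{prop:n-stability}; by cohomology and base change, ${\cal E}'$ is an $S$-flat family of $H_f'$-stable sheaves, and the construction is invertible via $\Phi_{Y\to X}^{{\cal P}}$ (again using the relevant $H^i$-vanishings). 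Thus the two moduli stacks are isomorphic over $\mathbf{Sch}/k$. Since the Gieseker moduli space $M_{H_f'}^Y({\bf e}')$ is a projective coarse moduli space by Friedman--Bridgeland, transporting this structure through the Fourier--Mukai equivalence produces the desired projective coarse moduli space $M^{\lambda_0}({\bf e})$ on $X$.

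The principal obstacle is ensuring that the cohomology and base change argument really does produce a single sheaf on $Y\times S$ rather than a genuine complex: this requires the pointwise vanishing $H^i(\Phi_{X\to Y}^{{\cal P}^\vee[1]}({\cal E}_s))=0$ for $i\neq 0$ to hold uniformly for all $s\in S$, which in turn rests on the boundedness of the family ${\cal T}_B$ of possible torsion subsheaves of $\lambda_0$-stable sheaves (Proposition \ref{prop:finite}) and on Lemmas \ref{lem:n-stability1}--\ref{lem:n-stability2}. A secondary bookkeeping issue is verifying independence of the auxiliary choice $(r_1,d_1)$, which follows from functoriality of the construction within the fixed chamber.
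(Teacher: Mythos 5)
Your overall strategy is exactly the paper's: pick a rational point of the chamber of the form $\frac{d_1-r_1(f\cdot\alpha)}{r_1(f\cdot H)}$, form $Y=M_H^\alpha(0,r_1f,d_1)$, and transport the problem through $\Phi_{X\to Y}^{{\cal P}^\vee[1]}$ to a Gieseker moduli space on $Y$ via Proposition \ref{prop:n-stability}. The extra discussion of the family-level statement is fine (and is implicit in the paper's appeal to Proposition \ref{prop:n-stability}, which is already an isomorphism of stacks).

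There is, however, one step that would fail as written: you assert that $\gcd(r_1,d_1)=1$ makes $Y=M_H^\alpha(0,r_1f,d_1)$ a \emph{fine} moduli space. That is not enough. The obstruction to a universal family is governed by $\gcd_{F\in K(X)}\chi(F\otimes E)=\gcd\bigl(d_1,\,r_1\cdot(f\cdot c_1(F))\bigr)$, and on the elliptic surfaces this paper is concerned with (no section, multiple fibers, reducible fibers) the subgroup $(f\cdot \NS(X))\subset{\Bbb Z}$ can be a proper subgroup $m'{\Bbb Z}$, so one can have $\gcd(r_1,d_1)=1$ but $\gcd(r_1m',d_1)>1$ (e.g.\ $r_1=1$, $d_1=2$, $m'=2$), in which case no universal family ${\cal P}$ exists and the Fourier--Mukai kernel you need is unavailable. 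This is precisely why the paper replaces $(r_0,d_0)$ by $(r_0(f\cdot H)k,\,d_0(f\cdot H)k+1)$ with $k\gg0$: the new ratio still lies in the open chamber, and the stronger coprimality $\gcd(r_0(f\cdot H),d_0)=1$ forces fineness. A second, related omission: you also need $M_H^\alpha(0,r_1f,d_1)$ to consist entirely of $\alpha$-twisted \emph{stable} sheaves (otherwise $Y$ is singular and not a fine moduli surface carrying the elliptic fibration used in subsection \ref{subsect:relFM}); the paper secures this by a further small perturbation of $\alpha$. Both points are repairable by exactly these adjustments, but as stated your choice of $(r_1,d_1)$ does not justify the existence of ${\cal P}$, which is the input to everything that follows.
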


\begin{proof}
For a chamber ${\cal C}$, we take 
$\lambda_0:=\frac{d_0}{r_0(f \cdot H)}-\frac{(f \cdot \alpha)}{(f \cdot H)} \in {\cal C}$, 
where $r_0, d_0 \in {\Bbb Z}$ and $\gcd(r_0,d_0)=1$.
Replacing $(r_0,d_0)$ by $(r_0',d_0')=(r_0 (f \cdot H)k,d_0(f \cdot H)k+1)$ $(k \gg 0)$,
we may assume that $\gcd(r_0(f \cdot H),d_0)=1$ and
$\frac{d_0}{r_0(f \cdot H)}-\frac{(f \cdot \alpha)}{(f \cdot H)} \in {\cal C}$.
By purturbing $\alpha$ again,
we may assume that $Y:=M_H^\alpha(0,r_0 f,d_0)$ consists of
$\alpha$-twisted stable sheaves, and hence,
$Y$ is a fine moduli space.
Let ${\cal P}$ be a universal family on $X \times Y$. 
By Proposition \ref{prop:n-stability},
$\Phi_{X \to Y}^{{\cal P}^{\vee}[1]}$ induces an isomorphism 
${\cal M}^{\lambda_0}(r,\xi,a)^{ss} \to {\cal M}_{H_f}(r',\xi',a')^{ss}$.
Hence we have a projective coarse moduli space $M^{\lambda_0}(r,\xi,a)$. 
\end{proof}

\begin{rem}\label{rem:moduli}
Since $\lambda_0$ is a point of an open set ${\cal C}$,
${\cal M}_{H_f}(r',\xi',a')^{ss}$ consists of $\mu$-stable vector bundles.
\begin{NB}
$\Ext^1({\Bbb C}_y,\Phi_{X \to Y}^{{\cal P}^{\vee}[1]}(E))=
\Hom({\cal P}_{|X \times \{ y\}},E)=0$. 
Hence $\Phi_{X \to Y}^{{\cal P}^{\vee}[1]}(E)$ is locally free.
\end{NB}
\end{rem}

%In this subsection, we continue to use ${\bf e}=(r,\xi,a)$ with $\gcd((\xi \cdot f),r)=1$.
\begin{NB}
For the torsion submodule $T$ of a $\lambda$-semi-stable sheaf 
$E$ with $\tau(E)={\bf e}$,
let 
$$
0 \subset T_1 \subset T_2 \subset \cdots \subset T_p=T
$$
be the Harder-Narasimhan filtration with respect to
$(H,\alpha)$-twisted semi-stability.
Then the choice of $\tau(T_i/T_{i-1})$ are finite by Proposition \ref{prop:finite}.
By perturbing $\alpha$, we may assume that 
$T_i/T_{i-1}$ are general with respect to
$(H,\alpha)$.
\end{NB}

We take a general $\alpha$ (see Remark \ref{rem:general}).
Let $W$ be a wall defined by an $\alpha$-twisted stable sheaf $A$
with $\tau(A)=\tau=(0,\tau_1,\tau_2)$.
We set $\lambda:=\frac{\chi_\alpha(A)}{(c_1(A) \cdot H)}$.
We take $\lambda_\pm$ ($\lambda_-<\lambda<\lambda_+$) 
from two adjacent chambers.
The Harder-Narasimhan filtration with respect to $\lambda_+$-semi-stability
is an exact sequence
\begin{equation}\label{eq:HNF}
0 \to E_1 \to E \to E_2 \to 0
\end{equation}
where 
$E_1 \in {\cal M}^{\lambda_+}({\bf e}-l \tau)^{ss}$ and
$E_2 \in {\cal M}^\alpha_H(l \tau)^{ss}$.
\begin{NB}
Old:
$E_2 \in {\cal M}^{\lambda_+}(l \tau)^{ss}$.
\end{NB}
The Harder-Narasimhan filtration with respect to 
$\lambda_-$-semi-stability is an exact sequence
\begin{equation}\label{eq:HNF2}
0 \to E_1 \to E \to E_2 \to 0
\end{equation}
where $l \in {\Bbb Q}_{>0}$, $E_1 \in {\cal M}^\alpha_H(l \tau)^{ss}$ and
$E_2 \in {\cal M}^{\lambda_-}({\bf e}-l \tau)^{ss}$.

Let ${\cal F}^+({\bf e}_1,{\bf e}_2)$ (resp. ${\cal F}^-({\bf e}_1,{\bf e}_2)$)
be the stack of filtrations
parameterizing \eqref{eq:HNF} (resp, \eqref{eq:HNF2}), where
$({\bf e}_1,{\bf e}_2)=({\bf e}-l\tau,l\tau)$ (resp. $=(l\tau,{\bf e}-l\tau)$).
Then we get
\begin{equation}
\dim {\cal F}^\pm ({\bf e}_1,{\bf e}_2)=
\dim {\cal M}^{\lambda_\pm}({\bf e}-l\tau)^{ss}
-\chi({\bf e}_1,{\bf e}_2)+\dim {\cal M}^{\lambda_\pm}(l \tau)^{ss}
\end{equation}
by \eqref{eq:HNF-dim}, where $l \in {\Bbb Q}_{>0}$,
$\chi({\bf e}_1,{\bf e}_2)=l(r \tau_2+l (\tau_1^2)-(\xi \cdot \tau_1))$.

\begin{prop}\label{prop:codim}
Let $W$ be a wall defined by $\tau$.
\begin{enumerate}
\item[(1)]
Assume that $\tau=(0,D,b)$ with $(D^2)=-2$. If $(D \cdot \xi)-rb \geq 0$, then
$$
\dim({\cal M}^{\lambda_\pm}({\bf e})^{ss} 
\setminus {\cal M}^\lambda({\bf e})^s)= 
\dim {\cal M}^{\lambda_\pm}({\bf e})^{ss}- ((D \cdot \xi)-rb+1).
$$
If  $(D \cdot \xi)-rb < 0$, then ${\cal M}^\lambda({\bf e})^s=\emptyset$.
\item[(2)]
Assume that $\tau=(0,r'f,d')$ with $\gcd(r',d')=1$ and $(r' f \cdot \xi)-rd'>0$.
Then
$$
\dim({\cal M}^{\lambda_\pm}({\bf e})^{ss} \setminus 
{\cal M}^\lambda({\bf e})^s)= 
\dim {\cal M}^{\lambda_\pm}({\bf e})^{ss}- 
\min_{(l_1,...,l_s,l)} \left(\sum_i l_i (r_i (f_i \cdot \xi)-rd_i)+l(r' (f \cdot \xi)-rd'-1) \right)
$$
where $r_i,d_i,l_i,l \in {\Bbb Z}$ are defined in \eqref{eq:isotropic}.
\end{enumerate}
\end{prop}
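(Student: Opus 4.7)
\textbf{Proof plan for Proposition \ref{prop:codim}.}

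The overall strategy adapts the dimension-counting of Proposition \ref{prop:wall-dim} to the higher-rank setting, using the Harder--Narasimhan filtrations \eqref{eq:HNF}--\eqref{eq:HNF2} described just above the statement. The plan is to estimate $\dim \mathcal{F}^\pm(\mathbf{e}_1, \mathbf{e}_2)$ via \eqref{eq:HNF-dim} and to minimise over admissible classes; I will carry out the $+$-side computation, the $-$-side being dual.

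First I would identify $\mathcal{M}^{\lambda_+}(\mathbf{e})^{ss} \setminus \mathcal{M}^\lambda(\mathbf{e})^s$ with a disjoint union of the stacks $\mathcal{F}^+(\mathbf{e}-\mathbf{f},\mathbf{f})$, ranging over admissible classes $\mathbf{f}$ of the HN piece $E_2$. In case (1) these are $\mathbf{f} = l\tau$ with $l \in \mathbb{Z}_{>0}$; in case (2), the genericity assumption on $\alpha$ (Remark \ref{rem:general}) together with the uniqueness in \eqref{eq:isotropic} identifies admissible classes with tuples $(l_1,\dots,l_s,l)$ giving $\mathbf{f} = \sum_i l_i(0,r_if_i,d_i) + l(0,r'f,d')$. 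The identification is immediate from uniqueness of HN filtrations.

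Next I compute each factor in \eqref{eq:HNF-dim}. By Riemann--Roch for a rank-$r$ sheaf $E_1$ and a rank-$0$ sheaf $E_2$, $\chi(E_1,E_2) = r\chi(E_2) - (c_1(E_1) \cdot c_1(E_2))$. Combined with $K_X \cdot c_1(\mathbf{f}) = 0$ (since both $c_1(\mathbf{f})$ and $K_X$ are supported on fibers by the canonical bundle formula) and the intersection data ($(D^2) = -2$ in case (1); all mutual fiber products zero in case (2)), this gives
\begin{equation*}
-\chi(\mathbf{e}-\mathbf{f},\mathbf{f}) = \begin{cases} l((\xi \cdot D) - rb) + 2l^2 & \text{in case (1)}, \\ \sum_i l_i(r_i(\xi \cdot f_i) - rd_i) + l(r'(\xi \cdot f) - rd') & \text{in case (2)}. \end{cases}
\end{equation*}
For the remaining factor $\dim \mathcal{M}^\alpha_H(\mathbf{f})^{ss}$: in case (1), Lemma \ref{lem:spherical2} identifies $E_0^{\oplus l}$ as the unique semistable representative of class $l\tau$, so the stack has dimension $-l^2$; in case (2), Lemma \ref{lem:isotropic-dim} gives dimension $l$. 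The difference $\dim \mathcal{M}^{\lambda_+}(\mathbf{e}-\mathbf{f})^{ss} - \dim \mathcal{M}^{\lambda_+}(\mathbf{e})^{ss}$ is computed from Proposition \ref{prop:lambda-smooth} using the same vanishing $K_X \cdot c_1(\mathbf{f}) = 0$; assembling all three inputs into \eqref{eq:HNF-dim} yields
\begin{equation*}
\codim \mathcal{F}^+(\mathbf{e}-\mathbf{f},\mathbf{f}) = \begin{cases} l((\xi \cdot D) - rb + l) & \text{in case (1)}, \\ \sum_i l_i(r_i(\xi \cdot f_i) - rd_i) + l(r'(\xi \cdot f) - rd' - 1) & \text{in case (2)}. \end{cases}
\end{equation*}
Minimising over admissible tuples produces the stated formulae: in case (1) the minimum $(D \cdot \xi) - rb + 1$ is achieved at $l = 1$ whenever $(D \cdot \xi) - rb \geq 0$. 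If instead $(D \cdot \xi) - rb < 0$, the $l = 1$ codimension is non-positive, forcing the filtration stratum $\mathcal{F}^+(\mathbf{e}-\tau,\tau)$ to fill $\mathcal{M}^{\lambda_+}(\mathbf{e})^{ss}$, whence $\mathcal{M}^\lambda(\mathbf{e})^s = \emptyset$.

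The main obstacle is the case (2) bookkeeping: verifying that admissible HN-piece classes correspond bijectively with the tuples from \eqref{eq:isotropic} and that $\dim \mathcal{M}^\alpha_H(\mathbf{f})^{ss}$ is indeed controlled by the generic-fiber multiplier $l$, which rests on Lemma \ref{lem:fiber-dim} together with the multiple-fiber finiteness of Lemmas \ref{lem:multiple1}--\ref{lem:multiple2}.
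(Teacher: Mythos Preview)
Your proposal is correct and follows essentially the same approach as the paper's own proof: the paper simply refers back to Proposition \ref{prop:wall-dim} for part (1) and carries out the same codimension computation you give for part (2), arriving at the identical formula $\sum_i l_i(r_i(f_i\cdot\xi)-rd_i)+l(r'(f\cdot\xi)-rd'-1)$ via \eqref{eq:HNF-dim} and Lemma \ref{lem:isotropic-dim}. The only place your write-up is slightly looser than one might like is the emptiness claim in (1) when $(D\cdot\xi)-rb<0$: rather than arguing that a non-positive codimension ``forces the stratum to fill'', it is cleaner (and is what underlies the reference in Proposition \ref{prop:wall-dim}) to observe directly that for a $\lambda$-stable $E$ one has $\Hom(E,E_0)=\Hom(E_0(K_X),E)=0$, so $\chi(E_0,E)=rb-(D\cdot\xi)\le 0$, a contradiction.
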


\begin{proof}
(1) The proof is similar to that of Proposition \ref{prop:wall-dim}. 
(2) For $\tau=(0,r' f, d')$, we consider ${\bf f}$ in \eqref{eq:isotropic}.
Then we have
\begin{equation}
\begin{split}
\codim {\cal F}^\pm({\bf e}_1,{\bf e}_2)=&
\left(\sum_i l_i \frac{d_i}{d'}+l \right)(r' (f \cdot \xi)-rd')-l \\
=& \sum_i l_i (r_i (f_i \cdot \xi)-rd_i)+l(r' (f \cdot \xi)-rd'-1).
\end{split}
\end{equation}
\end{proof}

\subsection{Birational correspondences}\label{subsect:barat-corresp}

As in subsection \ref{subsect:birat}, we have birational correspondences
induced by some Fourier-Mukai transforms.

\begin{prop}\label{prop:Psi}
\begin{enumerate}
\item[(1)]
Let $D$ be an effective divisor on a fiber with $(D^2)=-2$.
\begin{enumerate}
\item
Assume that $(\xi \cdot D)-rb<0$. Then
$R_A \circ R_A$ induces a birational map
\begin{equation}\label{eq:RR}
{\cal M}^{\lambda_+}({\bf e})^{ss} \to {\cal M}^{\lambda_-}({\bf e}')^{ss} 
\cdots \to
{\cal M}^{\lambda_+}({\bf e}')^{ss} \to {\cal M}^{\lambda_-}({\bf e})^{ss},
\end{equation}
where ${\bf e}'={\bf e}+((\xi \cdot D)-rb)\tau(A)$.
\begin{NB}
For an $\alpha$-twisted stable sheaf $A$ with
$\tau(A)=(0,D,b)$, 
we have an exact sequence
\begin{equation}
0 \to A^{\oplus k} \to E \to E' \to A^{\oplus k} \to 0.
\end{equation}
\end{NB}

\item
Assume that $(\xi \cdot D)-rb=0$. Then $R_A$ induces an isomorphism
\begin{equation}\label{eq:R}
{\cal M}^{\lambda_+}({\bf e})^{ss} \to {\cal M}^{\lambda_-}({\bf e})^{ss}.
\end{equation}
\end{enumerate}

\item[(2)]
\begin{enumerate}
\item
If $r' (\xi \cdot f)-r d'=1$, then as in \ref{subsect:birat},
we have a contravariant Fourier-Mukai transform
$\Psi:{\bf D}(X) \to {\bf D}(X)$ which induces an isomorphism
\begin{equation}\label{eq:Psi-isom1}
{\cal M}^{\lambda_+}({\bf e})^{ss} \to {\cal M}^{\lambda_-}({\bf e})^{ss}.
\end{equation}
\item
If $r' (\xi \cdot f)-rd'=2$ and $M_H^\alpha(0,r' f,d')$ is projective
(e.g. all multiple fibers have odd multiplicities),
then we also have a contravariant Fourier-Mukai transform
$\Psi:{\bf D}(X) \to {\bf D}(X)$ which induces an isomorphism
\begin{equation}\label{eq:Psi-isom2}
{\cal M}^{\lambda_+}({\bf e})^{ss} \to {\cal M}^{\lambda_-}({\bf e})^{ss}.
\end{equation} 
\end{enumerate}
\end{enumerate}
\end{prop}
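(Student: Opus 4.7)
The plan is to imitate the treatment of subsection \ref{subsect:birat} in the $\lambda$-stability framework: handle walls of type (1) (spherical) by a double spherical twist, and walls of type (2) (isotropic) by transporting both sides of the wall through a relative Fourier--Mukai transform to moduli of positive-rank sheaves on $Y$ where Lemma \ref{lem:isotropic-wall} applies.

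For part (1), fix a general $\alpha$ so that Lemma \ref{lem:spherical2} supplies a unique spherical $A \in M_H^\alpha(0,D,b)$ defining the wall, and note $A(K_X) \cong A$ by Lemma \ref{lem:u}(2). The first step is a $\lambda$-stability analogue of Proposition \ref{prop:R-isom}: for $E \in {\cal M}^{\lambda_-}({\bf e})^{ss}$, one shows $R_A(E) \in \Coh(X)$ by verifying $\Ext^i(A,E)=0$ for $i\ne 1$ ($i=0$ is forced by $\lambda_-$-stability since $\chi_\alpha(A)/(c_1(A)\cdot H)=\lambda>\lambda_-$, and $i=2$ by Serre duality using $A(K_X)\cong A$ together with the vanishing $\Hom(E,A)=0$ in Definition \ref{defn:n-stable}(b)); then $\lambda_+$-stability of $R_A(E)$ follows from $\Hom(A,R_A(E))=0$ (built into the cone construction) and a routine check against other $\alpha$-twisted stable fiber-sheaves of slope $\lambda$, using that $\alpha$ is general (Remark \ref{rem:general}). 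By Hirzebruch--Riemann--Roch, and because $(D \cdot K_X)=0$, one computes $\chi(A,E) = rb - (\xi \cdot D)$, so $\tau(R_A(E)) = {\bf e}-\chi(A,E)\tau(A) = {\bf e} + ((\xi\cdot D)-rb)\tau(A)$. When $(\xi\cdot D)-rb=0$ this class is preserved and $R_A$ gives the isomorphism \eqref{eq:R}.

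For (1)(a) with $(\xi\cdot D)-rb<0$, the first reflection sends ${\cal M}^{\lambda_-}({\bf e})^{ss}$ onto ${\cal M}^{\lambda_+}({\bf e}')^{ss}$. By Proposition \ref{prop:codim} both ${\cal M}^{\lambda_\pm}({\bf e}')^{ss}\setminus {\cal M}^\lambda({\bf e}')^s$ have codimension at least $(\xi\cdot D)-rb+1 \geq 2$ (applied to ${\bf e}'$ instead of ${\bf e}$), so the two moduli spaces of class ${\bf e}'$ are canonically identified away from codimension two. A second application of $R_A$ transports ${\cal M}^{\lambda_+}({\bf e}')^{ss}$ back to ${\cal M}^{\lambda_-}({\bf e})^{ss}$ since the class shift reverses sign, yielding the chain \eqref{eq:RR}. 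For part (2), set $Y := M_H^\alpha(0,r'f,d')$, which is a fine projective moduli space under the stated hypotheses (automatic in case (a); and in case (b) guaranteed by the projectivity assumption, as when all multiple fibers have odd multiplicities so that Lemma \ref{lem:multiple1} forces $\gcd=1$ conditions). Let ${\cal P}$ be a universal family. By Proposition \ref{prop:n-stability}, $\Phi_{X\to Y}^{{\cal P}^\vee[1]}$ maps ${\cal M}^{\lambda_+}({\bf e})^{ss}$ isomorphically onto a moduli space ${\cal M}_{H'_f}(r'',\xi'',a'')^{ss}$ of stable sheaves on $Y$ with $r'' = r'(\xi\cdot f)-rd'$, while composing with $D_X$ sends ${\cal M}^{\lambda_-}({\bf e})^{ss}$ isomorphically onto ${\cal M}_{H'_f}(r'',-\xi'',a'')^{ss}$. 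For $r''=1$ (case (a)), tensoring by a line bundle identifies these two (Lemma \ref{lem:isotropic-wall}(1)); for $r''=2$ (case (b)), the assignment $E \mapsto E \otimes (\det E)^\vee$ does so (Lemma \ref{lem:isotropic-wall}(2)). Composing the three equivalences produces the advertised contravariant Fourier--Mukai transform $\Psi$ and isomorphisms \eqref{eq:Psi-isom1}, \eqref{eq:Psi-isom2}.

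The principal obstacle is the $\lambda$-stability analogue of Proposition \ref{prop:R-isom}: one must rule out spurious destabilizing subobjects of $R_A(E)$ that are not themselves killed by the twist, and must show that $R_A(E)$ is a genuine sheaf rather than a two-term complex. This requires combining the genericity of $\alpha$ (so that $A$ is essentially the only $\alpha$-twisted stable fiber-sheaf of slope $\lambda$) with the tight control over the torsion filtration supplied by Proposition \ref{prop:finite} and the wall classification in Proposition \ref{prop:codim}; the bookkeeping is somewhat more delicate than in subsection \ref{subsect:birat} because coherent sheaves of positive rank may carry nontrivial torsion subsheaves that interact with the reflection.
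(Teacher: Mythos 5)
Your treatment of part (1) and of part (2)(a) follows the route the paper intends (it only sketches these, referring back to subsection \ref{subsect:birat}), and your computation $\chi(A,E)=rb-(\xi\cdot D)$, hence $\tau(R_A(E))={\bf e}+((\xi\cdot D)-rb)\tau(A)$, is correct. However, there is a genuine gap in your argument for part (2)(b), and it is precisely the point that the paper's proof is devoted to. You assert that $Y:=M_H^\alpha(0,r'f,d')$ is a \emph{fine} moduli space, "guaranteed by the projectivity assumption." Projectivity (absence of properly semi-stable sheaves) does not imply fineness: the standard obstruction to a universal family is measured by $\gcd\{\chi(F,{\bf u}) \mid F\in K(X)\}$ for ${\bf u}=(0,r'f,d')$, which in case (a) is $1$ (since $\chi({\bf e},{\bf u})=rd'-r'(\xi\cdot f)=-1$), but in case (b) can be $2$. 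The paper explicitly states that $X':=M_H^\alpha(0,r'f,d')$ is \emph{not fine in general} in case (b), and resolves this by taking the universal family ${\cal P}$ as a twisted sheaf, so that one gets a twisted Fourier--Mukai transform $\Phi_{X\to X'}^{{\cal P}^\vee[1]}:{\bf D}(X)\to{\bf D}^\beta(X')$ carrying $E\in{\cal M}^{\lambda_+}({\bf e})^{ss}$ to a stable $\beta$-twisted sheaf $F$ of rank $2$; the determinant-twist isomorphism of Lemma \ref{lem:isotropic-wall}(2) must then be replaced by the correspondence ${\bf D}^\beta(X')\to{\bf D}^{\beta^{-1}}(X')$, $F\mapsto F\otimes(\det F_0)^\vee$, where $F_0$ is a rank-$2$ $\beta$-twisted vector bundle with $(\det F_0)^\vee\otimes\det F\in\Pic^0(X')$. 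Without this twisted-sheaf formalism your chain of equivalences in case (b) does not exist as written.

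A secondary, smaller point: the role of the projectivity hypothesis in (2)(b) is not to give fineness but to exclude properly semi-stable sheaves of class $(0,r'f,d')$, which would otherwise sit on multiple fibers; the paper's argument is that a properly semi-stable $P$ lives on a multiple fiber $f_i$ with $(r'f,d')=\tfrac{d'}{d_i}(r_if_i,d_i)$ and $2=\tfrac{d'}{d_i}(r_i(\xi\cdot f_i)-rd_i)$, so odd $m_i$ forces $\tfrac{d'}{d_i}=1$. Your sketch gestures at this but conflates it with fineness. Everything else in your proposal is consistent with the paper's (largely implicit) argument.
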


\begin{proof}
We only give a remark on the proof  for (2) (b).
 In this case, $X':=M_H^\alpha(0,r' f,d')$ is not fine in general.
So we need to use a universal family ${\cal P}$ as a twisted sheaf.
Thus we have a twisted Fourier-Mukai transform
$\Phi_{X \to X'}^{{\cal P}^{\vee}[1]}:{\bf D}(X) \to {\bf D}^\beta(X')$,
where $\beta$ is a 2-cocycle of ${\cal O}_{X'}^{\times}$.
For $E \in {\cal M}^{\lambda_+}({\bf e})^{ss}$,
$F:=\Phi_{X \to X'}^{{\cal P}^{\vee}[1]}(E)$ is a stable
$\beta$-twisted sheaf of rank 2.
We take a twisted vector bundle $F_0 \in {\bf D}^\beta(X')$ of rank 2
such that $(\det F_0)^{\vee} \otimes \det F \in \Pic^0(X')$.
Then we shall replace the isomorphism
in Lemma \ref{lem:isotropic-wall} (2) by the correspondence
${\bf D}^\beta(X') \to {\bf D}^{\beta^{-1}}(X')$ 
($F \mapsto F \otimes (\det F_0)^{\vee}$).

For the projectivity of $M_H^\alpha(0,r' f,d')$,
we shall study properly semi-stable sheaves $P$ with
$\tau(P)=(0,r' f,d')$. Obviously $P$ is a sheaf on a multiple fiber $f_i$. 
In the notation of \eqref{eq:isotropic},
$(r'f,d')=\frac{d'}{d_i}(r_i f_i,d_i)$ and $m_i=p_i \frac{d'}{d_i}$.
Since $2=r' (\xi \cdot f)-rd'=\frac{d'}{d_i}(r_i(\xi \cdot f_i)-rd_i)$,
if $m_i$ is odd, then we get $\frac{d'}{d_i}=1$.
Hence there is no properly semi-stable sheaf $P$.
\end{proof}

\begin{rem}
Assume that $r'(\xi \cdot f)-rd'=2$.
If $d'$ is odd, then $(r'f,d')=\frac{d'}{d_i}(r_i f_i,d_i)$ implies 
$\frac{d'}{d_i}=1$.
If $d'$ is even, then $r'$ is odd and $(\xi \cdot f)$ is even.
Thus if $d'$ is odd or $(\xi \cdot f)$ is odd, then
$M_H^\alpha(0,r' f,d')$ is projective.
\begin{NB}
$m_i \mid (\xi \cdot f)$.
\end{NB}
\end{rem}

Therefore we get the following result.

\begin{prop}\label{prop:birat}
For general $\lambda_1, \lambda_2$, 
there is a (contravariant) Fourier-Mukai transform
$\Psi:{\bf D}(X) \to {\bf D}(X)$ and proper closed substacks 
${\cal Z}_i \subset {\cal M}^{\lambda_i}({\bf e})^{ss}$ $(i=1,2)$
such that
$\Psi$ induces an isomorphism 
$$
\Psi:{\cal M}^{\lambda_1}({\bf e})^{ss} \setminus {\cal Z}_1 
\to {\cal M}^{\lambda_2}({\bf e})^{ss} \setminus {\cal Z}_2.
$$
%for a general $E$,
%$\Phi(E)$ satisfies $\Hom(\Phi(E),{\cal P}_{|X \times \{ x' \}})=0$
%for all $x' \in X'$ and
%$\Hom({\cal P}_{|X \times \{ x' \}}(-K_X),\Phi(E))=0$
%for all $x' \in X'$.
Moreover $\codim_{{\cal M}^{\lambda_i}({\bf e})^{ss}} {\cal Z}_i \geq 2$ if
%all multiple fibers have odd multiplicities.
there is no multiple fibers.
\end{prop}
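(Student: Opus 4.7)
The plan is to produce $\Psi$ and the exceptional loci ${\cal Z}_i$ by chaining together the atomic wall-crossings already established in Proposition \ref{prop:Psi}, and then read off the codimension from Proposition \ref{prop:codim}. First I would fix a compact set $B \subset \NS(X)_{\Bbb R}$ containing $\alpha$ and use the finiteness of walls in any bounded region ${\Bbb R}_{\lambda_0}$ (the proposition just before Proposition \ref{prop:Psi}) to see that the interval $[\lambda_1,\lambda_2]$ meets only finitely many walls $W_1,\dots,W_N$. Choose auxiliary general parameters $\mu_0=\lambda_1,\mu_1,\dots,\mu_N=\lambda_2$ so that $\mu_{i-1}$ and $\mu_i$ lie in the two chambers adjacent to $W_i$; it suffices to produce a Fourier--Mukai transform and exceptional loci across each single wall and then compose, since a composition of Fourier--Mukai transforms is again Fourier--Mukai, and the loci of indeterminacy of the total map are (scheme-theoretically) contained in the preimages of the individual loci.

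At a single wall $W_i$ defined by an $\alpha$-twisted stable sheaf $A$ on a fiber, Remark \ref{rem:wall} lets me assume $(c_1(A)^2)\in\{0,-2\}$. If $(c_1(A)^2)=-2$ with $\tau(A)=(0,D,b)$, Proposition \ref{prop:Psi}(1) gives a straight isomorphism $R_A$ when $(D\cdot\xi)-rb=0$ and a birational correspondence $R_A\circ R_A$ (a composition of spherical twists) otherwise. If $(c_1(A)^2)=0$ with $\tau(A)=(0,r'f,d')$ and $r'(\xi\cdot f)-rd'\in\{1,2\}$, Proposition \ref{prop:Psi}(2) directly gives the desired contravariant Fourier--Mukai transform. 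For $r'(\xi\cdot f)-rd'\geq 3$ I would run the same argument as in case (2)(a): the relative Fourier--Mukai transform $\Phi_{X\to Y}^{{\cal P}^{\vee}[1]}$ of Proposition \ref{prop:n-stability} translates $\lambda$-stability on either side of $W_i$ into Gieseker stability of a rank $r'(\xi\cdot f)-rd'$ sheaf on $Y$ with opposite first Chern class, and the dualization map of Lemma \ref{lem:isotropic-wall}(3) supplies the required birational identification, regular up to codimension $r'(\xi\cdot f)-rd'-1\geq 2$.

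For the codimension assertion I would apply Proposition \ref{prop:codim} wall by wall. In the spherical case the exceptional locus has codimension $(D\cdot\xi)-rb+1$, which is $\geq 2$ whenever the correspondence is not already an isomorphism. In the isotropic case with no multiple fibers, the decomposition \eqref{eq:isotropic} collapses to ${\bf f}=l(0,r'f,d')$, so Proposition \ref{prop:codim}(2) gives codimension $l(r'(\xi\cdot f)-rd'-1)\geq 2$ whenever $r'(\xi\cdot f)-rd'\geq 3$, which covers exactly the non-isomorphism cases. Summing the (finitely many) contributions from the walls $W_1,\dots,W_N$ yields closed substacks ${\cal Z}_i\subset{\cal M}^{\lambda_i}({\bf e})^{ss}$ of codimension at least $2$.

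The main obstacle I expect is keeping the codimension estimate intact in the isotropic case with multiple fibers of even multiplicity, where $M_H^\alpha(0,r'f,d')$ is only a twisted moduli space and the correspondence in Proposition \ref{prop:Psi}(2)(b) must be set up through $\beta$-twisted sheaves; this is precisely why the codimension bound is asserted only under the absence of multiple fibers. A minor point of care is that the atomic exceptional loci must be pulled back under the preceding Fourier--Mukai transforms before being unioned, but since each atomic map is an isomorphism on a dense open set of codimension $\geq 2$, the pulled-back loci retain codimension $\geq 2$.
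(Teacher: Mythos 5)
Your proposal follows essentially the same route as the paper: cross the finitely many walls one at a time, invoke Proposition \ref{prop:Psi} for the spherical walls and for the isotropic walls with $r'(\xi \cdot f)-rd' \in \{1,2\}$, and read the codimension of ${\cal Z}_i$ off Proposition \ref{prop:codim}, with the multiple-fiber contribution (case $l_i=1$, $r_i(f_i \cdot \xi)-rd_i=1$) correctly identified as the sole obstruction to $\codim \geq 2$. The only (harmless) deviation is at isotropic walls with $r'(\xi \cdot f)-rd' \geq 3$, where the paper simply notes that both sides share the common open substack ${\cal M}^{\lambda}({\bf e})^{s}$ with complement of codimension $\geq 2$ and no transform is needed, whereas you route through the dualization of Lemma \ref{lem:isotropic-wall}(3); this agrees with the identity on the common stable locus, so the two arguments coincide.
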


\begin{proof}
For the wall in Proposition \ref{prop:codim} (1), 
\eqref{eq:RR} or \eqref{eq:R} is a desired morphism. 

We shall treat the wall in Proposition \ref{prop:codim} (2).
In the notation of the proposition, 
we set
$$
d({\bf f}):=\sum_i l_i (r_i (f_i \cdot \xi)-rd_i)+l(r' (f \cdot \xi)-rd'-1).
$$
Then $d({\bf f})=0$ if and only if
$r'(f \cdot \xi)-rd'=1$ and $l_i=0$ for all $i$.
In this case \eqref{eq:Psi-isom1} is a desired isomorphism.
We also see that $d({\bf f})=1$ if and only if
(I) $r'(f \cdot \xi)-rd'=2$ and $l_i=0$ for all $i$ or (II)
there is $i$ such that $l_i=1$, $r_i(f_i \cdot \xi)-rd_i=1$ and
$l_j=0$ for $j \ne i$.
For case (I), \eqref{eq:Psi-isom2} is a desired isomorphism.
Hence ${\cal Z}_1$ and ${\cal Z}_2$ are proper substack and
$\codim {\cal Z}_1, \codim {\cal Z}_2 \geq 2$ if (II) does not occur.
\end{proof}

\begin{lem}\label{lem:m_i}
Assume that $r'_0 d-rd'_0=1$ with $0 \leq r_0'<r$.
Let $\tau=(0,r' f,d')$ be a topological invariant in Proposition \ref{prop:codim}.
\begin{enumerate}
\item[(1)]
If $r>m_i r'_0$, then for $r_i,d_i$ in \eqref{eq:isotropic},
$r_i (f_i \cdot \xi)-r d_i=1$ implies 
$$
\frac{d'_0}{r'_0(f \cdot H)} \leq \frac{d_i}{r_i(f_i \cdot H)}.
$$
\item[(2)]
If $r>2r_0'$, then for $(r',d')$ satisfying $r'(\xi \cdot f)-rd'=2$, we have  
$$
\frac{d'_0}{r'_0(f \cdot H)} \leq \frac{d'}{r'(f \cdot H)}.
$$
\end{enumerate}
\end{lem}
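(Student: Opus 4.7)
The key observation is that both inequalities are purely numerical once one unwinds the identifications coming from the multiple fiber structure. Since $\pi^{-1}(c_i) = m_i f_i$ as a divisor, in $\NS(X)$ we have $f \equiv m_i f_i$, and hence $(f \cdot H) = m_i (f_i \cdot H)$ and $(\xi \cdot f) = m_i (\xi \cdot f_i)$. I also read $d$ in the hypothesis as $(\xi \cdot f)$, so the standing relation is $r'_0 (\xi \cdot f) - r d'_0 = 1$.

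For part (1), clearing positive denominators and substituting $(f \cdot H) = m_i (f_i \cdot H)$, the desired inequality $d'_0/(r'_0(f\cdot H)) \leq d_i/(r_i(f_i\cdot H))$ is equivalent to $d'_0 r_i \leq d_i r'_0 m_i$. I would multiply by $r$ and substitute $r d_i = r_i (\xi \cdot f_i) - 1$ (from the hypothesis $r_i(f_i \cdot \xi) - r d_i = 1$) together with $r d'_0 = r'_0 m_i (\xi \cdot f_i) - 1$ (from the standing relation). The $(\xi \cdot f_i)$-terms cancel and one is left with the clean identity
\[
r\bigl(d'_0 r_i - d_i r'_0 m_i\bigr) \;=\; r'_0 m_i - r_i,
\]
so $r \mid (r'_0 m_i - r_i)$. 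The hypothesis $r > m_i r'_0$ together with $r_i > 0$ forces $r'_0 m_i - r_i < r$; combined with the divisibility this integer must be $\leq 0$, which is exactly the desired inequality.

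For (2) the argument is entirely parallel. The inequality $d'_0/(r'_0(f\cdot H)) \leq d'/(r'(f\cdot H))$ reduces to $d'_0 r' \leq d' r'_0$, and multiplying by $r$ while substituting $r d' = r'(\xi \cdot f) - 2$ and $r d'_0 = r'_0(\xi \cdot f) - 1$ gives
\[
r\bigl(d'_0 r' - d' r'_0\bigr) \;=\; 2r'_0 - r'.
\]
Hence $r \mid (2r'_0 - r')$; using $r > 2r'_0$ and $r' > 0$ one gets $2r'_0 - r' < r$, so this integer is $\leq 0$, giving the claim. The entire proof is arithmetic — no moduli-theoretic or Fourier–Mukai input beyond the setup already in place is needed. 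The only real care point, and the most likely source of error, is consistently using the $\NS(X)$-identification $f \equiv m_i f_i$ on both the intersection numbers with $H$ and with $\xi$; once that is done, the divisibility-by-$r$ trick together with the size hypotheses $r > m_i r'_0$ and $r > 2r'_0$ closes the argument immediately.
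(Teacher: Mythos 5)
Your proof is correct and is essentially the paper's argument in different clothing: the paper parametrizes the solutions of the Diophantine condition as $(r_i,d_i)=(r'_0 m_i+kr,\,d'_0+k(f_i\cdot\xi))$ and rules out $k<0$ via non-effectivity of $r_i f_i$, which is exactly your identity $r(d'_0 r_i - d_i r'_0 m_i)=r'_0 m_i - r_i$ plus the divisibility-by-$r$ and size argument (and likewise for part (2)). No substantive difference.
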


\begin{proof}
(1)
If $r_i (f_i \cdot \xi)-r d_i=1$, then
$r'_0 m_i (f_i \cdot \xi)-rd'_0=1$ implies
$(r_i,d_i)=(r'_0 m_i+kr,d'_0+k(f_i \cdot \xi))$ $(k \in {\Bbb Z})$.
%We set $\lambda:=\frac{d'}{r'(f \cdot H)}$.
Then %$\lambda 
$\frac{d'_0}{r'_0(f \cdot H)}
\leq \frac{d'_0+k(f_i \cdot \xi)}{(r'_0 m_i+kr)(f_i \cdot H)}$ for $k \geq 0$.
Since $r>m_i r'_0$, $r'_0 f+kr f_i$ is not effective for $k<0$.
Hence $(0,r'_0 f+kr f_i,d'_0+k(f_i \cdot \xi))$ $(k<0)$ does not define a wall.

(2)
By our assumption, we see that $(r',d')=(2r_0'+kr,2d_0'+k(\xi \cdot f))$
$(k \in {\Bbb Z})$.
By a similar argument to (1), we get the claim. 
\end{proof}

\begin{thm}\label{thm:birat2}
We set ${\bf e}:=(r,\xi,a)$, where $\gcd(r,(\xi \cdot f))=1$.
For a fine moduli space $Y:=M_H^\alpha(0,r_1 f,d_1)$ and a universal family
${\cal P}$ on $X \times Y$, 
let $\Phi_{X \to Y}^{{\cal P}^{\vee}}:{\bf D}(X) \to {\bf D}(Y)$ be the associated
 Fourier-Mukai transform.
Assume that $r_1 (\xi \cdot f)-rd_1>0$.
Then there is a (contravariant) autoequivalence
$\Psi:{\bf D}(X) \to {\bf D}(X)$ such that
\begin{enumerate}
\item
$\tau(\Psi(E))=\tau(E)$ and
\item
$\Phi_{X \to Y}^{{\cal P}^{\vee}[1]} \circ \Psi$ induces a birational map 
${\cal M}_{H_f}(r,\xi,a)^{ss} \cdots \to {\cal M}_{H'_f}(r',\xi',a')^{ss}$,
\end{enumerate}
where $E \in  {\cal M}_{H_f}(r,\xi,a)$, 
$\tau(\Phi_{X \to Y}^{{\cal P}^{\vee}}(E)[1])=(r',\xi',a')$ and 
$H'$ is a polarization of $Y$. 
\end{thm}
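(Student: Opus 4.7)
The plan is to bridge the Gieseker moduli ${\cal M}_{H_f}(r,\xi,a)^{ss}$ with the target via a finite chain of wall-crossings for $\lambda$-stability, and then realise the last step as the Fourier-Mukai transform $\Phi_{X\to Y}^{{\cal P}^{\vee}[1]}$ itself.

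First I would set $\lambda_0:=\frac{d_1-r_1(f\cdot\alpha)}{r_1(f\cdot H)}$, the slope $\chi_\alpha({\cal P}_{|X\times\{y\}})/(c_1({\cal P}_{|X\times\{y\}})\cdot H)$ appearing in Proposition~\ref{prop:n-stability}. The hypothesis $r_1(\xi\cdot f)-rd_1>0$ is exactly what makes $\lambda_0<((\xi\cdot f)-r(\alpha\cdot f))/(r(f\cdot H))$, i.e.\ an admissible value for $\lambda$-stability on objects of class ${\bf e}=(r,\xi,a)$ (cf.\ Remark~\ref{rem:n-stable}~(1)). Proposition~\ref{prop:n-stability} then identifies ${\cal M}^{\lambda_0}({\bf e})^{ss}$ with ${\cal M}_{H'_f}(r',\xi',a')^{ss}$ via $\Phi_{X\to Y}^{{\cal P}^{\vee}[1]}$. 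If $\lambda_0$ lies on a wall for $\lambda$-stability on the class ${\bf e}$, I would slightly perturb $\alpha$ inside its own chamber for $\alpha$-twisted stability on $(0,r_1 f,d_1)$: such a perturbation leaves $Y$ and ${\cal P}$ unchanged but shifts $\lambda_0$ into a $\lambda$-stability chamber for ${\bf e}$.

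By Proposition~\ref{prop:torsion-free}, pick $\lambda_1$ sufficiently small (and generically, inside a chamber) so that ${\cal M}^{\lambda_1}({\bf e})^{ss}={\cal M}_{H_f}(r,\xi,a)^{ss}$. Between $\lambda_1$ and $\lambda_0$ only finitely many walls intervene, by the finiteness of walls in ${\Bbb R}_{\lambda_0}$ established just before Theorem~\ref{thm:moduli}. I would cross these walls one at a time using Proposition~\ref{prop:Psi}: at each wall defined by a spherical sheaf $A$ with $\tau(A)=(0,D,b)$ the composition $R_A\circ R_A$ (combined with identification of the interior chambers, as in \eqref{eq:RR}) gives a contravariant autoequivalence of ${\bf D}(X)$ preserving the class ${\bf e}$; at each wall defined by $(0,r'f,d')$ with $r'(\xi\cdot f)-rd'\in\{1,2\}$, the Fourier-Mukai transforms \eqref{eq:Psi-isom1} or \eqref{eq:Psi-isom2} play the same role. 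Composing all of these yields a contravariant Fourier-Mukai transform $\Psi:{\bf D}(X)\to{\bf D}(X)$ which, by Proposition~\ref{prop:birat}, induces a birational map ${\cal M}^{\lambda_1}({\bf e})^{ss}\cdots\to {\cal M}^{\lambda_0}({\bf e})^{ss}$ with $\tau(\Psi(E))=\tau(E)={\bf e}$ for $E\in{\cal M}_{H_f}(r,\xi,a)^{ss}$.

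Postcomposing $\Psi$ with the isomorphism $\Phi_{X\to Y}^{{\cal P}^{\vee}[1]}:{\cal M}^{\lambda_0}({\bf e})^{ss}\simto {\cal M}_{H'_f}(r',\xi',a')^{ss}$ of the first step yields the desired birational map ${\cal M}_{H_f}(r,\xi,a)^{ss}\cdots\to {\cal M}_{H'_f}(r',\xi',a')^{ss}$. The exceptional locus of the composite is assembled from the exceptional loci of each wall-crossing step, all of which are proper closed substacks by Proposition~\ref{prop:codim} and Proposition~\ref{prop:birat}, so the composite is genuinely birational. The main obstacle I anticipate is the bookkeeping of $\tau$-preservation along the chain of wall-crossings; but since Proposition~\ref{prop:birat} is formulated precisely as an isomorphism between moduli stacks both parametrising objects of class ${\bf e}$, the preservation of $\tau$ is built into the construction of each factor and need not be verified separately at the level of $K(X)$.
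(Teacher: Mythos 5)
Your proposal is correct and follows essentially the same route as the paper: identify $\lambda_0=\frac{d_1-r_1(f\cdot\alpha)}{r_1(f\cdot H)}$ so that Proposition \ref{prop:n-stability} realises $\Phi_{X\to Y}^{{\cal P}^{\vee}[1]}$ as an isomorphism ${\cal M}^{\lambda_0}({\bf e})^{ss}\to{\cal M}_{H'_f}(r',\xi',a')^{ss}$, use Proposition \ref{prop:torsion-free} to identify ${\cal M}^{\lambda}({\bf e})^{ss}$ with ${\cal M}_{H_f}({\bf e})^{ss}$ for $\lambda\ll 0$, and connect the two by the finite chain of wall-crossings supplied by Proposition \ref{prop:birat}. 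The extra detail you supply (perturbing $\alpha$ off walls, the $\tau$-preservation of $R_A\circ R_A$ and of the contravariant transforms at isotropic walls) is exactly what the paper's terser proof implicitly relies on.
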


\begin{proof}
We set $\lambda_1:=\frac{d_1-(r_1 f \cdot \alpha)}{(r_1 f \cdot H)}$.
By Proposition \ref{prop:n-stability},
 $\Phi_{X \to Y}^{{\cal P}^{\vee}[1]}$ induces an isomorphism
$$
{\cal M}^{\lambda_1}(r,\xi,a)^{ss} \to  {\cal M}_{H'_f}(r',\xi',a')^{ss}.
$$
For a sufficiently small $\lambda$, Proposition \ref{prop:torsion-free}
implies
${\cal M}^{\lambda}(r,\xi,a)^{ss}={\cal M}_{H_f}(r,\xi,a)^{ss}$.
Hence
by using Proposition \ref{prop:birat},
we get a desired birational map.
\end{proof}

Thanks to Proposition \ref{prop:codim},
we can estimate the codimension of the locus where the birational map
is not defined. 
In this sense, Theorem \ref{thm:birat2} is regarded as a refinement of 
\cite[Thm. 1.1]{Br:1}.

\begin{thm}\label{thm:m=2}
Let $\pi:X \to C$ be an arbitrary elliptic surface with multiple fibers
$m_1 f_1,...,m_s f_s$.
For $(r,\xi,a)$ with $\gcd((\xi \cdot f),r)=1$,
we take a pair of integers $(r',d')$ such that
$(\xi \cdot f)r'-rd'=1$ and $0 \leq r'<r$.
If $r > r' m_i$ for all $i$, then
there is a (contravariant) equivalence
$\Psi:{\bf D}(X) \to {\bf D}(X)$ such that   
$\Phi_{X \to Y}^{{\cal P}^{\vee}[1]} \circ \Psi$ induces a birational map
\begin{equation}\label{eq:birat:m=2}
M_{H_f}(r,\xi,a) \cdots \to \Hilb_Y^l \times \Pic^0(Y)
\end{equation}
which is defined up to codimension 2.
In particular if $m_i =2$ for all $i$, then the claim holds.
\end{thm}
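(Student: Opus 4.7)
The plan is to combine the reduction-to-rank-one from Theorem \ref{thm:birat2} with the wall-crossing codimension analysis of Proposition \ref{prop:birat} and the slope inequality of Lemma \ref{lem:m_i}.

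First, I would choose a generic $\alpha \in \NS(X)_{\Bbb Q}$ (possibly after a small perturbation) so that $Y := M_H^\alpha(0,r'f,d')$ is a fine moduli space with universal family ${\cal P}$; the equation $(\xi \cdot f)r' - rd' = 1$ forces $\gcd(r',d') = 1$, so this is possible. Let $\Phi := \Phi_{X \to Y}^{{\cal P}^\vee[1]}$. For $E$ with $\tau(E) = (r,\xi,a)$, the rank of $\Phi(E)$ is the Riemann--Roch pairing $(\xi \cdot f)r' - rd' = 1$, so $\Phi$ maps ${\bf e}$ to a rank-one class on $Y$. Applying Theorem \ref{thm:birat2} with $(r_1,d_1) = (r',d')$ then produces a contravariant equivalence $\Psi$ such that $\Phi \circ \Psi$ induces a birational map $M_{H_f}(r,\xi,a) \cdots \to M_{H'_f}(1,\xi'',a'')$. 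Tensoring by a fixed line bundle on $Y$ to normalize $c_1$ identifies the target with $\Hilb_Y^l \times \Pic^0(Y)$ for the appropriate $l$.

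The key step is upgrading ``birational'' to ``defined in codimension $1$''. Writing $\lambda_0 := (d' - r'(f \cdot \alpha))/(r'(f \cdot H))$, Proposition \ref{prop:n-stability} identifies the FM image ${\cal M}^{\lambda_0}(r,\xi,a)^{ss}$ with ${\cal M}_{H'_f}(1,\xi'',a'')^{ss}$, while Proposition \ref{prop:torsion-free} identifies ${\cal M}^\lambda(r,\xi,a)^{ss}$ with ${\cal M}_{H_f}(r,\xi,a)^{ss}$ for $\lambda$ sufficiently negative. The birational map in question is therefore the composition of all wall-crossings for ${\bf e}$ in the interval $(-\infty,\lambda_0)$. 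By Proposition \ref{prop:birat} each such wall-crossing has exceptional substack of codimension $\geq 2$, with the sole possible exception being walls of type (II): those defined by a class $\tau = (0, r_i f_i, d_i)$ supported on a multiple fiber $m_i f_i$ and satisfying $r_i(f_i \cdot \xi) - rd_i = 1$. Hence everything reduces to ruling out type (II) walls in $(-\infty, \lambda_0)$, which is precisely the content of Lemma \ref{lem:m_i}(1): the hypothesis $r > r'm_i$ forces every such $\tau$ to satisfy $d_i/(r_i(f_i \cdot H)) \geq d'/(r'(f \cdot H))$, i.e., the wall it defines lies at slope $\geq \lambda_0$. Consequently no type (II) wall is crossed in our interval, and the main claim follows.

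For the ``in particular $m_i = 2$'' assertion, when the unique $r' \in [0,r)$ satisfies $r > 2r'$ the previous argument applies verbatim. In the remaining subcase $r/2 \leq r' < r$ the same strategy works once Proposition \ref{prop:Psi}(2)(b) is invoked in place of (2)(a) during the wall-crossing at $r'(\xi \cdot f) - rd' = 2$; the remark after Proposition \ref{prop:Psi} shows the rank-two moduli $M_H^\alpha(0,r'f,d')$ is projective as soon as $(\xi \cdot f)$ or $d'$ is odd, and a short parity check, using $\gcd(r,(\xi \cdot f))=1$ together with $m_i=2$, verifies this. I expect the main obstacle to be precisely this codimension control at type (II) walls: without the hypothesis $r > r'm_i$, or its substitute via Proposition \ref{prop:Psi}(2)(b), these walls would generically produce codimension-one exceptional loci and break the ``defined up to codim $2$'' claim. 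Verifying that Lemma \ref{lem:m_i}(1) pushes all such walls past $\lambda_0$, and carefully tracking the rank-two correspondence in the $m_i = 2$ boundary case, is the heart of the argument.
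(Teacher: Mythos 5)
Your treatment of the main claim (the case $r>r'm_i$ for all $i$) is essentially the paper's own argument: reduce to the chain of wall-crossings on $(-\infty,\lambda_0)$ via Proposition \ref{prop:n-stability} and Proposition \ref{prop:torsion-free}, invoke Proposition \ref{prop:birat} for the codimension estimates, and use Lemma \ref{lem:m_i} to push the dangerous walls to slope $\geq\lambda_0$. (One small remark: under $r>r'm_i$ with $s\geq 1$ you automatically have $r>2r'$, so Lemma \ref{lem:m_i}(2) also removes the type (I) pairing-two walls from the interval, and Proposition \ref{prop:Psi}(2)(b) is not needed at all in the main case.)

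The ``in particular $m_i=2$'' part of your proposal has a genuine gap. In the subcase $r<2r'$ (note $r$ is odd since $2\mid(\xi\cdot f)$ and $\gcd(r,(\xi\cdot f))=1$, so $r=2r'$ cannot occur), the obstruction is \emph{not} the type (I) walls that Proposition \ref{prop:Psi}(2)(b) addresses, but the type (II) walls: the class $\tau_i=(0,(2r'-r)f_i,\,d'-(\xi\cdot f_i))$ supported on a multiple fiber satisfies $( (2r'-r)f_i\cdot\xi)-r(d'-(\xi\cdot f_i))=r'(\xi\cdot f)-rd'=1$ and, exactly because $r<m_ir'$, lies at slope strictly below $\lambda_0$ (this is where the proof of Lemma \ref{lem:m_i}(1) breaks down for $k=-1$). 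Crossing such a wall produces a codimension-one exceptional stratum for which the paper provides no compensating equivalence, and Proposition \ref{prop:Psi}(2)(b) — which only concerns classes proportional to the full fiber $f$ — cannot repair it. Moreover your parity check goes the wrong way: with $m_i=2$ one has $(\xi\cdot f)$ even and hence $d''$ even for any pairing-two class, so the sufficient condition ``$d''$ odd or $(\xi\cdot f)$ odd'' for projectivity of $M_H^\alpha(0,r''f,d'')$ fails. The paper's actual resolution is different and simpler: since $r\geq 3$, the duality $E\mapsto E^{\vee}$ of Lemma \ref{lem:isotropic-wall}(3) gives a birational map $M_{H_f}(r,\xi,a)\dashrightarrow M_{H_f}(r,-\xi,a+(\xi\cdot K_X))$ regular outside codimension $\geq 2$, and it replaces $r'$ by $r-r'$ (because $(r-r')(-(\xi\cdot f))-r(d'-(\xi\cdot f))=1$); as $r$ is odd, one of $r'$, $r-r'$ satisfies $r>2r'$, reducing to the main case. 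You should replace your Proposition \ref{prop:Psi}(2)(b) argument by this dualization step.
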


\begin{proof}
Lemma \ref{lem:m_i}
and the proof of Theorem \ref{thm:birat2} imply the claim.

Assume that $m_i =2$ for all $i$.
Since $2 \mid (\xi \cdot f)$, we may assume that $r \geq 3$.
Then we have a birational map 
\begin{equation}
\begin{matrix}
M_{H_f}(r,\xi,a) & \cdots \to & M_{H_f}(r,-\xi,a')\\
E & \mapsto & E^{\vee},
\end{matrix}
\end{equation}
which is defined in codimension 1, where
$a'=a+(\xi \cdot K_X)$.
Since $(r-r')(-\xi \cdot f)-r((f \cdot \xi)-d')=1$,
replacing $(r,\xi,a)$ by $(r,-\xi,a')$ if necessary,
we may assume that $r>2r'$.
Hence the claim holds.
\end{proof}

\begin{NB}
\begin{cor}\label{cor:m=2}
Let $\pi:X \to C$ be an arbitrary elliptic surface with multiple fibers
$m_1 f_1,...,m_s f_s$
Assume that $(r,\xi,a)$ satisfies $\gcd((\xi \cdot f),r)=1$,
If $m_i \leq 2$ for all $i$, then the claim of Theorem \ref{thm:Pic} holds.
\end{cor}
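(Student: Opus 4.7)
The plan is to deduce the statement from Theorem \ref{thm:birat2} by choosing the Fourier-Mukai data so the transformed rank is one, and by using Lemma \ref{lem:m_i} to control the intermediate wall-crossings. I would apply Theorem \ref{thm:birat2} with $(r_1,d_1):=(r',d')$; the numerical identity $r'(\xi\cdot f)-rd'=1$ forces the transformed rank to equal one, so $\Phi_{X\to Y}^{{\cal P}^{\vee}[1]}\circ\Psi$ sends $M_{H_f}(r,\xi,a)$ birationally onto the moduli $M_{H_f'}(1,\xi',a')$ of rank-one torsion-free sheaves on $Y$; tensoring by a fixed line bundle reduces this to $\Hilb_Y^l\times\Pic^0(Y)$ for the appropriate $l$, which is the target in \eqref{eq:birat:m=2}.

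Next I would estimate the codimension of the indeterminacy locus of the birational map produced by Theorem \ref{thm:birat2}. Following its proof, the map is built by varying $\lambda$ from a chamber where ${\cal M}^\lambda({\bf e})^{ss}={\cal M}_{H_f}({\bf e})^{ss}$ (Proposition \ref{prop:torsion-free}) up to $\lambda_1=(d'-(r'f\cdot\alpha))/(r'f\cdot H)$, where Proposition \ref{prop:n-stability} identifies it with the FM target. By Proposition \ref{prop:codim} every intermediate wall contributes codimension at least two except possibly for those defined by $\tau=(0,r_if_i,d_i)$ with $r_i(f_i\cdot\xi)-rd_i=1$, or by $\tau=(0,r''f,d'')$ with $r''(\xi\cdot f)-rd''=2$ (cases (II) and (I), respectively, in the proof of Proposition \ref{prop:birat}). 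Lemma \ref{lem:m_i}(1) shows that the hypothesis $r>r'm_i$ forces every wall of the first type to lie at slope $\geq\lambda_1$, hence not crossed in the relevant range; and Lemma \ref{lem:m_i}(2) (applicable since $m_i\geq 2$, and therefore $r>r'm_i\geq 2r'$) rules out the second type in the same way. No codimension-one wall-crossing thus occurs below $\lambda_1$, so \eqref{eq:birat:m=2} is defined off a codimension-two subset.

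For the case $m_i=2$ for all $i$, I would first note that each general fibre $f$ is numerically $2f_i$, so $2\mid(\xi\cdot f)$; combined with $\gcd(r,(\xi\cdot f))=1$, this forces $r$ odd, and in particular $r\geq 3$. If the inverse $r'$ of $(\xi\cdot f)$ modulo $r$ chosen in $[0,r-1]$ already satisfies $r'<r/2$, then $r>2r'=r'm_i$ and the main part applies directly. If instead $r'\geq r/2$, I would first apply the dualization $E\mapsto E^{\vee}$, which for $r\geq 3$ is an isomorphism on the open locus of $\mu$-stable locally free sheaves and so yields a birational map $M_{H_f}(r,\xi,a)\cdots\to M_{H_f}(r,-\xi,a+(\xi\cdot K_X))$ defined in codimension one; on the target, the relevant inverse of $-(\xi\cdot f)$ modulo $r$ is $r-r'<r/2$, which brings us back to the good case. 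The main obstacle, as I see it, is verifying that Lemma \ref{lem:m_i} really exhausts the codimension-one wall types listed in Proposition \ref{prop:codim} and correctly matches the orientation of slopes as one ascends from the torsion-free chamber to $\lambda_1$; everything else is a direct application of the machinery assembled in Section \ref{sect:another}.
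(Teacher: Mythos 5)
Your argument is correct and coincides with the paper's: the reduction to $r>2r'$ via $E\mapsto E^{\vee}$ (using $2\mid(\xi\cdot f)$, hence $r\geq 3$ odd, so one of $r'$, $r-r'$ is $<r/2$) and the elimination of the two codimension-one wall types by Lemma \ref{lem:m_i} are exactly how Theorem \ref{thm:m=2} is proved in the paper. The only step you leave implicit is the passage from the codimension-two birational map \eqref{eq:birat:m=2} to the Picard-group exact sequence, which is supplied by the compatibility \eqref{eq:comm} of $\theta_{\bf e}$ with Fourier--Mukai transforms together with the rank-one case \cite[Cor. A.4]{Y:Enriques}, exactly as in the proof of Theorem \ref{thm:Pic}.
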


\begin{proof}
We may assume that $r>1$.
Assume that $m_i=2$ for all $i$. Then $2 \mid (\xi \cdot f)$ and $r \geq 3$
Then we have a birational map 
\begin{equation}
\begin{matrix}
M_{H_f}(r,\xi,a) & \cdots \to & M_{H_f}(r,-\xi,a')\\
E & \mapsto & E^{\vee},
\end{matrix}
\end{equation}
which is defined in codimension 1, where
$a'=a+(\xi \cdot K_X)$.
Since $(r-r')(-\xi \cdot f)-r((f \cdot \xi)-d')=1$,
replacing $(r,\xi,a)$ by $(r,-\xi,a')$ if necessary,
we may assume that $r>2r'$.
Then the claim follows from Proposition \ref{prop:m=2}.
\end{proof}
\end{NB}

\begin{NB}
For example, if $d \equiv -1 \mod r$ and $r \geq \max_i \{m_i \}$, then
$\codim_{{\cal M}^{\lambda_i}({\bf e})^ss} {\cal Z}_i \geq 2$, 
where $\lambda_1 \ll 0$ and 
$\lambda_2:=
\frac{-r-(a f \cdot \alpha)}{(a f \cdot H)}$.
\end{NB}

\subsection{A relation with \cite{Br:1}.}

Let us explain a more precise relation with \cite{Br:1}.
Let $Y:=M_H^\alpha(0,r' f,d')$ be a fine moduli space and
${\cal P}$ a universal family in subsection \ref{subsect:relFM}.
Then
$\Phi$ and $\Psi$ in \cite{Br:1} correspond to $\Phi_{Y \to X}^{{\cal P}}$
and $\Phi_{X \to Y}^{{\cal Q}}$ respectively.
%
%Then $(X,J(a,b),{\cal P})$ in \cite{Br:1} corresponds to $(Y,X,{\cal Q})$..
\begin{NB}
We note that the birational type of $M_{H'}^{\alpha'}(0,r' f,-p)$ 
is independent of $\alpha'$.
Indeed there is an equivalence $\Lambda:{\bf D}(Y) \to {\bf D}(Y)$
which induces an isomorphism
$M_{H'}^{\alpha'}(0,r' f,-p) \to M_{H'}(0,r' f,-p)$.
We set 
${\cal Q}':=\Lambda({\cal Q})$ and
${\cal P}':=({\cal Q}')^{\vee}[1]$.
Then there is $\alpha''$ such that 
${\cal P}'_{|X \times \{ y \}}$ is $\alpha''$-twisted stable
for all $y \in Y$. 
\end{NB}

For a coherent sheaf $E$, let
$d(E):=(c_1(E) \cdot f)$ be the relative degree of $E$.
By \cite[Thm. 5.3]{Br:1}, we get the following relations:
\begin{equation}
\begin{split}
\begin{pmatrix}
\rk(\Phi_{X \to Y}^{{\cal P}^{\vee}}(E))\\
d(\Phi_{X \to Y}^{{\cal P}^{\vee}}(E))
\end{pmatrix}
=&
\begin{pmatrix}
d' & -r' \\
-q & p
\end{pmatrix}
\begin{pmatrix}
\rk E\\
d(E)
\end{pmatrix},\;
d' p-r' q=1,\\
\begin{pmatrix}
\rk(\Phi_{Y \to X}^{{\cal P}}(F))\\
d(\Phi_{Y \to X}^{{\cal P}}(F))
\end{pmatrix}
=&
\begin{pmatrix}
p & r' \\
q & d'
\end{pmatrix}
\begin{pmatrix}
\rk F \\
d(F)
\end{pmatrix}.
\end{split}
\end{equation}
In section \ref{subsect:n-stability},
we introduced $\lambda$-semi-stability for $F \in \Coh(Y)$ and studied the stability of
$\Phi_{Y \to X}^{{\cal P}}(F)=\Phi_{Y \to X}^{{\cal Q}^{\vee}[1]}(F)$ under the condition
$p \rk F+r' d(F)>0$. In particular, we studied the $\Phi$-$\WIT_0$ property in \cite{Br:1}.

\begin{NB}
If $\tau:=(0,r_i f_i, d_i)$ defines a codimension 1 wall for 
${\bf e}=(1,0,\chi)$, then
$d_i=-1$ and $\frac{-1}{r_i} \leq \frac{-r}{r' m_i}$.

If $\tau=(0,r'' f,d'')$ defines a codimension 1 wall for
${\bf e}$, then $d''=-2$ and $\frac{-2}{r''} \leq \frac{-r}{r'}$.
Since $r \geq a$, $2/r' \geq 1$. Thus $r'=1,2$.
Furthermore we assume that $2r' \leq r$. Then we have $r'=2$
and $2r'=r$. Thus $r=2$ by $\gcd(r,r')=1$. 
\end{NB}

\begin{ex}\label{ex:multiple}
Assume that all fibers are irreducible. 
Then we may assume that $\alpha=0$.
We set ${\bf e}=(r,\xi,a)$ and set $d:=(\xi \cdot f)$.
Let $m_1 f_1',...,m_s f_s'$ be the multiple fibers of $\pi'$.
For $(1,0,e-l)$, $(0,r_i f_i',d_i)$ defines a wall if and only if
$-l \leq d_i$. 
We set $p:=r$ and $q:=d$.
We note that $\tau({\cal Q}_{|\{ x \} \times Y})=(0,r' f,-r)$.
Then we can easily classify walls: 
\begin{enumerate}
\item 
$(0,r_i f_i, d_i)$ $(\gcd(r_i,d_i)=1)$ defines a wall if and only if
$-l \leq d_i$ and $\frac{d_i}{r_i(f_i \cdot \xi)} \leq \frac{-r}{r'(f \cdot \xi)}$.
\item
$(0,r'' f, d'')$ ($\gcd(r'',d'')=1$) defines a wall if and only if
$-l \leq d''$ and $\frac{d''}{r''(f \cdot \xi)} \leq \frac{-r}{r'(f \cdot \xi)}$.
\end{enumerate}
In particular $(r_i,d_i)$ satisfies $l \geq -d_i>0$ and
$0<r_i \leq \frac{lr' m_i}{r}$.

We note that $r_i f_i=\frac{r_i}{m_i}f$. 
If $-l>-\frac{r}{m_i r'}$ for all $i$,
then all $F \in \Hilb_Y^l \times \Pic^0(Y)$ are $-\frac{r}{r' (f \cdot H)}$-stable.
Hence $\Phi_{Y \to X}^{{\cal P}}=\Phi_{Y \to X}^{{\cal Q}^{\vee}[1]}$ induces an isomorphism
\begin{equation}\label{eq:isom}
\Hilb_Y^l \times \Pic^0(Y) \cong M_{H_f}(r,\xi,a).
\end{equation}
In particular if $r>l r' m_i$ for $1 \leq i \leq s$, then \eqref{eq:isom} holds, where
$(r',d')$ satisfies $rd'-r' d=1$ and
 $0<r'<r$ (cf. \cite[Lem. 7.4]{Br:1}). 
In particular if $d \equiv 1 \mod r$, then we may assume that $r'=1$
and the condition is $r>l m_i$ for all $i$.  
\end{ex}

\begin{NB}
Assume that all fibers are integral curves.
Then we may assume that $\alpha=0$.
For $\Hilb_X^l \times \Pic^0(X)$,
the candidates of of walls in $\lambda \leq -2$
are defined by $(0,rf,-k)$ such that
$-l \leq -\frac{k}{r} \leq -2$ and $l \geq k$. Thus 
$lr \geq k \geq 2r$ and $l \geq k$.
In particular $2r \leq l$.
For $k=2$, we have $r=1$.
In this case, $M^{\lambda_-}(1,0,e-l) \cong M^{\lambda_+}(1,0,e-l)$,
where $\lambda_\pm$ are adjacent cahmber separated by the wall defined by $(0,f,-2)$.  
\end{NB}

\begin{ex}\label{ex:reducible}
Assume that there is a reducible fiber. 
We set $Y:=M_H^\alpha(0,f,-1)$ (i.e., $(r',d')=(1,-1)$). 
Since $Y$ is fine, we have a Fourier-Mukai
transform 
\begin{equation}\label{eq:reducible-FM}
\Phi_{X \to Y}^{{\cal P}^{\vee}}:{\bf D}(X) \to {\bf D}(Y)
\end{equation}
associated to a universal family ${\cal P}$.
By the existence of reducible fibers, the equivalence \eqref{eq:reducible-FM} depends on 
the choice of $\alpha$. 
For example, we assume that $\alpha=0$.
Let $D_1$ be a smooth rational curve in a fiber $\pi^{-1}(c)$. 
%We set $D_2:=\pi^{-1}(c)-D_1$.
For ${\bf e}=\tau({\cal O}_X(D_1))$,
$\tau({\cal O}_{D_1}(D_1))=(0,D_1,-1)$ defines a wall.
Since $\lambda_1:=\frac{-1}{(D_1 \cdot H)}<\frac{-1}{(f \cdot H)}$,
and ${\cal O}_X(D_1)$ is not $\lambda_1$-stable,
$\Phi_{X \to Y}^{{\cal P}^{\vee}}({\cal O}_X(D_1))[1] \not \in \Coh(Y)$.
More generally for any locally free sheaf $E \in {\cal M}_{H_f}(r,\xi,a)$ with
$r'(\xi \cdot f)-rd'>0$, 
since $\tau(E(nD_1)_{|D_1})=(0,rD_1,(\xi \cdot D_1)+(1-2n)r)$,
$\Phi_{X \to Y}^{{\cal P}^{\vee}}(E(nD_1))[1] \not \in \Coh(Y)$ for $n \gg 0$,
where $Y:=M_H(0,r' f,d')$. On the other hand if $\alpha=-D_1$, then
we can easily show that 
$\Phi_{X \to Y}^{{\cal P}^{\vee}}({\cal O}_X(D_1))[1] \in \Coh(Y)$ 
(see Remark \ref{rem:relation}).
Thus the choice of $\alpha$ is important.
\end{ex}

\begin{NB}
Codimension 1 wall is defined by
\begin{equation}
0 \to E_1 \to E \to {\Bbb C}_x[-1] \to 0.
\end{equation}
\end{NB}

\begin{rem}\label{rem:relation}
Let $D (\subset Y)$ be a genus 1 curve in a fiber. 
Since $\chi({\cal O}_{D_1})=-(D_1^2)/2>0$ for any non-trivial quotient
${\cal O}_D \to {\cal O}_{D_1}$ of dimension 1, 
${\cal O}_D$ is a stable sheaf
of $\chi({\cal O}_D)=0$. If $X=M_{H'}(0,r' f,-r)$ and $r>r'$,
then we see that 
a general $I_Z \in \Hilb_Y^l$ is $\frac{-r}{r' (f \cdot H')}$-stable and
$\Phi_{Y \to X}^{{\cal P}}(I_Z)$ is a stable sheaf.
This is \cite[Lem. 7.3]{Br:1}.
\end{rem}

\begin{NB}
\begin{rem}\label{rem:relation2}
We shall explain a relation with Bridgeland paper \cite{Br:1}.
We assume that $\alpha' \in \NS(Y)_{\Bbb Q}$ is trivial.
%We first note that ${\cal Q}:={\cal P}^{\vee}[1]$ is a universal family of stable
%sheaves on $X$. We also note that $X$ and ${\cal P}$ can be regarded as a pair of 
%a moduli space
%of (twisted)-stable sheaves on $Y$  and a universal family.
%Then $(X,{\cal P})$ in this paper corresponds to $(Y,{\cal Q})$ in \cite{Br:1}.
Let $D (\subset X)$ be a genus 1 curve in a fiber. 
Since $\chi({\cal O}_{D_1})=-(D_1^2)/2>0$ for any non-trivial quotient
${\cal O}_D \to {\cal O}_{D_1}$ of dimension 1, 
${\cal O}_D$ is a stable sheaf
of $\chi({\cal O}_D)=0$. If $Y=M_H(0,r_1 f,d_1)$ and $-1>\frac{d_1}{r_1}$,
then we see that 
a general $I_Z \in \Hilb_X^l$ is $\frac{d_1}{r_1 (f \cdot H)}$-stable and
$\Phi_{X \to Y}^{{\cal P}^{\vee}}(I_Z)$ is a stable sheaf.
This is \cite[Lem. 7.3]{Br:1}.
\end{rem}
\end{NB}

\section{Application}\label{sect:application}

\subsection{Picard groups.}
For ${\bf e}=(r,\xi,a) \in {\Bbb Z} \oplus \NS(X) \oplus {\Bbb Z}$ with
$\gcd(r,(\xi \cdot f))=1$,
we set 
$$
K(X)_{\bf e}:=\{\alpha \in K(X) \mid \chi(\alpha,{\bf e})=0 \},
$$
where $\chi(\alpha,{\bf e}):=\chi(\alpha,E)$ $(\tau(E)={\bf e})$.
Let ${\cal E}$ be a universal family on $M_{H_f}({\bf e}) \times X$.
We have a homomorphism
\begin{equation}
\begin{matrix}
\theta_{\bf e}:& K(X)_{\bf e} & \to & \Pic(M_{H_f}({\bf e}))\\ 
& \alpha & \mapsto & \det p_{!}({\cal E} \otimes p_X^*(\alpha^{\vee}))
\end{matrix}
\end{equation}
%
%where ${\cal E}$ is a universal family. We note that
%$\theta_{\bf e}$ can be defined even if there is no universal family
%by using a family on a quot-scheme.
which is independent of the choice of ${\cal E}$.
We define a homomorphism $D^*$ by 
\begin{equation}
\begin{matrix}
D^*:& K(X) & \to & K(X)\\
& \alpha & \mapsto & \alpha^{\vee}(-K_X).
\end{matrix}
\end{equation} 
Then the Serre duality implies
$\chi(\alpha,{\bf e})=\chi({\bf e},\alpha(K_X))=\chi(D_X(\alpha(K_X)),D_X({\bf e}))$, and hence 
$D^*$ induces an isomorphism
$K(X)_{{\bf e}} \to K(X)_{D_X({\bf e})}$, where
$D_X({\bf e}):=(r,-\xi,a)$. 

For the Fouruer-Mukai transform $\Phi$ in Theorem \ref{thm:birat},
we have a commutative diagram
\begin{equation}\label{eq:comm}
\begin{CD}
K(X)_{\bf e} @>{\Phi_*}>> K(Y)_{{\bf e}'}\\
@V{\theta_{\bf e}}VV @VV{\theta_{{\bf e}'}}V\\
\Pic(M_{H_f}({\bf e})) @= \Pic(M_{H'_f}({\bf e}'))
\end{CD}
\end{equation}
where $\Phi_*$ is an isomorphism and ${\bf e}'=(1,0,a')$. 
Indeed for $D_X$, Grothendieck-Serre duality implies 
\begin{equation}\label{eq:D}
\theta_{{\bf e}}(\alpha)=\det p_! ({\cal E}^{\vee} \otimes p_X^*(\alpha(K_X)))^{\vee}
=\theta_{D_X({\bf e})}(D^*(\alpha))^{\vee}.
\end{equation}
We also have 
\begin{equation}
\theta_{{\bf e}}(\alpha)=\theta_{{\bf e}'}(\Phi_{X \to Y}^{\bf P}(\alpha))
\end{equation}
%for an integral functor $\Phi=\Phi_{X \to Y}^{\bf P}$.
for ${\bf e}'=\Phi_{X \to Y}^{\bf P}({\bf e})$.
Since $\Phi$ is a composite of these functors,
we have \eqref{eq:comm}.

\begin{NB}
Let $q:S \times X \times Y \to S$ be the projection, 
$q_X$ and $q_Y$ are projections from $S \times X \to Y$ to $X$ and $Y$ respectively. 
For ${\cal E}' \in {\bf D}(S \times Y)$,
\begin{equation}
\begin{split}
\det p_{!}(\Phi_{Y \to X}^{{\bf P}^{\vee}}({\cal E}') \otimes p_X^*(\alpha^{\vee}))=&
\det p_{!}({\bf R}q_{S \times X*}({\bf P}^{\vee} \otimes {\cal E}') \otimes p_X^*(\alpha^{\vee}))\\
=& \det q_{!}({\bf P}^{\vee} \otimes {\cal E}' \otimes q_X^*(\alpha^{\vee}))\\
=& \det q_{!}({\cal E}' \otimes q_{X \times Y}^*(({\bf P} \otimes p_X^*(\alpha))^{\vee}))\\
=& \det p'_{!}({\cal E}' \otimes {\bf R}p_{Y*}({\bf P} \otimes p_X^*(\alpha(K_X)))^{\vee})\\
=& \det p'_{!}({\cal E}' \otimes \Phi_{X \to Y}^{{\bf P}}(\alpha(K_X))^{\vee})
\end{split}
\end{equation}
\end{NB}

Now we come to our second main result.
\begin{thm}\label{thm:Pic}
Assume that the multiplicity of all multiple fibers are two. 
We set ${\bf e}:=(r,\xi,a)$ with $\gcd(r,(\xi \cdot f))=1$.
Assume that $\dim M_{H_f}({\bf e})\geq 4+g$ and $k={\Bbb C}$.
Then we have an exact sequence
$$
0 \longrightarrow \ker \tau \longrightarrow
 K(X)_{\bf e} \overset{\theta_{\bf e}}{\longrightarrow}
 \Pic(M_{H_f}({\bf e}))/\Pic(\Alb(M_{H_f}({\bf e}))) \longrightarrow 0.
$$
\end{thm}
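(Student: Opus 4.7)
The plan is to use Theorem~\ref{thm:birat} to transport the problem to the rank-one case, where the target is (a product involving) a Hilbert scheme of points and the Picard group is already known. The key input is that the exceptional locus of the birational map in Theorem~\ref{thm:birat} has codimension at least $2$ on both sides, which preserves Picard groups and Albanese varieties of smooth projective varieties.

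Concretely, I would first apply Theorem~\ref{thm:birat} to obtain a Fourier--Mukai transform $\Phi:{\bf D}(X)\to{\bf D}(Y)$ and an isomorphism $M_{H_f}({\bf e})\setminus Z \to M_{H'_f}(1,0,a')\setminus Z'$ with $\codim Z, \codim Z'\ge 2$. Both sides are smooth projective (Proposition~\ref{prop:smooth} together with its positive-rank counterpart obtained via Proposition~\ref{prop:n-stability}), so removing these codimension-$\ge 2$ closed subsets does not alter Picard groups or Albanese varieties. The commutative diagram~\eqref{eq:comm} then shows that $\theta_{\bf e}$ corresponds to $\theta_{{\bf e}'}$ for ${\bf e}'=(1,0,a')$ under the isomorphism $\Phi_*:K(X)_{\bf e}\to K(Y)_{{\bf e}'}$. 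It therefore suffices to prove the exact sequence for $M_{H'_f}(1,0,a')$.

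Next, I would identify $M_{H'_f}(1,0,a')$ with $\Hilb_Y^n\times \Pic^0(Y)$ (for some $n\ge 2$, which is guaranteed by $\dim M_{H_f}({\bf e})\ge 4+g$ together with Lemma~\ref{lem:Y-multiple}) via the standard decomposition $E\mapsto (E\otimes\det(E)^{-1},\det(E))$ of a rank-one torsion-free sheaf. On this target, $\Pic(\Hilb_Y^n)\cong \Pic(Y)\oplus{\Bbb Z}\delta$ by Fogarty--Beauville (with $2\delta$ the exceptional divisor of the Hilbert--Chow morphism), $\Alb(\Hilb_Y^n\times \Pic^0(Y))\cong \Alb(Y)\times \Pic^0(Y)$, and $\theta_{{\bf e}'}$ can be computed on explicit classes $[{\cal O}_y]-[{\cal O}_{y_0}]$, $[{\cal O}_C]-[{\cal O}_C(-y_0)]$, and $[L]-[{\cal O}_Y]$ with $L\in\Pic^0(Y)$. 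Modulo $\Pic(\Alb)$, these generate the quotient, with kernel exactly the elements of $K(Y)_{{\bf e}'}$ lying in $\ker \tau$. This computation was carried out in the irreducible-fiber case in \cite{Y:twist2} and \cite{Y:Enriques}, and it goes through here once the identification above is fixed.

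The main obstacle is securing the codimension bound $\codim Z,\codim Z'\ge 2$. Without the assumption that every multiple fiber has multiplicity exactly $2$, Proposition~\ref{prop:codim}(2) allows walls with $d({\bf f})=1$ coming from a multiple fiber (case (II) in the proof of Proposition~\ref{prop:birat}), producing divisorial wall-crossings that would destroy the Picard-group comparison. Lemma~\ref{lem:m_i} is the precise reason this does not happen when all $m_i=2$: since $(\xi\cdot f)$ is then even we may reduce (by dualizing) to $r>2r'$, so every relevant wall contributes codimension at least $2$. Once this codimension bound is in hand, the remainder is the classical Fogarty--Beauville computation on the Hilbert scheme transported back via \eqref{eq:comm}.
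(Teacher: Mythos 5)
Your proposal follows essentially the same route as the paper: reduce to the rank-one case via the codimension-$\geq 2$ birational map of Theorem \ref{thm:m=2} together with the compatibility diagram \eqref{eq:comm}, then invoke the known rank-one computation (the paper cites \cite[Cor. A.4]{Y:Enriques}, which is the Fogarty--Beauville-type calculation you describe). Your discussion of why the multiplicity-two hypothesis secures the codimension bound, via Lemma \ref{lem:m_i} and dualizing to reach $r>2r'$, matches the argument in the proof of Theorem \ref{thm:m=2}.
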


\begin{proof}
If $r=1$, the the claim is \cite[Cor. A.4]{Y:Enriques}.
By using \eqref{eq:comm} and Theorem \ref{thm:m=2}, we get the claim.
\end{proof}

\begin{rem}\label{rem:m=2}
In the notation of Theorem \ref{thm:m=2},
if $r \geq r' m_i$ for all $i$ or 
$r \geq (r-r')m_i$ for all $i$, then we also see that the claim of Theorem \ref{thm:Pic} holds.
In particular if $d \equiv \pm 1 \mod r$ and $r \geq m_i$ for all $i$, then
the claim of Theorem \ref{thm:Pic} holds.
\end{rem}

\begin{rem}
We can also see that
the birational map $M_{H_f}({\bf e}) \cdots \to \Hilb_Y^l \times \Pic^0(Y)$
in Theorem \ref{thm:birat2} extends to an open subscheme
whose complement is at least of codimension 2. 
Hence we have
$\Pic(M_{H_f}({\bf e})) \cong \Pic(\Hilb_Y^l \times \Pic^0(Y))$
as abstract groups although we do not know whether 
the statement of Theorem \ref{thm:Pic} holds.
\end{rem}

We shall study line bundles on $M_{H_f}({\bf e})$.
We define a non-negative integer $l$ by $\dim M_{H_f}({\bf e})=2l+g$ and assume that $l \geq 2$.
% associated to the fiber of $\pi$.

\begin{lem}\label{lem:fiber-indep}
For $c \in C$,
$\det {\cal E}_{|M_{H_f}({\bf e}) \times \{ x \}}$ is independent of $x \in \pi^{-1}(c)$.
\end{lem}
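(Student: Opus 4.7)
My plan is to realize the assignment $x \mapsto \det{\cal E}_{|M_{H_f}({\bf e}) \times \{x\}}$ as a morphism
\begin{equation*}
\phi: X \to \Pic(M_{H_f}({\bf e})), \quad x \mapsto \det{\cal E}_{|M_{H_f}({\bf e}) \times \{x\}},
\end{equation*}
(for any choice of universal family ${\cal E}$) and show that $\phi$ factors through $\pi: X \to C$; the conclusion will then be immediate since $\phi$ is automatically constant on each fiber $\pi^{-1}(c)$. The restriction ${\cal E}_{|M_{H_f}({\bf e}) \times \{x\}}$ is a perfect complex on $M_{H_f}({\bf e})$ of generic rank $r$, so its Knudsen-Mumford determinant is a well-defined line bundle, and the construction does assemble into a morphism to the Picard scheme.

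Since $X$ is connected, $\phi$ takes values in a single connected component of $\Pic(M_{H_f}({\bf e}))$, which is a torsor over the abelian variety $\Pic^0(M_{H_f}({\bf e}))$. Fixing any $x_0 \in X$, the twisted morphism $\bar\phi(x) := \phi(x) \otimes \phi(x_0)^{-1}$ sends $X$ into $\Pic^0(M_{H_f}({\bf e}))$, and hence by the universal property of the Albanese factors as
\begin{equation*}
\bar\phi:\; X \xrightarrow{\;\alpha\;} \Alb(X) \xrightarrow{\;\gamma\;} \Pic^0(M_{H_f}({\bf e})).
\end{equation*}

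The crucial step is to show that the Albanese morphism $\alpha$ itself factors through $\pi$. The standing hypothesis $R^1\pi_*{\cal O}_X \not\cong {\cal O}_C$ gives $q(X) = g$, as recorded in the introduction; hence the pullback $\pi^*: H^0(C,\Omega_C^1) \hookrightarrow H^0(X,\Omega_X^1)$, which is always injective, is an isomorphism of $g$-dimensional spaces. Thus every holomorphic $1$-form on $X$ is pulled back from $C$, the differential $d\alpha$ annihilates vertical tangent vectors, and $\alpha$ is constant on each connected fiber of $\pi$; equivalently $\alpha = \beta \circ \pi$ for some $\beta: C \to \Alb(X)$. Combining, $\bar\phi$ and hence $\phi$ depend only on $\pi(x)$, so $\phi$ is constant on $\pi^{-1}(c)$.

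The principal obstacle I anticipate is the clean justification of the Albanese factorization through $\pi$ in the generality needed (including at singular or multiple fibers). A purely component-wise alternative avoids this: every irreducible component of $\pi^{-1}(c)$ which is rational (possibly singular) admits no non-constant morphism to an abelian variety, so $\bar\phi$ is constant on each such component; the only remaining case is that of a (possibly non-reduced) smooth elliptic component, which is handled by the Albanese argument above. Connectedness of $\pi^{-1}(c)$ then glues the partial constancies together to yield the lemma.
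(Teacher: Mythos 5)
Your proof is correct, but it runs in the opposite direction from the paper's. You view $\det {\cal E}$ as a family of line bundles on $M_{H_f}({\bf e})$ parameterized by $X$, obtain a classifying morphism $X \to \Pic(M_{H_f}({\bf e}))$, and then argue that it contracts the fibers of $\pi$ via the Albanese of $X$. The paper instead views $\det {\cal E}$ as a family of line bundles on $X$ parameterized by $M_{H_f}({\bf e})$: all members have the same N\'eron--Severi class, so they differ by elements of $\Pic^0(X)=\pi^*(\Pic^0(C))$, and the seesaw decomposition reads $\det {\cal E}\cong(\varphi\times 1_X)^*({\bf L}')\otimes(Q\boxtimes L)$ with $\varphi:M_{H_f}({\bf e})\to\Pic^0(C)$ and ${\bf L}'$ pulled back from the Poincar\'e bundle on $\Pic^0(C)\times C$; restricting to $M_{H_f}({\bf e})\times\{x\}$ then visibly depends only on $\pi(x)$. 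Both arguments hinge on the same input --- the standing hypothesis $R^1\pi_*{\cal O}_X\not\cong{\cal O}_C$ forces $q(X)=g$, equivalently $\Pic^0(X)=\pi^*\Pic^0(C)$, equivalently $\Alb(X)\to\Jac(C)$ is an isogeny --- so the two proofs are essentially dual. The paper's version is shorter because the factorization through $C$ is built into ${\bf L}'$ from the outset, whereas yours must separately establish that the Albanese map contracts the (possibly singular or non-reduced) fibers; on that point, your differential argument is characteristic-zero and awkward on non-reduced fibers, and the component-wise fallback does not by itself dispose of elliptic components (an elliptic curve can map non-constantly to an abelian variety). The clean fix is to observe that $\alpha$ followed by the isogeny $\Alb(X)\to\Jac(C)$ equals the Abel--Jacobi map composed with $\pi$, so $\alpha$ sends each connected fiber into a single coset of the finite kernel, hence to a point.
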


\begin{proof}
Let ${\bf L}$ be the universal line bundle on $\Pic^0(C) \times C$.
We set ${\bf L}':=(1_{\Pic^0(C)} \times \pi)^*({\bf L})$.
Since $\Pic^0(X)=\pi^*(\Pic^0(C))$,
there is a morphism $\varphi:M_{H_f}({\bf e}) \to \Pic^0(C)$
such that  
$\det {\cal E} \cong (\varphi \times 1_X)^*({\bf L}') \otimes 
(Q \boxtimes L)$,
where $Q \in \Pic(M_{H_f}({\bf e}))$ and $L \in \Pic(X)$.
Then 
\begin{equation}
\det {\cal E}_{|M_{H_f}({\bf e}) \times \{ x \}} \cong
 \varphi^*({\bf L}_{|\Pic^0(C) \times \{ c \}})
\otimes Q,
\end{equation}
which implies the claim.
\end{proof}

\begin{NB}
For $E \in M_{H_f}(r,\xi,a)$, 
there is a smooth fiber $f$ such that $E_{|f}$ is a stable vector bundle.
\end{NB}

\begin{defn}
For an effective divisor $\delta \in \Pic(C)$,
we set ${\cal L}_{\bf e}(\delta):=\theta_{\bf e}(F)$, 
where $\tau(F)=(0,r \pi^{-1}(\delta),(\pi^{-1}(\delta) \cdot \xi))$.
\end{defn}

%Let $F$ be a stable 1-dimensional sheaf with $\tau(F)=(0,rf,(\xi \cdot f))$.
By Lemma \ref{lem:fiber-indep},
${\cal L}_{\bf e}(\delta)$ is independent of the choice of $F$.
%$\theta_{\bf e}(F)$ is a line bundle which depends only on the linear
%equivalence class $rf$.

\begin{prop}[{cf. O'Grady \cite{O}}]
\begin{enumerate}
\item[(1)]
Let $\delta$ be an effective divisor on $C$. Then 
${\cal L}_{\bf e}(m \delta)$ $(m \gg 0)$
is base point free and defines a morphism
$M_{H_f}({\bf e}) \to S^l C$.
\item[(2)]
If the Kodaira dimension of $X$ is 1, then
$K_{M_{H_f}({\bf e})}^{\otimes m}$
defines a morphism $M_{H_f}({\bf e}) \to S^l C$.
\end{enumerate}
\end{prop}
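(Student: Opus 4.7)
The plan is to transfer both statements to the corresponding facts on $\Hilb_Y^l \times \Pic^0(Y)$ via Theorem \ref{thm:m=2}, where they become classical results of O'Grady \cite{O}. By Theorem \ref{thm:m=2} there is a contravariant Fourier--Mukai transform $\Phi:{\bf D}(X) \to {\bf D}(Y)$ inducing an isomorphism between $M_{H_f}({\bf e}) \setminus Z$ and $(\Hilb_Y^l \times \Pic^0(Y)) \setminus Z'$ with $Z, Z'$ of codimension at least $2$. Since both moduli spaces are smooth projective, line bundles and their global sections extend across $Z, Z'$ by Hartogs' theorem; hence base-point-freeness of a line bundle and the morphism associated to a base-point-free linear system are preserved. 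It therefore suffices to establish the analogues of (1) and (2) on $\Hilb_Y^l \times \Pic^0(Y)$.

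Next I would identify the image of $\mathcal{L}_{\bf e}(\delta)$ under $\Phi$. By \eqref{eq:comm} and \eqref{eq:D}, $\theta_{\bf e}(F) = \theta_{(1,0,a')}(\Phi_*(F))$ up to the $D^*$-twists absorbed into the choice of $\Phi$. Using Lemma \ref{lem:Y-multiple} (matching of the multiple fibers of $\pi$ and $\pi'$) together with the relative Fourier--Mukai calculations of subsection \ref{subsect:relFM}, a sheaf $F$ supported on $\pi^{-1}(\delta)$ with $\tau(F) = (0, r\pi^{-1}(\delta), (\pi^{-1}(\delta) \cdot \xi))$ is transformed to an object whose K-theory class on $Y$ is supported on ${\pi'}^{-1}(\delta)$. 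Consequently the corresponding line bundle on $\Hilb_Y^l \times \Pic^0(Y)$ is, up to a pullback from $\Pic^0(Y)$, the Hilbert--Chow pullback of an effective divisor on $S^l Y$ supported on the preimage of $\delta$ under the composition $S^l Y \to S^l C$; equivalently it is pulled back from the divisor $S^{l-1}C + \delta$ on $S^l C$.

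For (1), for $m \gg 0$ the divisor $m\delta$ is base point free on $C$, so the corresponding divisor on $S^l C$ is base point free and defines the natural morphism to $S^l C$. Pulling back through $\Hilb_Y^l \times \Pic^0(Y) \to \Hilb_Y^l \to S^l Y \to S^l C$ and transferring via $\Phi$ gives the required morphism $M_{H_f}({\bf e}) \to S^l C$. For (2), when $X$ has Kodaira dimension $1$ the same is true of $Y$ via derived equivalence, so by the canonical bundle formula $K_Y$ is supported on fibers; since $\Pic^0(Y)$ is abelian, $K_{\Hilb_Y^l \times \Pic^0(Y)}$ is the pullback of $K_{\Hilb_Y^l}$, which in turn is the Hilbert--Chow pullback of $\Sym^l(K_Y)$, a divisor on $S^l Y$ supported on the preimage of a finite subset of $C$. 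For $m \gg 0$ the linear system $|mK|$ therefore factors through a morphism to $S^l C$.

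The main obstacle will be the bookkeeping of $\Phi_*$ on classes involving multiple fibers, but since only identifications up to $\Pic^0(Y)$-factors are needed (such factors do not affect base-point-freeness or the associated morphism to $S^l C$), this reduces to verifying the support of $\Phi_*(F)$ rather than a full Chern-character calculation. The contravariance of $\Phi$ combined with the possible presence of multiple fibers is handled by Lemma \ref{lem:Y-multiple}, which ensures the multiplicities of corresponding fibers of $\pi$ and $\pi'$ agree, so supports of fiber sheaves are transported as claimed.
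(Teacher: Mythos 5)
Your central transfer step contains a genuine gap. You claim that because the Fourier--Mukai transform of Theorem \ref{thm:m=2} gives an isomorphism off closed subsets $Z,Z'$ of codimension $\geq 2$, ``base-point-freeness of a line bundle and the morphism associated to a base-point-free linear system are preserved.'' What Hartogs gives you is only an identification of Picard groups and of spaces of global sections; it does \emph{not} preserve base-point-freeness. A section of the line bundle on $\Hilb_Y^l \times \Pic^0(Y)$ that is nowhere zero near a point of $Z'$ corresponds to a section on $M_{H_f}({\bf e})$ whose nonvanishing you only control on $M_{H_f}({\bf e})\setminus Z$; the base locus of the transported linear system could a priori be a nonempty subset of $Z$. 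This is exactly the distinction between the movable and nef cones: an isomorphism in codimension one identifies movable classes but can fail to identify nef or semiample ones (flops are the standard counterexample). So your argument establishes at best that $\mathcal{L}_{\bf e}(m\delta)$ is movable and that the associated rational map agrees with the map to $S^lC$ on a dense open set, not that it is a morphism on all of $M_{H_f}({\bf e})$.

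The paper closes this gap by proving base-point-freeness \emph{directly} on $M_{H_f}({\bf e})$: taking $\delta$ very ample, for a given $E$ one chooses $\sum_i p_i\in|\delta|$ with $\pi^{-1}(p_i)$ smooth and $E_{|\pi^{-1}(p_i)}$ stable, and stable rank-$r$ bundles $F_i$ on these fibers with $\Hom(E,F_i(K_X))=0$; the theta divisor $D=\{y\mid \Ext^2(\oplus_iF_i,{\cal E}_{|\{y\}\times X})\neq 0\}$ lies in $|{\cal L}_{\bf e}(\delta)|$ and misses $E$. Only after base-point-freeness is secured does the paper use the birational comparison with $\Hilb_X^l\times\Pic^0(X)$ (where the corresponding divisor is visibly pulled back from an ample divisor on $S^lC$) to identify the target of the morphism as $S^lC$. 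Your identification of the image and your treatment of (2) (where the paper simply notes $mrK_X=\pi^{-1}(\delta)$, so $K_{M_{H_f}({\bf e})}^{\otimes m}={\cal L}_{\bf e}(\delta)$ up to torsion and (2) reduces to (1)) are in the right spirit, but without a direct base-point-freeness argument on $M_{H_f}({\bf e})$ itself the proposal does not prove the statement.
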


\begin{NB}
If $\Hom(E,F) \ne 0$, then 
$E_{|f} \cong F$.
Hence we can take $F$ such that $\Hom(E,F)=0$.
\end{NB}

\begin{proof}
(1)
We may assume that $\delta$ is very ample.
For $E \in M_{H_f}({\bf e})$, 
we take a divisor $\sum_{i=1}^N p_i \in |\delta|$ such that $\pi^{-1}(p_i)$
are smooth and $E_{|\pi^{-1}(p_i)}$ are stable vector bundles.
We take stable vector bundles $F_i$ of rank $r$ on $\pi^{-1}(p_i)$
such that $\chi(F_i)=(\xi \cdot f)$
and $\Hom(E,F_i(K_X))=0$ for all $i$.
We set
\begin{equation}
D:=\{ y \in M_{H_f}({\bf e}) \mid \Ext^2(\oplus_i F_i, {\cal E}_{|\{y \} \times X}) \ne 0 \}.
\end{equation}
Then 
${\cal O}_{M_{H_f}({\bf e})}(D) \cong \theta_{\bf e}(\sum_i F_i)$.
Since $E \not \in D$, ${\cal L}_{\bf e}(\delta)$ is base point free. 
For ${\bf e}_1=(1,0,e-l)$,
we have a morphism
$\Hilb_X^l \times \Pic^0(X) \to S^l X \to S^l C$ and
${\cal L}_{{\bf e}_1}(\delta) \cong {\cal O}_{M_{H_f}({\bf e}_1)}(D_1)$,
where
$$
D_1=\{ (I_Z(L) \mid I_Z \in \Hilb_X^l, \pi(Z) \cap \{p_1,p_2,...,p_N \} \ne \emptyset, L \in \Pic^0(X) \}.
$$
Hence ${\cal L}_{{\bf e}_1}(\delta)$ is the pull-back of
an ample line bundle on $S^l C$.
By the birational map
$$
M_{H_f}({\bf e}) \cdots \to \Hilb_X^l \times \Pic^0(X),
$$
${\cal L}_{\bf e}(m\delta)$ corresponds to the
line bundle ${\cal L}_{{\bf e}_1}(m \delta)$.
Therefore we get (1).

(2)
We note that $mrK_X=\pi^{-1}(\delta)$ $(\delta \in \Pic(C))$ if $m_i \mid m$
for all $i$.
Then $K_{M_{H_f}({\bf e})}^{\otimes m}={\cal L}_{\bf e}(\delta)$ up to torsion.
Hence the second claim holds.
\end{proof}

\begin{NB}
 Then $\theta_{\bf e}(F)$ is a line bundle such that $E$ is not the base point.

For a line bundle $L \in \Pic^0(X)=\pi^*(\Pic^0(C))$,
$F \otimes L \cong F$. 
Hence $\{L \in \Pic^0(X) \mid \Hom(E \otimes L,F) \ne 0 \}$
is 
\end{NB}

The following proposition shows the relation between our chamber structure
and relative ample line bundles over $S^l C$.

\begin{prop}\label{prop:rel-ample}
Assume that $\alpha \in f^\perp$.
Let $(\lambda_1,\lambda_2)$ be a chamber for $\lambda$-stability.
Assume that $\eta=\eta_0+\lambda H \in \NS(X)_{\Bbb Q}$ satisfies
\begin{equation}\label{eq:rel-ample}
\lambda_1
<\frac{(\eta \cdot f)}{(H \cdot f)}
<\lambda_2
\end{equation}
where $\eta_0 \in f^\perp$ is sufficiently close to $\alpha$.
%and $(\eta \cdot D) \not \in {\Bbb Z}$ for all effective divisors 
%$D$ satisfying $(D^2)=-2$ and $(D \cdot f)=0$.
Then $\theta_{\bf e}(F)$ is relatively ample over $S^l C$, where
$F \in K(X)_{\bf e}$ with
$\tau(F)=-n(1,\eta, b)$ $(n \gg 0)$.
\end{prop}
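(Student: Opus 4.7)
The strategy is to transport the relative ampleness through the Fourier--Mukai identification of Theorem \ref{thm:moduli}, and then invoke the standard ampleness of determinant line bundles on a moduli of Gieseker-stable sheaves on the Fourier--Mukai partner $Y$.

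First, since $\alpha\in f^\perp$ the chamber $(\lambda_1,\lambda_2)$ contains rational numbers of the form $\lambda_0=\frac{d_0}{r_0(f\cdot H)}-\frac{(f\cdot\alpha)}{(f\cdot H)}$; after a small perturbation of $\alpha$ (see Remark \ref{rem:general}) we may assume $\gcd(r_0(f\cdot H),d_0)=1$ and that $\lambda_0$ approximates $\frac{(\eta\cdot f)}{(H\cdot f)}$. Then $Y:=M_H^\alpha(0,r_0f,d_0)$ is a fine moduli space carrying a universal family $\mathcal{P}$, and by Proposition \ref{prop:n-stability} (applied together with Theorem \ref{thm:moduli} and Remark \ref{rem:moduli}) the Fourier--Mukai transform $\Phi:=\Phi_{X\to Y}^{\mathcal{P}^\vee[1]}$ gives an isomorphism
\[
M_{H_f}({\bf e})\;\simeq\;M^{\lambda_0}({\bf e})\;\simeq\;M_{H'_f}({\bf e}')
\]
onto a moduli of Gieseker-stable torsion free sheaves on $Y$, where $H'$ is a $\pi'$-ample polarization of $Y$.

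Second, I transport the line bundle. By the commutative diagram \eqref{eq:comm} we have $\theta_{\bf e}(F)=\theta_{{\bf e}'}(\Phi_*F)$, so it suffices to compute the Mukai vector $\tau(\Phi_*F)\in K(Y)_{{\bf e}'}$. Writing $\tau(F)=-n(1,\eta,b)$ and using the rank/degree transformation formulas recalled in Section \ref{subsect:barat-corresp}, one finds $\rk(\Phi_*F)\in n{\mathbb Z}_{>0}$ exactly when $\frac{(\eta\cdot f)}{(H\cdot f)}$ lies on the correct side of $\lambda_0$, and more generally the class $c_1(\Phi_*F)$ splits as a positive multiple of $H'$ plus a class in ${H'}^\perp$; the two inequalities \eqref{eq:rel-ample} become, respectively, the positivity of the coefficient of $H'$ and the positivity of the coefficient of $f$. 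The assumption that $\eta_0$ is close to $\alpha$ ensures that the ${H'}^\perp$-part of $c_1(\Phi_*F)$ is close to the twisting class used to define the Gieseker stability on $Y$, so that the resulting class is a large positive multiple of the Strømme/Le Potier ample polarization.

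Third, apply the standard ampleness theorem for determinant line bundles on moduli of Gieseker-semistable sheaves with respect to a fibered polarization $H'_f$: for $n\gg 0$, $\theta_{{\bf e}'}(\Phi_*F)$ is relatively ample over the Uhlenbeck-type support map, in particular relatively ample over the composition $M_{H'_f}({\bf e}')\to S^l(Y)\to S^l C$ (where the second map is induced by $\pi'$). Under our identification this morphism coincides with the one constructed from ${\cal L}_{\bf e}(m\delta)$ (which was shown to factor through $S^lC$), so $\theta_{\bf e}(F)$ is relatively ample over $S^lC$.

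The main obstacle will be the bookkeeping in step two: one must verify that the chamber inequality \eqref{eq:rel-ample} on $X$ translates precisely into the positivity of $c_1(\Phi_*F)\cdot H'$ (relative ampleness over $C$), with the hypothesis $\eta_0\sim\alpha$ killing the discrepancy between the Gieseker and twisted-Gieseker polarizations on the $Y$-side; once this is done, the classical ampleness of determinant line bundles finishes the argument.
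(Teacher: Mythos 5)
Your overall architecture matches the paper's: pass to a fine moduli space $Y=M_H^{\eta_0}(0,r_0f,d_0)$, use Proposition \ref{prop:n-stability}/Theorem \ref{thm:moduli} to identify the moduli space for the chamber with a Gieseker moduli space on $Y$, transport $\theta_{\bf e}(F)$ via the diagram \eqref{eq:comm}, and conclude by ampleness of a determinant line bundle on the $Y$-side. However, there is a genuine gap in your second step. The point of the construction is that $\lambda_0=\frac{d_0}{r_0(f\cdot H)}$ must equal $\frac{(\eta\cdot f)}{(H\cdot f)}$ \emph{exactly}, not approximately: by the rank formula $\rk(\Phi_*F)=-n\bigl(d_0\rk F^{-1}\cdots\bigr)=-nr_0(f\cdot H)\bigl(\lambda_0-\tfrac{(\eta\cdot f)}{(H\cdot f)}\bigr)$ (up to sign conventions), so $\Phi_*F$ has rank zero precisely when the slopes agree, and then $c_1(\Phi_*F)$ is $\pi'$-ample by \cite[Lem.~3.2.1]{PerverseII} because $\frac{c_1(F)}{\rk F}=\eta\approx \alpha+\frac{d_0H}{r_0(H\cdot f)}$ is the twisting class defining $Y$ — this is the actual role of ``$\eta_0$ close to $\alpha$''. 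Only for such a rank-zero class with fiberwise-positive $c_1$ does Lemma \ref{lem:ample} (or any standard determinant-line-bundle ampleness statement) give relative ampleness over $S^lC$. If you merely approximate the slope, $\rk(\Phi_*F)$ is a nonzero multiple of $n$, and no off-the-shelf ampleness theorem applies to $\theta_{{\bf e}'}$ of such a class.

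The difficulty you are implicitly dodging is that $\lambda=\frac{(\eta\cdot f)}{(H\cdot f)}$ may fail the coprimality condition $\gcd(r_0(f\cdot H),d_0)=1$ needed for $Y$ to be fine. The paper resolves this not by approximation but by convexity: it writes the slope as the midpoint of $\frac{d_0k\pm 1}{r_0k(f\cdot H)}$ for $k\gg 0$, uses the two-sided inequality \eqref{eq:rel-ample} to keep both perturbed slopes inside the chamber, proves relative ampleness for each (where the gcd condition now holds and the rank-zero argument applies verbatim), and takes the positive combination. This also corrects your reading of \eqref{eq:rel-ample}: the two inequalities do not translate into positivity of the $H'$- and $f$-coefficients of $c_1(\Phi_*F)$; they guarantee that $\lambda$ lies in the open chamber (so the relevant moduli space consists of stable, indeed $\mu$-stable locally free, sheaves — Remark \ref{rem:moduli}) and that there is room for the convexity reduction. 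Finally, the asserted isomorphism $M_{H_f}({\bf e})\simeq M^{\lambda_0}({\bf e})$ is false for general $\lambda_0$ (it holds only in the chamber of Proposition \ref{prop:torsion-free}); fortunately it is not needed.
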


\begin{proof}
We shall prove the claim by modifying the proof of Theorem \ref{thm:moduli}.
We set $\lambda=\frac{d_0}{r_0 (f \cdot H)}$ ($r_0 \in {\Bbb Z}_{>0}$, $d_0 \in {\Bbb Z}$).
We take an integer $k \in (f \cdot H){\Bbb Z}$ such that
$$
\lambda_1<\frac{d_0 k-1}{r_0 k (f \cdot H)}
<\frac{d_0 }{r_0 (f \cdot H)}<\frac{d_0 k+1}{r_0 k (f \cdot H)}<
\lambda_2.
$$
If the claim holds for $\lambda=\frac{d_0 k-1}{r_0 k (f \cdot H)}, \frac{d_0 k+1}{r_0 k (f \cdot H)}$,
then the claim also holds for $\lambda=\frac{d_0}{r_0 (f \cdot H)}$.
\begin{NB}
$s(1,\eta_1,b_1)+t(1,\eta_2,b_2) \sim
(1,\frac{s \eta_1+t \eta_2}{s+t},\frac{s b_1+t b_2}{s+t})$.
\end{NB}
Hence we shall prove the claim under the assumption $\gcd(r_0 (f \cdot H),d_0)=1$.
We set $Y=M_H^\eta(0,r_0 f,d_0)=M_H^{\eta_0}(0,r_0 f,d_0)$. We take a general $\eta_0$.
Then $Y$ is a fine moduli space and we have a Fourier-Mukai transform
$\Phi_{X \to Y}^{{\cal P}^{\vee}[1]}:{\bf D}(X) \to {\bf D}(Y)$ which induces an isomorphism
${\cal M}^\lambda(r,\xi,a)^{ss} \cong {\cal M}_{H_f}(r',\xi',a')^{ss}$.

We set $G:=(\Phi_{X \to Y}^{{\cal P}^{\vee}[1]}(F))$.
By \cite[Lem. 3.2.1]{PerverseII}, 
$\rk (G)=0$ and
$c_1(G)$
is $\pi$-ample. 
%Hence $\theta_{{\bf e}'}(\Phi_{X \to Y}^{{\cal P}^{\vee}[1]}(F)) \otimes {\cal L}_{{\bf e}'}(m\delta)$
%$(m \gg 0)$ gives a contraction to the Uhlenbeck compactification.
By Remark \ref{rem:moduli} and Lemma \ref{lem:ample}, 
$\theta_{{\bf e}'}(G)$ is relatively ample over $S^l C$, where
${\bf e}'=(r',\xi',a')$. Then the claim holds for a general $\eta_0$.
Hence the claim holds.
\end{proof}

\begin{NB}
For the moduli space $M_{H_f}({\bf e})$, 
$-(1,\eta,b)+k(0,H,*)$ is ample if $k \gg 0$.
If all fibers of $\pi$ are irreducible, then
$\theta_{\bf e}(F)$ is ample if $(\eta \cdot f) \ll 0$.
\end{NB}

\begin{NB}
If $E$ is a properly $\eta$-twisted semi-stable sheaf with
$\tau(E)=(0,r_0 f,d_0)$,
then there is a subsheaf $F$ with $\chi_\eta(F)=0$.
Since $\chi_\eta(F)=\chi(F)-(c_1(F) \cdot \eta)$,
$(c_1(F) \cdot \eta) \in {\Bbb Z}$.
\begin{rem}
Let $C_{ij}$ be effective divisors in the proof of Proposition \ref{prop:finite}. 
Then $D=D_1+n_i f_i$, $D_1 \in L$.
\end{rem}
\end{NB}

\begin{lem}\label{lem:ample}
Assume that $M_{H_f}({\bf e})$ consists of $\mu$-stable vector bundles.
Then $\theta_{\bf e}(G)$ is ample, where $G \in K(X)_{\bf e}$ satisfies
$\rk G=0$ and
$c_1(G) \in {\Bbb Z}_{>0} H_f$, where $H_f=H+nf$ $(n \gg 0)$. 
\end{lem}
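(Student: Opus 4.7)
The plan is to deduce the ampleness of $\theta_{{\bf e}}(G)$ from the classical result on ampleness of Gieseker's determinantal line bundle on the moduli space of $\mu$-stable sheaves (cf.\ J.~Li, Le~Potier, and Huybrechts--Lehn, \emph{The Geometry of Moduli Spaces of Sheaves}, Theorem~8.1.11).

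First, I would recall the classical theorem in the form needed here. For an ample divisor $L$ on $X$, choose a class $u_L \in K(X)_{{\bf e}}$ with $\rk u_L = 0$, $c_1(u_L) = NL$ for some $N > 0$, and Euler characteristic adjusted so that $\chi(u_L,{\bf e}) = 0$. Then $\theta_{{\bf e}}(u_L)$ is semi-ample on the Gieseker moduli space $\overline{M}_L({\bf e})$, and defines a morphism to the Uhlenbeck compactification that is injective on the locus of $\mu$-stable sheaves. In particular, $\theta_{{\bf e}}(u_L)$ restricts to an ample line bundle on the open subscheme of $\mu$-stable locally free sheaves.

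Second, I would apply this with $L = H_f$. Since $H_f = H + nf$ for $n \gg 0$ is an ample class, the above applies. By the hypothesis that $M_{H_f}({\bf e})$ consists entirely of $\mu$-stable vector bundles, we conclude that $\theta_{{\bf e}}(u_{H_f})$ is ample on $M_{H_f}({\bf e})$.

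Third, I would compare the given $G$ with the canonical class $u_{H_f}$. Writing $c_1(G) = kH_f$ with $k \in {\Bbb Z}_{>0}$ and $c_1(u_{H_f}) = N H_f$, the element $ku_{H_f} - NG \in K(X)_{{\bf e}} \otimes {\Bbb Q}$ has rank $0$ and $c_1 = 0$, so it is represented (up to torsion) by a class in $K(X)$ of dimension $0$. Such $0$-dimensional classes are sent by $\theta_{{\bf e}}$ to line bundles pulled back from $\Alb(M_{H_f}({\bf e}))$ (cf.\ the proof of Lemma~\ref{lem:fiber-indep}), which are numerically trivial. Therefore $\theta_{{\bf e}}(G)^{\otimes N}$ and $\theta_{{\bf e}}(u_{H_f})^{\otimes k}$ are numerically equivalent, and since $\theta_{{\bf e}}(u_{H_f})$ is ample with $k, N > 0$, the line bundle $\theta_{{\bf e}}(G)$ is ample. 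The main obstacle is the classical ampleness result itself, which rests on a delicate GIT comparison between $\mu$-stability and Gieseker's asymptotic GIT stability; once it is invoked, the reduction to an arbitrary admissible $G$ is essentially formal, requiring only that the discrepancy with the canonical choice $u_{H_f}$ be numerically trivial.
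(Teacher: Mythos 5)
Your argument reaches the right conclusion, but by a genuinely different route from the paper. The paper does not invoke the Le~Potier--Li theory of the Donaldson--Uhlenbeck morphism at all. Instead it combines two ingredients: (i) the ampleness of $c_1(\theta_{\bf e}(F(\beta,n)))$ for classes $F(\beta,n)\in K(X)_{\bf e}$ with $\ch(F(\beta,n))=(-1,nH_f+\beta,b)$ and $n\gg 0$, quoted from \cite[A.2]{MYY2} (essentially the Gieseker--GIT ample class); and (ii) the hypothesis that every point of $M_{H_f}({\bf e})$ is a $\mu$-stable vector bundle, which makes $E\mapsto E^{\vee}$ an isomorphism $M_{H_f}({\bf e})\to M_{H_f}({\bf e}')$ under which, by \eqref{eq:D}, $c_1(\theta_{\bf e}(-F(\beta,-n)))$ is identified with another ample class. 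Since $\ch(F(\beta,n)-F(\beta,-n))=(0,2nH_f,*)$, the class in question is exhibited as a sum of two ample classes. Thus the paper uses the $\mu$-stable locally free hypothesis to feed the duality involution $D_X$, whereas you use it to feed the injectivity (hence, by properness, finiteness) of the Donaldson--Uhlenbeck morphism. Your route leans on the classical semi-ampleness theorem; the paper's route stays inside the formalism it has already set up ($\theta_{\bf e}$, $D_X$, \cite{MYY2}) and avoids importing that theorem, which is also convenient since it is usually stated only over ${\Bbb C}$ while the paper works over a general $k$ elsewhere.

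Three points in your write-up should be tightened. First, the intermediate assertion that $\theta_{\bf e}(u_L)$ ``restricts to an ample line bundle on the open subscheme of $\mu$-stable locally free sheaves'' is stronger than what the classical theorem gives and is not needed: semi-ampleness together with injectivity of the induced morphism on an open subset does not immediately yield ampleness of the restriction to that subset. The correct deduction is that under the hypothesis of the lemma the locus in question is all of the projective scheme $M_{H_f}({\bf e})$, so the Donaldson--Uhlenbeck morphism is an injective projective morphism, hence finite, and the pullback of the ample ${\cal O}(1)$ under a finite morphism is ample. Second, in the reduction step the justification ``pulled back from $\Alb(M_{H_f}({\bf e}))$, hence numerically trivial'' is not right as stated (the Albanese carries ample bundles); what is true is that a rank-$0$ class with $c_1=0$ lying in $K(X)_{\bf e}\otimes{\Bbb Q}$ automatically has $\ch_2=0$ (since $\chi(\alpha,{\bf e})=r\,\ch_2(\alpha)$ for such $\alpha$ and $r\neq 0$), so its Chern character vanishes and Grothendieck--Riemann--Roch gives $c_1(\theta_{\bf e}(ku_{H_f}-NG))=0$; this is the same fact the paper uses implicitly when it defines $c_1(\theta_{\bf e}(\alpha))$ for $\alpha\in K(X)_{\bf e}\otimes{\Bbb Q}$. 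Third, mind the sign conventions: $\theta_{\bf e}(\alpha)=\det p_!({\cal E}\otimes p_X^*(\alpha^{\vee}))$ carries a dual, and the classical class $u_1$ has $c_1$ a negative multiple of the polarization in the usual $\lambda$-convention, so one must check that it corresponds here to a class with $c_1\in{\Bbb Z}_{>0}H_f$ (the lemma is false for negative multiples). None of these is a fatal gap, but each needs to be addressed for the proof to be complete.
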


\begin{proof}
For ${\bf e}=(r,\xi,a)$, 
we note that $E \in M_{H_f}({\bf e})$ if and only if 
$E^{\vee} \in M_{H_f}({\bf e}')$, where ${\bf e}'=\tau(E^{\vee})$.
Hence $D_X$ induces an isomorphism 
$M_{H_f}({\bf e}) \to M_{H_f}({\bf e}')$.
Under this isomorphism, we have
$c_1(\theta_{{\bf e}}(D^*(\alpha)))=-c_1(\theta_{{\bf e}'}(\alpha))$
for $\alpha \in K(X)_{{\bf e}'}$.
(see \eqref{eq:D}).
For $\alpha \in K(X)_{\bf e} \otimes {\Bbb Q}$,
we can define $c_1(\theta_{\bf e}(\alpha))$ as a rational cohomology class
in $H^2(M_{H_f}({\bf e}),{\Bbb Q})$.
For $F(\beta,n) \in K(X)_{\bf e}$ with
$$
\ch(F(\beta,n))=(-1,nH_f+\beta,b), b \in {\Bbb Q},
$$
$c_1(\theta_{\bf e}(F(\beta,n)))$ is ample for $n \gg 0$
(\cite[A.2]{MYY2}).
We note that
$$
\ch(D^*(F(-\beta+K_X,n)))=(-1,-(nH_f-\beta),b')=\ch(F(\beta,-n)), b' \in {\Bbb Q}.
$$
Since $c_1(\theta_{{\bf e}'}(F(-\beta+K_X,n)))=c_1(\theta_{\bf e}(-F(\beta,-n)))$
$(n \gg 0)$ is ample and
$$
\ch(F(\beta,n)-F(\beta,-n))=(0,2nH_f,b-b'),
$$
we get our claim.
\end{proof}

\subsection{A special case}
In this subsection, 
we assume that $\pi:X \to C$ is an elliptic surface such that 
$\NS(X)={\Bbb Z}H+{\Bbb Z}f$ and there is no multiple fiber.
We assume that $\alpha=0$.
As in the previous subsections, we define $l \in {\Bbb Z}$ 
by $\dim M^\lambda({\bf e})=2l+g$ and assume that $l \geq 2$.
For a chamber $I=(\lambda_1,\lambda_2)$,
we set $M^I({\bf e}):=M^\lambda({\bf e})$ ($\lambda \in I$).
\begin{prop}\label{prop:example1}
Let $I=(\lambda_1,\lambda_2)$ be a chamber and $F \in (K(X)_{\bf e}) \otimes {\Bbb Q}$
satisfy $\tau(F)=-(1,\lambda H,b)$. 
\begin{enumerate}
\item[(1)]
If $\lambda \in I$,
then $c_1(\theta_{\bf e}(F)) \in \NS(M^I({\bf e}))_{\Bbb Q}$ is relatively ample over $S^l C$.
\item[(2)]
If $\lambda=\lambda_1,\lambda_2$, then 
$c_1(\theta_{\bf e}(F)) \in \NS(M^I({\bf e}))_{\Bbb Q}$ gives a contraction over $S^l C$.
\end{enumerate}
\end{prop}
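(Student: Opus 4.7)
The plan is to deduce both parts from Proposition \ref{prop:rel-ample} together with the wall-crossing analysis of Section \ref{subsect:wall}. In this special setting, the conditions of that proposition simplify considerably: since $\NS(X) = {\Bbb Z}H + {\Bbb Z}f$ with $(f^2)=0$, the class $f$ itself lies in $f^\perp$, and with $\alpha=0$ we may take $\eta_0=0$ (or an arbitrarily small multiple of $f$) as the required element of $f^\perp$ close to $\alpha$.

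For part (1), with this choice $\eta = \lambda H$ satisfies $(\eta \cdot f)/(H \cdot f) = \lambda$, and the hypothesis \eqref{eq:rel-ample} becomes exactly $\lambda \in I$. Since $\theta_{\bf e}$ is a group homomorphism, the relative ampleness of $\theta_{\bf e}(G)$ for $\tau(G) = -n(1,\lambda H,b)$ with $n \gg 0$ (which is what Proposition \ref{prop:rel-ample} delivers) is equivalent to the positivity of $c_1(\theta_{\bf e}(F)) \in \NS(M^I({\bf e}))_{\Bbb Q}$ as a relatively ample rational class over $S^l C$.

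For part (2), take the limit from inside the chamber: for $\lambda \to \lambda_i$, the class $c_1(\theta_{\bf e}(F))$ with $\lambda = \lambda_i$ is the limit of the relatively ample classes from (1), hence nef over $S^l C$. To identify the associated morphism as a contraction, pass to the Fourier--Mukai side. Following the proof of Theorem \ref{thm:moduli}, choose a pair $(r_0,d_0)$ with $\gcd(r_0(f \cdot H),d_0)=1$ such that $\lambda_i = d_0/(r_0(f \cdot H))$, set $Y := M_H(0,r_0 f,d_0)$, and use the transform $\Phi_{X \to Y}^{{\cal P}^{\vee}[1]}$. This identifies $\lambda_i$-semi-stable sheaves on $X$ with Gieseker semi-stable sheaves on $Y$ with respect to $H_f'$, and therefore gives a projective coarse moduli space $\overline{M}^{\lambda_i}({\bf e})$ together with a natural morphism $\varphi_i : M^I({\bf e}) \to \overline{M}^{\lambda_i}({\bf e})$ (sending a $\lambda$-stable sheaf to the $S$-equivalence class of the Jordan--H\"older filtration of Proposition \ref{prop:JHF} at $\lambda_i$). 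By the compatibility \eqref{eq:comm} and Lemma \ref{lem:ample} applied on $Y$, $c_1(\theta_{\bf e}(F))$ at the boundary value $\lambda = \lambda_i$ is the pullback via $\varphi_i$ of a relatively ample polarization on $\overline{M}^{\lambda_i}({\bf e})$ over $S^l C$. Since $\varphi_i$ is not an isomorphism on the wall-crossing locus predicted by Proposition \ref{prop:codim}, it is a genuine contraction.

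The main obstacle is the last identification: verifying that the limit of $c_1(\theta_{\bf e}(F))$ as $\lambda \to \lambda_i$ is \emph{precisely} the pullback of an ample class from $\overline{M}^{\lambda_i}({\bf e})$, i.e. that it is zero exactly on the fibers of $\varphi_i$. This requires tracking $\theta_{\bf e}(F)$ under $\Phi_{X \to Y}^{{\cal P}^{\vee}[1]}$ at the critical parameter, where the strict inequality in \eqref{eq:rel-ample} degenerates to equality, and checking that the translated class on $Y$ is the standard Donaldson-type polarization to which Lemma \ref{lem:ample} applies.
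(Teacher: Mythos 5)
Part (1) of your argument is correct and is exactly the paper's proof: with $\eta=\lambda H$ (and $\eta_0$ an arbitrarily small perturbation in $f^\perp$), condition \eqref{eq:rel-ample} is literally $\lambda_1<\lambda<\lambda_2$, so Proposition \ref{prop:rel-ample} applies.

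For part (2) there is a genuine gap, and you have correctly located it yourself: the assertion that $c_1(\theta_{\bf e}(F))$ at $\lambda=\lambda_i$ is the pullback under $\varphi_i$ of a relatively ample class on $\overline{M}^{\lambda_i}({\bf e})$ is never established. As stated, Lemma \ref{lem:ample} cannot be invoked at the wall: it requires the moduli space to consist of $\mu$-stable vector bundles, which is exactly what fails at $\lambda=\lambda_i$ (the wall is defined by the presence of strictly semistable objects), and the Fourier--Mukai image of $F$ with $\tau(F)=-(1,\lambda_i H,b)$ lands on the boundary of the relatively ample cone of the Gieseker moduli space on $Y$, not in its interior. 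So the route through the coarse moduli space of $\lambda_i$-semistable sheaves would require a separate ampleness statement on that (singular) space, which is not available in the paper.

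The paper's proof of (2) avoids this entirely and is worth adopting. Writing $\tau=(0,r_0f,d_0)$ for the class defining the wall, one has $\chi(\tau(F),\tau)=r_0\lambda_i(H\cdot f)-d_0=0$ by the definition of $\lambda_i$. For $E_1\in M_H(\tau)$ and a $\lambda_i$-stable $E_2$ with $\tau(E_2)={\bf e}-\tau$, the universal extension over $P={\Bbb P}(\Ext^1(E_2,E_1)^{\vee})$ is a family of $\lambda_+$-stable sheaves, and a direct computation gives $c_1(\theta_{\bf e}(F))_{|P}=\chi(\tau(F),\tau)\,c_1({\cal O}_P(1))=0$; this exhibits explicit curves on which the class degenerates. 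The class is relatively nef (limit of the ample classes from (1)) and relatively big, and since $K_{M^I({\bf e})}$ is pulled back from $S^lC$ in this special setting, the relative base point free theorem shows that a multiple of $c_1(\theta_{\bf e}(F))$ is relatively globally generated and hence defines a contraction over $S^lC$ (a small contraction when $-\chi({\bf e}-\tau,\tau)\geq 3$, and the Hilbert--Chow or Gieseker--Uhlenbeck contraction when $-\chi({\bf e}-\tau,\tau)=1,2$). In short: replace the descent-to-the-coarse-moduli-space step by the MMP argument, or else supply a proof that the determinant line bundle descends to an ample class on $\overline{M}^{\lambda_i}({\bf e})$ -- neither is present in your write-up.
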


\begin{proof}
(1) In the notation of Proposition \ref{prop:rel-ample},
we assume that $\eta=\lambda H$.
Then \eqref{eq:rel-ample} means $\lambda_1<\lambda<\lambda_2$.
Hence the claim holds.

(2) 
We only treat the case where $\lambda=\lambda_1$.
Assume that the wall $\lambda=\lambda_1$ is 
defined by $\tau=(0,r_0 f,d_0)$, where $\gcd(r_0,d_0)=1$.
We first assume that $-\chi({\bf e}-\tau,\tau) \geq 3$.
By our assumption, $\chi(\tau(F),\tau)=r_0 \lambda(H \cdot f)-d_0=0$.
We take $\lambda_+$ satisfying $\lambda_1<\lambda_+<\lambda_2$.
For $E_1 \in M_H(\tau)$ and a $\lambda_1$-stable sheaf $E_2$ with $\tau(E_2)=
{\bf e}-\tau$, we set
$P:={\Bbb P}(\Ext^1(E_2,E_1)^{\vee})$.
Let ${\cal E}$ be a family of $\lambda_+$-stable sheaves on $P \times X$ fitting in an exact 
sequence
$$
0 \to E_1 \boxtimes {\cal O}_P(1) \to {\cal E} \to E_2 \boxtimes {\cal O}_P \to 0.
$$
\begin{NB}
Then $\theta_{\bf e}(F)_{|P} \cong {\cal O}_P$.
Since $\theta_{\bf e}(F)$ is relatively nef and big over $S^l C$ and
the canonical divisor is the pull-back of an ample divisor on $S^l C$,
the base point free theorem implies
$\theta_{\bf e}(F)$ gives a contraction over $S^l C$.
\end{NB}
Then $c_1(\theta_{\bf e}(F))_{|P}=\chi(\tau(F),\tau)c_1({\cal O}_P(1))=0$.
\begin{NB}
Consistency check:
$\chi(\tau(F),{\bf e})=\chi(\tau(F),\tau)=r_0 \lambda(H \cdot f)-d_0>0$ iff
\eqref{eq:rel-ample} holds.
\end{NB}
Since $c_1(\theta_{\bf e}(F))$ is relatively nef and big over $S^l C$ and
the canonical divisor is the pull-back of a divisor on $S^l C$,
the base point free theorem implies
$c_1(\theta_{\bf e}(F))$ gives a small contraction over $S^l C$
(see the proof of Proposition \ref{prop:birat}).
We next assume that $-\chi({\bf e}-\tau,\tau) =1,2$.
In these cases, we can also show that
$c_1(\theta_{\bf e}(F))$ gives the Hilbert-Chow contraction or the Gieseker-Uhlenbeck contraction.
Thus $c_1(\theta_{\bf e}(F))$ is in the boundary of the relative Movable cone $\Mov(M^I({\bf e})/S^l C)$
over $S^l C$.
\end{proof}

\begin{NB}
Assume that $-\chi({\bf e}-\tau,\tau)=2$.
Let $D$ be the effective divisor consisting of properly
$\lambda_1$-semi-stable sheaves. 
For $F \in K(X)_{\bf e}$ such that
$\tau(F) \equiv (r,\xi,b)$,
$c_1(\theta_{\bf e}(F)) \equiv D \mod \NS(S^l C \times \Pic^0(X))$.
Proof:
By a suitable Fourier-Mukai transform,
we can reduced to the rank 2 case.
We explain the twisted case.
For a universal family ${\cal F}$ on the moduli $M$ of twisted sheaves of rank 2
and a fixed determinant,
we take a locally free resolution $0 \to V_1 \to V_0 \to {\cal F} \to 0$. 
For a twisted locally free sheaf $G$ on $X$ such that $\det {\cal F}=\det G \otimes {\cal L}$,
${\cal L} \in \Pic(M)$. 
$\det p_{M!}({\cal E}xt^1({\cal F},G)) \cong {\cal O}((\rk G) D)$.
Since  $p_{M!}({\cal E}xt^1({\cal F},G)) \cong
p_{M!}(G,{\cal F}(K_X))+p_{M!}(G,{\cal F} \otimes {\cal L})=
p_{M!}(G,{\cal F}(K_X))+p_{M!}(G,{\cal F}) \otimes {\cal L}^{\otimes -\chi(G,E)}$,
$(\rk G) D \equiv \theta_{\bf e}(G(-K_X)+G-\chi(G,E){\Bbb C}_x)$.
Since $\rk G=2$ and $\tau(G(-K_X)+G) \equiv 2G \mod K(C)_{\Bbb Q}$,
we get the claim.

For $F' \in (K(X)_{\bf e})_{\Bbb Q}$ with $\tau(F')=-(1,\eta,b)$ such that
$\lambda_1 (H \cdot f)+(\alpha \cdot f)=(\eta \cdot f)$,
$\theta_{\bf e}(nF'-F)$ is ample for $n \gg 0$.
Indeed $r_1(\xi \cdot f)-r d_1=2$ implies 
$\frac{d_1}{r_1(H \cdot f)}<\frac{(\xi \cdot f)}{r(H \cdot f)}$.
Hence $\theta_{\bf e}(F')$ is big. Since it is nef, it defines a contraction over $S^l C$.
Since $\gcd(r,(\xi \cdot f))=1$, $D$ is a primitive divisor.

We note that $(K_D)_{|P} \cong K_P$.
Since $K_M$ is trivial over $S^l C$, ${\cal O}(D)_{|P} \cong K_P={\cal O}_P(-2)$.
\end{NB}

%Let $\pi:X \to C$ be an elliptic surface such that there is no multiple fiber and
%$\NS(X)={\Bbb Z}H +{\Bbb Z}f$, where $H$ is an ample divisor.
We next restrict to the case where ${\bf e}=(1,0,e-l)$ and
describe the chamber structure for $\lambda$-stability.
We set $t:=(H \cdot f) \lambda$ and describe the space of $t \in (-\infty,0)$.
%Since there is no $(-2)$-curve in a fiber, walls are defined by
%$(0,r_1,f,d_1)$.
We set 
$$
I_n:=
\begin{cases}
(-\frac{2}{n},-\frac{2}{n+1}) & n \geq 1\\
(-\infty,-2) & n=0.
\end{cases}
$$
Then we have a decomposition
\begin{equation}
(-\infty,0) \setminus \{\tfrac{-2}{n} \mid n \in {\Bbb Z}_{>0} \}=\cup_{n=0}^\infty I_n.
\end{equation}
We shall consider a relation of the intervals $I_n$.
%
%We set $Y=M_H(0,nf,1)$. Then $Y \cong X$.
%Let ${\bf P}$ be a universal family such that
%$\Phi_{X \to Y}^{{\bf P}^{\vee}[1]}({\cal O}_X)={\cal O}_Y$.
%Assume that $n=2k$.
%
We set $Y=M_H(0,f,-1)$. Then $Y \cong X$.
Let ${\bf P}$ be a universal family such that
$\Phi_{X \to Y}^{{\bf P}^{\vee}[1]}({\cal O}_X)={\cal O}_Y$.
Let $E_{p,q}$ be a stable 1-dimensional sheaf on a fiber with
$\tau(E_{p,q})=(0,pf,q)$. Then 
$\Phi_{X \to Y}^{{\bf P}^{\vee}[1]}(E_{p,q})=E_{p+q,q}$.
\begin{NB}
By $\Phi_{X \to Y}^{{\bf P}^{\vee}[1]}({\cal O}_X)={\cal O}_Y$,
$\Phi_{X \to Y}^{{\bf P}^{\vee}[1]}(E_{1,0})=E_{1,0}$.
By $\Phi_{X \to Y}^{{\bf P}^{\vee}[1]}(E_{1,-1})=E_{0,1}[-1]$,
$$
A 
\begin{pmatrix} 
1& 1\\
0 & -1
\end{pmatrix}=
\begin{pmatrix} 
1& 0\\
0 & -1
\end{pmatrix}.
$$
Hence we get $\Phi_{X \to Y}^{{\bf P}^{\vee}[1]}(E_{p,q})=E_{p+q,q}$.
\end{NB}
For $t$, we set
$\varphi(t):=\frac{t}{1+t}$.
Then we get
$\varphi(I_n)=I_{n-2}$ for $n \geq 2$ and
$$
(-\infty,0) \setminus \{ -\tfrac{2}{n} \mid n \in {\Bbb Z}_{>0} \}
=\cup_{k=0}^{\infty} \varphi^{-k} (I_0 \cup I_1).
$$

%If $\lambda>-1$, then $\Phi_{X \to Y}^{{\cal P}^{\vee}[1]}(E)$
%is $\varphi(\lambda)$-stable for $E \in {\cal M}^\lambda(1,0,-l)$.
%Then we can prove the following results.
\begin{prop}
\begin{enumerate}
\item[(1)]
The isomorphism 
classes of $M^\lambda({\bf e})$ are parameterized by 
$I_0$.
\begin{NB}
For $t \in I_n$, 
\end{NB}
\item[(2)]
The set of walls in $I_0$ is given by
$$
\{\tfrac{q}{p} \mid p,q \in {\Bbb Z}, \gcd(p,q)=1, 0<2p <-q \leq l \}.
$$
\end{enumerate}
\end{prop}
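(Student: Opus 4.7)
The plan is to exploit the autoequivalence $\Phi := \Phi_{X\to Y}^{{\bf P}^\vee[1]}$ of ${\bf D}(X)$ coming from the identification $Y = M_H(0,f,-1) \cong X$, which acts on stable 1-dimensional sheaves by $E_{p,q} \mapsto E_{p+q,q}$ and hence implements the M\"obius transformation $\varphi(t) = t/(1+t)$ on the slope parameter. By the same mechanism used in Proposition \ref{prop:n-stability} and in the proof of Theorem \ref{thm:birat2}, this autoequivalence intertwines $\lambda$-stability at slope $t$ with $\lambda$-stability at slope $\varphi(t)$, inducing isomorphisms $M^\lambda({\bf e}) \cong M^{\varphi(\lambda)}(\Phi_*({\bf e}))$ at the level of moduli spaces. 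Since $\varphi(I_n) = I_{n-2}$ for $n \geq 2$, iterated application of $\Phi$ (or its inverse) reduces every chamber of $\lambda$-stability to one in $I_0 \cup I_1$. The residual $I_1$ case is then handled by composing with a further equivalence (most naturally a duality such as $D_X$, or a contravariant Fourier--Mukai transform built from $\Phi$) whose induced action on slopes swaps $I_1$-chambers with $I_0$-chambers, completing the parametrization of isomorphism classes by $I_0$.

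For part (2), I classify walls in $I_0$ directly via Proposition \ref{prop:codim}. Because $X$ has only irreducible fibers and $\NS(X) = \mathbb{Z}H + \mathbb{Z}f$, no class $D$ satisfies both $(D^2) = -2$ and $(D \cdot f) = 0$, so only primitive classes $\tau = (0, pf, q)$ with $p \geq 1$ and $\gcd(p,q) = 1$ define walls. The wall value $t = q/p$ lies in $I_0 = (-\infty, -2)$ precisely when $q/p < -2$, equivalently $0 < 2p < -q$. The upper bound $-q \leq l$ is the non-emptiness condition for the Harder--Narasimhan factor ${\bf e} - \tau$: applying Riemann--Roch to the modified class $(1, -pf, e-l-q)$ shows it is realized by ideal sheaves $I_{Z'}(L)$ with $\ell(Z') = l + q$, so $l + q \geq 0$ is required. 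Higher multiples $m\tau$ give strictly stronger bounds $-mq \leq l$ and hence do not contribute new walls.

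The hard part will be the $I_1$ reduction in part (1): under $\varphi$ alone, $I_1$ maps outside $(-\infty, 0)$ and does not fold onto $I_0$, so producing the correct additional autoequivalence (and checking that, after identifying $\Phi_*({\bf e})$ back with ${\bf e}$ up to the appropriate numerical adjustment, it genuinely induces an isomorphism of moduli) is the essential technical step. For part (2), one must also verify that the twisted moduli $M_H^\alpha(m\tau)^{ss}$ are non-empty for at least one $m \geq 1$ whenever $-q \leq l$, so that the numerically possible walls are all actually realized and no additional Bogomolov-type obstruction arises.
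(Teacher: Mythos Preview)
Your strategy matches the paper's: use the autoequivalence $\Phi = \Phi_{X\to Y}^{{\bf P}^\vee[1]}$ (with $Y\cong X$) to shift $I_n\to I_{n-2}$ via $\varphi$, then handle $I_1$ by composing with a duality. The paper carries this out using $D_Y\circ\Phi$ for the $I_1$ step, observing that $-\varphi(t)=-t/(1+t)$ sends $I_1$ into $I_0$; this is exactly the contravariant transform you anticipated. Your treatment of part (2) is correct and essentially identical to the paper's.

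One point to tighten: the statement that $\Phi$ ``intertwines $\lambda$-stability at slope $t$ with $\lambda$-stability at slope $\varphi(t)$'' is \emph{not} the content of Proposition~\ref{prop:n-stability}. That proposition only relates $\lambda$-stability at the single value $\lambda_1=\chi_\alpha({\cal P}_y)/(c_1({\cal P}_y)\cdot H)$ to Gieseker $H_f'$-stability on $Y$; it says nothing about an arbitrary $\lambda_0>\lambda_1$. What you need is the refinement in Proposition~\ref{prop:n-stability2} (and its contravariant companion Proposition~\ref{prop:n-stability3} for the $I_1$ reduction), which establish precisely the isomorphisms ${\cal M}^{\lambda_0}({\bf e})^s\to{\cal M}^{\varphi(\lambda_0)}({\bf e}')^s$ you invoke. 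Without these, the tools you cite (Proposition~\ref{prop:n-stability} plus the wall-crossing in Theorem~\ref{thm:birat2}) would only yield \emph{birational} maps, not isomorphisms, and the claim that isomorphism classes are parametrized by $I_0$ would not follow. Note also that in this normalized setup $\Phi_*({\bf e})={\bf e}$ on the nose (since $\Phi({\cal O}_X)={\cal O}_Y$ and $\Phi$ preserves the class of a point up to the expected shift), so no ``numerical adjustment'' is required.
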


\begin{proof}
(1) 
Assume that $t>-1$.
By Proposition \ref{prop:n-stability2}, 
$\Phi_{X \to Y}^{{\cal P}^{\vee}[1]}$ induces an isomorphism
$$
M^{\frac{t}{(H \cdot f)}}({\bf e}) \to 
M^{\frac{\varphi(t)}{(H \cdot f)}}({\bf e}).
$$
For a general $t \in I_1$, $\frac{-t}{1+t} \in I_0$
and 
$D_Y \circ \Phi_{X \to Y}^{{\cal P}^{\vee}[1]}$ induces an isomorphism
 $$
M^{\frac{t}{(H \cdot f)}}({\bf e}) \to 
M^{-\frac{\varphi(t)}{(H \cdot f)}}({\bf e})
$$
by Proposition \ref{prop:n-stability3}. 
By $\varphi(I_n)=I_{n-2}$ $(n \geq 2)$, we get (1).
%For $I_0$, there are finitely many walls
%$\lambda=\tfrac{q}{p}$ for
%${\bf e}=(1,0,e-l)$.
For the proof of (2), let $p,q$ be integers satisfying
$p>0$ and  $\gcd(p,q)=1$.
Then $\tau=(0,pf,q)$ defines a wall for ${\bf e}$ in $I_0$
if $-l \leq \frac{q}{p}<-2$ and $l \geq -q$ by
the non-emptyness of $M^\lambda({\bf e}-\tau)$. 
Thus $2p<-q \leq lp$ and $-q \leq l$.
\end{proof}

\begin{rem}
By the definition of $\lambda$-stability,
we see that there is a torsion free $\lambda$-stable sheaf
if and only if $\lambda<-1$. 
\end{rem}

\begin{NB}
For the Hilbert-Chow contraction, the exceptional divisor is divisible by 2.
On the other hand for the Gieseker-Uhlenbeck contraction,
the exceptional divisor is primitive.
Hence all birational morphisms preserves the the relative movable cone.

$\lambda=\frac{t}{(H \cdot f)}$.
$\lambda_i=\frac{q_i}{p_i (H \cdot f)}$.
\end{NB}
We further assume that the Kodaira dimension of $X$ is $1$ and$g=0$.
Then the relative Picard rank
$\rho(M^{\tfrac{t}{(H \cdot f)}}({\bf e})/S^l {\Bbb P}^1)$ is $2$ for a general $t \in (-\infty,0)$.
\begin{NB}
For a chamber $I=(t_1,t_2)$, we set $\overline{I}:=[t_0,t_1]$.
\end{NB}
For $t <0$,
we set $\xi_t:=c_1(\theta_{\bf e}(F_t))$, where
$$
F_t=\tfrac{(H \cdot f)}{t}(1,0,l)+(0,H,(H \cdot K_X)) \in (K(X)_{\bf e})_{\Bbb R}.
$$
We note that $\xi_{-\infty}$ gives the Hilbert-Chow contraction
$\Hilb_X^l \to S^l X$.

Let $\Mov(M^I({\bf e})/S^l {\Bbb P}^1)$ be the relative Movable cone of
$M^I({\bf e}) \to S^l {\Bbb P}^1$ and
$\Nef(M^I({\bf e})/S^l {\Bbb P}^1)$ the relative Nef cone.
By Proposition \ref{prop:example1}, we get the following. 
\begin{prop} 
Let $I=(t_1,t_2)$ be a chamber in $I_0$.
We set $M^I({\bf e}):=M^{\frac{t}{(H \cdot f)}}({\bf e})$ $(t \in I)$.
\begin{enumerate}
\item[(1)]
$\Mov(M^I({\bf e})/S^l {\Bbb P}^1)$ is generated by
$\xi_{-\infty}$ and $\xi_{-2}$.
\item[(2)]
$\Nef(M^I({\bf e})/S^l {\Bbb P}^1)$ is generated by
$\xi_{t_1}$ and $\xi_{t_2}$. 
\begin{NB}
$\xi_t$ $(t \in \overline{I})$.
\end{NB}
\end{enumerate}
\end{prop}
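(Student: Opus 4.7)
The plan is to identify the two extremal rays of each cone inside the two-dimensional relative N\'eron--Severi space, using the one-parameter family $\{\xi_t\}$ together with the wall structure on $I_0$. Since the relative Picard rank is $2$, both $\Nef(M^I(\mathbf{e})/S^l\mathbb{P}^1)$ and $\Mov(M^I(\mathbf{e})/S^l\mathbb{P}^1)$ are closed two-dimensional cones and are determined by their two extremal rays.

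For part (2) I would argue as follows. By Proposition \ref{prop:example1}(1), $\xi_t$ is relatively ample over $S^l\mathbb{P}^1$ for every $t\in I=(t_1,t_2)$. The assignment $t\mapsto \xi_t$ is affine-linear in $1/t$, hence continuous on the closure $\overline{I}$, so the limits $\xi_{t_1}$ and $\xi_{t_2}$ are relatively nef. By Proposition \ref{prop:example1}(2), each of these boundary classes determines a nontrivial contraction over $S^l\mathbb{P}^1$ and therefore fails to be relatively ample, so each lies on the boundary of $\Nef(M^I(\mathbf{e})/S^l\mathbb{P}^1)$. The two classes $\xi_{t_1}$, $\xi_{t_2}$ are linearly independent modulo $\pi^*\NS(S^l\mathbb{P}^1)$ (they determine the two distinct walls bounding $I$), so they span the two extremal rays of the relative nef cone.

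For part (1) the key observation is that every wall strictly inside $I_0=(-\infty,-2)$ corresponds to a small birational modification. For an interior wall defined by $\tau=(0,pf,q)$ with $\gcd(p,q)=1$ and $0<2p<-q\leq l$, Proposition \ref{prop:codim}(2) applied to $\mathbf{e}=(1,0,e-l)$ (so $r=1$, $(\xi\cdot f)=0$, no multiple fibres) gives exceptional codimension at least $-q-1\geq 2$. Thus the birational maps relating the various $M^I(\mathbf{e})$ for chambers $I\subset I_0$ are isomorphisms in codimension one, so all these minimal models share the same relative movable cone, and $\Mov(M^I(\mathbf{e})/S^l\mathbb{P}^1)$ equals the union of the chamber-wise Nef cones as $t$ sweeps across $I_0$.

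It then remains to identify the two limiting rays. As $t\to -\infty$, Proposition \ref{prop:torsion-free} gives $M^\lambda(\mathbf{e})\cong\Hilb_X^l\times\Pic^0(X)$ for $\lambda\ll 0$, and $\xi_{-\infty}=c_1(\theta_{\mathbf{e}}(0,H,(H\cdot K_X)))$ is the pullback of a relatively ample class under the Hilbert--Chow morphism to $S^lX\times\Pic^0(X)$, which is a divisorial contraction; hence $\xi_{-\infty}$ bounds $\Mov$. At $t=-2$, the sheaf $\tau=(0,f,-2)$ lies at the boundary slope of $I_0$, satisfies $-\chi(\mathbf{e}-\tau,\tau)=2$, and the codimension formula in Proposition \ref{prop:codim}(2) evaluates to exactly~$1$: this is the Gieseker--Uhlenbeck type divisorial contraction of Proposition \ref{prop:example1}(2), whose nef class coincides with $\xi_{-2}$. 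The main obstacle is to match the abstract divisor classes $\xi_{-\infty}$ and $\xi_{-2}$ with these two distinguished divisorial contractions; I would handle this by evaluating $\theta_{\mathbf{e}}$ on the universal ideal sheaf over $\Hilb_X^l\times\Pic^0(X)$ and on the strictly semistable locus across the $t=-2$ wall, along the lines of Lemma \ref{lem:ample} and Proposition \ref{prop:rel-ample}.
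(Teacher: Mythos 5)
Your proposal is correct and follows essentially the same route as the paper, which proves this proposition simply by invoking Proposition \ref{prop:example1} (relative ampleness of $\xi_t$ for $t$ in a chamber, contractions at the endpoints) together with the wall classification and the observation that the two limiting contractions at $t=-\infty$ (Hilbert--Chow) and $t=-2$ (Gieseker--Uhlenbeck) are divisorial while all interior walls of $I_0$ are small by the codimension bound $-q-1\geq 2$ from Proposition \ref{prop:codim}. Your write-up merely supplies more detail than the paper does at the final identification step, and that detail is consistent with the paper's intended argument.
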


Thus each chamber describe the isomorphism classes of $M^\lambda({\bf e})$. 

\begin{NB}
If $\chi(F)=(1,\lambda H,a)$, then 
$\chi(F,I_Z)=a+e-l+\lambda(H \cdot K_X)/2$.
If $a=l-e-\lambda(H \cdot K_X)/2$, then
$\chi(F)=a-\lambda (H \cdot K_X)/2+e=l-\lambda (H \cdot K_X)$.
Hence $\tau(F)=(1,\lambda H,l-\lambda (H \cdot K_X))=(1,0,l)+\lambda (0,H,-(H \cdot K_X))$.
For $F_0$ with $\tau(F_0)=(1,0,l)$,
$\ch_2(F_0)+e=l$. Hence $\ch_2(F_0)=l-e$ and
$\chi(F_0,I_Z)=l-e+e-l=0$.
\end{NB}

\subsection{Hodge numbers}

The following result is proved for an elliptic surface
with a section such that all fibers are irreducible in \cite{Y:twist2}.

\begin{thm}\label{thm:hodge}
Assume that $\gcd(r,(\xi \cdot f))=1$. Then
\begin{equation}\label{eq:hodge}
e(M_{H_f}(r,\xi,a))=e(\Hilb_X^l \times \Pic^0(X)),
\end{equation}
where %$\dim M_{H_f}(r,\xi,a)=2l+q$.
$l=(\xi^2)-2ra+(r^2+1)\chi({\cal O}_X)-r(\xi \cdot K_X)$.
\end{thm}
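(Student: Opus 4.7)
The plan is, following the strategy of \cite{Y:twist2} (where the analogous statement was proved for elliptic surfaces with irreducible fibers), to lift the computation to the virtual Hodge polynomials $e({\cal M}^\lambda({\bf e})^{ss})$ of the moduli stacks of $\lambda$-semi-stable sheaves from Section \ref{sect:stability}, and show that these are independent of the chamber of $\lambda$. For $\lambda\ll 0$ this agrees with $e(M_{H_f}({\bf e}))$ (Proposition \ref{prop:torsion-free}); at a distinguished critical value $\lambda_0$, a Fourier-Mukai transform identifies the stack with a rank-$1$ stack on a Fourier-Mukai partner $Y$, which is explicitly a product $\Hilb_Y^n\times\Pic^0(Y)$.

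For the reduction to rank $1$, I would choose $(r_1,d_1)\in{\Bbb Z}^2$ with $r_1(\xi\cdot f)-rd_1=1$, which exists by $\gcd(r,(\xi\cdot f))=1$, and (after a small perturbation of $\alpha$) take $Y:=M_H^\alpha(0,r_1f,d_1)$ to be fine. Setting $\lambda_0:=\frac{d_1-r_1(f\cdot\alpha)}{r_1(f\cdot H)}$, Proposition \ref{prop:n-stability} provides an isomorphism of stacks
\[
\Phi_{X\to Y}^{{\cal P}^{\vee}[1]}:{\cal M}^{\lambda_0}(X,{\bf e})^{ss}\simto{\cal M}_{H'_f}(Y,(1,\xi',a'))^{ss}.
\]
The rank-$1$ target is a $\Pic^{\xi'}(Y)$-bundle over $\Hilb_Y^n$, and $\Pic^{\xi'}(Y)\cong\Pic^0(Y)=\pi'^*\Pic^0(C)=\pi^*\Pic^0(C)=\Pic^0(X)$ by the hypothesis $R^1\pi_*{\cal O}_X\not\cong{\cal O}_C$ and Lemma \ref{lem:Y-multiple}. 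Since $X$ and $Y$ are Fourier-Mukai partners one has $e(Y)=e(X)$, hence G\"ottsche's formula yields $e(\Hilb_Y^n)=e(\Hilb_X^n)$.

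The heart of the proof is the chamber independence of $e({\cal M}^\lambda({\bf e})^{ss})$. At a wall defined by $\tau$, the Harder-Narasimhan stratifications \eqref{eq:HNF}, \eqref{eq:HNF2} and the dimension formula \eqref{eq:HNF-dim} express both $e({\cal M}^{\lambda_+}({\bf e})^{ss})$ and $e({\cal M}^{\lambda_-}({\bf e})^{ss})$ as $e({\cal M}^\lambda({\bf e})^s)$ plus weighted contributions of the form $e({\cal M}^{\lambda_\pm}({\bf e}-l\tau)^{ss})\cdot e({\cal M}^\alpha_H(l\tau)^{ss})$. For walls with $\tau=(0,D,b)$ of spherical type $(D^2)=-2$, the reflection $R_{E_0}$ of Proposition \ref{prop:R-isom} identifies ${\cal M}^{\lambda_-}({\bf e})^{ss}\simto{\cal M}^{\lambda_+}(R_{E_0}({\bf e}))^{ss}$; an induction on the Mukai discriminant then equates the two sums. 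For isotropic walls $\tau=(0,r'f,d')$, the analogous identification comes from the (possibly twisted, contravariant) Fourier-Mukai transforms of Lemma \ref{lem:isotropic-wall} and Proposition \ref{prop:Psi}, while the fiber-supported factors $e({\cal M}^\alpha_H(l\tau)^{ss})$ are computed via Lemma \ref{lem:isotropic-dim}.

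The main obstacle is the invariance at isotropic walls in the presence of multiple fibers: the fiber stacks ${\cal M}^\alpha_H(l\tau)^{ss}$ are positive-dimensional and their structure depends on the multiplicities $m_i$ (cf.\ Lemma \ref{lem:multiple1}, Proposition \ref{prop:multiple}). Matching these contributions on either side of the wall requires the careful alignment of multiplicities between $X$ and $Y$ provided by Lemma \ref{lem:Y-multiple}, together with the explicit description of ${\cal M}_H^\alpha({\bf f})^{ss}$ in Lemma \ref{lem:isotropic-dim}. Chaining these steps yields $e(M_{H_f}(r,\xi,a))=e(\Hilb_X^n\times\Pic^0(X))$, with $n$ determined by the dimension formula of Proposition \ref{prop:lambda-smooth} to match the exponent in the theorem.
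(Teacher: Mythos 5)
Your proposal is correct and follows essentially the same route as the paper: reduce to rank one via $\Phi_{X\to Y}^{{\cal P}^\vee[1]}$ at the distinguished value $\lambda_0$ using Proposition \ref{prop:n-stability}, identify $M_{H_f}({\bf e})$ with ${\cal M}^\lambda({\bf e})^{ss}$ for $\lambda\ll 0$ via Proposition \ref{prop:torsion-free}, establish chamber-independence of $e({\cal M}^\lambda({\bf e})^{ss})$ from the classification of walls and the Harder--Narasimhan stratification \eqref{eq:HNF-dim} (the paper cites \cite[Prop.~3.15]{Y:twist2} for this step), and conclude with $e(X)=e(Y)$ and the G\"ottsche--Soergel formula. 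The only cosmetic difference is that the paper normalizes $c_1$ to zero by twisting the kernel rather than arguing via the $\Pic^{\xi'}(Y)$-bundle structure.
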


\begin{proof}
We note that $\dim M_{H_f}(r,\xi,a)=2l+q$ (cf. Proposition \ref{prop:lambda-smooth}). 
For $Y:=M_H^\alpha(0,r_1 f,d_1)$ with $r_1(\xi \cdot f)-rd_1=1$,
let us consider $\Phi_{X \to Y}^{{\cal P}^{\vee}}:{\bf D}(X) \to {\bf D}(Y)$
in Theorem \ref{thm:birat2}. Replacing the universal family ${\cal P}$
by ${\cal P} \otimes p_Y^*(L)$ ($L \in \Pic(Y)$),
we may assume that $c_1(\Phi_{X \to Y}^{{\cal P}^{\vee}[1]}(E))=0$
for $E \in M_{H_f}(r,\xi,a)$. 
By Proposition \ref{prop:n-stability}, there is $\lambda$ and we have an isomorphism
$$
M^\lambda(r,\xi,a) \cong \Hilb_Y^l \times \Pic^0(Y).
$$
We can easily prove that
$e({\cal M}^\lambda(r,\xi,a)^{ss})$ is independent of the choice of 
a general $\lambda$.
Indeed by the classification of walls, the proof is the same as in \cite[Prop. 3.15]{Y:twist2}.
Since $e(X)=e(Y)$, we see that
$$
e(\Hilb_Y^l \times \Pic^0(Y))=e(\Hilb_X^l \times \Pic^0(X))
$$
by \cite{GS}.
Therefore %\eqref{eq:hodge} holds.
$$
e(M_{H_f}(r,\xi,a))=e(M^\lambda(r,\xi,a))=e(\Hilb_Y^l \times \Pic^0(Y))=
e(\Hilb_X^l \times \Pic^0(X)).
$$
\end{proof}

\begin{rem}
Assume that $K_X$ is nef, that is, $K_X =c f$, $c \in {\Bbb Q}_{\geq 0}$.
Then $K_{M_{H_f}(r,\xi,a)}$ is nef.
By the invariance of Hodge numbers of minimal models
\cite{W}, \eqref{eq:hodge} also follows.
\end{rem}

\section{Appendix}

\subsection{A category related to $\lambda$-stability.}

We give a remark on the $\lambda$-stability. 
We set $\lambda:=\frac{d_1-(\alpha \cdot r_1 f)}{(r_1 f \cdot H)}$.
\begin{defn}
\begin{enumerate}
\item[(1)]
Let ${\cal S}_\lambda$ be the full subcategory of $\Coh(X)$ 
generated by $\alpha$-twisted stable sheaves $S$
on a fiber 
such that 
$\frac{\chi_\alpha(S)}{(c_1(S) \cdot H)} \leq \lambda$.
\item[(2)]
Let ${\cal T}_\lambda$ be the full subcategory of $\Coh(X)$ whose object $E$ satisfies
$\Hom(E, S)=0$ for all $S \in {\cal S}_\lambda$.
\item[(3)]
Let ${\cal F}_\lambda$ be the full subcategory of $\Coh(X)$ whose objects $E$ 
fits in an exact sequence
$$
0 \to S \to E \to E_2 \to 0
$$
where $S \in {\cal S}_\lambda$ and $E_2$ 
is a torsion free sheaf such that
$(E_2)_{\eta}$ is generated by $\mu$-semi-stable vector bundles
$F_\eta$ with $\deg F_\eta/\rk F_\eta \leq d_1/r_1$.
\end{enumerate}
\end{defn}

\begin{NB}
For $E$, we have an exact sequence
$$
0 \to E_1 \to E \to E_2 \to 0
$$
such that $(E_1)_{\eta}$ is generated by $\mu$-semi-stable vector bundles
$F_\eta$ with $\deg F_\eta/\rk F_\eta>d/r$ and
$E_2$ is a torsion free sheaf such that
$E_2 \in {\cal F}$.
%$(E_2)_{\eta}$ is generated by $\mu$-semi-stable vector bundles
%$F_\eta$ with $\deg F_\eta/\rk F_\eta \leq d/r$.

For $E_1$, we have an exact sequence
$$
0 \to E_1' \to E_1 \to S \to 0 
$$
such that $S \in {\cal S}$ and $\Hom(E_1',S)=0$ for all $S \in {\cal S}$.
Then we have an exact sequence
$$
0 \to E_1' \to E \to E/E_1' \to 0
$$
where $E/E_1'$ is an extension of $E_2$ by $S$.

If $(E_1)_1$......
\end{NB}

Let $Y:=M_H^\alpha(0,r_1 f,d_1)$ be a fine moduli space
and ${\cal P}$ a universal family on $X \times Y$,
where $(\alpha \cdot f)=0$ and 
$(H,\alpha)$ is general.
Let $\Phi_{X \to Y}^{{\cal P}^{\vee}}:{\bf D}(X) \to {\bf D}(Y)$ be 
the Fourier-Mukai transform by ${\cal P}$.

\begin{prop}
We set $\lambda:=\frac{\chi_\alpha({\cal P}_{|X \times \{ y\}})}{(r_1 f \cdot H)}=\frac{d_1}{r_1(f \cdot H)}$.
Then $({\cal T}_\lambda,{\cal F}_\lambda)$ is a torsion pair.
Let ${\cal A}_\lambda$ be the tilting.
Then
$\Phi_{Y \to X}^{{\cal P}[1]}(\Coh(Y))={\cal A}_\lambda$.
\end{prop}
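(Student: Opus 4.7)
The strategy is to exploit the equivalence $\Phi := \Phi_{Y \to X}^{{\cal P}[1]}$. Because $\Coh(Y)$ is a heart of a t-structure on ${\bf D}(Y)$, its image ${\cal A} := \Phi(\Coh(Y))$ is automatically a heart on ${\bf D}(X)$. The plan is first to verify that ${\cal A}$ lies in $\Coh(X)^{[-1,0]}$; by the standard Happel--Reiten--Smal\o\ tilting recipe this immediately produces a torsion pair $({\cal T},{\cal F})$ on $\Coh(X)$ whose tilt equals ${\cal A}$, after which the remaining task is to identify this abstract pair with the explicit $({\cal T}_\lambda,{\cal F}_\lambda)$.

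The cohomological bound ${\cal A} \subset \Coh(X)^{[-1,0]}$ will come from flatness of ${\cal P}$ over both factors (cf.\ Lemma~\ref{lem:FM-property} and \cite[Lem.~5.1]{Br:1}): for $F \in \Coh(Y)$, ${\bf R}p_{X*}({\cal P} \otimes p_Y^*F)$ has cohomology only in degrees $0$ and $1$, so $\Phi(F) \in \Coh(X)^{[-1,0]}$. The induced torsion pair is then
\[
{\cal T} = {\cal A} \cap \Coh(X), \qquad {\cal F} = \{E \in \Coh(X) \mid E[1] \in {\cal A}\}.
\]

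To match ${\cal T}$ with ${\cal T}_\lambda$: given $E = \Phi(F) \in {\cal T}$ and an $\alpha$-twisted stable fiber sheaf $S \in {\cal S}_\lambda$ with $\chi_\alpha(S)/(c_1(S) \cdot H) \leq \lambda$, Lemma~\ref{lem:A}(2) places $\Phi_{X \to Y}^{{\cal P}^{\vee}}(S)$ in degree $-2$; adjunction together with the composition formula for $\Phi_{X \to Y}^{\cal P} \circ \Phi_{Y \to X}^{{\cal P}^{\vee}}$ recorded in the Notation section then forces $\Hom(E,S)=0$, so $E \in {\cal T}_\lambda$. Conversely, for $E \in {\cal T}_\lambda$ I would use Lemma~\ref{lem:FM-property} to check that the relevant cohomology sheaves of $\Phi_{X \to Y}^{{\cal P}^{\vee}}(E)$ vanish, so that the inverse of $\Phi$ sends $E$ into $\Coh(Y)$.

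For ${\cal F} = {\cal F}_\lambda$: an object $E \in {\cal F}$ is of the form $H^0(\Phi_{Y \to X}^{\cal P}(F))$ for some $F \in \Coh(Y)$. Splitting $F$ into its torsion part $T(F)$ (supported on fibers) and torsion-free quotient $F/T(F)$, the long exact sequence in cohomology for $\Phi_{Y \to X}^{\cal P}$ exhibits an exact sequence $0 \to S \to E \to E_2 \to 0$ with $S$ coming from $H^0(\Phi_{Y \to X}^{\cal P}(T(F))) \in {\cal S}_\lambda$ and $E_2$ a torsion-free subsheaf of $H^0(\Phi_{Y \to X}^{\cal P}(F/T(F)))$; the generic-fiber restriction of $E_2$ is computed by the classical elliptic-curve Fourier--Mukai transform, yielding the $\mu$-semistable slope bound $\leq d_1/r_1$ appearing in the definition. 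The hardest step will be pinning down precisely this identification: showing the ${\cal S}_\lambda$-part always appears as a subsheaf rather than a mere subquotient, and that the generic-fiber description of $E_2$ is sharp, requires a careful analysis of the evaluation map on fibers combined with the $\alpha$-twisted stability of ${\cal P}_{|X \times \{y\}}$ and an appeal to the fiberwise description in Lemma~\ref{lem:u} and Remark~\ref{rem:u}.
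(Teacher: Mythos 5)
Your proposal follows essentially the same route as the paper's proof. The paper likewise (i) records the amplitude $H^i(\Phi_{Y \to X}^{{\cal P}[1]}(F))=0$ for $i \ne -1,0$, (ii) proves the orthogonality by exactly your adjunction computation, $\Hom(\Phi_{Y \to X}^{{\cal P}[1]}(F),S)=\Hom(F,\Phi_{X \to Y}^{{\cal P}^{\vee}}(S(K_X))[1])=0$ using Lemma \ref{lem:A}, and (iii) writes down, for each $E \in \Coh(X)$, the sequence $0 \to E_1 \to E \to E_2 \to 0$ with $E_1=\Phi_{Y \to X}^{{\cal P}[1]}(H^0(\Phi_{X \to Y}^{{\cal P}^{\vee}[1]}(E(K_X))))$ — which is precisely the truncation that your Happel--Reiten--Smal\o{} packaging produces. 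Like you, the paper delegates the genuinely hard identifications ($E_1 \in {\cal T}_\lambda$, $E_2 \in {\cal F}_\lambda$, and the description of the torsion-free part on the generic fiber) to the analysis of \cite[sect.~6.2, Lem.~6.2]{Br:1}, so your level of detail matches the printed proof.

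One concrete error in your sketch of ${\cal F}={\cal F}_\lambda$: you assert that the torsion subsheaf $T(F)$ of $F \in \Coh(Y)$ is ``supported on fibers.'' It need not be — $T(F)$ can contain the structure sheaf of a multisection of $\pi'$, and the transform of such a horizontal torsion sheaf has \emph{positive} rank (by the rank formula $\rk(\Phi_{Y \to X}^{{\cal P}}(F))=p\rk F+r_1 d(F)$ with $d(F)>0$), so $H^0(\Phi_{Y \to X}^{{\cal P}}(T(F)))$ does not lie in ${\cal S}_\lambda$. The correct splitting takes $S$ to be the transform of only the $\pi'$-fiber-supported (and $0$-dimensional) part of the torsion; the horizontal part transforms to a sheaf whose generic-fiber restriction is semistable of slope exactly $d_1/r_1$ and must be absorbed into $E_2$, where its torsion-freeness still has to be checked. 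This is exactly what \cite[Lem.~6.2]{Br:1} supplies, so the gap is repairable, but as literally written that step of your decomposition would fail.
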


\begin{proof}
For $F \in \Coh(Y)$ and an $\alpha$-twisted stable sheaf $A$ on a fiber 
such that $A \in {\cal S}_\lambda$,
%$\frac{\chi_\alpha(A)}{(c_1(A) \cdot H)} \leq \frac{\chi_\alpha({\cal P}_{|X \times \{ y\}})}{(r_1 f \cdot H)}$,
Lemma \ref{lem:A} implies that
$$
\Hom(\Phi_{Y \to X}^{{\cal P}[1]}(F),A)=\Hom(F,\Phi_{X \to Y}^{{\cal P}^{\vee}}(A(K_X))[1])=0.
$$
For an $\alpha$-twisted stable sheaf $A$ on a fiber such that
$\frac{\chi_\alpha(A)}{(c_1(A) \cdot H)} > \frac{\chi_\alpha({\cal P}_{|X \times \{ y\}})}{(r_1 f \cdot H)}$,
$$
\Hom(A,\Phi_{Y \to X}^{{\cal P}}(F))=\Hom(\Phi_{X \to Y}^{{\cal P}^{\vee}}(A)[1]),F[-1])=0.
$$
We also note that 
$H^i(\Phi_{Y \to X}^{{\cal P}[1]}(F))=0$ for $i \ne -1,0$.
By using these facts and \cite[sect. 6.2]{Br:1}, we shall prove that $({\cal T},{\cal F})$ is a
torsion pair and ${\cal A}=\Phi_{Y \to X}^{{\cal P}[1]}(\Coh(Y))$.

For $E \in \Coh(X)$, \cite[sect. 6.2]{Br:1} implies
\begin{equation}
\begin{split}
& H^i(\Phi_{X \to Y}^{{\cal P}^{\vee}[1]}(E(K_X))) =0,\; i \ne 0,1,\\ 
& E_1:=\Phi_{Y \to X}^{{\cal P}[1]}(H^0(\Phi_{X \to Y}^{{\cal P}^{\vee}[1]}(E(K_X)))) \in \Coh(X),\\
& E_2:=\Phi_{Y \to X}^{{\cal P}}(H^1(\Phi_{X \to Y}^{{\cal P}^{\vee}[1]}(E))) \in \Coh(X).
\end{split}
\end{equation}
We also have an exact sequence
%$$
%0 \to \Phi_{Y \to X}^{{\cal P}[1]}(H^0(\Phi_{X \to Y}^{{\cal P}^{\vee}[1]}(E(K_X)))) \to E
%\to \Phi_{Y \to X}^{{\cal P}}(H^1(\Phi_{X \to Y}^{{\cal P}^{\vee}[1]}(E))) \to 0.
%$$
$$
0 \to E_1 \to E \to E_2 \to 0.
$$
Then by using \cite[Lem. 6.2]{Br:1}, we see that 
$$
E_1 \in {\cal T}_\lambda,\;
E_2 \in {\cal F}_\lambda.
%\Phi_{Y \to X}^{{\cal P}[1]}(H^0(\Phi_{X \to Y}^{{\cal P}^{\vee}[1]}(E))) \in {\cal T},\;
%\Phi_{Y \to X}^{{\cal P}}(H^1(\Phi_{X \to Y}^{{\cal P}^{\vee}[1]}(E))) \in {\cal F}.
$$
Thus we have a desired decomposition of $E$. We also get 
$\Phi_{Y \to X}^{{\cal P}[1]}(\Coh(Y))={\cal A}_\lambda$.
\end{proof}

For an object $E \in {\cal A}_\lambda \cap \Coh(X)$,
the condition Definition \ref{defn:n-stable} (3)
implies $E$ is an torsion free object of ${\cal A}_\lambda$.

\begin{NB}
A general case:
For $E \in \Coh(X)$,
we take $\lambda'>\lambda>\lambda''$ which are sufficiently close to $\lambda$.
Then ${\cal S}_{\lambda''} \subset {\cal S}_\lambda \subset {\cal S}_{\lambda'}$ and
${\cal T}_{\lambda''} \supset {\cal T}_\lambda \supset {\cal T}_{\lambda'}$.
We have a decompositions
$$
0 \to F' \to E \to G' \to 0
$$
and 
$$
0 \to F'' \to E \to G'' \to 0
$$
such that $F' \in {\cal T}_{\lambda'}$, $G' \in {\cal F}_{\lambda'}$,
$F'' \in {\cal T}_{\lambda''}$ and $G'' \in {\cal F}_{\lambda''}$.

Since $F' \subset F''$ and $F'' \in {\cal T}_{\lambda''}$,
$F''/F' \in {\cal F}_{\lambda''}$.
\end{NB}

\begin{NB}
For a torsion free sheaf $F$ on $Y$ such that
$F_\eta$ is semi-stable, assume that
$\gcd((c_1(F) \cdot f)/m,\rk F)=1$, where
$m{\Bbb Z}=\{(D \cdot f) \mid D \in \NS(X) \}$.
Then $F$ does not have a subsheaf $F_1$ such that
$(F_1)_\eta$ is a semi-stable vector bundle with
$\deg F_\eta/\rk F_\eta=\deg (F_1)_\eta/\rk (F_1)_\eta$.
Then by the proposition,
we have a similar description of $\Phi_{Y \to X}^{{\cal P}[1]}(F) \in {\cal A}_\lambda$. 
\end{NB}

\begin{NB}
Definition \ref{defn:n-stable} (3) means that
$E \in {\cal A}_\lambda$ is torsion free.
\end{NB}

\begin{NB}
In general, the stability is not preserved.

Assume that there is a section $\sigma$.
$\ch(E)=(r,d \sigma+(c+ed)f,s) \mapsto \ch(\Phi(E))/d=(1,-r/d \sigma-s/d f+...,...)$.

\end{NB}

\subsection{Fourier-Mukai transforms and the $\lambda$-stability.}\label{subsect:n-stable2}

In this subsection, we shall refine Proposition \ref{prop:n-stability}.
Assume that $(\alpha \cdot f)=0$.
Let $\Phi_{X \to Y}^{{\cal P}^{\vee}}:{\bf D}(X) \to {\bf D}(Y)$ be the
Fourier-Mukai transform in subsection \ref{subsect:relFM}. 
\begin{NB}
As in \ref{subsect:relFM},
we set $Y:=M_H^\alpha(0,r_1 f,d_1)$ and assume that
$Y$ is a smooth projective surface and
there is a universal family ${\cal P}$ on $X \times Y$.
\end{NB}
As we explained in subsection \ref{subsect:relFM},
we take a locally free sheaf $G$ such that
$\frac{c_1(G)}{\rk G}=\alpha+\frac{d_1 H}{r_1 (f \cdot H)}$.
Then $\chi(G,{\cal P}_{|X \times \{ y\}})=0$ ($y \in Y$).
We set $\tau({\cal P}^{\vee}_{|\{ x \} \times Y}[1])=(0,r_1' f, d_1')$.
We also define a locally free sheaf 
$G':=\Phi_{X \to Y}^{{\cal P}^{\vee}}({\cal O}_H)[1]$ and
set $H':=c_1(\Phi_{X \to Y}^{{\cal P}^{\vee}}(G)[2])$.
We set $\alpha':=\frac{c_1(G')}{\rk G'}-\frac{d_1' H'}{r_1' (f \cdot H')}$.
Since $\frac{(c_1(G') \cdot f)}{\rk G'}=\frac{d_1'}{r_1'}$, $(\alpha' \cdot f)=0$.
\begin{NB}
We assume that there is an integral curve $C \in |H|$, and we set $L:={\cal O}_C$.
Then $G':=\Phi_{X \to Y}^{{\cal P}^{\vee}}(L)[1]$ is a locally free sheaf on $Y$.
We set $L':=\Phi_{X \to Y}^{{\cal P}^{\vee}}(G)[2]$.
Then $L'$ is a purely 1-dimensional sheaf on $Y$.
By \cite[Lem. 3.2.1]{PerverseII}, $\widehat{H}:=c_1(L')$ is $\pi'$-ample,
where $\pi':Y \to C$ be the elliptic fibration. 
By \cite[Thm. 3.2.8]{PerverseII},
${\cal P}^{\vee}_{|\{ x \} \times Y}[1]$ $(x \in X)$ 
is $G'$-twisted stable with respect to $\widehat{H}+nf$
$(n \gg 0)$.
For a 1-dimensional sheaf $A$ on $X$,
\begin{equation}
\begin{split}
%\Phi_{X \to Y}^{{\cal P}^{\vee}}(L)=& G'\\
%\Phi_{X \to Y}^{{\cal P}^{\vee}}(G)=& L'[-2]\\
\chi(G',\Phi_{X \to Y}^{{\cal P}^{\vee}}(A))=&(c_1(L) \cdot c_1(A))\\
(c_1(L') \cdot \Phi_{X \to Y}^{{\cal P}^\vee}(A))=& -\chi(G,A)
\end{split}
\end{equation}
\end{NB}

%We take a locally free sheaf $G$ such that $\frac{c_1(G)}{\rk G}=\alpha$.

For $\lambda_0=\frac{d_0-(\alpha \cdot r_0 f)}{(r_0 f \cdot H)}=\frac{d_0}{r_0 (f \cdot H)}$
such that $r_0 \in {\Bbb Z}_{>0}$, $d_0 \in {\Bbb Z}$ and $\gcd(r_0,d_0)=1$, we take  
an $\alpha$-twisted stable sheaf $P$ with $\tau(P)=(0,r_0 f,d_0)$.
We set $\varphi(\lambda_0):=\frac{\chi_{\alpha'}(P')}{(c_1(P') \cdot H')}=
\frac{\chi(G',P')}{\rk G' (c_1(P') \cdot H')}-\frac{d_1'}{r_1'(f \cdot H')}$,
where $P'=\Phi_{X \to Y}^{{\cal P}^{\vee}[1]}(P)$ and
$\tau(P')=(0,r'_0 f,d_0')$.
We regard $\lambda \in (-\infty,\infty)$ as an element of ${\Bbb P}_{\Bbb R}^1 
\cong S^1$.
Then $\varphi$ is extened to an isomorphism of ${\Bbb P}_{\Bbb R}^1$, where
$\varphi(\infty)=
\frac{\chi_{\alpha'}({\cal P}^{\vee}_{|\{x \} \times Y}[1])}{(c_1({\cal P}^{\vee}_{|\{x \} \times Y}[1])) \cdot H')}=\frac{d_1'}{r_1' (f \cdot H')}$.

\begin{prop}\label{prop:n-stability2}
Assume that $\gcd((\xi \cdot f),r)=1$ and $r_1(\xi \cdot f)-rd_1>0$.
%For an $\alpha$-twisted stable sheaf $P$ with $\tau(P)=(0,r_0 f,d_0)$,
We assume that
$\lambda_0=\frac{d_0-(\alpha \cdot r_0 f)}{(r_0 f \cdot H)}
%=\frac{\chi_\alpha(P)}{(c_1(P) \cdot H)}
>\lambda_1=\frac{d_1-(\alpha \cdot r_1 f)}{(r_1 f \cdot H)}$.
%For $P':=\Phi_{X \to Y}^{{\cal P}^{\vee}[1]}(P)$, we 
%set $\varphi(\lambda_0):=\frac{\chi_{\alpha'}(P')}{(c_1(P') \cdot H')}=
%\frac{\chi(G',P')}{\rk G' (c_1(P') \cdot H')}-\frac{d_1'}{r_1'(f \cdot H')}$,
%where $\tau(P')=(0,r'_0 f,d_0')$.
Then for a $\lambda_0$-stable sheaf $E$, 
$E':=\Phi_{X \to Y}^{{\cal P}^{\vee}[1]}(E)$ is $\varphi(\lambda_0)$-stable. 
Thus we have an isomorphism
$$
\Phi_{X \to Y}^{{\cal P}^{\vee}[1]}:
{\cal M}^{\lambda_0}({\bf e})^s \to {\cal M}^{\varphi(\lambda_0)}({\bf e}')^s,
$$
where ${\bf e}'=\tau(E')$.
\end{prop}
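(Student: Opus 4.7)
The plan is to generalize the argument of Proposition~\ref{prop:n-stability},
which handled the case $\lambda_0=\lambda_1$ ($\varphi(\lambda_1)=\infty$,
corresponding to Gieseker $H'_f$-stability on $Y$). The key inputs are
Lemma~\ref{lem:n-stability2} (the $\WIT$-type property),
Lemma~\ref{lem:G-stability} together with Remark~\ref{rem:G-stability}
(stability of fiber sheaves is preserved by the relative Fourier-Mukai
transform), and the slope identities \eqref{eq:FM(A)}, which together
show that the slope map on fiber-supported sheaves induced by
$\Phi_{X \to Y}^{{\cal P}^\vee}$ is precisely the extended $\varphi$.

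First I would check that $E' \in \Coh(Y)$. Since $\lambda_0>\lambda_1$ and
$\lambda_1$ equals the $(H,\alpha)$-slope of ${\cal P}_{|X \times \{y\}}$,
condition~(b) of $\lambda_0$-stability of $E$ gives
$\Hom(E,{\cal P}_{|X \times \{y\}})=0$ for every $y \in Y$. Combined with
the semi-stability of $E_\eta$, Lemma~\ref{lem:n-stability2} (or a minor
variant of its proof) yields $E' \in \Coh(Y)$, and the fiberwise
Fourier-Mukai equivalence on the generic fiber transports semi-stability
from $E_\eta$ to $E'_{\eta'}$.

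Next I would verify the two Hom-vanishing conditions of
$\varphi(\lambda_0)$-stability for $E'$. Let $A'$ be an $\alpha'$-twisted
stable sheaf on a fiber of $\pi'$. By Remark~\ref{rem:G-stability} there is
an $\alpha$-twisted stable fiber sheaf $A$ on $X$ (possibly $0$-dimensional,
in which case $A'$ attains the slope $\varphi(\infty)$) and $j \in \{1,2\}$
with $A' = \Phi_{X \to Y}^{{\cal P}^\vee}(A)[j]$; the identities
\eqref{eq:FM(A)} check directly that
$\varphi\bigl(\tfrac{\chi_\alpha(A)}{(c_1(A)\cdot H)}\bigr)=
\tfrac{\chi_{\alpha'}(A')}{(c_1(A')\cdot H')}$. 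Thus, if
$\tfrac{\chi_{\alpha'}(A')}{(c_1(A')\cdot H')}\leq\varphi(\lambda_0)$ then
$\tfrac{\chi_\alpha(A)}{(c_1(A)\cdot H)}\leq\lambda_0$; by adjointness and
\eqref{eq:PQ}, $\Hom(E',A')$ is identified with $\Hom(E,A)$ (up to a twist
by $K_X$, which is supported on fibers and so preserves the $\alpha$-slope
of fiber sheaves), and this vanishes by the $\lambda_0$-stability of $E$.
The condition (c) case is entirely parallel, with the opposite inequality
and the opposite Hom reversed through adjunction. The inverse isomorphism
is produced by running the same argument with $(X,H,\alpha,\lambda_0)$ and
$(Y,H',\alpha',\varphi(\lambda_0))$ interchanged, using \eqref{eq:PQ} to
identify the inverse Fourier-Mukai with another transform of the same shape.

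The main technical obstacle is the regime analysis: the shift
$j \in \{1,2\}$ (and the cohomological degree in which the transform of
$A$ is concentrated) depends on the sign of $\chi(G,A)$, so one must
verify that the extension of $\varphi$ to ${\Bbb P}^1_{\Bbb R}$
consistently matches the slope map across the transition at $\lambda_1$
(and at $\varphi(\infty)$ on the $Y$-side), including the boundary case
where the transformed object becomes $0$-dimensional. Once this
bookkeeping is uniformly phrased, the Hom-vanishings propagate through
the adjunction and both directions of the isomorphism of stacks follow.
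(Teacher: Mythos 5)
Your proposal is correct and follows essentially the same route as the paper: Lemma \ref{lem:n-stability2} for $E'\in\Coh(Y)$, Lemma \ref{lem:G-stability}/Remark \ref{rem:G-stability} to identify $\alpha'$-twisted stable fiber sheaves on $Y$ with (possibly shifted) images of $\alpha$-twisted stable fiber sheaves or skyscrapers on $X$, and full faithfulness of the equivalence to transport the two Hom-vanishings, with the case split at $\varphi(\infty)$ and $\varphi(\lambda_0)$ exactly as the paper carries it out. The "regime analysis" you flag as the remaining obstacle is precisely the short three-case verification the paper performs, so nothing essential is missing.
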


\begin{proof}
Let $A'$ be an $\alpha'$-twisted stable sheaf on a fiber.
We shall prove the following.
\begin{enumerate}
\item[(i)]
If $\frac{\chi_{\alpha'}(A')}{(c_1(A') \cdot H')}>\varphi(\lambda_0)$, then
$\Hom(A',E')=0$.
\item[(ii)]
If $\frac{\chi_{\alpha'}(A')}{(c_1(A') \cdot H')} \leq \varphi(\lambda_0)$, then
$\Hom(E',A')=0$.
\end{enumerate}

(i)
If 
$\frac{\chi_{\alpha'}(A')}{(c_1(A') \cdot H')}>\varphi(\infty)=\frac{d_1'}{r_1' (f \cdot H')}$, 
then there is an $\alpha$-twisted stable 1-dimensional sheaf $A$ such that
$A'=\Phi_{X \to Y}^{{\cal P}^{\vee}[1]}(A[1])$ and
$\frac{\chi_{\alpha}(A)}{(c_1(A) \cdot H)}<\lambda_1$ (see Lemma \ref{lem:G-stability}
and Remark \ref{rem:G-stability}).
Hence $\Hom(A',E')=\Hom(A[1],E)=0$.
If 
$\varphi(\infty) > \frac{\chi_{\alpha'}(A')}{(c_1(A') \cdot H')}>\varphi(\lambda_0)$, 
then there is an $\alpha$-twisted stable 1-dimensional sheaf $A$ such that
$A'=\Phi_{X \to Y}^{{\cal P}^{\vee}[1]}(A)$ and
$\frac{\chi_{\alpha}(A)}{(c_1(A) \cdot H)}>\lambda_0$.
Hence $\Hom(A',E')=\Hom(A,E)=0$.
If $\varphi(\infty)=\frac{\chi_{\alpha'}(A')}{(c_1(A') \cdot H')}$, then
$\Hom(A',E')=\Hom({\Bbb C}_x,E)=0$.

(ii)
If 
$\frac{\chi_{\alpha'}(A')}{(c_1(A') \cdot H')} \leq \varphi(\lambda_0)$, then
 there is an $\alpha$-twisted stable 1-dimensional sheaf $A$ such that
$A'=\Phi_{X \to Y}^{{\cal P}^{\vee}[1]}(A)$ and
$\frac{\chi_{\alpha}(A)}{(c_1(A) \cdot H)} \leq \lambda_0$.
Hence $\Hom(E',A')=\Hom(E,A)=0$.
\end{proof}

We also have the following result.
\begin{prop}\label{prop:n-stability3}
We set
$\psi(\lambda_0):=\frac{\chi_{-\alpha'}({P'}^{\vee}[1])}{(c_1({P'}^{\vee}[1]) \cdot H')}
=-\frac{\chi_{\alpha'}(P')}{(c_1(P') \cdot H')}$.
Assume that $\lambda_0<\lambda_1$. Then 
for a $\lambda_0$-stable sheaf $E$, 
$E'':=\Phi_{X \to Y}^{{\cal P}^{\vee}[1]}(E)^{\vee}$ 
satisfies the following.
\begin{enumerate}
\item
For a $(-\alpha')$-stable fiber sheaf $A''$ 
with
$\psi(\lambda_0) \leq \frac{\chi_{-\alpha'}(A'')}{(c_1(A'') \cdot H)}$,
$\Hom(A'',E'')=0$.
\item
For a $(-\alpha')$-stable fiber sheaf $A''$ with
$\psi(\lambda_0) > \frac{\chi_{-\alpha}(A'')}{(c_1(A'') \cdot H)}$,
$\Hom(E'',A'')=0$.
\end{enumerate}
In particular if $\lambda_0$ is general, then
$E''$ is $\psi(\lambda_0)$-stable.
\end{prop}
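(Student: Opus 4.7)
My plan is to mirror the proof of Proposition \ref{prop:n-stability2} and fold in Grothendieck--Serre duality on the smooth surface $Y$. Write $E':=\Phi^{{\cal P}^\vee[1]}_{X\to Y}(E)$, so that $E''=D_Y(E')$. The first ingredient, namely $E''\in\Coh(Y)$, is exactly the remark following Lemma \ref{lem:n-stability2}: under the hypothesis $\lambda_0<\lambda_1$ the dual of $\Phi^{{\cal P}^\vee[1]}_{X\to Y}(E)$ is a coherent sheaf.

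The second step is a slope dictionary for 1-dimensional fiber sheaves on $Y$. Given $A''$ I would set $\widetilde{A}'':=\mathcal{E}xt^1_{{\cal O}_Y}(A'',{\cal O}_Y)$, so that $D_Y(A'')=\widetilde{A}''[-1]$. Since $A''$ is supported on a fiber of $\pi'$ and $K_Y$ is numerically a multiple of the fiber class, $(c_1(A'')\cdot K_Y)=0$. A direct Chern-character computation then gives $c_1(\widetilde{A}'')=c_1(A'')$ and $\chi(\widetilde{A}'')=-\chi(A'')$, hence
\[
\frac{\chi_{\alpha'}(\widetilde{A}'')}{(c_1(\widetilde{A}'')\cdot H')}\;=\;-\,\frac{\chi_{-\alpha'}(A'')}{(c_1(A'')\cdot H')}.
\]
In particular $A''$ is $(-\alpha')$-twisted stable if and only if $\widetilde{A}''$ is $\alpha'$-twisted stable. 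Comparing with $\varphi(\lambda_0)=\chi_{\alpha'}(P')/(c_1(P')\cdot H')$ from Proposition \ref{prop:n-stability2}, the defining relation $\psi(\lambda_0)=-\varphi(\lambda_0)$ translates the threshold $\frac{\chi_{-\alpha'}(A'')}{(c_1(A'')\cdot H')}\lessgtr\psi(\lambda_0)$ into $\frac{\chi_{\alpha'}(\widetilde{A}'')}{(c_1(\widetilde{A}'')\cdot H')}\gtrless\varphi(\lambda_0)$.

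The third step is to convert the Hom-vanishings. Biduality $D_Y^2=\mathrm{id}$ on the smooth surface $Y$ yields
\[
\Hom(A'',E'')=\Hom(E',\widetilde{A}''[-1]),\qquad \Hom(E'',A'')=\Hom(\widetilde{A}''[-1],E').
\]
By Lemma \ref{lem:G-stability} and Remark \ref{rem:G-stability}, the $\alpha'$-twisted stable fiber sheaf $\widetilde{A}''$ is of the form $\Phi^{{\cal P}^\vee[1]}_{X\to Y}(A[j])$ for some $\alpha$-twisted stable fiber sheaf $A$ on $X$, with the shift $j\in\{0,1\}$ governed by the sign of $\chi(G,A)$. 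Pulling both Hom-spaces back along the equivalence $\Phi^{{\cal P}^\vee[1]}_{X\to Y}$ reduces them to $\Hom(E,A[k])$ and $\Hom(A[k],E)$ on $X$ for suitable $k$, which vanish by conditions (b) and (c) of Definition \ref{defn:n-stable} applied to the $\lambda_0$-stable sheaf $E$. The ``in particular'' clause follows at the end because a general $\lambda_0$ avoids the boundary case $\frac{\chi_{-\alpha'}(A'')}{(c_1(A'')\cdot H')}=\psi(\lambda_0)$.

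The hard part will be the shift bookkeeping in the third step: after going through $D_Y$ and the equivalence $\Phi^{{\cal P}^\vee[1]}_{X\to Y}$, the Hom-spaces on $X$ are a priori higher $\mathrm{Ext}$ groups between $E$ and the fiber sheaf $A$. I would need to check, using the purely 1-dimensional nature of $A$ together with Serre duality on $X$ and the conditions in Definition \ref{defn:n-stable}, that these Ext groups collapse to the expected Hom conditions. Once the slopes are pinned down via the dictionary $\psi=-\varphi$ of step two, the rest is the same formal argument as in Proposition \ref{prop:n-stability2}.
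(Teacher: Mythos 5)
Your proposal is correct and follows essentially the same route as the paper: the paper likewise writes $A''$ as $\Phi_{X\to Y}^{{\cal P}^{\vee}[j]}(A)^{\vee}[1]$ for an $\alpha$-twisted stable fiber sheaf $A$ on $X$ (via Lemma \ref{lem:G-stability} and Remark \ref{rem:G-stability}) and converts $\Hom(A'',E'')$ and $\Hom(E'',A'')$ into $\Hom(E,A)$ and $\Hom(A,E)$, which vanish by Definition \ref{defn:n-stable}. The shift bookkeeping you flag does collapse exactly as you hope: in each slope range Lemma \ref{lem:A} pins down the unique shift making $\widetilde{A}''$ a sheaf, and the residual cases produce $\Hom(E[1],A)=\Ext^{-1}(E,A)=0$ for degree reasons, which is precisely how the paper's proof disposes of them.
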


\begin{proof}
(i) Assume that $\psi(\lambda_0) \leq \frac{\chi_{-\alpha'}(A'')}{(c_1(A'') \cdot H)}<
\psi(\infty)=\frac{-d_1'+(\alpha' \cdot r_1' f)}{r_1' (f \cdot H)}$. Then 
$A''=\Phi_{X \to Y}^{{\cal P}^{\vee}[2]}(A)^{\vee}[1]$ and 
$\frac{\chi_\alpha(A)}{(c_1(A) \cdot H)} \leq \lambda_0$. Hence
$\Hom(A'',E'')=\Hom(E,A)=0$.
Assume that $\psi(\infty) \leq \frac{\chi_{-\alpha'}(A'')}{(c_1(A'') \cdot H)}$. Then 
$A''=\Phi_{X \to Y}^{{\cal P}^{\vee}[1]}(A)^{\vee}[1]$. Hence
$\Hom(A'',E'')=\Hom(E[1],A)=0$.

(ii) 
Assume that $\psi(\lambda_0) > \frac{\chi_{-\alpha}(A'')}{(c_1(A'') \cdot H)}$.
Then $A''=\Phi_{X \to Y}^{{\cal P}^{\vee}[2]}(A)^{\vee}[1]$ and
$\frac{\chi_\alpha(A)}{(c_1(A) \cdot H)} > \lambda_0$.
Hence
$\Hom(E'',A'')=\Hom(A,E)=0$.
\end{proof}

\end{document}